\pgfplotsset{compat=1.16}
\theoremstyle{plain}
\newtheorem{theo}{Theorem}[section]
\newtheorem{lemm}[theo]{Lemma}
\newtheorem{prop}[theo]{Proposition}
\newtheorem{coro}[theo]{Corollary}
\theoremstyle{definition}
\newtheorem{defi}[theo]{Definition} 
\theoremstyle{remark}
\newtheorem{exam}[theo]{Example}
\newtheorem{rema}[theo]{Remark}
\DeclareMathOperator{\ISA}{\mathbf{ISA}}
\DeclareMathOperator{\IS}{\mathbf{IS}}
\DeclareMathOperator{\EG}{\mathbf{EG}}
\DeclareMathOperator{\Calg}{\mathbf{C^*_{alg}}}
\DeclareMathOperator{\ComCalg}{\mathbf{ComC^*_{alg}}}
\DeclareMathOperator{\LCHS}{\mathbf{LCH}}
\DeclareMathOperator{\CHS}{\mathbf{CH}}
\DeclareMathOperator{\ISAlcH}{\mathbf{ISA_{lcH}}}
\DeclareMathOperator{\EGlcH}{\mathbf{EG_{lcH}}}
\DeclareMathOperator{\SP}{SP}
\DeclareMathOperator{\TG}{TG}
\DeclareMathOperator{\SA}{SA}
\DeclareMathOperator{\Bis}{Bis}
\DeclareMathOperator{\Gu}{Gu}
\renewcommand{\:}{\colon}
\newcommand{\inv}{^{-1}}
\DeclareMathOperator{\id}{id}
\newcommand{\0}{^{(0)}}
\newcommand{\2}{^{(2)}}
\newcommand{\op}{^{\mathrm{op}}}
\newcommand{\egmor}{couple morphism\xspace}
\newcommand{\egmors}{couple morphisms\xspace}
\newcommand{\Egmors}{Couple morphisms\xspace}
\newcommand{\isamor}{action morphism\xspace}
\newcommand{\isamors}{action morphisms\xspace}
\newcommand{\shom}{$*$-ho\-mo\-mor\-phism\xspace}
\newcommand{\shoms}{$*$-ho\-mo\-mor\-phisms\xspace}
\title[A categorical approach to semigroups, groupoids, C*-algebras]{A categorical approach to inverse semigroups, \'etale groupoids, and their C*-algebras}
\author{Takuto Fujieda \and Takeshi Katsura \and Tomoki Uchimura}
\begin{document}

\maketitle

\begin{abstract}
We introduce a category of inverse semigroup actions and a category of \'etale groupoids. 
We show that there are three functors which send inverse semigroups to their spectral actions, inverse semigroup actions to their transformation groupoids, and \'etale groupoids to their groupoid C*-algebras, respectively. 
The composition of these functors is naturally equivalent to the already known functor which sends inverse semigroups to their C*-algebras. 
This result is a categorical version of the theorem by Paterson which states that the inverse semigroup C*-algebras coincide the groupoid C*-algebras of the universal groupoids. 
We also construct a functor from the category of \'etale groupoids to the category of inverse semigroup actions sending \'etale groupoids to their slice actions, 
and show that this functor is right adjoint to the functor sending inverse semigroup actions to their transformation groupoids. 
\end{abstract}


\section{Introduction}

In the $C^*$-algebra theory, the construction of $C^*$-algebras from inverse semigroups and \'etale groupoids are studied well.
Inverse semigroups and semigroup homomorphisms form the category $\IS$, and $C^*$-algebras and \shoms form the category $\Calg$.
It is easy to see that the construction of inverse semigroup $C^*$-algebras $C^*(S)$ from inverse semigroup $S$ 
becomes a functor $C^*$ from $\IS$ to $\Calg$. 
\'Etale groupoids and continuous groupoid homomorphisms also form a category. 
However, as explained below, the construction of groupoid $C^*$-algebras $C^*(G)$ from \'etale groupoids $G$ 
is far from being a functor with respect to this category. 
In this paper, we propose a category $\EG$ whose objects are \'etale groupoids 
with which the construction of groupoid $C^*$-algebras becomes a functor. 

From a group action, or more generally from an inverse semigroup action, 
one can construct an \'etale groupoid called a transformation groupoid. 
From an inverse semigroup $S$, one gets an inverse semigroup action $\big(S,\widehat{E(S)},\beta^S\big)$ called a spectral action. 
The transformation groupoid of the spectral action of $S$ is called a universal groupoid of $S$, and denoted by $\Gu(S)$. 
Paterson showed that for every inverse semigroup $S$, the inverse semigroup $C^*$-algebra $C^*(S)$ is isomorphic to the groupoid $C^*$-algebra $C^*(\Gu(S))$ of the universal groupoid $\Gu(S)$ of $S$ (see \cite[Theorem 4.4.1]{Pat99});
\[
\begin{tikzcd}
S&&&C^*(S).
\\
&\big(S,\widehat{E(S)},\beta^S\big)
&\Gu(S)
&
\arrow[from = 1-1, to = 1-4, mapsto, sloped, ""{name = a}]
\arrow[from = 1-1, to = 2-2, mapsto, sloped]
\arrow[from = 2-2, to = 2-3, mapsto, sloped, ""{name = b}]
\arrow[from = 2-3, to = 1-4, mapsto, sloped]
\end{tikzcd}
\]
It is not obvious how to define morphisms between inverse semigroup actions. 
In this paper, we define a category $\ISA$ whose objects are inverse semigroup actions, 
and show that constructions of spectral actions and transformation groupoids become 
functors $\SP$ from $\IS$ to $\ISA$ and $\TG$ from $\ISA$ to $\EG$, respectively. 
With the categories and the functors above, 
we get a categorical version of the theorem by Paterson. 
We also define a functor $\SA$ from $\EG$ to $\ISA$ 
which sends \'etale groupoids to their slice actions, 
and show that the functor $\TG$ is left adjoint to the functor $\SA$. 

We are going to explain the details. 
Discrete groups and compact spaces are particular examples of \'etale groupoids, 
and the construction of groupoid $C^*$-algebras from \'etale groupoids 
is extension of the one of group $C^*$-algebras from discrete groups 
and the one of function algebras from compact spaces. 
Continuous groupoid homomorphisms between discrete groups and between compact spaces 
are, of course, group homomorphisms and continuous maps, respectively. 
These induce \shoms between group $C^*$-algebras and between function algebras. 
However a group homomorphism produces a \shom between group $C^*$-algebras covariantly, 
whereas a continuous map produces a \shom between function algebras contravariantly. 
In general, a continuous groupoid homomorphism between \'etale groupoids 
does not induce a natural \shom between groupoid $C^*$-algebras 
covariantly or contravariantly. 
To overcome this difficulty, we introduce morphisms between \'etale groupoids called \emph{\egmors}.
These are couples of two continuous groupoid homomorphisms in two certain classes.
We show that all \'etale groupoids and \egmors form the category $\EG$ in Theorem \ref{Theorem: category EG}.
When constructing $C^*$-algebras and \shoms, 
it is natural to assume that \'etale groupoids have locally compact Hausdorff unit spaces, and that \egmors have a certain properness.
We see that all \'etale groupoids with locally compact Hausdorff units and all proper \egmors form the subcategory $\EGlcH$ of $\EG$ 
in Theorem \ref{Theorem: category EGlcH}.
We will construct a functor $C^*$ from $\EGlcH$ to $\Calg$ which sends \'etale groupoids to their (full) groupoid $C^*$-algebras 
in Theorem \ref{Theorem: functor C*} 
by defining natural \shoms from proper \egmors. 
We note that between locally compact spaces considered as \'etale groupoids, 
proper \egmors are nothing but proper partial maps in the opposite direction. 
These are morphisms of the opposite category $\LCHS\op$ of 
the category $\LCHS$ of locally compact spaces introduced in Subsection~\ref{Subsection: Partial maps}. 

An inverse semigroup action is a triplet $(S,X,\alpha)$ of an inverse semigroup $S$, a topological space $X$ 
and an action $\alpha$ of $S$ on $X$. 
From the observations discussed above, 
a natural candidate of a morphism between inverse semigroup actions 
is a pair of a semigroup homomorphism 
and a partial map between spaces in the opposite direction 
which are compatible with actions. 
This is called an \emph{\isamor} in Definition~\ref{Definition: isamor}. 
We show that all inverse semigroup actions and all \isamors form the category $\ISA$ in Theorem \ref{Theorem: category ISA}.
In this paper, we show several results which suggest that the definitions of the categories $\ISA$ and $\EG$ 
are the desired ones. 
One such result says that the construction of transformation groupoids $S\ltimes_\alpha X$ 
from inverse semigroup actions $(S,X,\alpha)$ becomes a functor $\TG$ from $\ISA$ to $\EG$ 
(Theorem \ref{Theorem: functor TG}). 
We get this result by showing that \isamors between inverse semigroup actions 
naturally induce \egmors between their transformation groupoids. 
Another result says that the construction of slice actions $\big(\Bis G, G\0,\gamma^G\big)$ 
from \'etale groupoids $G$ becomes a functor $\SA$ from $\EG$ to $\ISA$
(Theorem \ref{Theorem: SA functoriality}). 
We note that general continuous groupoid homomorphisms do not induce 
semigroup homomorphisms between inverse semigroups $\Bis G$ 
consisting of open bisections of \'etale groupoids $G$ in any direction, 
whereas \egmors between \'etale groupoids 
naturally induce \isamors between their slice actions 
in the right direction. 
Moreover we show that the functor $\TG$ is left adjoint to the functor $\SA$ 
in Theorem \ref{Theorem: adj func btwn ISA and EG};
\[
\begin{tikzcd}
\ISA & \EG.
\arrow[r, from = 1-1, to = 1-2, "\TG", ""'{name = a}, bend left = 20]
\arrow[r, from = 1-2, to = 1-1, "\SA", ""'{name = b}, bend left = 20]
\arrow[phantom, from = a, to = b, "\perp"]
\end{tikzcd}
\]
Similarly as the subcategory $\EGlcH$ of $\EG$, 
all inverse semigroup actions $(S,X,\alpha)$ on locally compact Hausdorff spaces $X$ 
and \isamors $(\theta,\xi)$ with proper partial maps $\xi$ 
define a subcategory $\ISAlcH$ of $\ISA$ (Theorem \ref{Theorem: category ISAlcH}). 
We show that the two functors $\SA$ and $\TG$ descend functors between $\EGlcH$ and $\ISAlcH$, 
and these are also adjoint to each other. 

One more result suggesting that the definition of the category $\ISA$ 
(or its subcategory $\ISAlcH$) is the desired one 
is that the construction of spectral actions defines a functor from $\IS$ to $\ISAlcH$. 
From a semigroup homomorphism $\theta \: S \rightarrow T$ between inverse semigroups $S,T$, 
one naturally get a proper partial map $\widehat{\theta} \: \widehat{E(T)} \rightarrow \widehat{E(S)}$ 
between locally compact Hausdorff spaces. 
The domain of this map $\widehat{\theta}$ is not the whole space $\widehat{E(T)}$ in general. 
We show that the pair $\big(\theta,\widehat{\theta}\,\big)$ becomes a proper \isamor 
from the spectral action $\big( S,\widehat{E(S)},\beta^S \big)$ to $\big( T,\widehat{E(T)},\beta^T \big)$
in Proposition~\ref{Proposition: proper morphism}. 
This defines a functor from $\IS$ to $\ISAlcH$ which we denote $\SP$ (Theorem \ref{Theorem: functor SP}).

Now we are ready to state Theorem \ref{Theorem: Paterson categorical ver} which is a categorical version of the theorem by Paterson. 
Namely, the composition of the three functors $\SP\: \IS \rightarrow \ISAlcH$, $\TG\: \ISAlcH \rightarrow \EGlcH$, and $C^*\: \EGlcH \rightarrow \Calg$ are naturally equivalent to the functor $C^*\: \IS \rightarrow \Calg$;
\[
\begin{tikzcd}
\IS&&&\Calg.\\
&\ISAlcH&\EGlcH&
\arrow[from = 1-1, to = 1-4, rightarrow, ""'{name = a}, "C^*"]
\arrow[from = 1-1, to = 2-2, rightarrow, "\SP"']
\arrow[from = 2-2, to = 2-3, rightarrow, ""{name = b}, "\TG"']
\arrow[from = 2-3, to = 1-4, rightarrow, "C^*"']
\arrow[from = a, to = b, "\simeq", phantom, sloped]
\end{tikzcd}
\]
In our sequel \cite{FKU25}, we use these categories and functors to study quotient objects of 
inverse semigroups, inverse semigroup actions, \'etale groupoids and $C^*$-algebras. 

This paper is organized as follows. 
In Section \ref{Section: Preliminaries}, 
we introduce partial maps and recall definitions of inverse semigroups, 
inverse semigroup actions, \'etale groupoids and $C^*$-algebras. 
We also recall the relationships between them, and the theorem by Paterson mentioned above. 
In Section \ref{Section: A category ISA and a functor from IS to ISA}, 
we define the category $\ISA$ of inverse semigroup actions and \isamors, 
and the functor $\SP$ from $\IS$ to $\ISA$ of constructing spectral actions. 
The category $\EG$ of \'etale groupoids and \egmors is defined in Section \ref{Section: A category EG}. 
In Section \ref{Section: functors TG and SA}, 
we define the functor $\TG$ from $\ISA$ to $\EG$ of constructing transformation groupoids, 
and the functor $\SA$ from $\EG$ to $\ISA$ of constructing slice actions. 
We also show that $\TG$ is left adjoint to $\SA$. 
In Section \ref{Section: A functor C* from EG to Calg}, 
we define the functor $C^*$ from $\EGlcH$ to $\Calg$, 
and show our categorical version of the theorem by Paterson. 

Many papers including \cite{BEM12} consider categories of actions 
of a fixed inverse semigroup. 
However to the best of our knowledge, 
this paper is the first one which consider a category of inverse semigroup actions 
whose objects are triplets of inverse semigroups, spaces and actions. 
On the other hands, several previous researches consider categories of groupoids, 
and functors to the category $\Calg$ of $C^*$-algebras. 
For example, algebraic morphisms studied by Buneci and Stachura in \cite{BS06} 
(see also \cite{Zak90,Bun08}) 
define a category of locally compact, $\sigma$-compact, Hausdorff groupoids with Haar systems 
together with a functor to $\Calg$. 
In this paper, we only consider \'etale groupoids, 
but we do not assume that groupoids are $\sigma$-compact nor Hausdorff. 
We note that it is important to consider non-Hausdorff groupoids 
(see \cite{BEM12,Tay23,Kom21,FKU25}). 
As one can see from the explanation on actors below, 
the set of algebraic morphisms in \cite{BS06} 
can be considered as a proper subset of the set of \egmors introduced in this paper 
for locally compact, $\sigma$-compact, Hausdorff \'etale groupoids. 
In \cite{BEM12}, Buss, Exel and Meyer use algebraic morphisms between \'etale groupoids 
to study the relationship between the category of inverse semigroups and the one of \'etale groupoids. 
Theorem \ref{Theorem: adj func btwn ISA and EG} in this paper relates to 
\cite[Theorem 4.11]{BEM12}. 
One can obtain and extend \cite[Theorem 4.11]{BEM12} by combining Theorem \ref{Theorem: adj func btwn ISA and EG} in this paper 
with one results on adjoint functors between the non-Hausdorff version of 
the functor $\SP$ from $\IS$ to $\ISA$ in Theorem \ref{Theorem: functor SP} 
and the forgetful functor from $\ISA$ to $\IS$. 
This theme is pursued elsewhere. 
We note that in \cite{BEM12} inverse semigroups are assumed to have zero and unit. 
This is because the set of algebraic morphisms are ``smaller'' than the one of \egmors. 
One can get rid of this assumption using \egmors instead of algebraic morphisms. 
This is one advantage of \egmors introduced in this paper. 
Lawson and Lenz also considered similar adjunction in \cite[Theorem 2.22]{LL13}. 
The objects and the morphisms of the categories $\mathbf{Inv}$ and $\mathbf{Etale}$ in \cite{LL13}
seem to be smaller than the ones of $\IS$ and $\EG$ in this paper. 
While this paper has been prepared, \cite{Tay23} was posted on the ArXiv. 
In \cite{Tay23}, Taylor uses actors to define a category of \'etale groupoids. 
Actors are introduced by Meyer and Zhu in \cite{MZ15} 
which can be considered as \'etale version of algebraic morphisms in \cite{BS06}. 
In \cite[Lemma 6.2]{Tay23}, Taylor constructs an \'etale groupoid 
and two groupoid homomorphisms from actors satisfying some conditions 
(see also \cite[Lemma 4.3]{BEM12}). 
These define a \egmor introduced in this paper.  
This construction can be generalized to the construction from arbitrary actors, 
but not all \egmors can be obtained from actors. 
In the sequel of this paper, 
we consider groupoid correspondences between \'etale groupoids. 
There it will be shown that the sets of actors and the ones of \egmors can be considered 
as subsets of the set of groupoid correspondences. 
Under this observation, the set of actors are strictly smaller than the one of \egmors. 
We borrow the notion ``fibrewise bijective'' from \cite[Definition~6.1]{Tay23} 
to define \egmors.


\section{Preliminaries}
\label{Section: Preliminaries}

\subsection{Partial maps}
\label{Subsection: Partial maps}

For topological spaces $X$ and $Y$, a \emph{partial map} from $X$ to $Y$ is a continuous map from an open subspace $D_f$ of $X$ to $Y$. 
This is denoted as $f\:X\supset D_f\rightarrow Y$ or just $f\: X \rightarrow Y$. 
We call $D_f$ the domain of $f$.
For a subset $V$ of $Y$, we define a subset $f\inv(V)$ of $X$ as the set of all elements $x \in D_f$ with $f(x)\in V$.
For partial maps $f_1\: X\supset D_{f_1}\rightarrow Y$ and $f_2\: Y\supset D_{f_2}\rightarrow Z$, the composition $f_2\circ f_1\: X \supset D_{f_2\circ f_1} \rightarrow Z$ is defined as $f_2\circ f_1(x) = f_2(f_1(x))$ for every $x \in D_{f_2\circ f_1} := f_1\inv(D_{f_2})$.
This composition satisfies the associative law.
We remark that $(f_2 \circ f_1)\inv(W) = f_1\inv\big(f_2\inv(W)\big)$ holds for a subset $W$ of $Z$.
A partial map $f\:X\supset D_f\rightarrow Y$ is said to be \emph{proper} if for every compact subspace $K$ of $Y$, the subspace $f\inv(K)$ of $X$ is compact.
It is clear that the composition of proper partial maps is proper.
We denote $\LCHS$ as the category of locally compact Hausdorff spaces and proper partial maps.

We denote $\CHS_*$ as the category of compact Hausdorff spaces with fixed points and continuous maps preserving fixed points.
For a compact Hausdorff space $X$ with a fixed point $x_0\in X$, the open subspace $X\setminus\{x_0\}$ is a locally compact Hausdorff space.
For compact Hausdorff spaces $X$, $Y$ with fixed points $x_0\in X$, $y_0\in Y$ and a continuous map $f\: X\rightarrow Y$ with $f(x_0) = y_0$, the restriction of $f$ to the open subspace $f\inv\left(Y\setminus\{y_0\}\right)$ of $X\setminus\{x_0\}$ is a proper continuous map.
Thus $f$ induces a proper partial map from $X\setminus\{x_0\}$ to $Y\setminus\{y_0\}$.
These constructions become a functor from $\CHS_*$ to $\LCHS$.

For a locally compact Hausdorff space $X$, we can define the compact topology on the union set $\widetilde{X}$ of $X$ and another point $\infty_X$ called the \emph{one-point compactification} of $X$.
For locally compact Hausdorff spaces $X,Y$ and a proper partial map $f\: X \supset D_f \rightarrow Y$, we define a map $\widetilde{f}\: \widetilde{X} \rightarrow \widetilde{Y}$ as a map sending an element $x\in D_f$ to $f(x) \in Y$ and elements of $\widetilde{X} \setminus D_f$ to $\infty_Y$.
This map $\widetilde{f}$ is continuous.
These constructions become a functor from $\LCHS$ to $\CHS_*$.
The above two functors are equivalences between $\LCHS$ and $\CHS_*$.
Thus $\LCHS$ and $\CHS_*$ are equivalent categories.

\subsection{$C^*$-algebras}

We recall the motivating fact in the $C^*$-algebra theory.
See \cite{Mur90,Dav96} for detail of the $C^*$-algebra theory.

A $*$-algebra is a $\mathbb{C}$-algebra with involution.
A $C^*$-algebra is a $*$-algebra which has a submultiplicative complete norm satisfying the $C^*$-condition.
A \shom between $*$-algebras is a linear map which keeps multiplication and involution.
A \shom between $C^*$-algebras becomes norm-decreasing automatically.
\begin{defi}
	We denote $\Calg$ as the category of all $C^*$-algebras and all \shoms.
\end{defi}
A locally compact Hausdorff space $X$ produces a commutative $C^*$-algebra $C_0(X)$ of continuous complex functions on $X$ vanishing at infinity.
We denote $\ComCalg$ as the full subcategory of $\Calg$ consisting of all commutative $C^*$-algebras.
The construction $X \mapsto C_0(X)$ forms a contravariant functor from $\LCHS$ to $\ComCalg$.
The Gelfand-Naimark theorem says that this functor is an equivalence between $\LCHS\op$ and $\ComCalg$, where $\LCHS\op$ is the opposite category of $\LCHS$.
This fact is one of our motivation for introducing \egmor and \isamor defined later.
This fact is also mentioned in \cite{AM21}.

\subsection{Inverse semigroups and their $C^*$-algebras}
\label{Subsection: Inverse semigroups and their C*-algebras}

In this subsection, we recall the basics of inverse semigroups and their $C^*$-algebras. 
See \cite[Chapter 2]{Pat99}, \cite[Section 2]{BEM12}, or \cite[Section 4]{Exe08} for more detail.

\subsubsection{Inverse semigroups and their spectral actions}

\begin{defi}
	An \emph{inverse semigroup} $S$ is a semigroup such that for every element $s\in S$ there exists a unique element $s^*\in S$ with $ss^*s = s$ and $s^*ss^* = s^*$.
\end{defi}

We call $s^*$ as the \emph{generalized inverse} of $s$.
An element $e\in S$ which satisfies $e^2=e$ is called an \emph{idempotent}. 
Every idempotent $e$ satisfies $e^* =e$.
For every element $s\in S$, $s^*s$ and $ss^*$ are idempotents. 
We denote the set of all idempotents as $E(S)$. 
It is well-known that $E(S)$ becomes a commutative subsemigroup of $S$.

\begin{exam}
	A discrete group is an inverse semigroup which has the unit as a unique idempotent.
\end{exam}

\begin{defi}
	A \emph{semigroup homomorphism} between inverse semigroups is a map which keeps multiplications.
\end{defi}

A semigroup homomorphism keeps generalized inverses automatically.

\begin{defi}
	We denote $\IS$ as the category of all inverse semigroups and all semigroup homomorphisms.
\end{defi}

We give an important example of inverse semigroup.
Let $X$ be a topological space.
A \emph{partial homeomorphism} on $X$ is a homeomorphism between open subspaces of $X$.
The set of all partial homeomorphisms on $X$ becomes an inverse semigroup with respect to the composition as partial maps.

\begin{defi}
	We denote $I(X)$ as the inverse semigroup of all partial homeomorphisms on $X$.
\end{defi}

We recall the basics of inverse semigroup actions.

\begin{defi}
	An \emph{inverse semigroup action} $(S,X,\alpha)$ is a triplet of an inverse semigroup $S$, a topological space $X$, and a semigroup homomorphism $\alpha\: S \rightarrow I(X); s \mapsto \alpha_s$ such that the union of all domains of $\alpha_s$ coincides with $X$.
\end{defi}

\begin{rema}
In \cite[Definition 5.1]{Ste10}, a semigroup homomorphism $\alpha$ such that the union of all domains of $\alpha_s$ coincides with $X$ 
is called a non-degenerate action. 
\end{rema}

For every $s\in S$, we denote the domain of $\alpha_s$ as $D^\alpha_s$.
We can check that $D^\alpha_s$ coincide with $D^\alpha_{s^*s}$.

Let $S$ be an inverse semigroup. 
We will define an inverse semigroup action called the spectral action of $S$.
We first remark that $\{0,1\}$ becomes an inverse semigroup with the usual multiplication. 
We denote $\widehat{E(S)}_0$ as the set of all semigroup homomorphism from $E(S)$ to $\{0,1\}$.
Give the Hausdorff topology to $\{0,1\}$. 
We regard $\widehat{E(S)}_0$ as the closed subspace of the product space $\{0,1\}^{E(S)}$ which is compact Hausdorff.
We write $0$ as the semigroup homomorphism which sends all elements of $E(S)$ to $0$.
We set $\widehat{E(S)} := \widehat{E(S)}_0\setminus\{0\}$.
This is a locally compact Hausdorff space.
An element of $\widehat{E(S)}$ is called a \emph{character} on $E(S)$.
We set a subspace $U_e^S$ of $\widehat{E(S)}$ as 
\[
U_e^S := \big\{ \zeta \in \widehat{E(S)} \ \big|\ \zeta(e)=1 \big\}
\] 
for every $e\in E(S)$. 
The subspace $U_e^S$ becomes a compact open subspace of $\widehat{E(S)}$ for all $e \in E(S)$.
We can easily check that 
$
\bigcup_{e \in E(S)} U_e^S = \widehat{E(S)}.
$

\begin{defi}
	For an inverse semigroup $S$, we define \emph{the spectral action} $\big(S,\widehat{E(S)}, \beta^S\big)$ of $S$ as follows:
	For every $s\in S$, we set a subspace $U_s^S$ of $\widehat{E(S)}$ as $U_{s^*s}^S$. 
	For every $s\in S$, we define a partial homeomorphism $\beta_s^S\: U_{s}^S \rightarrow U_{s*}^S$ of $\widehat{E(S)}$ as $\beta_s^S(\zeta)(e) = \zeta(s^*es)$ for $\zeta\in U^S_s = U_{s^*s}^S$ and $e\in E(S)$. 
\end{defi}
In Section \ref{Section: A category ISA and a functor from IS to ISA}, we will introduce morphisms between inverse semigroup actions, and show that the construction of spectral actions $S \mapsto \big(S,\widehat{E(S)},\beta^S\big)$ becomes a functor.

\subsubsection{Inverse semigroup $C^*$-algebras}

Let $S$ be an inverse semigroup. 
We set $\mathbb{C}[S]$ as the set of all maps $f$ from $S$ to $\mathbb{C}$ such that $f(s) = 0$ for all but finite $s\in S$. 
This becomes a $*$-algebra with respect to the following multiplication and involution;
\[
\big(f_1*f_2\big)(s):= \sum_{\substack{s_1,s_2\in S\\s_1s_2=s}} f_1(s_1)f_2(s_2), \hspace{5mm} f^*(s) = \overline{f(s^*)}
\]
for $f_1,f_2,f\in \mathbb{C}[S]$ and $s\in S$. In other words, a $*$-algebra $\mathbb{C}[S]$ is generated by the delta functions $\left\{\delta_s\right\}_{s\in S}$ with respect to the multiplication defined by $\delta_s*\delta_t=\delta_{st}$ and the conjugate linear operation defined by $\left(\delta_s\right)^* = \delta_{s^*}$ for $s\in S$.

There exists a unique $C^*$-algebra $C^*(S)$ such that it contains $\mathbb{C}[S]$ as a dense $*$-subalgebra, and that any \shom from $\mathbb{C}[S]$ to any $C^*$-algebra $D$ extends to a unique \shom from it to $D$.
See \cite[p.26]{Pat99} for the existence of this $C^*$-algebra $C^*(S)$.
\begin{defi}
	We call the $C^*$-algebra $C^*(S)$ as the \emph{inverse semigroup $C^*$-algebra} of $S$.
\end{defi}

A semigroup homomorphism $\theta$ from $S$ to $T$ induces a \shom $\sigma_\theta$ from $\mathbb{C}[S]$ to $\mathbb{C}[T]$ by 
\[
\sigma_\theta(f)(t) := \sum_{\theta(s) = t} f(s)
\]
for $f\in\mathbb{C}[S]$ and $t\in T$. 
This map sends $\delta_s\in\mathbb{C}[S]$ to $\delta_{\theta(s)}\in\mathbb{C}[T]$. 
We denote the extension of the \shom $\sigma_\theta \: \mathbb{C}[S] \rightarrow \mathbb{C}[T] \subset C^*(T)$ to the \shom from $C^*(S)$ as the same symbol $\sigma_\theta$.
One can obtain easily the following proposition:

\begin{prop}
	The constructions $S \mapsto C^*(S)$ and $\theta\mapsto\sigma_\theta$ form a functor from $\IS$ to $\Calg$. 
	We denote the functor as $C^*$.
\end{prop}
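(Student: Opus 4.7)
The plan is to establish functoriality by the standard two-step procedure: check that $\sigma_\theta$ is a well-defined \shom on the dense $*$-subalgebra $\mathbb{C}[S]$, then invoke the universal property of $C^*(S)$ to extend and to identify extensions on generators.

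First I would verify that the formula $\sigma_\theta(f)(t) = \sum_{\theta(s) = t} f(s)$ actually defines a linear map $\mathbb{C}[S] \to \mathbb{C}[T]$, noting that the sum is finite because $f$ has finite support and that $\sigma_\theta(f)$ also has finite support for the same reason. Since $\mathbb{C}[S]$ is generated as a $*$-algebra by the delta functions with relations $\delta_s*\delta_t = \delta_{st}$ and $\delta_s^* = \delta_{s^*}$, and since $\sigma_\theta$ sends $\delta_s$ to $\delta_{\theta(s)}$, checking that $\sigma_\theta$ is a \shom reduces to verifying $\delta_{\theta(s)}*\delta_{\theta(t)} = \delta_{\theta(st)}$ (which follows from $\theta$ being a semigroup homomorphism) and $\delta_{\theta(s)}^* = \delta_{\theta(s^*)}$ (which follows from the remark in the text that semigroup homomorphisms between inverse semigroups automatically preserve generalized inverses).

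Having $\sigma_\theta \colon \mathbb{C}[S] \to \mathbb{C}[T] \subset C^*(T)$ as a \shom, the universal property of $C^*(S)$ recalled in the excerpt produces a unique extension, also denoted $\sigma_\theta \colon C^*(S) \to C^*(T)$. To finish I would check the two functor axioms. For the identity, $\sigma_{\id_S}$ agrees with $\id_{C^*(S)}$ on every $\delta_s$, hence on the dense subalgebra $\mathbb{C}[S]$, hence everywhere by the uniqueness clause of the universal property (equivalently, by continuity of \shoms). For composition, given $\theta \colon S \to T$ and $\theta' \colon T \to U$, both $\sigma_{\theta' \circ \theta}$ and $\sigma_{\theta'} \circ \sigma_\theta$ are \shoms from $C^*(S)$ to $C^*(U)$ which send $\delta_s$ to $\delta_{\theta'(\theta(s))}$; again uniqueness of the extension from $\mathbb{C}[S]$ forces them to coincide on $C^*(S)$.

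There is no real obstacle here; the only point requiring mild care is the \emph{involutivity} step, which silently uses the fact that a semigroup homomorphism $\theta$ between inverse semigroups satisfies $\theta(s^*) = \theta(s)^*$. All remaining verifications are routine uses of the universal property of $C^*(S)$ and the density of $\mathbb{C}[S]$.
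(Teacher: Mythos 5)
Your argument is correct and is exactly the routine verification the paper leaves implicit (it states this proposition without proof as "easily obtained"): check $\sigma_\theta$ on the basis $\{\delta_s\}$ of $\mathbb{C}[S]$, use $\theta(s^*)=\theta(s)^*$ for involutivity, extend by the universal property of $C^*(S)$, and deduce the functor axioms from uniqueness of extensions agreeing on the dense subalgebra. No gaps; this is the intended standard approach.
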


\subsection{\'Etale groupoids and their $C^*$-algebras}

In this section, we recall the fundamentals of \'etale groupoids and groupoid $C^*$-algebras. 
See \cite[Section 3]{Exe08}, \cite[Section 3]{BEM12}, or \cite{KS02} for more detail.

\subsubsection{\'Etale groupoids and open bisections}

\begin{defi}
	A \emph{groupoid} consists of a set $G$ called an \emph{arrow space}, a subset $G\0 \subset G$ called a \emph{unit space}, maps $d,r\: G \rightarrow G\0$ called a \emph{domain map} and a \emph{range map} respectively, a map called a \emph{multiplication map}
	\[
	G^{(2)}:=\{ (g',g)\in G \times G \mid d(g') = r(g) \} \ni (g',g) \mapsto g'g \in G,
	\]
	and a map called an \emph{inverse map}
	\[
	G \ni g \mapsto g\inv \in G
	\]
	such that 
	\begin{enumerate}[(i)]
		\item $r(u) = u = d(u)$ for $u\in G\0$,
		\item $r(g)g = g = gd(g)$ for $g\in G$,
		\item $r(g'g) = r(g')$ and $d(g'g) = d(g)$ for $(g',g)\in G^{(2)}$,
		\item $g''(g'g) = (g''g')g$ for $(g'',g'),(g',g)\in G^{(2)}$,
		\item $d(g\inv) = r(g)$, $r(g\inv) = d(g)$ for $g \in G$,
		\item $gg\inv = r(g)$ and $g\inv g = d(g)$ for $g\in G$.
	\end{enumerate}
\end{defi}
A pair $g',g\in G$ is said to be \emph{composable} if $(g',g)\in G\2$.
We set $G_A:= d\inv(A)$, $G^B:=r\inv(B)$, and $G_A^B:=G_A\cap G^B$ for subsets $A,B\subset G\0$. 
For one point sets $A = \{x\}$ and $B = \{y\}$, we write them as $G_x$, $G^y$, and $G_x^y$. 
By (i), the domain and range maps are surjective.
For subsets $U$ and $V$ of $G$, we set
\[
UV:=\big\{ uv \ \big|\  u\in U, v\in V, (u,v)\in G\2 \big\}, \hspace{3mm} U\inv:=\big\{ u\inv \ \big|\  u\in U \big\}.
\]

\begin{defi}
	A subset $U$ of a groupoid $G$ is called a \emph{bisection} if both $d|_U$ and $r|_U$ are injective.
\end{defi}
For bisections $U$ and $V$ of $G$, $UV$ and $U\inv$ are bisections of $G$ (see \cite[Proposition 2.2.3]{Pat99})

\begin{defi}
	A \emph{topological groupoid} $G$ is a groupoid equipped with a topology on $G$ such that the multiplication map and the inverse map are continuous.
\end{defi}

For every topological groupoid $G$, the domain map and the range map are continuous by the condition (vi).
For every $g \in G$, we get $(g\inv)\inv = g$. 
Thus the inverse map $g \mapsto g\inv$ is a homeomorphism on $G$.

\begin{defi}
	A topological groupoid $G$ is said to be \emph{\'etale} if the domain map $d\: G \rightarrow G\0$ is locally homeomorphic, that is for every $g\in G$, there exists an open neighborhood $V_g\subset G$ of $g$ such that the restriction $d|_{V_g}$ is a homeomorphism onto an open subspace of $G\0$.
\end{defi}

We remark that the domain map $d$ of $G$ is locally homeomorphic if and only if so is the range map $r\: G\rightarrow G\0$. 
For an \'etale groupoid $G$ and a subset $A \subset G\0$, the topological subgroupoid $G_A^A$ is \'etale. 

\begin{prop}[{\cite[Proposition 3.2]{Exe08}}]\label{Proposition: G0 open}
	If $G$ is \'etale, the unit space $G\0$ is open in $G$.
\end{prop}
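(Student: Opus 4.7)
The plan is to prove the statement pointwise: for each unit $u \in G\0$ I will exhibit an open subset of $G$ that contains $u$ and is entirely contained in $G\0$. The only structural hypothesis at my disposal is that the domain map $d\:G \rightarrow G\0$ is a local homeomorphism, so I will start by applying this property at $u$ to obtain an open neighborhood $V \subset G$ of $u$ on which $d$ restricts to a homeomorphism onto an open subset $W = d(V)$ of $G\0$. Since $d$ fixes units, $u = d(u) \in W$ and of course $u \in V$.

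Next, I will cut $V$ down to the set $U := V \cap d\inv(V \cap G\0)$ and show that $U$ is both open in $G$ and contained in $G\0$. The key algebraic observation is that for any $g \in V$ with $d(g) \in V$, the two elements $g$ and $d(g)$ both lie in $V$ and both map to $d(g)$ under $d$, because $d(d(g)) = d(g)$ (the element $d(g)$ is already a unit, and units are fixed by $d$). Injectivity of $d|_V$ then forces $g = d(g)$, so $g \in G\0$. Thus $U \subset G\0$, and plainly $u \in U$.

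The only point that calls for a moment's care is verifying that $U$ is open in $G$. We cannot appeal to ``$G\0$ is open in $G$'' since that is exactly the conclusion; but we do not need to, because $V \cap G\0$ is open in the subspace topology of $G\0$ simply by virtue of $V$ being open in $G$, and then its preimage $d\inv(V \cap G\0)$ is open in $G$ by continuity of $d$. Intersecting with the open set $V$ preserves openness, so $U$ is the desired open neighborhood of $u$ inside $G\0$. I do not expect any substantive obstacle: the entire argument turns on choosing the set $U$ correctly, after which everything reduces to a one-line topological check using injectivity of $d|_V$ and the fact that units are fixed by $d$.
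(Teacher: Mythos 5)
Your proof is correct and complete: the set $U = V \cap d\inv(V \cap G\0)$ is open by continuity of $d$, contains $u$, and lies in $G\0$ by the injectivity of $d|_V$ together with $d(d(g))=d(g)$. The paper itself gives no proof of this proposition (it simply cites \cite[Proposition 3.2]{Exe08}), and your argument is exactly the standard local-injectivity argument behind that reference, so there is nothing to add.
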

For every \'etale groupoid $G$, the subsets $G_u$ and $G^u$ are discrete for every $u\in G\0$.

\begin{exam}
	A discrete group is an \'etale groupoid whose unit space consists of the identity only.
\end{exam}

\begin{exam}
	A topological space $X$ is an \'etale groupoid whose unit space coincides with the whole space $X$.
\end{exam}

\begin{exam}
	For groupoids $G$ and $H$, the direct product $G\times H$ becomes a groupoid with respect to the structures defined component-wise. 
	For topological groupoids $G$ and $H$, the groupoid $G \times H$ becomes a topological groupoid with respect to the product topology.
	If $G$ and $H$ are \'etale, so is $G\times H$.
\end{exam}

Let $G$ be an \'etale groupoid.
In the studies of \'etale groupoids, open bisections play an important role. 
In \cite{Exe08}, open bisections are called \emph{slices}.
For every open bisections $U$ and $V$ of $G$, $UV$ and $U\inv$ are open bisections. 
All open bisections form an inverse semigroup with respect to the multiplication $(U,V) \mapsto UV$ (see \cite[Proposition 2.2.4]{Pat99}).
\begin{defi}
	We denote the inverse semigroup of all open bisections of $G$ as $\Bis G$.
\end{defi}

\begin{lemm}\label{Lemma: dr homeo on op bis}
	For every open bisection $U$ of $G$, the restrictions $d|_U\: U \rightarrow d(U)$ and $r|_U\: U \rightarrow r(U)$ of the domain and range maps to $U$ are homeomorphisms onto open subsets of $G\0$. 
\end{lemm}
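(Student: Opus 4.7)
The plan is to use the \'etale assumption together with injectivity from the bisection property. Recall that for an \'etale groupoid $G$, the domain map $d\colon G\to G\0$ is a local homeomorphism; as noted in the text just before the lemma, the range map $r\colon G\to G\0$ is also a local homeomorphism.

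First I would show that $d(U)$ is open in $G\0$. Fix any $g\in U$. By the \'etale property there is an open neighborhood $V_g\subset G$ of $g$ such that $d|_{V_g}$ is a homeomorphism onto an open subset of $G\0$. Replace $V_g$ by $V_g\cap U$, which is still open in $G$ and contains $g$, and on which $d$ is still a homeomorphism onto an open subset $d(V_g\cap U)\subset G\0$ (open images of open subsets under a homeomorphism). Hence $d(U)=\bigcup_{g\in U} d(V_g\cap U)$ is open in $G\0$.

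Next I would show that $d|_U\colon U\to d(U)$ is a homeomorphism. It is continuous as the restriction of the continuous map $d$. It is surjective onto $d(U)$ by construction, and injective because $U$ is a bisection. It remains to show that it is an open map. For any open $W\subset U$ and any $g\in W$, take the neighborhood $V_g$ constructed above and shrink further to $V_g\cap W$; then $d(V_g\cap W)$ is open in $G\0$ by the same local-homeomorphism argument, so $d(W)=\bigcup_{g\in W} d(V_g\cap W)$ is open. Thus $d|_U$ is an open continuous bijection onto $d(U)$, hence a homeomorphism.

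Finally, the statement for $r|_U$ follows by exactly the same argument, replacing $d$ by $r$ and using that $r$ is also a local homeomorphism. There is no real obstacle here; the one subtlety worth naming is that one must verify openness of $d(U)$ and openness of the map $d|_U$ \emph{separately}, since injectivity of $d$ on $U$ alone would not be enough to upgrade a continuous bijection to a homeomorphism without the local-homeomorphism input from the \'etale hypothesis.
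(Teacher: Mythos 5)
Your proof is correct and follows essentially the same route as the paper: the paper simply notes that $d|_U$ and $r|_U$ are continuous bijections and are restrictions of the open maps $d,r\colon G\to G\0$ to the open set $U$, hence open, while you unpack that openness directly from the local-homeomorphism property of the \'etale structure. The extra covering argument is just a spelled-out version of the same idea, so there is nothing to add.
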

\begin{proof}
	The maps $d|_U$ and $r|_U$ are continuous bijections.
	These are the restrictions of the open maps $d,r\: G \rightarrow G\0$ to an open subset $U$.
	Hence $d|_U$ and $r|_U$ are open.
\end{proof}

\begin{lemm}\label{Lemma: bisection}
	The set of all open bisections form a basis of $G$.
\end{lemm}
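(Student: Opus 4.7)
The plan is to verify the defining property of a basis: given any open set $W \subset G$ and any arrow $g \in W$, I need to produce an open bisection $U$ with $g \in U \subset W$.

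First I would invoke the étale hypothesis on the domain map: by definition, there is an open neighborhood $V \subset G$ of $g$ such that $d|_V$ is a homeomorphism onto an open subset of $G\0$. Then I would use the remark made right after the definition of étale, namely that $d$ is locally homeomorphic if and only if $r$ is locally homeomorphic, to obtain an open neighborhood $V' \subset G$ of $g$ on which $r|_{V'}$ is a homeomorphism onto an open subset of $G\0$.

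Setting $U := V \cap V' \cap W$, I would check that $U$ is an open neighborhood of $g$ contained in $W$, and that both $d|_U$ and $r|_U$ are injective, since they are restrictions of the injective maps $d|_V$ and $r|_{V'}$ respectively. This makes $U$ an open bisection with $g \in U \subset W$, which is exactly the basis condition.

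There is no real obstacle here; the argument is essentially immediate from the definition of étale combined with the equivalence between local homeomorphicity of $d$ and of $r$. The only point requiring any care is that one needs \emph{both} $d$ and $r$ to be locally injective around $g$, which is why the remark on the equivalence of the two conditions (stated just after the definition of étale groupoid) is invoked; without it one would only get local injectivity of $d$.
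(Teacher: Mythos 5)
Your proof is correct and follows essentially the same route as the paper: intersect an open neighborhood on which $d$ is injective with one on which $r$ is injective (the latter obtained from the remark that $d$ is locally homeomorphic if and only if $r$ is) and with the given open set, yielding an open bisection containing the point. No gaps.
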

\begin{proof}
	For every element $g\in G$, there exist open subsets $U_g$ and $V_g$ such that $d|_{U_g}$ and $r|_{V_g}$ are injective. 
	For every open subset $U$ of $G$ with $g \in U$, $U\cap U_g \cap V_g$ is an open bisection containing $g$ and included in $U$.
\end{proof}

\begin{coro}\label{Corollary: bisection lcHs}
	Assume $G$ has a locally compact Hausdorff unit space.
	Every open bisection $U$ of $G$ is a locally compact Hausdorff space.
	The set of all open bisections becomes an open basis of $G$ consisting of Hausdorff subsets.
\end{coro}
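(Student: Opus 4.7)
The plan is to leverage the preceding two lemmas directly, together with a standard fact about open subspaces of locally compact Hausdorff spaces. The essential observation is that each open bisection $U$ is, by Lemma~\ref{Lemma: dr homeo on op bis}, homeomorphic to the open subset $d(U)$ of $G\0$, so topological properties of $U$ can be read off from $G\0$.

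First, I would fix an open bisection $U$ of $G$ and recall that by Lemma~\ref{Lemma: dr homeo on op bis}, $d|_U \: U \rightarrow d(U)$ is a homeomorphism onto an open subset of $G\0$. Since $G\0$ is by assumption locally compact Hausdorff, the open subspace $d(U)$ inherits the Hausdorff property trivially, and it is also locally compact: for any point $x \in d(U)$, one uses regularity of the locally compact Hausdorff space $G\0$ to find an open neighborhood $V$ of $x$ with $\overline{V}$ compact and $\overline{V} \subset d(U)$, giving a compact neighborhood of $x$ inside $d(U)$. Transporting back through the homeomorphism $d|_U$, we conclude that $U$ itself is locally compact Hausdorff.

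For the second statement, Lemma~\ref{Lemma: bisection} already supplies that the collection of open bisections forms a basis for the topology of $G$, and the first part just established that each element of this basis is Hausdorff, completing the claim.

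The only non-routine point is really the passage from ``open subset of a locally compact Hausdorff space'' to ``locally compact Hausdorff space,'' which is a well-known topological fact; I would cite it rather than reprove it in detail. No serious obstacle is expected, since everything reduces to transporting the local structure of $G\0$ along the homeomorphism provided by Lemma~\ref{Lemma: dr homeo on op bis}.
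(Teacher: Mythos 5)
Your argument is correct and is exactly the intended derivation: the paper states this as a corollary of Lemma~\ref{Lemma: dr homeo on op bis} (transporting the locally compact Hausdorff structure of the open set $d(U) \subset G\0$ back to $U$ via the homeomorphism $d|_U$) and Lemma~\ref{Lemma: bisection} (open bisections form a basis), with the standard fact about open subspaces of locally compact Hausdorff spaces left implicit. No discrepancy with the paper's approach.
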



\subsubsection{Groupoid homomorphisms}
Let $G,H$ be groupoids.
\begin{defi}
	A map $\varphi\: G \rightarrow H$ is called a \emph{groupoid homomorphism} if for every $(g',g)\in G\2$, $(\varphi(g'),\varphi(g))\in H\2$ and $\varphi(g'g) = \varphi(g')\varphi(g)$ hold. 
\end{defi}

Let $\varphi\: G \rightarrow H$ be a groupoid homomorphism.
We get $\varphi\big(G\0\big)\subset H\0$, and $\varphi\big(G_u)\subset H_{\varphi(u)}$ for every $u\in G\0$.
We denote the restriction of $\varphi$ to the unit space $G\0$ as $\varphi\0\:G\0\rightarrow H\0$ and the restriction to $G_u$ as $\varphi_u\:G_u\rightarrow H_{\varphi(u)}$ for every $u\in G\0$. 
By simple calculations, we get $\varphi\0 \circ d_G = d_H \circ \varphi$ and $\varphi\0 \circ r_G = r_H \circ \varphi$, where $d_G,r_G$ and $d_H,r_H$ are the domain and range maps of $G$ and $H$.
For $g \in G$, $\varphi(g\inv) = \varphi(g)\inv$ holds.

For a groupoid $G_i$ with = 1,2,3, and a groupoid homomorphism $\varphi_i\:G_i\rightarrow G_{i+1}$ with $i=1,2$, we get
\[
(\varphi_2\circ\varphi_1)\0 = \varphi_2\0 \circ \varphi_1\0, \hspace{3mm} (\varphi_2\circ\varphi_1)_u = (\varphi_2)_v\circ(\varphi_1)_u,
\]
where $u\in G_1\0$ and $v=\varphi_1(u)$.

\begin{lemm}\label{Lemma: grpd hom and inv}
	Let $\varphi\: G \rightarrow H$ be a groupoid homomorphism.
	The following hold:
	\begin{enumerate}[(i)]
		\item $\varphi(U\inv) = \varphi(U)\inv$ for a subset $U$ of $G$.
		\item $\varphi\inv(V\inv) = \varphi\inv(V)\inv$ for a subset $V$ of $H$.
	\end{enumerate}
\end{lemm}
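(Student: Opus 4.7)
The plan is to derive both equalities directly from the pointwise identity $\varphi(g\inv) = \varphi(g)\inv$ for every $g \in G$, which has already been recorded in the paragraph preceding the lemma. So the lemma is essentially a bookkeeping statement lifting that pointwise fact to direct and inverse images of arbitrary subsets.

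For part (i), I would unfold the definitions: an element of $\varphi(U\inv)$ has the form $\varphi(g\inv)$ for some $g \in U$, and by the pointwise identity this equals $\varphi(g)\inv$, which is a typical element of $\varphi(U)\inv$. The reverse inclusion is the same computation read backwards, using that $(g\inv)\inv = g$ so that any $g \in U$ can be written as $(g\inv)\inv$ with $g\inv \in U\inv$.

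For part (ii), I would argue by a chain of equivalences: $g \in \varphi\inv(V\inv)$ iff $\varphi(g) \in V\inv$ iff $\varphi(g)\inv \in V$ iff $\varphi(g\inv) \in V$ (using the pointwise identity again) iff $g\inv \in \varphi\inv(V)$ iff $g \in \varphi\inv(V)\inv$. The only slight care is at the last step, where one needs to know that $g \mapsto g\inv$ is an involution on $G$ in order to identify $\{g : g\inv \in \varphi\inv(V)\}$ with $\varphi\inv(V)\inv$; this is immediate from $(g\inv)\inv = g$.

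There is no real obstacle here; the whole content is that the pointwise formula $\varphi(g\inv)=\varphi(g)\inv$ commutes with the set-theoretic operations of image and preimage. I would keep the write-up to a few lines, either as two short paragraphs or as a single display of equalities for each of (i) and (ii).
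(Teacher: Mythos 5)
Your proposal is correct and matches the paper's own proof, which simply cites the pointwise identity $\varphi(g\inv) = \varphi(g)\inv$ and leaves the set-theoretic bookkeeping implicit. Your write-up just makes that bookkeeping explicit, so there is nothing to add.
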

\begin{proof}
	It follows from the fact that $\varphi(g\inv) = \varphi(g)\inv$ holds for $g \in G$.
\end{proof}

\begin{lemm}\label{Lemma: grpd hom inj surj}
	Let $\varphi\: G \rightarrow H$ be a groupoid homomorphism.
	\begin{enumerate}[(i)]
		\item $\varphi$ is injective if and only if $\varphi\0$ and $\varphi_u$ for all $u\in G\0$ are injective.
		\item $\varphi$ is surjective if $\varphi\0$ and $\varphi_u$ for all $u \in G\0$ are surjective.
		\item $\varphi\0$ is surjective if $\varphi$ is surjective.
		\item $\varphi_u$ is surjective for all $u \in G\0$ if $\varphi$ is surjective and $\varphi\0$ is injective.
		\item $\varphi$ is bijective if and only if $\varphi\0$ and $\varphi_u$ for all $u\in G\0$ are bijective.
	\end{enumerate}
\end{lemm}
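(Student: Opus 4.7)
The plan is to prove all five parts by direct fibrewise bookkeeping, using only the identities $\varphi\0 \circ d_G = d_H \circ \varphi$ and $\varphi\0 \circ r_G = r_H \circ \varphi$ established immediately before the lemma, together with the fact that $G = \bigsqcup_{u\in G\0} G_u$ via the domain map. Parts (i)--(iv) are the substance; part (v) is an immediate consequence.

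For (i), the ``only if'' direction is trivial since the restrictions of an injective map are injective. For the ``if'' direction, suppose $\varphi(g_1) = \varphi(g_2)$; apply $d_H$ to get $\varphi\0(d_G(g_1)) = \varphi\0(d_G(g_2))$, so by injectivity of $\varphi\0$ we have $d_G(g_1) = d_G(g_2) =: u$, whence $g_1, g_2 \in G_u$ and injectivity of $\varphi_u$ yields $g_1 = g_2$. For (ii), take any $h \in H$, set $v = d_H(h)$, and use surjectivity of $\varphi\0$ to find $u \in G\0$ with $\varphi\0(u) = v$; then $h \in H_{\varphi(u)}$ and surjectivity of $\varphi_u$ delivers a preimage $g \in G_u$.

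For (iii), given $v \in H\0 \subseteq H$, pick $g \in G$ with $\varphi(g) = v$ by surjectivity of $\varphi$; applying $d_H$ gives $v = d_H(v) = d_H(\varphi(g)) = \varphi\0(d_G(g))$, so $v$ lies in the image of $\varphi\0$. For (iv), fix $u \in G\0$ and $h \in H_{\varphi(u)}$; pick $g \in G$ with $\varphi(g) = h$, and observe that $\varphi\0(d_G(g)) = d_H(\varphi(g)) = d_H(h) = \varphi(u) = \varphi\0(u)$, so the injectivity of $\varphi\0$ forces $d_G(g) = u$, i.e.\ $g \in G_u$.

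Finally (v): the ``only if'' direction follows from (i) (applied to the existing implication) and (iii), plus the trivial observation that restrictions of injective maps are injective; the ``if'' direction combines (i) and (ii). No step is a genuine obstacle here; the only mild subtlety is in (iv), where the hypothesis ``$\varphi\0$ injective'' is essential and explains why (ii)'s partial converse requires both surjectivity of $\varphi$ and injectivity on the unit space to force each fibre restriction to be surjective.
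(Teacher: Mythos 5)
Your proof is correct and takes essentially the same fibrewise route as the paper: parts (i)--(iv) are argued exactly as in the paper's proof, using $\varphi\0\circ d_G = d_H\circ\varphi$ and injectivity/surjectivity transfer along fibres, and (v) is then a formal consequence. One cosmetic slip: in the ``only if'' direction of (v), surjectivity of each $\varphi_u$ is supplied by (iv) (using that $\varphi$ is surjective and $\varphi\0$ is injective), not by (i); since you proved (iv), nothing substantive is missing.
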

\begin{proof}
	\begin{enumerate}[(i)]
		\item The only if part is clear.
		We show the if part.
		Take $g,g' \in G$ with $\varphi(g) = \varphi(g')$. 
		We get $\varphi\0(d(g))= d(\varphi(g)) = d(\varphi(g')) = \varphi\0(d(g'))$.
		Since $\varphi\0$ is injective, we have $d(g) = d(g') =:u$.
		Since $\varphi_u$ is injective, $\varphi_u(g) = \varphi(g) = \varphi(g') = \varphi_u(g')$ implies $g = g'$.
		Thus $\varphi$ is injective.
		\item Take $h \in H$.
		Since $\varphi\0$ is surjective, there exists $u \in G\0$ with $\varphi(u) = d(h)$.
		Since $\varphi_u\: G_u \rightarrow H_{d(h)}$ is surjective, there exists $g \in G_u$ with $\varphi(g) = h$.
		\item This follows from $\varphi\0 \circ d = d \circ \varphi$, and the fact that the domain maps $d$ are surjective.
		\item Take $u \in G\0$ and $h \in H_{\varphi\0(u)}$.
		Since $\varphi$ is surjective, there exists $g \in G$ with $\varphi(g) = h$.
		We get $\varphi\0(d(g)) = d(\varphi(g)) = d(h) = \varphi\0(u)$.
		Since $\varphi\0$ is injective, we have $d(g) = u$.
		Thus $g$ is an element of $G_u$.
		Hence $\varphi_u$ is surjective.
		\item This follows from (i)-(iv).\qedhere
	\end{enumerate}
\end{proof}

Let $G,H$ be \'etale groupoids.
\begin{lemm}\label{Lemma: phi open iff phi0 open}
	A continuous groupoid homomorphism $\varphi\: G\rightarrow H$ is open if and only if $\varphi\0\:G\0\rightarrow H\0$ is open.
\end{lemm}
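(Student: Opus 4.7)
My plan is to prove both implications separately, with the forward direction being essentially immediate and the reverse direction requiring the bisection structure of étale groupoids.

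For the forward direction, assume $\varphi$ is open. Let $U \subset G\0$ be open. By Proposition \ref{Proposition: G0 open}, $G\0$ is open in $G$, so $U$ is open in $G$. Hence $\varphi(U) = \varphi\0(U)$ is open in $H$. Since this image lies in $H\0$ and $H\0$ is open in $H$, it is in particular open in $H\0$ with respect to the subspace topology. So $\varphi\0$ is open.

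For the reverse direction, assume $\varphi\0$ is open. By Lemma \ref{Lemma: bisection}, the open bisections form a basis of $G$, so it suffices to show that $\varphi(U)$ is open in $H$ for every open bisection $U$ of $G$. To show $\varphi(U)$ is open, I will verify that every $\varphi(g)$ with $g \in U$ has an open neighborhood in $H$ contained in $\varphi(U)$. Using Lemma \ref{Lemma: bisection} again applied to $H$, choose an open bisection $V \subset H$ with $\varphi(g) \in V$. Replacing $U$ with the open bisection $U_1 := U \cap \varphi\inv(V)$, we have $g \in U_1$ and $\varphi(U_1) \subset V$.

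The key computation is to identify $\varphi(U_1)$ as an open set. By Lemma \ref{Lemma: dr homeo on op bis}, $d|_{U_1}\:U_1 \to d(U_1)$ and $d|_V \: V \to d(V)$ are homeomorphisms onto open subsets of $G\0$ and $H\0$ respectively, and $d\circ\varphi = \varphi\0\circ d$ gives $d(\varphi(U_1)) = \varphi\0(d(U_1))$. Since $\varphi\0$ is open by hypothesis, $\varphi\0(d(U_1))$ is open in $H\0$, and it lies inside the open set $d(V)$. Using injectivity of $d|_V$, we get
\[
\varphi(U_1) = (d|_V)\inv\bigl(\varphi\0(d(U_1))\bigr),
\]
which is open in $V$, hence open in $H$. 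So $\varphi(g) \in \varphi(U_1) \subset \varphi(U)$ is the required open neighborhood.

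The main obstacle, and the crucial idea, is the reverse direction: one has to translate the hypothesis on $\varphi\0$ (a map between unit spaces) into information about $\varphi$ (a map of the full arrow spaces). The étale structure resolves this by letting both $d|_{U_1}$ and $d|_V$ serve as local coordinates through which $\varphi|_{U_1}$ factors, so that openness of $\varphi\0$ on the base transfers upstairs. The rest is just gluing via the open-bisection basis.
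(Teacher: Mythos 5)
Your proof is correct and follows essentially the same route as the paper: the forward direction uses openness of $G\0$ in $G$, and the reverse direction picks an open bisection $V$ of $H$ around $\varphi(g)$, shrinks to $U_1 = U \cap \varphi\inv(V)$, and transfers openness of $\varphi\0(d(U_1))$ back through the homeomorphism $d|_V$. Your preliminary reduction to open bisections of $G$ is an extra (harmless) step the paper avoids by simply using that $d$ is an open map, but the core argument is the same.
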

\begin{proof}
	The only if part follows from the fact that $G\0$ is open in $G$ by Proposition \ref{Proposition: G0 open}.
	
	Assume that $\varphi\0$ is open.
	We check that $\varphi(U)$ is open for every open subset $U$ of $G$.
	It suffices to show that for every $g\in U$ there exists an open neighborhood $U_g\subset U$ of $g$ such that $\varphi(U_g)$ is open in $H$.
	
	For every element $g\in U$, there exists an open bisection $V_g$ of $H$ including $\varphi(g)$ by Lemma \ref{Lemma: bisection}.
	We set an open neighborhood $U_g$ of $g$ in $G$ as $U\cap \varphi\inv\left(V_g\right)$.
	The subset $\varphi(U_g)$ satisfies $\varphi(g) \in \varphi(U_g) \subset \varphi(U) \cap V_g$.
	Since $V_g$ is an open bisection in $H$ and Lemma \ref{Lemma: dr homeo on op bis} holds, the restriction $d|_{V_g}$ of the domain map $d$ is a homeomorphism to the open subset $d(V_g)$ in $H\0$.
	The subset $d(U_g)$ is open in $G\0$ because the domain map $d\: G \rightarrow G\0$ is open.
	By assumption, $d(\varphi(U_g)) = \varphi\0(d(U_g))$ is open in $H\0$, and also in $d(V_g)$.
	Thus $\varphi(U_g)$ is open in $V_g$, and also in $H$.
\end{proof}


\subsubsection{\'Etale groupoid $C^*$-algebras}

Let $G$ be an \'etale groupoid with a locally compact Hausdorff unit space, and $U$ be an open Hausdorff subset of $G$.
The open subset $U$ becomes locally compact.
We denote $C_c(U)$ as the set of all continuous complex functions $f$ on $U$ such that there exists a compact subset $K$ of $U$ with $f=0$ on $U\setminus K$. 
Every such functions can be extended to a function on $G$ by setting zero outside of $U$. 
We regard $C_c(U)$ as a subspace of the vector space of all functions on $G$. 
We define $Q(G)$ as the linear span of the union of the subspaces $C_c(U)$ for all Hausdorff open subsets $U$ of $G$. 
For every open Hausdorff basis $\{U_\lambda\}_{\lambda\in\Lambda}$ of $G$, the set $Q(G)$ coincides with the linear span of the union of $C_c(U_\lambda)$ for all $\lambda\in \Lambda$. 
In particular, the set $Q(G)$ coincides with the linear span of the union of $C_c(U)$ for all $U\in \Bis G$ by Corollary \ref{Corollary: bisection lcHs}.

The set $Q(G)$ becomes a $*$-algebra with respect to the following multiplication and involution;
\[
\big(f_1*f_2\big)(g) := \sum_{\substack{(g_1,g_2)\in G\2\\g_1g_2=g}} f_1(g_1)f_2(g_2), \hspace{3mm} f^*(g):=\overline{f(g\inv)}
\]
for $f_1,f_2,f\in Q(G)$ and $g\in G$. 
This fact follows from the next proposition:
\begin{prop}[{\cite[Proposition 3.11.]{Exe08}}]\label{Proposition: convolution}
	\begin{enumerate}[(i)]
		\item For $U_i\in \Bis G$ and $f_i \in C_c(U_i)$ with $i=1,2$, the convolution $f_1*f_2$ lies in $C_c(U_1U_2)$. 
		For $g_1\in U_1$ and $g_2\in U_2$ with $d(g_1) = r(g_2)$, we have
		\[
		(f_1*f_2)(g_1g_2) = f_1(g_1)f_2(g_2).
		\]
		\item For $U\in\Bis G$ and $f\in C_c(U)$, the involution $f^*$ lies in $C_c(U\inv)$.
	\end{enumerate}
\end{prop}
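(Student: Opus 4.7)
The plan is to exploit the bisection hypothesis to collapse the convolution sum to at most one nonzero term. For part (i), fix $g \in G$ and consider the sum defining $(f_1 * f_2)(g)$. Any pair $(g_1, g_2) \in G^{(2)}$ contributing nontrivially must satisfy $g_1 \in U_1$ and $g_2 \in U_2$. Since $r|_{U_1}$ is injective and $r(g_1) = r(g_1 g_2) = r(g)$, there is at most one candidate $g_1 \in U_1$, and then $g_2 = g_1\inv g$ is forced. Consequently $(f_1 * f_2)(g) = 0$ unless $g \in U_1 U_2$, and for $g_1 \in U_1$, $g_2 \in U_2$ composable the stated pointwise formula $(f_1 * f_2)(g_1 g_2) = f_1(g_1) f_2(g_2)$ follows.

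I would then verify that $f_1 * f_2$ restricted to the open bisection $U_1 U_2$ is continuous and compactly supported. By Lemma \ref{Lemma: dr homeo on op bis}, the restrictions $r|_{U_1} \colon U_1 \to r(U_1)$ and $d|_{U_2} \colon U_2 \to d(U_2)$ are homeomorphisms, and for $g = g_1 g_2 \in U_1 U_2$ we have $r(g_1) = r(g)$ and $d(g_2) = d(g)$. Hence the factorization maps $g \mapsto g_1$ and $g \mapsto g_2$ are compositions of continuous maps, and so is $g \mapsto f_1(g_1) f_2(g_2)$. For compact support, if $K_i \subset U_i$ is a compact set outside of which $f_i$ vanishes, then $f_1 * f_2$ vanishes outside $K_1 K_2$; the subset $\{(g_1,g_2) \in K_1 \times K_2 : d(g_1)=r(g_2)\}$ is closed in $K_1 \times K_2$ because $G\0$ is Hausdorff, hence compact, and its continuous image $K_1 K_2$ under groupoid multiplication is therefore compact.

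For part (ii), the inverse map is a self-homeomorphism of $G$, so $U\inv$ is open (and a bisection by the calculus recalled before Definition \ref{Definition: isamor}... actually before the definition of $\Bis G$). The function $f^*$ vanishes outside $U\inv$, and on $U\inv$ equals the composition $g \mapsto \overline{f(g\inv)}$ of continuous maps, hence is continuous there. If $f$ vanishes outside a compact $K \subset U$, then $f^*$ vanishes outside the compact set $K\inv$.

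The main obstacle is really only within (i), and even there it is bookkeeping rather than substance: once one recognizes that the bisection property forces a unique factorization of each $g \in U_1 U_2$, the continuity and compact support claims reduce to standard facts about continuous maps between locally compact Hausdorff spaces and to Hausdorffness of $G\0$, which ensures that the fibered product of two compact sets over $G\0$ is compact.
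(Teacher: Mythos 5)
Your proof is correct, and since the paper itself gives no argument for this proposition (it is quoted from \cite{Exe08}), there is nothing to diverge from: your route — unique factorization of $g\in U_1U_2$ forced by the bisection property, continuity via $(r|_{U_1})\inv\circ r$ and $(d|_{U_2})\inv\circ d$ from Lemma \ref{Lemma: dr homeo on op bis}, compact support via the fibred product $K_1\times_{G\0}K_2$ being closed in $K_1\times K_2$ (using Hausdorffness of $G\0$), and the inverse map being a homeomorphism for (ii) — is exactly the standard argument of the cited source. No gaps.
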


There exists a unique $C^*$-algebra $C^*(G)$ such that it contains $Q(G)$ as a dense $*$-subalgebra, and that any \shom from $Q(G)$ to any $C^*$-algebra $D$ extends to a unique \shom from it to $D$.
See \cite[p.204-205]{Exe08} for the existence of the $C^*$-algebra $C^*(G)$.

\begin{defi}
	We call the $C^*$-algebra $C^*(G)$ as the \emph{groupoid $C^*$-algebra} of $G$.
\end{defi}


In Section \ref{Section: A functor C* from EG to Calg}, we show that the construction of $C^*$-algebras $G \mapsto C^*(G)$ forms a functor from the category $\EGlcH$ of \'etale groupoids which we introduce in Subsection \ref{Subsection: A category EG of etale groupoids} to $\Calg$.

\subsection{Relationship between inverse semigroups and \'etale groupoids}

For an inverse semigroup action $(S,X,\alpha)$, we can construct an \'etale groupoid $S\ltimes_\alpha X$ as follows: 
The arrow space of $S\ltimes_\alpha X$ is defined as 
\[
S\ltimes_\alpha X := \{ (s,x)\in S\times X \mid x\in D^\alpha_s \}/{\sim},
\]
where $(s,x)\sim(t,y)$ if and only if $x=y$ and there exists $e\in E(S)$ with $x\in D^\alpha_e$ and $se = te$. 
We write the equivalence class of $(s,x)$ as $[s,x]$. 
The unit space is defined as 
\[
\big(S\ltimes_\alpha X\big)\0 := \{ [e,x]\in S\ltimes_\alpha X \mid e\in E(S)\}.
\]
The map 
\[
(S\ltimes_\alpha X)\0 \ni [e,x] \mapsto x \in X
\]
becomes a bijection.
We identify the unit space $(S\ltimes_\alpha X)\0$ as $X$ through the bijection above.
We set the domain map as $d\:[s,x]\mapsto x$ and the range map as $r\:[s,x]\mapsto \alpha_sx$. 
The multiplication is defined as $[t,y][s,x] := [ts,x]$ for every $([t,y],[s,x])\in (S\ltimes_\alpha X)\2$.
We define the inverse of $[s,x]$ as $[s^*,\alpha_sx]$ for every $[s,x]\in S\ltimes_\alpha X$.
The set $S\ltimes_\alpha X$ becomes a groupoid with respect to the above structures.
We give a topology of $S\ltimes_\alpha X$ generated by the collection of 
\[
[s,U] := \left\{[s,x] \in S\ltimes_\alpha X \ \middle|\  x\in U\right\}
\] 
for all pairs of $s\in S$ and an open subset $U$ of $D^\alpha_s$.
In this topology, the bijection $(S\ltimes_\alpha X)\0 \rightarrow X; [e,x]\mapsto x$ becomes a homeomorphism.
The domain and the range maps are local homeomorphisms.
Thus the groupoid $S\ltimes_\alpha X$ becomes an \'etale groupoid.

\begin{defi}
	For an inverse semigroup action $(S,X,\alpha)$, we call the \'etale groupoid $S\ltimes_\alpha X$ as the \emph{transformation groupoid} of $(S,X,\alpha)$.
\end{defi}

In Subsection \ref{Subsection: A functor TG from ISA to EG}, we show that the construction of transformation groupoids $(S,X,\alpha) \mapsto S\ltimes_\alpha X$ forms a functor between the categories $\ISA$ and $\EG$ which will be introduced in Section \ref{Section: A category ISA and a functor from IS to ISA} and \ref{Section: A category EG}, respectively.

\begin{defi}
	For an inverse semigroup $S$, we call the transformation groupoid $S\ltimes_{\beta^S} \widehat{E(S)}$ of the spectral action of $S$ as the \emph{universal groupoid} of $S$.
	We denote it as $\Gu(S)$.
\end{defi}

Recall that the domain of the partial homeomorphism $\beta_s^S$ of $S$ is denoted as $U^S_s$ for every $s\in S$.
We set the characteristic function of $\big[s,U^S_s\big] \subset \Gu(S)$ as $\chi_{[s,U^S_s]}$. 
Since $\big[s,U^S_s\big]$ is a compact open bisection, we get  $\chi_{[s,U^S_s]} \in Q(\Gu(S))$.
\begin{theo}\label{Theorem: Paterson}
	For every inverse semigroup $S$, the $C^*$-algebra $C^*(S)$ is isomorphic to the groupoid $C^*$-algebra $C^*(\Gu(S))$ through the $*$-isomorphism $\iota_S$ which sends $\delta_s\in C^*(S)$ to $\chi_{[s,U^S_s]}\in C^*(\Gu(S))$.
\end{theo}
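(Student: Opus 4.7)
The plan is to define $\iota_S$ first on the dense $*$-subalgebra $\mathbb{C}[S] \subseteq C^*(S)$, extend to $C^*(S)$ by universality, and then verify surjectivity and injectivity. I would set $\iota_S^0(\delta_s) := \chi_{[s,U_s^S]}$ and extend linearly to $\iota_S^0 \colon \mathbb{C}[S] \to Q(\Gu(S)) \subseteq C^*(\Gu(S))$. The multiplicativity reduces to the identity $[s,U_s^S]\cdot[t,U_t^S] = [st,U_{st}^S]$ in $\Gu(S)$: a pair $([s,\zeta],[t,\eta])$ is composable iff $\zeta = \beta_t^S(\eta)$, and then $\beta_t^S(\eta) \in U_s^S$ iff $\eta(t^*s^*st) = \eta((st)^*(st)) = 1$, i.e., iff $\eta \in U_{st}^S$; combined with Proposition~\ref{Proposition: convolution}(i) this yields $\chi_{[s,U_s^S]} * \chi_{[t,U_t^S]} = \chi_{[st,U_{st}^S]}$. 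The involution relation $\chi_{[s,U_s^S]}^* = \chi_{[s^*,U_{s^*}^S]}$ is the direct content of Proposition~\ref{Proposition: convolution}(ii). Hence $\iota_S^0$ is a \shom, and the universal property of $C^*(S)$ supplies a unique extension $\iota_S \colon C^*(S) \to C^*(\Gu(S))$.

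For surjectivity I would show the image is dense; since a \shom between $C^*$-algebras has closed image, this suffices. The image contains every $\chi_{U_e^S} = \iota_S(\delta_e)$ with $e \in E(S)$ and is closed under products and differences, so it contains $\chi_{U_e^S} \cdot \chi_{U_f^S} = \chi_{U_{ef}^S}$ and $\chi_{U_e^S} - \chi_{U_{ef}^S}$. Since the $U_e^S$ form a compact-open basis that separates points of the totally disconnected space $\widehat{E(S)}$, the Stone--Weierstrass theorem applied to $C_0(\widehat{E(S)})$ shows that $C_0(\widehat{E(S)}) \subseteq \overline{\iota_S(C^*(S))}$. Next, for any $h \in C_0(\widehat{E(S)})$ supported in an open $V \subseteq U_s^S$, Proposition~\ref{Proposition: convolution}(i) gives $\chi_{[s,U_s^S]} * h \in C_c([s,V])$, and by varying $h$ we recover every element of $C_c([s,V])$. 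Since the bisections $[s,V]$ form an open Hausdorff basis of $\Gu(S)$, the closure of $\iota_S(C^*(S))$ contains $Q(\Gu(S))$, which is dense in $C^*(\Gu(S))$.

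Injectivity is the main obstacle. My plan is to factor a faithful representation of $C^*(S)$ through $\iota_S$. For each character $\zeta \in \widehat{E(S)}$, the groupoid regular representation $\lambda_\zeta$ of $C^*(\Gu(S))$ on $\ell^2(\Gu(S)_\zeta)$ is given on $f \in Q(\Gu(S))$ by $\lambda_\zeta(f)\delta_g = \sum_{h \in \Gu(S)^{r(g)}} f(h)\,\delta_{hg}$. Under $\iota_S$, the composition $\lambda_\zeta \circ \iota_S$ is unitarily equivalent, after identifying $\Gu(S)_\zeta$ with the standard $S$-orbit model, to the $\zeta$-component of Paterson's left regular representation of $S$ appearing in \cite[Theorem~2.1.1]{Pat99}. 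Hence the direct sum $\bigoplus_\zeta \lambda_\zeta \circ \iota_S$ is Paterson's regular representation of $C^*(S)$, which is faithful, forcing $\ker \iota_S = 0$. The essential technical content lies in this identification---unravelling $\Gu(S)_\zeta$ against the $S$-orbit data of $\zeta$ so that the pulled-back regular representation of $\Gu(S)$ coincides with a component of Paterson's representation of $S$; the remaining verifications are routine once the convolution formulas of Proposition~\ref{Proposition: convolution} and the structure of $\Gu(S)$ are in hand.
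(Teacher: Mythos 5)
The paper itself gives no proof of this theorem: it is quoted from the literature, with the proof delegated to \cite[Theorem 4.1.1]{Pat99} and \cite[Theorem 3.3]{KS02}. Your construction of $\iota_S$ on $\mathbb{C}[S]$ (the bisection identity $[s,U^S_s][t,U^S_t]=[st,U^S_{st}]$ combined with Proposition \ref{Proposition: convolution}) and your surjectivity argument (the image is a $C^*$-subalgebra; Stone--Weierstrass on the unit space, then convolving $\chi_{[s,U^S_s]}$ with functions $h\in C_c(V)$ to sweep out $C_c([s,V])$ for the basic bisections $[s,V]$) are sound and consistent with the standard proof.

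The injectivity step, however, has a genuine gap. Both $C^*(S)$ and $C^*(\Gu(S))$ are \emph{universal} (full) $C^*$-algebras, but the representations $\lambda_\zeta$ you invoke are the regular representations of the groupoid: $\bigoplus_\zeta\lambda_\zeta$ computes the \emph{reduced} norm on $C^*(\Gu(S))$, and correspondingly $\bigoplus_\zeta\lambda_\zeta\circ\iota_S$ is (up to the identification you describe) the left regular representation of $S$. By Wordingham's theorem (\cite[Theorem 2.1.1]{Pat99}) that representation is faithful on the $*$-algebra $\mathbb{C}[S]$, but it is \emph{not} faithful on $C^*(S)$ in general, and faithfulness on a dense $*$-subalgebra does not pass to the enveloping $C^*$-algebra. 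Concretely, if $S$ is a non-amenable discrete group then $\Gu(S)=S$ and $\iota_S$ is the identity, so your argument would assert that the regular representation of the full group $C^*$-algebra is faithful, which is false. This route can only yield injectivity of $\iota_S$ on $\mathbb{C}[S]$, or the reduced statement $C^*_r(S)\cong C^*_r(\Gu(S))$. Injectivity at the full level requires the opposite implication: every $*$-representation of $\mathbb{C}[S]$ must be shown to factor through $\iota_S$, i.e.\ to extend to a representation of $Q(\Gu(S))$ (representations of $S$ correspond to covariant representations of the spectral action, which integrate to representations of $\Gu(S)$), so that the two universal norms coincide on $\mathbb{C}[S]$; that is precisely the content of Paterson's proof and of \cite[Theorem 3.3]{KS02}, and it is the missing ingredient in your sketch. (A minor slip besides: in your formula for $\lambda_\zeta$ the sum should run over $h$ with $d(h)=r(g)$, i.e.\ over $\Gu(S)_{r(g)}$, not over $\Gu(S)^{r(g)}$.)
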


See \cite[Theorem 4.1.1]{Pat99} or \cite[Theorem 3.3]{KS02} for a proof of Theorem \ref{Theorem: Paterson}.


\section{A category $\ISA$ and a functor $\SP$}
\label{Section: A category ISA and a functor from IS to ISA}

We define a morphism between inverse semigroup actions. 
Let $(S,X,\alpha)$ and $(T,Y,\beta)$ be inverse semigroup actions. 

\begin{defi}\label{Definition: isamor}
	An \emph{\isamor} $(\theta,\xi)$ from $(S,X,\alpha)$ to $(T,Y,\beta)$ consists of a semigroup homomorphism $\theta \: S\rightarrow T$ and a partial map $\xi \: Y \supset D_\xi \rightarrow X$ which satisfy the following conditions;
	\begin{enumerate}[(i)]
		\item $\xi\inv(D^\alpha_s) = D^\beta_{\theta(s)}$ for every $s \in S$,
		\item $\alpha_s(\xi(y)) = \xi(\beta_{\theta(s)}(y))$ for every $s \in S$ and $y\in \xi\inv(D_s^\alpha)$.
	\end{enumerate}
	We denote an \isamor $(\theta,\xi)$ from $(S,X,\alpha)$ to $(T,Y,\beta)$ as $(\theta,\xi)\: (S,X,\alpha) \rightarrow (T,Y,\beta)$.
\end{defi}

\begin{rema}\label{Remark: isamor}
	For every \isamor $(\theta,\xi)\: (S,X,\alpha) \rightarrow (T,Y,\beta)$, we get 
	\[
	\bigcup_{s \in S} D_{\theta(s)}^\beta = \bigcup_{s \in S} \xi\inv(D_s^\alpha) = \xi\inv\Big( \bigcup_{s \in S} D_s^\alpha \Big) = \xi\inv(X) = D_\xi
	\]
	by (i) in Definition \ref{Definition: isamor}.
	The range of $\beta_{\theta(s)}$ coincides with $D^\beta_{\theta(s^*)}$.
	The above equation implies $D^\beta_{\theta(s^*)} \subset D_\xi$.
	Hence 
	\[
	\xi\inv(D_s^\alpha) = D_{\theta(s)}^\beta = \beta_{\theta(s)}\inv(D_\xi)
	\]
	holds.
	Thus this equation and (ii) in Definition \ref{Definition: isamor} imply that 
	\[
	\alpha_s \circ \xi = \xi \circ \beta_{\theta(s)}
	\]
	as partial maps for every $s \in S$.
\end{rema}

\begin{rema}
	The condition that $\alpha_s \circ \xi = \xi \circ \beta_{\theta(s)}$ for every $s \in S$ does not implies (i) in Definition \ref{Definition: isamor}.
	Let both $S,T$ be the trivial groups and $X,Y$ be any topological spaces.
	We make the trivial groups act on $X$ and $Y$ by the identity maps on $X$ and $Y$ respectively. 
	Let $\theta\: S \rightarrow T$ be the identity semigroup homomorphism and $\xi\: Y \supset D_\xi \rightarrow X$ be any partial map with $D_\xi \subsetneq Y$.
	These satisfy that $\alpha_s \circ \xi = \xi \circ \beta_{\theta(s)}$ for every $s \in S$, but do not satisfy (i) in Definition \ref{Definition: isamor}.
\end{rema}

We illustrate an \isamor $(\theta,\xi) \: (S,X,\alpha) \rightarrow (T,Y,\beta)$ as below;
\[
\begin{tikzcd}
( S, \arrow[d,"\theta"'] &[-12mm] X, &[-12mm] \alpha )\\
( T, & Y, \arrow[u,"\xi"'] & \beta ).
\end{tikzcd}
\]

\begin{lemm}
	Let $(S_i,X_i,\alpha_i)$ be an inverse semigroup action for $i=1,2,3$ and $(\theta_i,\xi_i)\:(S_i,X_i,\alpha_i)\rightarrow (S_{i+1},X_{i+1},\alpha_{i+1})$ be an \isamor for $i=1,2$. 
	A pair $(\theta_2\circ \theta_1,\xi_1\circ \xi_2)$ becomes an \isamor from $(S_1,X_1,\alpha_1)$ to $(S_3,X_3,\alpha_3)$.
	\[
	\begin{tikzcd}
	( S_1, \arrow[d,"\theta_1"'] &[-12mm] X_1, &[-12mm] \alpha_1 )\\
	( S_2, \arrow[d,"\theta_2"'] & X_2, \arrow[u,"\xi_1"'] & \alpha_2 )\\
	( S_3, & X_3, \arrow[u,"\xi_2"'] & \alpha_3 ).
	\end{tikzcd}
	\]
\end{lemm}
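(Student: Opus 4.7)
The plan is to verify the two defining conditions of an \isamor (Definition~\ref{Definition: isamor}) for the pair $(\theta_2 \circ \theta_1, \xi_1 \circ \xi_2)$. That $\theta_2 \circ \theta_1 \: S_1 \to S_3$ is a semigroup homomorphism is immediate since it is the composition of semigroup homomorphisms, and that $\xi_1 \circ \xi_2 \: X_3 \supset D_{\xi_1 \circ \xi_2} \to X_1$ is a partial map follows from the definition of composition of partial maps given in Subsection~\ref{Subsection: Partial maps}, where $D_{\xi_1 \circ \xi_2} = \xi_2\inv(D_{\xi_1})$.

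For condition (i), I would exploit the identity $(\xi_1 \circ \xi_2)\inv(V) = \xi_2\inv(\xi_1\inv(V))$ for any subset $V \subset X_1$, which is noted in Subsection~\ref{Subsection: Partial maps}. Applied to $V = D^{\alpha_1}_s$ for $s \in S_1$ and using condition (i) successively for $(\theta_1,\xi_1)$ and for $(\theta_2,\xi_2)$, one chains
\[
(\xi_1 \circ \xi_2)\inv(D^{\alpha_1}_s) = \xi_2\inv\!\big(\xi_1\inv(D^{\alpha_1}_s)\big) = \xi_2\inv\!\big(D^{\alpha_2}_{\theta_1(s)}\big) = D^{\alpha_3}_{\theta_2(\theta_1(s))},
\]
which is exactly the required equality for the composition.

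For condition (ii), the cleanest route is via the partial-map reformulation in Remark~\ref{Remark: isamor}: the two hypotheses say that $(\alpha_1)_s \circ \xi_1 = \xi_1 \circ (\alpha_2)_{\theta_1(s)}$ and $(\alpha_2)_{\theta_1(s)} \circ \xi_2 = \xi_2 \circ (\alpha_3)_{\theta_2(\theta_1(s))}$ as partial maps, for every $s \in S_1$. Chaining these and using associativity of composition of partial maps, one obtains
\[
(\alpha_1)_s \circ (\xi_1 \circ \xi_2) = \xi_1 \circ (\alpha_2)_{\theta_1(s)} \circ \xi_2 = (\xi_1 \circ \xi_2) \circ (\alpha_3)_{(\theta_2 \circ \theta_1)(s)},
\]
which, combined with condition (i) already established, gives the pointwise equality in (ii) for every $y \in (\xi_1 \circ \xi_2)\inv(D^{\alpha_1}_s)$.

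There is no real obstacle here; the only subtlety worth flagging is the need to read (ii) as an equality of partial maps (including their domains) rather than merely a pointwise formula, so that associativity of partial-map composition suffices and one does not have to re-verify on which domain each equality makes sense. Remark~\ref{Remark: isamor} has already done this bookkeeping, so the proof reduces to the two one-line chains above.
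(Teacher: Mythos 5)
Your proposal is correct and follows essentially the same route as the paper's proof: condition (i) is verified by the identical three-step chain, and condition (ii) uses the same compatibility data, the only difference being that you package it as an equality of partial maps via Remark~\ref{Remark: isamor} while the paper writes the corresponding pointwise chain directly for $x \in (\xi_1\circ\xi_2)\inv(D^{\alpha_1}_s)$. This repackaging is harmless, since the partial-map identity together with the already established condition (i) immediately yields the pointwise statement required by Definition~\ref{Definition: isamor}~(ii).
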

\begin{proof}
	For every $s \in S_1$, we get
	\[
	(\xi_1\circ\xi_2)\inv(D^{\alpha_1}_s) 
	= \xi_2\inv(\xi_1\inv(D^{\alpha_1}_s)) 
	= \xi_2\inv\left(D^{\alpha_2}_{\theta_1(s)}\right) 
	= D^{\alpha_3}_{\theta_2(\theta_1(s))}.
	\]
	For every $s \in S_1$ and $x \in (\xi_1\circ\xi_2)\inv(D^{\alpha_1}_s) $, we get
	\[
	(\alpha_1)_{s}( \xi_1( \xi_2( x ) ) ) 
	= \xi_1( (\alpha_2)_{\theta_1(s)}( \xi_2( x ) ) ) 
	= \xi_1( \xi_2( (\alpha_3)_{\theta_2(\theta_1(s))}(x) ) ). \qedhere
	\]
\end{proof}

\begin{defi}\label{Definition: composition isamor}
	Let $(S_i,X_i,\alpha_i)$ be an inverse semigroup action for $i=1,2,3$ and $(\theta_i,\xi_i)\:(S_i,X_i,\alpha_i)\rightarrow (S_{i+1},X_{i+1},\alpha_{i+1})$ be an \isamor for $i=1,2$. 
	We define composition of the \isamors $(\theta_1,\xi_1)$ and $(\theta_2,\xi_2)$ as 
	\[
	(\theta_2,\xi_2) \cdot (\theta_1,\xi_1) := (\theta_2\circ \theta_1,\xi_1\circ \xi_2).
	\]
\end{defi}

\begin{theo}\label{Theorem: category ISA}
	All inverse semigroup actions $(S,X,\alpha)$ and all \isamors $(\theta,\xi)$ form a category with the composition in Definition \ref{Definition: composition isamor} and the identity morphisms $(\id_S,\id_X)$ for $(S,X,\alpha)$.
	We denote this category as $\ISA$.
\end{theo}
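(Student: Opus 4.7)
The proof will verify the three category axioms: well-definedness of composition, identity laws, and associativity. The preceding lemma already establishes that the composition defined in Definition~\ref{Definition: composition isamor} produces a genuine \isamor, so most of the work is in place; what remains is to confirm the identity morphisms live in the right place and that the two composition-ordering conventions (covariant on semigroup homomorphisms, contravariant on partial maps) play well with associativity and identities.

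First I would verify that for each inverse semigroup action $(S,X,\alpha)$, the pair $(\id_S, \id_X)$ satisfies the two conditions of Definition~\ref{Definition: isamor}. Since $\id_X$ is a total partial map with $D_{\id_X} = X$, condition (i) becomes $\id_X\inv(D^\alpha_s) = D^\alpha_s = D^\alpha_{\id_S(s)}$, which is immediate, and condition (ii) becomes $\alpha_s(\id_X(x)) = \id_X(\alpha_s(x))$, also immediate. So $(\id_S,\id_X)$ is a bona fide \isamor from $(S,X,\alpha)$ to itself.

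Next I would check the identity laws. Given $(\theta,\xi)\: (S,X,\alpha) \rightarrow (T,Y,\beta)$, the composition $(\theta,\xi) \cdot (\id_S, \id_X)$ unfolds to $(\theta\circ \id_S,\, \id_X \circ \xi) = (\theta,\xi)$, and $(\id_T,\id_Y) \cdot (\theta,\xi)$ unfolds to $(\id_T\circ \theta,\, \xi \circ \id_Y) = (\theta,\xi)$; both identities use only the identity laws for function composition (recalling that the domain of a partial map composed with an identity is unchanged).

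Finally, for associativity, let $(\theta_i,\xi_i)$ be composable \isamors for $i=1,2,3$. Unfolding both bracketings against Definition~\ref{Definition: composition isamor} yields on the semigroup side $(\theta_3\circ\theta_2)\circ\theta_1$ versus $\theta_3\circ(\theta_2\circ\theta_1)$, which agree because composition of semigroup homomorphisms is associative; and on the partial map side $\xi_1\circ(\xi_2\circ\xi_3)$ versus $(\xi_1\circ\xi_2)\circ\xi_3$, which agree by the associativity of partial map composition recalled in Subsection~\ref{Subsection: Partial maps}. The only subtlety worth mentioning (but not really an obstacle) is the reversal of order between the two components, so I would just verify once that both bracketings parse to the same ordered pair before concluding. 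No step is genuinely hard; the main content of the theorem is already packaged in the preceding lemma on composability.
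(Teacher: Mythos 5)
Your proposal is correct and matches the paper's approach: the paper likewise relies on the preceding lemma for well-definedness of composition and simply notes that the unit and associative laws are easy to check, which is exactly what you spell out. Your explicit verification that $(\id_S,\id_X)$ satisfies Definition~\ref{Definition: isamor} and that the order reversal on the partial-map component causes no issue is a harmless elaboration of the same argument.
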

\begin{proof}
	It is easy to check that the associative law and the unit law.
\end{proof}

\begin{defi}
	An \isamor $(\theta, \xi)$ is said to be \emph{proper} if the partial map $\xi$ is proper.
\end{defi}

We consider inverse semigroup actions on locally compact Hausdorff spaces when constructing $C^*$-algebras from them.
It is natural to assume that \isamors between inverse semigroup actions on locally compact Hausdorff spaces are proper. 

\begin{theo}\label{Theorem: category ISAlcH}
	All inverse semigroup actions $(S,X,\alpha)$ on locally compact Hausdorff spaces $X$ and all proper \isamors $(\theta,\xi)$ form a subcategory of $\ISA$. 
	We denote this subcategory as $\ISAlcH$.
\end{theo}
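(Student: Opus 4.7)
The plan is to verify the two subcategory axioms: that identity morphisms in $\ISA$ on objects in the proposed subcategory remain in the subcategory, and that the composition in $\ISA$ of two proper \isamors between actions on locally compact Hausdorff spaces is again a proper \isamor.

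First I would check identity morphisms. Given an inverse semigroup action $(S,X,\alpha)$ with $X$ locally compact Hausdorff, the identity morphism in $\ISA$ is $(\id_S,\id_X)$, where $\id_X\:X\to X$ is the identity partial map (with $D_{\id_X}=X$). For any compact subset $K\subset X$ we have $\id_X\inv(K)=K$, which is compact, so $\id_X$ is a proper partial map. Hence $(\id_S,\id_X)$ is a proper \isamor.

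Next I would check closure under composition. Let $(\theta_i,\xi_i)\:(S_i,X_i,\alpha_i)\rightarrow(S_{i+1},X_{i+1},\alpha_{i+1})$ for $i=1,2$ be proper \isamors with each $X_i$ locally compact Hausdorff. By Definition \ref{Definition: composition isamor}, the composition in $\ISA$ is $(\theta_2\circ\theta_1,\xi_1\circ\xi_2)$, which is already known to be an \isamor. Since both $\xi_1$ and $\xi_2$ are proper partial maps, their composition $\xi_1\circ\xi_2$ is proper as noted in Subsection \ref{Subsection: Partial maps}. Therefore $(\theta_2\circ\theta_1,\xi_1\circ\xi_2)$ is a proper \isamor.

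These two verifications together show that the indicated class of objects and morphisms is closed under composition and contains all relevant identities, so it forms a subcategory of $\ISA$. There is no real obstacle here; the only point worth being careful about is that the composition of $\ISA$-morphisms uses the reversed order $\xi_1\circ\xi_2$ of the partial maps, but this causes no issue since properness of partial maps is preserved under composition in either order.
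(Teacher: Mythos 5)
Your proof is correct and follows the same route as the paper: the paper's own argument simply observes that identity \isamors are proper and that compositions of proper \isamors are proper, which is exactly what you verify (with the composition of proper partial maps being proper, as recorded in Subsection \ref{Subsection: Partial maps}). Your explicit check of the reversed order $\xi_1\circ\xi_2$ is a harmless elaboration of the same argument.
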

\begin{proof}
	It is clear that identity \isamors are proper, and that compositions of proper \isamors are proper.
\end{proof}

\begin{exam}
	For a locally compact Hausdorff space $X$, we denote the set of all open subsets of $X$ as $\mathcal{O}(X)$.
	The set $\mathcal{O}(X)$ becomes an inverse semigroup with respect to intersection.
	The map $\id\: \mathcal{O}(X) \rightarrow I(X); U \mapsto \id_U$ is a semigroup homomorphism.
	For a proper partial map $\xi$ from a locally compact Hausdorff space $Y$ to $X$, the map $\xi\inv\:\mathcal{O}(X) \rightarrow \mathcal{O}(Y); U \mapsto \xi\inv(U)$ is a semigroup homomorphism.
	We can easily check that the pair $(\xi\inv,\xi)$ becomes a morphism from $(\mathcal{O}(X),X,\id)$ to $(\mathcal{O}(Y),Y,\id)$ in $\ISAlcH$.
	The constructions $X \mapsto (\mathcal{O}(X),X,\id)$ and $\xi \mapsto (\xi\inv,\xi)$ form a functor from $\LCHS\op$ to $\ISAlcH$.
	We get that this functor is full as follows:
	Let $X,Y$ be locally compact Hausdorff spaces, and $(\theta,\xi)$ be a morphism from $(\mathcal{O}(X),X,\id)$ to $(\mathcal{O}(Y),Y,\id)$ in $\ISAlcH$.
	By the condition (i) in Definition \ref{Definition: isamor}, we get $\theta(U) = \xi\inv(U)$ for every $U \in \mathcal{O}(X)$.
	Thus two semigroup homomorphisms $\theta$ and $\xi\inv$ from $\mathcal{O}(X)$ to $\mathcal{O}(Y)$ coincide.
\end{exam}

We construct a functor from $\IS$ to $\ISAlcH$ which sends inverse semigroups to their spectral actions.
Let $S$ and $T$ be inverse semigroups and $\theta\: S \rightarrow T$ be a semigroup homomorphism.
We define a continuous map $\theta_*$ from $\widehat{E(T)}_0$ to $\widehat{E(S)}_0$ as 
\[
\theta_*\: \widehat{E(T)}_0 \rightarrow \widehat{E(S)}_0; \zeta \mapsto \zeta\circ\theta.
\]
This map $\theta_*$ sends $0$ to $0$.

\begin{defi}
	We denote $\widehat{\theta}$ as the partial map from $\widehat{E(T)}$ to $\widehat{E(S)}$ induced from the continuous map $\theta_*$ from $\widehat{E(T)}_0$ to $\widehat{E(S)}_0$, that is,  $\widehat{\theta}$ is defined on the open subspace $D_{\widehat{\theta}} := \theta_*\inv\big(\widehat{E(S)}\big)$ of $\widehat{E(T)}$ as $\widehat{\theta}(\zeta) := \theta_*(\zeta)$ for $\zeta \in D_{\widehat{\theta}}$.
\end{defi}

\begin{prop}\label{Proposition: proper morphism}
	 The pair $\big(\theta,\widehat{\theta}\,\big)$ becomes a proper \isamor from $\big( S,\widehat{E(S)},\beta^S \big)$ to $\big( T,\widehat{E(T)},\beta^T \big)$.
\end{prop}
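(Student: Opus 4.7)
The plan is to verify the three things that make up the claim: that $\widehat{\theta}$ is a well-defined partial map, that it is proper, and that the pair $\big(\theta,\widehat{\theta}\,\big)$ satisfies conditions (i) and (ii) of Definition~\ref{Definition: isamor}.

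First, I would check that the map $\theta_*\:\widehat{E(T)}_0 \rightarrow \widehat{E(S)}_0$ is well defined and continuous. Since $\theta$ is a semigroup homomorphism of inverse semigroups, it sends $E(S)$ into $E(T)$ and preserves generalized inverses, so for each $\eta\in\widehat{E(T)}_0$ the composite $\eta\circ\theta|_{E(S)}$ is indeed a semigroup homomorphism from $E(S)$ to $\{0,1\}$, and clearly $\theta_*(0)=0$. Continuity is immediate from the product topology on $\{0,1\}^{E(S)}$: the $e$-th coordinate of $\theta_*(\eta)$ equals the $\theta(e)$-th coordinate of $\eta$.

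Second, properness of $\widehat{\theta}$ is automatic from the discussion in Subsection~\ref{Subsection: Partial maps}: a continuous base-point-preserving map between compact Hausdorff pointed spaces restricts to a proper partial map between the locally compact complements of the base points, and $\widehat{\theta}$ is by construction precisely the restriction of $\theta_*$ to $\theta_*^{-1}\big(\widehat{E(S)}\big)=D_{\widehat{\theta}}$.

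Third, I would verify condition~(i). Unwinding the definitions, $D_s^{\beta^S}=U_{s^*s}^S$ and $D_{\theta(s)}^{\beta^T}=U_{\theta(s)^*\theta(s)}^T=U_{\theta(s^*s)}^T$ since $\theta$ preserves generalized inverses. For $\eta\in\widehat{E(T)}$, the membership $\eta\in\widehat{\theta}\inv\big(U_{s^*s}^S\big)$ amounts to $\eta\in D_{\widehat{\theta}}$ together with $\eta(\theta(s^*s))=1$. But the latter alone already forces $\eta\circ\theta$ to take value $1$ somewhere, hence to be nonzero, so $\eta\in D_{\widehat{\theta}}$ is automatic. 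Thus both sides coincide with $U_{\theta(s^*s)}^T=D_{\theta(s)}^{\beta^T}$. Condition~(ii) is then a direct calculation: for $e\in E(S)$ and $\eta\in D_{\theta(s)}^{\beta^T}$,
\[
\beta_s^S\big(\widehat{\theta}(\eta)\big)(e)=\widehat{\theta}(\eta)(s^*es)=\eta\big(\theta(s)^*\theta(e)\theta(s)\big)=\beta_{\theta(s)}^T(\eta)\big(\theta(e)\big)=\widehat{\theta}\big(\beta_{\theta(s)}^T(\eta)\big)(e),
\]
using $\theta(s^*es)=\theta(s)^*\theta(e)\theta(s)$.

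There is no real obstacle here; the only point that merits care is the bookkeeping between inverse-semigroup level and character level, namely that $\theta$ preserving products and generalized inverses forces $\theta_*$ to interact correctly with the subsets $U_s^S$ and $U_{\theta(s)}^T$, and that these subsets already sit inside $D_{\widehat{\theta}}$ so that no extra condition on $\theta$ is needed to obtain equality in~(i).
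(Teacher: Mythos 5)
Your proposal is correct and follows essentially the same route as the paper: properness comes for free from the pointed-compactification picture of partial maps, condition (i) reduces to $\theta_*\zeta(s^*s)=\zeta(\theta(s)^*\theta(s))$, and condition (ii) is the same character-level computation using $\theta(s^*es)=\theta(s)^*\theta(e)\theta(s)$. Your extra remark that $\eta(\theta(s^*s))=1$ already forces $\eta\in D_{\widehat{\theta}}$ is a nice explicit justification of a point the paper leaves implicit, but it is not a different argument.
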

\begin{proof}
	We see that $\widehat{\theta}$ is proper by its construction. 
	For every $s \in S$ and $\zeta\in \widehat{E(T)}$,
	\[
	\theta_*\zeta(s^*s) = \zeta(\theta(s^*s)) = \zeta(\theta(s)^*\theta(s))
	\]
	holds. 
	Thus we get $\widehat{\theta}\inv\big(U_s^S\big) = U^T_{\theta(s)}$.
	For every $s \in S$, $\zeta \in \widehat{\theta}\inv\big(U_s^S\big)$, and $e \in E(S)$,
	\begin{align*}
	\beta_s^S\big(\widehat{\theta}(\zeta)\big)(e)
	= \widehat{\theta}\zeta(s^*es)
	&= \zeta(\theta(s^*es))\\
	&= \zeta(\theta(s)^*\theta(e)\theta(s))
	= \big(\beta_{\theta(s)}^T\zeta\big)(\theta(e))
	= \widehat{\theta}\big(\beta_{\theta(s)}^T\zeta\big)(e)
	\end{align*}
	holds. 
	Thus we get $\beta_s^S\big(\widehat{\theta}(\zeta)\big) = \widehat{\theta}\big(\beta_{\theta(s)}^T\zeta\big)$ for every $s \in S$ and $\zeta \in \widehat{\theta}\inv\big(U_s^S\big)$.
\end{proof}

\begin{theo}\label{Theorem: functor SP}
	The constructions $S \mapsto \big(S,\widehat{E(S)},\beta^S\big)$ and $\theta \mapsto \big(\theta,\widehat{\theta}\,\big)$ form a functor from $\IS$ to $\ISAlcH$.
	We denote this functor as $\SP$.
\end{theo}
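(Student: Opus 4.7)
The plan is to verify the three categorical axioms of a functor: that each object of $\IS$ is sent to an object of $\ISAlcH$, that each morphism is sent to a morphism, and that identities and composition are preserved. The first two are essentially at hand. By its construction in Subsection~\ref{Subsection: Inverse semigroups and their C*-algebras}, $\big(S,\widehat{E(S)},\beta^S\big)$ is an inverse semigroup action on the locally compact Hausdorff space $\widehat{E(S)}$, so $S \mapsto \big(S,\widehat{E(S)},\beta^S\big)$ takes values in objects of $\ISAlcH$. Proposition~\ref{Proposition: proper morphism} then shows that $\big(\theta,\widehat{\theta}\,\big)$ is a proper \isamor, hence a morphism in $\ISAlcH$. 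Thus the remaining content is the functoriality check.

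For the identity $\id_S\:S\rightarrow S$, the induced map $(\id_S)_*\:\widehat{E(S)}_0\rightarrow\widehat{E(S)}_0$ sends $\zeta$ to $\zeta\circ\id_{E(S)}=\zeta$, so it is the identity on $\widehat{E(S)}_0$. Consequently $D_{\widehat{\id_S}} = \widehat{E(S)}$ and $\widehat{\id_S}$ is the identity partial map on $\widehat{E(S)}$, so $\SP(\id_S) = \big(\id_S,\id_{\widehat{E(S)}}\big)$, which is the identity morphism of $\big(S,\widehat{E(S)},\beta^S\big)$ in $\ISA$.

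For composition, given $\theta\:S\rightarrow T$ and $\eta\:T\rightarrow U$ in $\IS$, Definition~\ref{Definition: composition isamor} tells me to check $\widehat{\eta\circ\theta} = \widehat{\theta}\circ\widehat{\eta}$ as partial maps from $\widehat{E(U)}$ to $\widehat{E(S)}$. At the level of compactifications the underlying maps agree by associativity of function composition, since for $\zeta\in\widehat{E(U)}_0$, $(\eta\circ\theta)_*(\zeta) = \zeta\circ(\eta\circ\theta) = (\zeta\circ\eta)\circ\theta = \theta_*(\eta_*(\zeta))$. To match the domains of the two partial maps, I observe that $\zeta \in D_{\widehat{\eta\circ\theta}}$ iff $\theta_*(\eta_*(\zeta))\neq 0$; since $\theta_*(0) = 0$, this forces $\eta_*(\zeta)\neq 0$, so $\zeta \in D_{\widehat{\eta}}$ and $\widehat{\eta}(\zeta)\in D_{\widehat{\theta}}$, giving $\zeta\in D_{\widehat{\theta}\circ\widehat{\eta}}$; the reverse inclusion is immediate from the same chain of equalities. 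I do not anticipate a substantive obstacle here: with Proposition~\ref{Proposition: proper morphism} in hand, everything reduces to formal bookkeeping, and the only step meriting any care is the domain matching in the composition check, which relies on $\theta_*$ preserving the zero character.
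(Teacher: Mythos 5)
Your proposal is correct and follows essentially the same route as the paper: objects land in $\ISAlcH$ by construction, morphisms by Proposition~\ref{Proposition: proper morphism}, identities are immediate, and composition reduces to $(\theta_2\circ\theta_1)_* = (\theta_1)_*\circ(\theta_2)_*$ on the compactified spaces. Your explicit matching of the domains of $\widehat{\theta}\circ\widehat{\eta}$ and $\widehat{\eta\circ\theta}$ just spells out what the paper leaves implicit via the correspondence between $\LCHS$ and $\CHS_*$, using that $\theta_*$ sends the zero character to the zero character.
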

\begin{proof}
	For an inverse semigroup $S$, the space $\widehat{E(S)}$ is locally compact Hausdorff.  
	It is easy to check that the identity homomorphism for $S$ induces the identity \isamor for $\big( S, \widehat{E(S)}, \beta^S \big)$.
	By Proposition \ref{Proposition: proper morphism}, $\big(\theta,\widehat{\theta}\,\big)$ is a proper \isamor for a semigroup homomorphism $\theta$. 
	
	Let $\theta_i\: S_i \rightarrow S_{i+1}$ be a semigroup homomorphism for $i = 1,2$.
	The maps $(\theta_1)_* \circ (\theta_2)_*$ and $(\theta_2\circ\theta_1)_*$ are the same continuous map from $\widehat{E(S_3)}_0$ to $\widehat{E(S_1)}_0$.
	This implies that $\widehat{\theta_1}\circ\widehat{\theta_2} = \widehat{\theta_2\circ\theta_1}$.
	Hence we get $\big(\theta_2,\widehat{\theta_2}\big)\cdot\big(\theta_1,\widehat{\theta_1}\big) = \big(\theta_2\circ\theta_1,\widehat{\theta_2\circ\theta_1}\big)$.
\end{proof}

\section{A category $\EG$}
\label{Section: A category EG}

\subsection{Two classes of groupoid homomorphisms}

In this subsection, we investigate two properties for groupoid homomorphisms.
We first investigate groupoid homomorphisms whose restrictions to each fibers are bijective:

\begin{defi}
	Let $G$ and $K$ be groupoids.
	A groupoid homomorphism $\varphi \: K \rightarrow G$ is said to be \emph{fibrewise bijective} if the restriction $\varphi_u \: K_u\rightarrow G_{\varphi(u)}$ of $\varphi$ is bijective for all $u\in K\0$.
\end{defi}

The notion ``fibrewise bijective'' is borrowed from \cite[Definition~6.1]{Tay23}. 
In the above definition, we can easily see that $\varphi$ is fibrewise bijective if and only if the restriction $\varphi^u\: K^U \rightarrow G^{\varphi(u)}$ of $\varphi$ is bijective for all $u\in K\0$.

\begin{rema}\label{Remark: composition fib bij}
	Let $G_i$ be a groupoid with $i = 1,2,3$, and $\varphi_i\: G_i\rightarrow G_{i+1}$ be a fibrewise bijective groupoid homomorphism with $i=1,2$.
	For every $u\in G_1\0$, $(\varphi_2\circ\varphi_1)_u = (\varphi_2)_{\varphi_1(u)}\circ(\varphi_1)_u$ is bijective. 
	Thus $\varphi_2\circ\varphi_1$ is fibrewise bijective.
\end{rema}

\begin{lemm}\label{Lemma: varphi inv}
	Let $G,K$ be groupoids, and $\varphi\: K \rightarrow G$ be a fibrewise bijective groupoid homomorphism. 
	For every subsets $U$ and $V$ of $G$, the following hold:
	\begin{enumerate}[(i)]
		\item $\varphi\inv(UV) = \varphi\inv(U)\varphi\inv(V)$.
		\item $\varphi\inv \left(G\0\right) = K\0$.
		\item $\varphi\inv(d(U)) = d\left(\varphi\inv(U)\right)$.
		\item If $U$ is a bisection of $G$, $\varphi\inv(U)$ is a bisection of $K$.
	\end{enumerate}
\end{lemm}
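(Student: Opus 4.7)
The strategy for all four parts is the same: each equality splits into an easy inclusion that follows directly from $\varphi$ being a groupoid homomorphism and a harder one for which fibrewise bijectivity does the work. I would tackle (i) first since it is the main computation; parts (ii)--(iv) will then follow by short arguments that reuse the same idea.

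For (i), the inclusion $\varphi\inv(U)\varphi\inv(V)\subset \varphi\inv(UV)$ is immediate: if $k_1\in\varphi\inv(U)$ and $k_2\in\varphi\inv(V)$ are composable, then $\varphi(k_1k_2)=\varphi(k_1)\varphi(k_2)\in UV$. For the reverse inclusion, I would take $k\in K$ with $\varphi(k)=uv$ for some $(u,v)\in G\2$, and use the observation recorded right after the definition that each restriction $\varphi^w\: K^w\to G^{\varphi(w)}$ is also bijective. Since $u\in G^{r(\varphi(k))}=G^{\varphi(r(k))}$, there is a unique $k_1\in K^{r(k)}$ with $\varphi(k_1)=u$. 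Setting $k_2:=k_1\inv k$, a direct check gives $\varphi(k_2)=u\inv uv=v$ and $k_1k_2=r(k_1)k=r(k)k=k$, exhibiting $k$ as an element of $\varphi\inv(U)\varphi\inv(V)$.

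For (ii), the inclusion $K\0\subset\varphi\inv(G\0)$ is clear. Conversely, if $\varphi(k)\in G\0$ then $\varphi(k)=r(\varphi(k))=\varphi(r(k))$, so both $k$ and $r(k)$ lie in $K^{r(k)}$ with the same image under $\varphi$; fibrewise injectivity forces $k=r(k)\in K\0$. For (iii), the inclusion $d(\varphi\inv(U))\subset\varphi\inv(d(U))$ is a direct computation using $\varphi\0\circ d=d\circ \varphi$; for the other direction, given $u\in K\0$ with $\varphi(u)=d(g)$ for some $g\in U$, fibrewise surjectivity yields $k\in K_u$ with $\varphi(k)=g$, so $u=d(k)\in d(\varphi\inv(U))$. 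For (iv), assume $U$ is a bisection and suppose $k_1,k_2\in\varphi\inv(U)$ share the same domain $u$; then $\varphi(k_1),\varphi(k_2)\in U$ share the domain $\varphi(u)$, so since $U$ is a bisection these images coincide, and fibrewise injectivity of $\varphi_u$ then forces $k_1=k_2$. The symmetric argument with the range map handles the other half of the bisection condition.

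The only step requiring a genuinely new idea is the factorisation $k=k_1k_2$ in (i); the remaining parts are essentially one-line applications of the definitions. I do not foresee a real obstacle, but I would be careful to invoke the range-fibre version of fibrewise bijectivity (rather than the domain-fibre version) in (i), since what makes the product $k_1\inv k$ well-defined is precisely the condition $r(k_1)=r(k)$.
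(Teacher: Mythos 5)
Your proof is correct and essentially the same as the paper's: the key factorisation in (i) is the mirror image of the paper's argument (the paper lifts $g_2$ into the domain fibre $K_{d(k)}$ and sets $k_1:=kk_2\inv$, whereas you lift $g_1$ into the range fibre $K^{r(k)}$ and set $k_2:=k_1\inv k$, which is legitimate since the paper records that fibrewise bijectivity for domain fibres is equivalent to that for range fibres), and (ii)--(iv) proceed exactly as in the paper. One tiny point: in (iii) you should note, as the paper does, that an arbitrary element of $\varphi\inv(d(U))$ lies in $K\0$ by (ii) before invoking surjectivity of $\varphi_u$.
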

\begin{proof}
	For every groupoid homomorphism $\varphi\:K\rightarrow G$, we can check easily that the left-hand sides in (i)-(iii) include the right-hand sides. 
	We prove that the opposite inclusions hold if $\varphi_u$ is bijective for all $u\in K\0$. 
	\begin{enumerate}[(i)]
		\item Take an element $k\in \varphi\inv\left(UV\right)$. 
		There exists a composable pair $g_1\in U$ and $g_2\in V$ such that $\varphi(k) = g_1g_2$. 
		We set $d(k) = u$. 
		Since $g_2 \in G_{\varphi(u)}$ and $\varphi_u$ is surjective, there exists an element $k_2\in K_u$ such that $\varphi\left(k_2\right) = g_2$. 
		Setting $k_1 := kk_2\inv$, we get $\varphi(k_1) = g_1$. Thus $k = k_1k_2\in \varphi\inv(U)\varphi\inv(V)$ holds.
		
		\item Take $k\in \varphi\inv\left(G\0\right)$ and set $u:=d(k)$. 
		Since $\varphi_u(d(k)) = d(\varphi_u(k)) = \varphi_u(k)$ holds and $\varphi_u$ is injective, $k = d(k) \in K\0$ holds. 
		
		\item 
		Take $u\in \varphi\inv(d(U))$. Take an element $g\in U$ satisfying $\varphi(u) = d(g)$.
		By the equation (ii), we get $u\in K\0$. 
		Since $g\in G_{\varphi(u)}$ and the restriction $\varphi_u\:K_u\rightarrow G_{\varphi(u)}$ of $\varphi$ is surjective, there exists $k\in K_u$ satisfying $\varphi_u(k) = g$. 
		Thus $u = d(k)\in d\big(\varphi\inv(U)\big)$ holds.
		
	\end{enumerate}	
	
	We check the statement (iv).
	We assume that $U$ is a bisection of $G$.
	Taking elements $k,k'\in \varphi\inv(U)$ with $d(k) = d(k')$, we get $d(\varphi(k)) = d(\varphi(k'))$. 
	Since $U$ is a bisection, we get $\varphi(k) = \varphi(k')$.
	It implies that $k=k'$ because $\varphi_u$ is injective where $u = d(k) = d(k')$.
	Similarly we can see that $r(k) = r(k')$ implies $k = k'$ for $k,k'\in\varphi\inv(U)$ because $\varphi^u\: K^u\rightarrow G^{\varphi(u)}$ is injective for $u := r(k) = r(k')$.
\end{proof}


Let $G, K$ be \'etale groupoids, and $\varphi\: K \rightarrow G$ be a continuous fibrewise bijective groupoid homomorphism.
\begin{rema}\label{Remark: phi homeo iff phi0 homeo}
	By Lemma \ref{Lemma: grpd hom inj surj} (v) and \ref{Lemma: phi open iff phi0 open}, $\varphi$ is homeomorphic if and only if so is $\varphi\0$.
\end{rema}

By Lemma \ref{Lemma: varphi inv} (i) and (iv) , the map which sends an open bisection $U$ of $G$ to an open bisection $\varphi\inv(U)$ of $K$ is regarded as a semigroup homomorphism from $\Bis G$ to $\Bis K$. 
\begin{defi}\label{Definition: varphi inv}
	We denote the semigroup homomorphism 
	\[
	\Bis G \rightarrow \Bis K ; U \mapsto \varphi\inv(U)
	\]
	as $\varphi\inv$. 
\end{defi}

Constructing a \shom from $\varphi\: K \rightarrow G$ in Section \ref{Section: A functor C* from EG to Calg} in mind, we investigate the properness for $\varphi\0$.


\begin{lemm}\label{Lemma: phi proper}
	If $\varphi\0$ is proper, then the map $\varphi\: \varphi\inv(U) \rightarrow U$ is proper for every open bisection $U$ of $G$.
\end{lemm}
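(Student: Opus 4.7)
The plan is to reduce properness of $\varphi \: \varphi\inv(U) \to U$ to properness of a restriction of $\varphi\0$, by using the domain maps to transport the statement from arrows to units, where the hypothesis lives.

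First I would observe that $\varphi\inv(U)$ is an open bisection of $K$: it is open by continuity of $\varphi$, and a bisection by Lemma \ref{Lemma: varphi inv}(iv). Invoking Lemma \ref{Lemma: dr homeo on op bis}, the domain maps restrict to homeomorphisms $d_G|_U \: U \to d(U)$ and $d_K|_{\varphi\inv(U)} \: \varphi\inv(U) \to d(\varphi\inv(U))$ onto open subsets of $G\0$ and $K\0$, respectively. Combining $\varphi\0 \circ d_K = d_G \circ \varphi$ with the identity $d(\varphi\inv(U)) = \varphi\inv(d(U))$ from Lemma \ref{Lemma: varphi inv}(iii), I get a commutative square
\[
\begin{tikzcd}
\varphi\inv(U) \arrow[r, "\varphi"] \arrow[d, "d_K|_{\varphi\inv(U)}"'] & U \arrow[d, "d_G|_U"] \\
\varphi\inv(d(U)) \arrow[r, "\varphi\0|_{\varphi\inv(d(U))}"'] & d(U)
\end{tikzcd}
\]
in which the vertical arrows are homeomorphisms.

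Next, I would appeal to the elementary fact that properness restricts well to open subsets of the target: if $f \: X \to Y$ is proper and $V \subset Y$ is open, then $f|_{f\inv(V)} \: f\inv(V) \to V$ is proper, since any compact $K \subset V$ is already compact in $Y$, so $f\inv(K) \subset f\inv(V)$ is compact. Applying this to the proper map $\varphi\0 \: K\0 \to G\0$ and the open subset $d(U) \subset G\0$, the bottom arrow of the square is proper.

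Finally, conjugating by homeomorphisms preserves properness, so the top arrow $\varphi \: \varphi\inv(U) \to U$ is proper. I do not anticipate any substantive obstacle here; the argument is essentially a diagram chase, and the main conceptual point is that fibrewise bijectivity, via Lemma \ref{Lemma: varphi inv}, lets the open bisection $U$ transport properness cleanly between arrow and unit spaces.
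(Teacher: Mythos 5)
Your proposal is correct and follows essentially the same route as the paper: both reduce to the unit spaces via the domain-map homeomorphisms of Lemma \ref{Lemma: dr homeo on op bis}, use $d(\varphi\inv(\,\cdot\,)) = \varphi\inv(d(\,\cdot\,))$ from Lemma \ref{Lemma: varphi inv}, and invoke properness of $\varphi\0$; the paper simply performs the compact-set check directly rather than phrasing it as conjugation of a proper restriction by homeomorphisms.
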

\begin{proof}
	Take a compact subset $C$ of $U$.	
	By Lemma \ref{Lemma: dr homeo on op bis} and Lemma \ref{Lemma: varphi inv} (iv), $C$ and $\varphi\inv(C)$ are homeomorphic to $d(C)$ and $d(\varphi\inv(C))$ respectively. 
	By Lemma \ref{Lemma: varphi inv} (iii), $d(\varphi\inv(C)) = \varphi\inv(d(C)) \subset K\0$ holds.
	This subset is compact since $\varphi\0$ is proper.
	Thus $\varphi\inv(C)$ is compact.
\end{proof}

Secondly, we investigate groupoid homomorphisms which are injective on unit spaces.



\begin{lemm}\label{Lemma: psi}
	Let $K,H$ be groupoids, and $\psi\: K \rightarrow H$ be a groupoid homomorphism with injective $\psi\0$.
	For subsets $U$ and $V$ of $K$, $\psi(UV) = \psi(U)\psi(V)$ holds.
	For a bisection $U$ of $K$, $\psi(U)$ is a bisection of $H$, and the map $\psi\: U \rightarrow \psi(U)$ is injective.
\end{lemm}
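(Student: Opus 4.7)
The plan is to exploit the compatibility relations $\psi\0 \circ d = d \circ \psi$ and $\psi\0 \circ r = r \circ \psi$ valid for any groupoid homomorphism, together with the injectivity hypothesis on $\psi\0$, to transfer equalities of domains and ranges from $H$ back to $K$. Everything then reduces to routine checks.

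For the first statement, the inclusion $\psi(UV)\subset\psi(U)\psi(V)$ is immediate from $\psi$ being a groupoid homomorphism. For the reverse inclusion, I would take $\psi(u)\psi(v)$ with $u\in U$, $v\in V$ and $(\psi(u),\psi(v))\in H\2$. Then $\psi\0(d(u)) = d(\psi(u)) = r(\psi(v)) = \psi\0(r(v))$, so injectivity of $\psi\0$ gives $d(u)=r(v)$, hence $(u,v)\in K\2$ and $\psi(u)\psi(v) = \psi(uv) \in \psi(UV)$.

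For the second statement, let $U$ be a bisection. To show $\psi(U)$ is a bisection, I would suppose $\psi(u_1),\psi(u_2)\in\psi(U)$ satisfy $d(\psi(u_1)) = d(\psi(u_2))$; then $\psi\0(d(u_1)) = \psi\0(d(u_2))$, injectivity of $\psi\0$ yields $d(u_1)=d(u_2)$, and since $d|_U$ is injective we get $u_1=u_2$, hence $\psi(u_1)=\psi(u_2)$. The argument for $r$ is symmetric. The same chain of reasoning shows $\psi|_U$ is injective: if $\psi(u_1)=\psi(u_2)$ with $u_1,u_2\in U$, then in particular $d(\psi(u_1))=d(\psi(u_2))$, so $d(u_1)=d(u_2)$ as above, and injectivity of $d|_U$ finishes.

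There is no real obstacle here; the only nontrivial point is realising that without injectivity of $\psi\0$ one cannot recover composability in $K$ from composability in $H$ (one could have $u,v\in K$ with $d(u)\neq r(v)$ but $\psi\0(d(u)) = \psi\0(r(v))$), and this is precisely what the hypothesis kills.
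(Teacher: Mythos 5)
Your proposal is correct and follows essentially the same route as the paper: both parts rely on transferring composability and equality of units back to $K$ via $\psi\0\circ d = d\circ\psi$ and the injectivity of $\psi\0$; the paper merely phrases the bisection part as the injectivity of the composite $\psi\0\circ d|_U$ rather than as your element-wise chase. No gap.
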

\begin{proof}
	Since $\psi$ is a groupoid homomorphism, $\psi(UV) \subset \psi(U)\psi(V)$ holds. 
	Take elements $k\in U$ and $k'\in V$ such that $d(\psi(k)) = r(\psi(k'))$. 
	Since $\psi\0$ is injective, $d(k) = r(k')$ holds. 
	Thus $\psi(k)\psi(k') = \psi(kk') \in \psi(UV)$ holds.
	Hence we get $\psi(UV) = \psi(U)\psi(V)$.
	
	We take a bisection $U$ of $K$.
	The maps $d\:U \rightarrow K\0$ and $\psi\0\: K\0 \rightarrow H\0$ are injective.
	The composition $\psi\0 \circ d \: U \rightarrow H\0$ is injective.
	This map coincides with the restriction of $d\circ\psi$ to $U$.
	Thus we get $\psi\: U \rightarrow \psi(U)$ and $d\: \psi(U) \rightarrow H\0$ are injective.
	In the same way, we get the restriction of $r$ to $\psi(U)$ is injective. 
	Hence $\psi(U)$ is a bisection. 
\end{proof}

Let $K,H$ be \'etale groupoids, and $\psi\: K\rightarrow H$ be a continuous groupoid homomorphism with open injective $\psi\0$.


\begin{lemm}\label{Lemma: psi 2}
	For an open bisection $U$ of $K$, $\psi(U)$ is an open bisection of $H$, and
	the map $\psi|_{U}\: U \rightarrow \psi(U)$ is a homeomorphism.
\end{lemm}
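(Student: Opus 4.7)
The plan is to use the groupoid identity $d_H \circ \psi = \psi\0 \circ d_K$ to factor $\psi|_U$ through the unit space and then exploit \'etaleness of $H$ to recover the required openness. Lemma~\ref{Lemma: psi} already supplies that $\psi(U)$ is a bisection of $H$ and that $\psi|_U \colon U \to \psi(U)$ is a continuous bijection, so what remains is to establish that $\psi(U)$ is open in $H$ and that $\psi|_U$ is open as well.

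The first move is to identify $d_K(U) \subset K\0$ as an open subset on which $d_K|_U$ is a homeomorphism onto $U$ (Lemma~\ref{Lemma: dr homeo on op bis}). Since $\psi\0$ is continuous, injective, and open, its restriction to the open set $d_K(U)$ is a homeomorphism onto the open set $W := \psi\0(d_K(U)) \subset H\0$. Hence $f := \psi\0 \circ d_K|_U \colon U \to W$ is a homeomorphism, and $\sigma := \psi \circ f\inv \colon W \to H$ is a continuous map satisfying $d_H \circ \sigma = \id_W$ whose image is exactly $\psi(U)$.

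The main step, and the only non-formal one, is to show that $\sigma(W)$ is open in $H$. This is a general fact about continuous sections of local homeomorphisms: given $w \in W$, choose an open bisection $V \subset H$ containing $\sigma(w)$ (Lemma~\ref{Lemma: bisection}), so that $d_H|_V$ is a homeomorphism onto the open set $d_H(V) \subset H\0$. By continuity of $\sigma$, pick an open neighborhood $W' \subset W$ of $w$ with $\sigma(W') \subset V$. The injectivity of $d_H|_V$ then forces the equality $\sigma(W') = V \cap d_H\inv(W')$, which is open in $H$. Hence $\sigma(W) = \psi(U)$ is open near each of its points, and therefore open.

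Finally, $\sigma \colon W \to \psi(U)$ is a continuous bijection with continuous inverse $d_H|_{\psi(U)}$, so $\psi|_U = \sigma \circ f$ is a composition of homeomorphisms, which finishes the lemma. The only genuine obstacle is the \emph{section-has-open-image} step in the third paragraph; everything else is bookkeeping from the identity $d_H \circ \psi = \psi\0 \circ d_K$ together with the hypotheses that $\psi\0$ is open and injective and that $H$ is \'etale.
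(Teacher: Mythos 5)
Your proof is correct, but it takes a different route from the paper. The paper disposes of the lemma in one line: Lemma~\ref{Lemma: psi} gives that $\psi(U)$ is a bisection and $\psi|_U$ is a continuous bijection onto it, and Lemma~\ref{Lemma: phi open iff phi0 open} (already proved) says that a continuous groupoid homomorphism with open restriction to the unit space is an open map, so $\psi$ itself is open; hence $\psi(U)$ is open and $\psi|_U$ is an open continuous bijection, i.e.\ a homeomorphism. You avoid invoking the global openness of $\psi$ and instead factor $\psi|_U$ through the unit spaces as $\sigma\circ f$ with $f=\psi\0\circ d_K|_U$ a homeomorphism onto the open set $W=\psi\0(d_K(U))$ (using Lemma~\ref{Lemma: dr homeo on op bis} and the hypothesis that $\psi\0$ is open and injective), and then prove that the continuous section $\sigma$ of $d_H$ over $W$ has open image by the bisection-comparison argument with Lemma~\ref{Lemma: bisection} and the identity $\sigma(W')=V\cap d_H\inv(W')$; all the verifications there ($d_H\circ\sigma=\id_W$, the forced equality via injectivity of $d_H|_V$, and $d_H|_{\psi(U)}$ being the continuous inverse of $\sigma$) check out. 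In effect you have inlined a special case of the proof of Lemma~\ref{Lemma: phi open iff phi0 open} rather than citing it, which makes your argument longer but more self-contained, and it isolates a reusable general fact (continuous sections of the local homeomorphism $d_H$ have open image); the factorization also yields the homeomorphism statement immediately, since $\psi|_U$ is exhibited as a composition of homeomorphisms, whereas the paper gets it from openness of $\psi$. Given that Lemma~\ref{Lemma: phi open iff phi0 open} is already available in the paper, its citation is the shorter path, but nothing in your argument is missing or incorrect.
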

\begin{proof}
	This follows from Lemma \ref{Lemma: phi open iff phi0 open} and \ref{Lemma: psi}.
\end{proof}

By Lemma \ref{Lemma: psi} and \ref{Lemma: psi 2}, the map which sends an open bisection $U$ of $K$ to an open bisection $\psi(U)$ of $H$ can be regarded as a semigroup homomorphism from $\Bis K$ to $\Bis H$. 
\begin{defi}\label{Definition: psi}
	We denote the semigroup homomorphism 
	\[
	\Bis K \rightarrow \Bis H; U\mapsto\psi(U)
	\]
	as $\psi$.
\end{defi}

\begin{rema}\label{Remark: comp psi0 open inj}
	Let $G_i$ be an \'etale groupoid with $i = 1,2,3$, and $\psi_i\:G_i \rightarrow G_{i+1}$ be a continuous groupoid homomorphism with open injective $\psi_i\0$ with $i = 1,2$.
	It is clear that $\psi_2\circ\psi_1$ has open injective $(\psi_2\circ\psi_1)\0$ because $(\psi_2\circ\psi_1)\0 = \psi_2\0 \circ \psi_1\0$.
\end{rema}

\subsection{A category $\EG$ of \'etale groupoids}
\label{Subsection: A category EG of etale groupoids}

We prepare lemmas and propositions on groupoid homomorphisms to defining morphism between groupoids.
Let $G,H$ be groupoids.

\begin{lemm}\label{Lemma: egmors subset}
	Let $K$ be a groupoid, $\varphi\: K \rightarrow G$ be a groupoid homomorphism such that $\varphi_u\: K_u \rightarrow G_{\varphi(u)}$ is injective for all $u \in K\0$, and $\psi\: K \rightarrow H$ be a groupoid homomorphism with injective $\psi\0$.
	\begin{enumerate}[(i)]
		\item For a subset $V$ of $G\0$ and an element $u$ of $K\0$, $\varphi(u)\in V$ holds if and only if $\psi(u) \in \psi(\varphi\inv(V))$ holds.
		\item The groupoid homomorphism $\varphi\times\psi$ defined as
		\[
		\varphi\times\psi \: K \rightarrow G\times H; k \mapsto (\varphi(k),\psi(k))
		\]
		is injective.
	\end{enumerate}
\end{lemm}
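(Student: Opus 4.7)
The plan is to prove both parts by a simple reduction to the unit space, exploiting the observation that injectivity of each $\varphi_u$ already forces $\varphi\inv\big(G\0\big)\subset K\0$. This is essentially the content of Lemma~\ref{Lemma: varphi inv}~(ii), but here fibrewise surjectivity of $\varphi$ is not available, so I would verify directly that injectivity on fibres suffices: if $k\in K$ with $\varphi(k)\in G\0$, then setting $u:=d(k)$ gives $\varphi_u(k)=\varphi(k)=d(\varphi(k))=\varphi\0(d(k))=\varphi_u(u)$, and injectivity of $\varphi_u$ yields $k=u\in K\0$.

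For (i), the forward direction is immediate: if $\varphi(u)\in V$, then $u\in\varphi\inv(V)$, so $\psi(u)\in\psi(\varphi\inv(V))$. For the converse, I would pick $k\in\varphi\inv(V)$ with $\psi(u)=\psi(k)$ and aim to conclude $u=k$. Since $V\subset G\0$, we have $\varphi(k)\in G\0$, and the observation above gives $k\in K\0$. Then $\psi(u)=\psi(k)$ reads $\psi\0(u)=\psi\0(k)$, and injectivity of $\psi\0$ forces $u=k$, so $\varphi(u)=\varphi(k)\in V$.

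For (ii), I would take $k,k'\in K$ with $\varphi(k)=\varphi(k')$ and $\psi(k)=\psi(k')$. Applying $d$ to the second equality and using $d\circ\psi=\psi\0\circ d$ gives $\psi\0(d(k))=\psi\0(d(k'))$, so injectivity of $\psi\0$ produces a common base point $u:=d(k)=d(k')$. Now $k,k'\in K_u$ satisfy $\varphi_u(k)=\varphi_u(k')$, and injectivity of $\varphi_u$ delivers $k=k'$.

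I do not anticipate any real obstacle: the only piece of genuine content is the preliminary remark that the $\varphi$-preimage of a unit is a unit, which follows from fibrewise injectivity alone. Once that is recorded, both statements reduce to two-line routine verifications, so the main care in writing up is to keep the presentation concise and to be explicit about which injectivity hypothesis is used at each step.
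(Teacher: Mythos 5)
Your proposal is correct and follows essentially the same route as the paper: the paper also handles the converse of (i) by noting that fibrewise injectivity alone gives $\varphi\inv(V)\subset K\0$ (citing its Lemma \ref{Lemma: varphi inv} (ii), whose proof is exactly your preliminary observation) and then invoking injectivity of $\psi\0$, and its proof of (ii) is the same two-step argument via $\psi\0$ and $\varphi_u$. The only cosmetic difference is that you reprove the unit-preimage fact directly rather than citing the earlier lemma.
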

\begin{proof}
	\begin{enumerate}[(i)]
		\item The only if part is clear. 
		We show that the if part. 
		Take $u\in K\0$ and $V \subset G\0$ such that $\psi(u) \in \psi(\varphi\inv(V))$ holds.
		There exists $u'\in \varphi\inv(V)$ such that $\psi(u) = \psi(u')$. 
		By Lemma \ref{Lemma: varphi inv} (ii), $\varphi\inv(V)$ is included in $K\0$ (Lemma \ref{Lemma: varphi inv} (ii) holds only by assuming that $\varphi_u$ is injective for every $u \in K\0$). 
		Hence we get $u'\in K\0$.
		Since $\psi\0$ is injective, $\psi(u) = \psi(u')$ implies $u=u'$.
		Thus $\varphi(u) = \varphi(u') \in V$ holds.
		\item Take $k,k'\in K$ with $\varphi(k) = \varphi(k')$ and $\psi(k) = \psi(k')$.
		Since $\psi\0$ is injective and $\psi\0(d(k)) = \psi\0(d(k'))$, $d(k) = d(k')$ holds.
		We set $u\in K\0$ as the element $d(k)(= d(k'))$.
		Since $\varphi_u$ is injective and $\varphi_u(k) = \varphi_u(k')$, we get $k = k'$.
		Thus $\varphi\times \psi$ is injective. \qedhere
	\end{enumerate}
\end{proof}

\begin{lemm}\label{Lemma: auto grpd hom}
	Let $K_1,K_2,K$ be groupoids, and $\varphi_1\: K_1 \rightarrow K$, $\varphi_2\: K_2 \rightarrow K$ be groupoid homomorphisms.
	A map $\iota \: K_1 \rightarrow K_2$ with $\varphi_2 \circ \iota = \varphi_1$ becomes a groupoid homomorphism if $\varphi_2$ is injective.
\end{lemm}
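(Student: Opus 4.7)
The plan is to verify the two conditions in the definition of a groupoid homomorphism for $\iota$, namely that $\iota$ preserves composability and multiplication. Both will follow by applying $\varphi_2$ to the relations we want to establish in $K_2$, using the hypothesis $\varphi_2 \circ \iota = \varphi_1$ together with the injectivity of $\varphi_2$.

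First I would check composability. Let $(k_1', k_1) \in K_1\2$, so $d(k_1') = r(k_1)$. Applying $\varphi_2^{(0)}$ to $d(\iota(k_1'))$ and using that $\varphi_2$ is a groupoid homomorphism (which entails $\varphi_2^{(0)} \circ d = d \circ \varphi_2$, as noted in the paper after the definition of groupoid homomorphism), I get
\[
\varphi_2^{(0)}(d(\iota(k_1'))) = d(\varphi_2(\iota(k_1'))) = d(\varphi_1(k_1')) = \varphi_1^{(0)}(d(k_1')).
\]
Similarly $\varphi_2^{(0)}(r(\iota(k_1))) = \varphi_1^{(0)}(r(k_1))$. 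Since $d(k_1') = r(k_1)$, the two outputs agree, and injectivity of $\varphi_2$ (in particular of $\varphi_2^{(0)}$) yields $d(\iota(k_1')) = r(\iota(k_1))$, so $(\iota(k_1'), \iota(k_1)) \in K_2\2$.

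Next I would verify multiplicativity. Applying $\varphi_2$ to both candidates gives
\[
\varphi_2\bigl(\iota(k_1') \iota(k_1)\bigr) = \varphi_2(\iota(k_1'))\,\varphi_2(\iota(k_1)) = \varphi_1(k_1')\varphi_1(k_1) = \varphi_1(k_1' k_1) = \varphi_2(\iota(k_1' k_1)),
\]
where the first equality uses that $\varphi_2$ is a groupoid homomorphism (legal because composability was established in the previous step), and the third uses that $\varphi_1$ is a groupoid homomorphism. Injectivity of $\varphi_2$ then forces $\iota(k_1' k_1) = \iota(k_1') \iota(k_1)$, completing the verification.

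There is essentially no obstacle here: the argument is a direct ``cancel $\varphi_2$'' argument, and the only subtlety is the order of the two checks — composability must be established before one is entitled to write the product $\varphi_2(\iota(k_1'))\varphi_2(\iota(k_1))$ on the $K$ side and invoke multiplicativity of $\varphi_2$. The statement does not assert (nor require) anything about units or inverses beyond what is packaged in ``groupoid homomorphism''; both are automatic consequences of preservation of composition in a groupoid.
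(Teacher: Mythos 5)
Your proof is correct and follows essentially the same route as the paper's: apply $\varphi_2$ (respectively $\varphi_2\0$) to the desired relations, use the intertwining $\varphi_2\circ\iota=\varphi_1$ and the homomorphism property of $\varphi_1$, and cancel by injectivity, checking composability before multiplicativity. No gaps.
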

\begin{proof}
	Take $k,k'\in K_1$ with $d(k) = r(k')$.
	We get  
	\begin{align*}
	\varphi_2\0(d(\iota(k))) &= d(\varphi_2(\iota(k))) = d(\varphi_1(k)) \\
	&= r(\varphi_1(k')) = r(\varphi_2(\iota(k'))) = \varphi_2\0(r(\iota(k'))).
	\end{align*}
	Since $\varphi_2\0$ is injective, we have $d(\iota(k)) = r(\iota(k'))$.
	We get 
	\begin{align*}
	\varphi_2(\iota(kk')) &= \varphi_1(kk') = \varphi_1(k)\varphi_1(k') \\
	&= \varphi_2(\iota(k))\varphi_2(\iota(k')) = \varphi_2(\iota(k)\iota(k')).
	\end{align*}
	Since $\varphi_2$ is injective, we have $\iota(kk') = \iota(k)\iota(k')$.
\end{proof}

\begin{lemm}\label{Lemma: map btwn alg cpl mor} 
	Let $K_i$ be a groupoid, and $\varphi_i\: K_i\rightarrow G$, $\psi_i\: K_i\rightarrow H$ be groupoid homomorphisms with $i=1,2$. 
	For a map $\iota\: K_1\rightarrow K_2$ which makes the diagram
	\[
	\begin{tikzcd}
	&K_1\arrow[ld,"\varphi_1"']\arrow[rd,"\psi_1"]\arrow[dd,"\iota"]&\\[-3mm]
	G&&H\\[-3mm]
	&K_2\arrow[lu,"\varphi_2"]\arrow[ru,"\psi_2"']&
	\end{tikzcd}
	\]
	commute, the following hold:
	\begin{enumerate}[(i)] 
		\item If $(\varphi_2)_v$ is injective for every $v \in K_2\0$ and $\psi_2\0$ is injective, then $\iota$ is a unique map which makes the above diagram commute and becomes automatically a groupoid homomorphism.
		\item If $(\varphi_1)_u$ is injective for every $u \in K_1\0$ and $\psi_1\0$ is injective in addition to (i), then $\iota$ is injective. 
		\item If $(\varphi_1)_u$ is surjective for every $u \in K_1\0$ in addition to (ii), then $\iota_u$ is bijective for every $u \in K_1\0$.
		\item If $\iota\big(K_1\0\big) = K_2\0$ in addition to (iii), then $\iota$ is a unique bijective groupoid homomorphism which makes the above diagram commute.
	\end{enumerate}
\end{lemm}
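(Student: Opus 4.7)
The plan is to recognize the product groupoid homomorphism $\varphi_i \times \psi_i \: K_i \rightarrow G \times H$ as the central object, after which the three preceding lemmas---Lemmas \ref{Lemma: egmors subset}, \ref{Lemma: auto grpd hom}, and \ref{Lemma: grpd hom inj surj}---will dispose of the four statements in turn. Commutativity of the diagram is equivalent to the single identity $(\varphi_2 \times \psi_2) \circ \iota = \varphi_1 \times \psi_1$, which I will use repeatedly.

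For (i), I first invoke Lemma \ref{Lemma: egmors subset}(ii) with the hypotheses that $(\varphi_2)_v$ is injective for every $v \in K_2\0$ and that $\psi_2\0$ is injective; this yields that $\varphi_2 \times \psi_2$ is injective. Any two maps $\iota, \iota'$ making the diagram commute then satisfy $(\varphi_2 \times \psi_2) \circ \iota = (\varphi_2 \times \psi_2) \circ \iota'$ and hence agree, giving uniqueness. Next I apply Lemma \ref{Lemma: auto grpd hom} with the injective groupoid homomorphism $\varphi_2 \times \psi_2$ playing the role of $\varphi_2$ in that lemma to conclude that $\iota$ is automatically a groupoid homomorphism.

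For (ii), the added hypotheses on $K_1$ allow a symmetric application of Lemma \ref{Lemma: egmors subset}(ii), so $\varphi_1 \times \psi_1$ is injective; since this factors through $\iota$, the map $\iota$ is injective. For (iii), I fix $u \in K_1\0$ and any $k' \in (K_2)_{\iota(u)}$, observe that $\varphi_2(k') \in G_{\varphi_1(u)}$, and use surjectivity of $(\varphi_1)_u$ to find $k \in (K_1)_u$ with $\varphi_1(k) = \varphi_2(k')$. Both $\iota(k)$ and $k'$ then lie in the fiber $(K_2)_{\iota(u)}$ and have the same image under $\varphi_2$; injectivity of $(\varphi_2)_{\iota(u)}$ forces $\iota(k) = k'$, so $\iota_u$ is surjective, and combined with (ii) it is bijective.

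For (iv), I note that $\iota(K_1\0) = K_2\0$ together with injectivity of $\iota$ from (ii) makes $\iota\0$ bijective; combined with the fiberwise bijectivity from (iii), Lemma \ref{Lemma: grpd hom inj surj}(v) yields bijectivity of $\iota$ itself, while uniqueness and the groupoid-homomorphism property descend from (i). I expect no serious obstacle: the only point requiring care is (iii), where one should recognize that it is the fiberwise injectivity of $\varphi_2$, not the full injectivity of $\varphi_2 \times \psi_2$, that gives the cleanest argument for surjectivity of $\iota_u$.
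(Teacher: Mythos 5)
Your proposal is correct and follows essentially the same route as the paper: reduce everything to the identity $(\varphi_2\times\psi_2)\circ\iota=\varphi_1\times\psi_1$, invoke Lemma \ref{Lemma: egmors subset}(ii) and Lemma \ref{Lemma: auto grpd hom} for (i)--(ii), use the fibrewise factorization $(\varphi_1)_u=(\varphi_2)_{\iota(u)}\circ\iota_u$ for (iii), and conclude (iv) via Lemma \ref{Lemma: grpd hom inj surj}(v). The only difference is that you spell out the surjectivity of $\iota_u$ elementwise, which the paper leaves implicit in the factorization identity.
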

\begin{proof}
	We get that the map $\iota$ satisfies $(\varphi_2\times\psi_2) \circ \iota = (\varphi_1\times\psi_1)$.
	\begin{enumerate}[(i)]
		\item By Lemma \ref{Lemma: egmors subset} (ii), $\varphi_2\times\psi_2$ is injective.
		Thus $\iota$ is unique.
		By Lemma \ref{Lemma: auto grpd hom}, $\iota$ becomes a groupoid homomorphism.
		\item By Lemma \ref{Lemma: egmors subset} (ii), $\varphi_1\times\psi_1$ is injective.
		Thus $\iota$ is injective.
		\item We get that $\iota_u$ is bijective by $(\varphi_1)_u = (\varphi_2)_{\iota(u)} \circ \iota_u$ for every $u \in K_1\0$.
		\item The map $\iota$ is bijective by Lemma \ref{Lemma: grpd hom inj surj} (v). \qedhere
	\end{enumerate}
\end{proof}

\begin{prop}\label{Proposition: egmors alg}
	Let $K_i$ be a groupoid, $\varphi_i\: K_i \rightarrow G$ be a fibrewise bijective groupoid homomorphism, and $\psi_i\: K_i \rightarrow H$ be a groupoid homomorphism with injective $\psi_i\0$ for $i = 1,2$.
	The following are equivalent:
	\begin{enumerate}[(i)]		
		\item There exists a map $\iota\: K_1\rightarrow K_2$ which makes the following diagram commute:
		\[
		\begin{tikzcd}
			&K_1\arrow[ld,"\varphi_1"']\arrow[rd,"\psi_1"]\arrow[dd,"\iota"]&\\[-3mm]
			G&&H.\\[-3mm]
			&K_2\arrow[lu,"\varphi_2"]\arrow[ru,"\psi_2"']&
		\end{tikzcd}
		\]
		\item The image of $\varphi_1\times \psi_1$ is included in the one of $\varphi_2\times \psi_2$ in $G \times H$.
		\item For every subset $U$ of $G$, $\psi_1\big(\varphi_1\inv(U)\big) \subset \psi_2\big(\varphi_2\inv(U)\big)$ holds.		
	\end{enumerate}
	Moreover, a map $\iota$ in (i) is unique and becomes automatically an injective groupoid homomorphism.
\end{prop}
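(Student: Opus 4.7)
The plan is to prove the cycle (i)$\Rightarrow$(ii)$\Rightarrow$(iii)$\Rightarrow$(i), then invoke Lemma \ref{Lemma: map btwn alg cpl mor} for the moreover clause.

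For (i)$\Rightarrow$(ii), the equations $\varphi_2\circ\iota=\varphi_1$ and $\psi_2\circ\iota=\psi_1$ immediately give $(\varphi_2\times\psi_2)\circ\iota = \varphi_1\times\psi_1$, so $\Im(\varphi_1\times\psi_1)\subset \Im(\varphi_2\times\psi_2)$. For (ii)$\Rightarrow$(iii), take any $U\subset G$ and any $k_1\in\varphi_1\inv(U)$; by (ii) there is $k_2\in K_2$ with $\varphi_2(k_2)=\varphi_1(k_1)\in U$ and $\psi_2(k_2)=\psi_1(k_1)$, so $\psi_1(k_1)=\psi_2(k_2)\in\psi_2(\varphi_2\inv(U))$.

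The heart of the argument is (iii)$\Rightarrow$(i). I will construct $\iota$ pointwise as follows. Given $k_1\in K_1$, apply (iii) with $U=\{\varphi_1(k_1)\}$: since $k_1\in\varphi_1\inv(U)$, we have $\psi_1(k_1)\in\psi_1(\varphi_1\inv(U))\subset \psi_2(\varphi_2\inv(U))$, producing some $k_2\in K_2$ with $\varphi_2(k_2)=\varphi_1(k_1)$ and $\psi_2(k_2)=\psi_1(k_1)$. The main technical point is to show this $k_2$ is unique so that $\iota(k_1):=k_2$ is a well-defined function. For this I invoke Lemma \ref{Lemma: egmors subset} (ii) applied to $\varphi_2$ and $\psi_2$: since $\varphi_2$ is fibrewise bijective (in particular each $(\varphi_2)_v$ is injective) and $\psi_2\0$ is injective by hypothesis, the map $\varphi_2\times\psi_2$ is injective, giving uniqueness of $k_2$. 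The resulting $\iota$ satisfies $\varphi_2\circ\iota=\varphi_1$ and $\psi_2\circ\iota=\psi_1$ by construction.

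Finally, for the moreover clause, uniqueness of $\iota$ and the fact that $\iota$ is automatically a groupoid homomorphism follow directly from Lemma \ref{Lemma: map btwn alg cpl mor} (i), whose hypotheses hold because $\varphi_2$ is fibrewise bijective and $\psi_2\0$ is injective. Injectivity of $\iota$ follows from Lemma \ref{Lemma: map btwn alg cpl mor} (ii), whose additional hypotheses hold because $\varphi_1$ is also fibrewise bijective and $\psi_1\0$ is also injective. The main obstacle is verifying that the pointwise construction in (iii)$\Rightarrow$(i) is well-defined, but this reduces cleanly to the injectivity of $\varphi_2\times\psi_2$ established in Lemma \ref{Lemma: egmors subset}.
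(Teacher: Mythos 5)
Your proposal is correct and follows essentially the same route as the paper: the key step is applying (iii) to the singleton $U=\{\varphi_1(k_1)\}$, and the moreover clause is delegated to Lemma \ref{Lemma: map btwn alg cpl mor} (i), (ii) exactly as in the paper. The only cosmetic difference is that you close the cycle via (iii)$\Rightarrow$(i) by building $\iota$ directly (using injectivity of $\varphi_2\times\psi_2$ from Lemma \ref{Lemma: egmors subset} (ii) for well-definedness), whereas the paper proves (iii)$\Rightarrow$(ii) and treats (ii)$\Leftrightarrow$(i) as immediate; both arguments are sound.
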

\begin{proof}
	It is easy to check that (ii) $\Leftrightarrow$ (i) $\Rightarrow$ (iii).
	We show (iii) $\Rightarrow$ (ii)
	We fix $k_1\in K_1$ arbitrarily.
	We set $U := \{\varphi_1(k_1)\}\subset G$.
	By assumption, we get $\psi_1(k_1) \in \psi_1\big(\varphi_1\inv(U)\big) \subset \psi_2\big(\varphi_2\inv(U)\big)$.
	Thus there exists $k_2\in K_2$ such that $\psi_1(k_1) = \psi_2(k_2)$ and $\varphi_2(k_2)\in U=\{\varphi_1(k_1)\}$.
	We get 
	\[
	(\varphi_1\times\psi_1)(k_1) = (\varphi_1(k_1),\psi_1(k_1)) = (\varphi_2(k_2),\psi_2(k_2)) \in (\varphi_2\times\psi_2)(K_2).
	\]
	Hence (ii) holds.
	
	The last statement follows from Lemma \ref{Lemma: map btwn alg cpl mor} (i), (ii).
\end{proof}

\begin{coro}\label{Corollary: egmor equiv alg}
	Let $K_i$ be a groupoid, $\varphi_i\: K_i \rightarrow G$ be a fibrewise bijective groupoid homomorphism, and $\psi_i\: K_i \rightarrow H$ be a groupoid homomorphism with injective $\psi_i\0$ for $i = 1,2$.
	The following are equivalent:
	\begin{enumerate}[(i)]
		\item There exists a map $\iota \: K_1 \rightarrow K_2$ which makes the diagram in Proposition \ref{Proposition: egmors alg} (i) commute and satisfies $\iota\big(K_1\0\big)=K_2\0$.
		\item The image of $\varphi_1\times \psi_1$ coincides with the one of $\varphi_2\times \psi_2$ in $G \times H$.
		\item For every subset $U$ of $G$, $\psi_1\big(\varphi_1\inv(U)\big) = \psi_2\big(\varphi_2\inv(U)\big)$ holds.
	\end{enumerate}
\end{coro}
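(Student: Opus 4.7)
The plan is to bootstrap from Proposition \ref{Proposition: egmors alg} by applying it symmetrically in both directions, using its uniqueness clause to upgrade one-sided inclusions to bijections. The equivalence (ii) $\Leftrightarrow$ (iii) will follow immediately from the equivalence (ii) $\Leftrightarrow$ (iii) of the Proposition combined with its version obtained by swapping the indices $1$ and $2$: a two-sided inclusion of images in $G\times H$ is the equality in (ii), and similarly the two-sided inclusion $\psi_1(\varphi_1\inv(U))\subset\psi_2(\varphi_2\inv(U))$ together with its reverse is the equality in (iii).

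For (i) $\Rightarrow$ (ii), given $\iota\:K_1\to K_2$ making the triangle in Proposition \ref{Proposition: egmors alg} (i) commute, the Proposition itself already yields the inclusion $(\varphi_1\times\psi_1)(K_1)\subset(\varphi_2\times\psi_2)(K_2)$. Adding the extra hypothesis $\iota\big(K_1\0\big)=K_2\0$, together with the fact that $\varphi_i$ is fibrewise bijective and $\psi_i\0$ is injective, Lemma \ref{Lemma: map btwn alg cpl mor} (iv) shows that $\iota$ is bijective. Hence $\iota(K_1)=K_2$, and applying $\varphi_2\times\psi_2$ to both sides together with the commutation relation $(\varphi_2\times\psi_2)\circ\iota=\varphi_1\times\psi_1$ gives the reverse inclusion, so (ii) holds.

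For (ii) $\Rightarrow$ (i), applying Proposition \ref{Proposition: egmors alg} to each of the two inclusions separately produces unique injective groupoid homomorphisms $\iota_{12}\:K_1\to K_2$ and $\iota_{21}\:K_2\to K_1$ such that $\varphi_2\circ\iota_{12}=\varphi_1$, $\psi_2\circ\iota_{12}=\psi_1$, and symmetrically for $\iota_{21}$. The composition $\iota_{21}\circ\iota_{12}\:K_1\to K_1$ then makes the same triangle (from $K_1$ through itself to $G$ and $H$) commute as does $\id_{K_1}$, so by the uniqueness clause of Proposition \ref{Proposition: egmors alg} we have $\iota_{21}\circ\iota_{12}=\id_{K_1}$, and symmetrically $\iota_{12}\circ\iota_{21}=\id_{K_2}$. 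Therefore $\iota_{12}$ is a bijective groupoid homomorphism with inverse $\iota_{21}$, and since both preserve unit spaces we obtain $\iota_{12}\big(K_1\0\big)=K_2\0$.

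The construction is essentially routine once one observes that the Proposition's uniqueness allows one to identify the two compositions with identities; the only subtle step is verifying the hypotheses of Lemma \ref{Lemma: map btwn alg cpl mor} (iv) in the direction (i) $\Rightarrow$ (ii), which all follow directly from fibrewise bijectivity of $\varphi_1,\varphi_2$ and injectivity of $\psi_1\0,\psi_2\0$, so no genuine obstacle is expected.
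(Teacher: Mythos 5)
Your proposal is correct and follows essentially the same route as the paper: both arguments apply Proposition \ref{Proposition: egmors alg} symmetrically in the two directions, use its uniqueness clause to identify the compositions $\iota_{21}\circ\iota_{12}$ and $\iota_{12}\circ\iota_{21}$ with the identities, and invoke Lemma \ref{Lemma: map btwn alg cpl mor} (iv) to get bijectivity of $\iota$ in the direction starting from (i). The only difference is organizational (you prove (i) $\Leftrightarrow$ (ii) and (ii) $\Leftrightarrow$ (iii) directly, while the paper routes everything through an intermediate condition (i')), which does not change the substance.
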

\begin{proof}
	By Proposition \ref{Proposition: egmors alg}, both (ii) and (iii) are equivalent to the following;
	\begin{enumerate}
		\item[(i')] there exist a map $\iota \: K_1 \rightarrow K_2$ such that $\varphi_2\circ\iota = \varphi_1$ and $\psi_2\circ\iota = \psi_1$, and a map $\iota' \: K_2 \rightarrow K_1$ such that $\varphi_1\circ\iota' = \varphi_2$ and $\psi_1\circ\iota' = \psi_2$.
	\end{enumerate}
	The proof finishes if we show that (i) is equivalent to (i'). 
	Since Lemma \ref{Lemma: map btwn alg cpl mor} (iv) implies that the map $\iota$ in (i) is bijective,
	(i) implies (i').
	Assume that (i') holds. 
	Then $\iota$ and $\iota'$ are the inverses of each other by the uniqueness of $\iota$ in Proposition \ref{Proposition: egmors alg} (i).
	Thus $\iota$ is bijective.
	This implies (i).
	\end{proof}

Let $G,H$ be \'etale groupoids.

\begin{defi}
	A \emph{\egmor $(\varphi,\psi;K)$ from $G$ to $H$} is a couple of a continuous fibrewise bijective groupoid homomorphism $\varphi \: K \rightarrow G$ and a continuous groupoid homomorphism $\psi \: K \rightarrow H$ with open injective $\psi\0$, where $K$ is an \'etale groupoid.
	We write $(\varphi,\psi;K)$ from $G$ to $H$ as $(\varphi,\psi;K)\: G \rightarrow H$.
\end{defi}

\begin{defi}
	Let $(\varphi_i,\psi_i;K_i)$ be a \egmor from $G$ to $H$ with $i=1,2$.
	An \emph{equivalence} from $(\varphi_1,\psi_1;K_1)$ to $(\varphi_2,\psi_2;K_2)$ is a homeomorphic groupoid isomorphism $\iota$ from $K_1$ to $K_2$ which makes the diagram
	\[
	\begin{tikzcd}
		&K_1\arrow[ld,"\varphi_1"']\arrow[rd,"\psi_1"]\arrow[dd,"\iota"]&\\[-3mm]
		G&&H.\\[-3mm]
		&K_2\arrow[lu,"\varphi_2"]\arrow[ru,"\psi_2"']&
	\end{tikzcd}
	\]
	commute.
	Two \egmors $(\varphi_1,\psi_1;K_1)$ and $(\varphi_2,\psi_2;K_2)$ are said to be \emph{equivalent} if there exists an equivalence between them.
\end{defi}

We will regard the equivalence classes of \egmors as morphisms of the category $\EG$ of \'etale groupoids in Theorem \ref{Theorem: category EG}.
We prepare lemmas and propositions to give necessary and sufficient conditions in Corollary \ref{Corollary: egmor equiv} so that two \egmors are equivalent.

\begin{lemm}\label{Lemma: phipsi homeo}
	Let $(\varphi,\psi;K)$ be a \egmor from $G$ to $H$. 
	The groupoid homomorphism $\varphi\times\psi \: K \rightarrow G \times H$ is a homeomorphic groupoid isomorphism onto its image.
\end{lemm}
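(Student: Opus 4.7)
The plan is to verify in turn that $\varphi\times\psi$ is (a) an injective continuous groupoid homomorphism whose image is a subgroupoid of $G\times H$, and (b) an open map onto that image. Continuity and the groupoid homomorphism property are immediate from the corresponding properties of $\varphi$ and $\psi$. Injectivity follows from Lemma~\ref{Lemma: egmors subset}~(ii), whose two hypotheses (fibrewise injectivity of $\varphi$ and injectivity of $\psi\0$) are built into the definition of a \egmor. That the image is a subgroupoid of $G\times H$ is a short check: if $(g_i,h_i)=(\varphi(k_i),\psi(k_i))$ for $i=1,2$ and $d(g_1,h_1)=r(g_2,h_2)$, then $d(h_1)=r(h_2)$ together with injectivity of $\psi\0$ force $d(k_1)=r(k_2)$, so $(g_1g_2,h_1h_2)=(\varphi\times\psi)(k_1k_2)$ lies in the image, and inverses are handled by $(\varphi\times\psi)(k\inv)=(\varphi\times\psi)(k)\inv$.

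The remaining and main point is to show that $\varphi\times\psi$ is open onto its image. Since open bisections of $K$ form a basis for its topology (Lemma~\ref{Lemma: bisection}), it suffices to show that for every $k \in K$ one can find an open bisection $U$ of $K$ containing $k$ and an open subset $W \subset G\times H$ containing $(\varphi(k),\psi(k))$ with $W\cap(\varphi\times\psi)(K) \subset (\varphi\times\psi)(U)$. Starting from any open bisection $U_0$ of $K$ containing $k$ and an open bisection $V$ of $G$ containing $\varphi(k)$ (Lemma~\ref{Lemma: bisection} applied now to $G$), set $U := U_0 \cap \varphi\inv(V)$ and $W := V\times \psi(U)$. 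Then $U$ is an open bisection of $K$ containing $k$ with $\varphi(U) \subset V$, and $W$ is open in $G\times H$ because $\psi(U)$ is open in $H$ by Lemma~\ref{Lemma: psi 2}.

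The nontrivial verification that $W \cap (\varphi\times\psi)(K) \subset (\varphi\times\psi)(U)$ then unwinds through three successive applications of injectivity. Given $(\varphi(k'),\psi(k'))\in W$, choose $u \in U$ with $\psi(k')=\psi(u)$; then $\psi\0(d(k'))=\psi\0(d(u))$ and injectivity of $\psi\0$ give $d(k')=d(u)=:x$. Next, $\varphi(k'),\varphi(u)\in V$ both have domain $\varphi\0(x)$, so the injectivity of $d|_V$ (coming from the bisection property of $V$ via Lemma~\ref{Lemma: dr homeo on op bis}) forces $\varphi(k')=\varphi(u)$. Finally, the fibrewise injectivity of $\varphi_x\:K_x \to G_{\varphi\0(x)}$ yields $k'=u\in U$. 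The main difficulty to anticipate is precisely here: $\varphi$ is not assumed to be an open map, so one cannot simply use $\varphi(U)\times\psi(U)$ as a neighborhood in $G\times H$. The remedy is to pad $\varphi(U)$ out to a surrounding open bisection $V$ of $G$, trading the missing openness of $\varphi$ for the injectivity of the domain map on $V$.
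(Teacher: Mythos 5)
Your proof is correct and follows essentially the same route as the paper: injectivity via Lemma \ref{Lemma: egmors subset}~(ii), and openness onto the image by surrounding $\varphi(k)$ with an open bisection $V$ of $G$, taking $W=\psi(U\cap\varphi\inv(V))$, and then using injectivity of $\psi\0$, the bisection property of $V$, and fibrewise injectivity of $\varphi$ to pull points of $(V\times W)\cap(\varphi\times\psi)(K)$ back into $U$. The only differences are cosmetic (you shrink to an open bisection $U_0$ of $K$ and invoke Lemma \ref{Lemma: psi 2} where the paper invokes Lemma \ref{Lemma: phi open iff phi0 open}, and you explicitly check the image is a subgroupoid), so no changes are needed.
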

\begin{proof}
	By Lemma \ref{Lemma: egmors subset} (ii), $\varphi\times\psi$ is injective.
	We can easily check that $\varphi\times\psi$ is continuous.
	The proof finishes if we show that $(\varphi\times\psi)(U)$ is open in $(\varphi\times\psi)(K)$ for an open subset $U$ of $K$. 
	To this end, it suffices to find for each element $k_0 \in U$ open subsets $V$ and $W$ of $G$ and $H$ respectively such that 
	$(\varphi\times\psi)(k_0) \in (V \times W) \cap (\varphi\times\psi)(K) \subset (\varphi\times\psi)(U)$. 
	Take an element $k_0 \in U$. 
	By Lemma \ref{Lemma: bisection}, there exists an open bisection $V$ of $G$ containing $\varphi(k_0)$. 
	The subset $U \cap \varphi^{-1}(V)$ is an open subset of $K$ containing $k_0$. 
	Hence the subset $W := \psi(U \cap \varphi^{-1}(V))$ is an open subset of $W$ containing $\psi(k_0)$ by Lemma \ref{Lemma: phi open iff phi0 open}. 
	We have $(\varphi\times\psi)(k_0) \in (V \times W) \cap (\varphi\times\psi)(K)$. 
	We show $(V \times W) \cap (\varphi\times\psi)(K) \subset (\varphi\times\psi)(U)$. 
	An element in $(V \times W) \cap (\varphi\times\psi)(K)$ is in the form $(\varphi(k),\psi(k))$ for $k \in K$ with $\varphi(k)\in V$ and $\psi(k) \in W$. 
	Since $\psi(k) \in W$, there exists $k' \in U \cap \varphi^{-1}(V)$ with $\psi(k) = \psi(k')$. 
	Since $\psi\0$ is injective and $\psi\0(d(k)) = \psi\0(d(k'))$, $d(k) = d(k')$ holds.
	Hence we have $d(\varphi(k))=d(\varphi(k'))$. 
	Since both $\varphi(k)$ and $\varphi(k')$ are in the bisection $V$, we get $\varphi(k) = \varphi(k')$. 
	This implies $k = k'$ because $\varphi_u$ is injective for $u := d(k)(= d(k'))$.
	Thus we get $k \in U$. 
	Therefore we have $(V \times W) \cap (\varphi\times\psi)(K) \subset (\varphi\times\psi)(U)$. 
\end{proof}

Let $(\varphi_i,\psi_i;K_i)$ be a \egmor from $G$ to $H$ for $i = 1,2$. 

\begin{prop}\label{Proposition: egmors}
	The conditions (i)-(iii) in Proposition \ref{Proposition: egmors alg} are equivalent.
	The map $\iota$ in (i) is unique, and becomes automatically a homeomorphic groupoid isomorphism onto its image.
\end{prop}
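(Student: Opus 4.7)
The plan is to piggyback on the algebraic Proposition~\ref{Proposition: egmors alg} and then use Lemma~\ref{Lemma: phipsi homeo} to extract the topological content. First, I would observe that the hypotheses of Proposition~\ref{Proposition: egmors alg} are automatically met by any \egmor, since fibrewise bijectivity of $\varphi_i$ implies in particular that $(\varphi_i)_u$ is injective for all $u \in K_i\0$, and injectivity of $\psi_i\0$ is built into the definition. Moreover, conditions (i)--(iii) in Proposition~\ref{Proposition: egmors alg} are purely set-theoretic statements (no continuity is demanded of $\iota$ in~(i)), so their equivalence transfers verbatim, and we already know that $\iota$ in (i) is unique and an injective groupoid homomorphism.

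It remains to upgrade $\iota$ to a homeomorphic groupoid isomorphism onto its image. For this I would invoke Lemma~\ref{Lemma: phipsi homeo}, by which each $\varphi_i\times\psi_i\: K_i \to G\times H$ is a homeomorphic groupoid isomorphism onto its image. The commutativity of the triangle rewrites as $(\varphi_2\times\psi_2)\circ\iota = \varphi_1\times\psi_1$, and by condition~(ii) the image of $\varphi_1\times\psi_1$ is contained in that of $\varphi_2\times\psi_2$, so $\iota$ factors as
\[
\iota = (\varphi_2\times\psi_2)^{-1}\circ(\varphi_1\times\psi_1),
\]
where $(\varphi_2\times\psi_2)^{-1}$ denotes the inverse of the homeomorphism $\varphi_2\times\psi_2$ from $K_2$ onto its image, restricted to the subspace $\Im(\varphi_1\times\psi_1)$.

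This factorization exhibits $\iota$ as a composition of two homeomorphisms onto their images, hence $\iota$ is continuous, and its corestriction $\iota\: K_1 \to \iota(K_1)$ is a homeomorphism (the inverse being $(\varphi_1\times\psi_1)^{-1}\circ(\varphi_2\times\psi_2)|_{\iota(K_1)}$). Combined with the fact that $\iota$ is an injective groupoid homomorphism by Proposition~\ref{Proposition: egmors alg}, this makes $\iota$ a homeomorphic groupoid isomorphism onto its image. The only genuinely new input beyond the algebraic version is this topological upgrade, which is essentially a free consequence of Lemma~\ref{Lemma: phipsi homeo}; I therefore expect no substantial obstacle, the proof being bookkeeping that assembles the two previous results.
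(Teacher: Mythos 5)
Your proposal is correct and follows essentially the same route as the paper: the equivalence of (i)--(iii) and the algebraic properties of $\iota$ are imported from Proposition~\ref{Proposition: egmors alg} (via Lemma~\ref{Lemma: map btwn alg cpl mor}), and the topological upgrade is obtained by factoring $\iota = (\varphi_2\times\psi_2)\inv\circ(\varphi_1\times\psi_1)$ using Lemma~\ref{Lemma: phipsi homeo}, exactly as in the paper's proof.
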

\begin{proof}
	By Lemma \ref{Lemma: map btwn alg cpl mor}, the map $\iota$ is a unique map which makes the diagram commute and becomes a groupoid homomorphism.
	By Lemma \ref{Lemma: phipsi homeo}, $\varphi_1\times\psi_1 \: K_1 \rightarrow (\varphi_1\times\psi_1)(K_1)$ and $(\varphi_2\times\psi_2)\inv \: (\varphi_2\times\psi_2)(K_2) \rightarrow K_2$ are homeomorphic groupoid isomorphisms.
	By the uniqueness, $\iota$ coincides with the homeomorphic groupoid isomorphism $(\varphi_2\times\psi_2)\inv \circ (\varphi_1\times\psi_1)$ onto its image.
\end{proof}

\begin{coro}\label{Corollary: egmor equiv}
	\Egmors $(\varphi_1,\psi_1;K_1)$ and $(\varphi_2,\psi_2;K_2)$ are equivalent if and only if one of the three equivalent conditions (i)-(iii) in Corollary \ref{Corollary: egmor equiv alg} holds.
\end{coro}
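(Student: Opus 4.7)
The plan is to leverage the two results already established: Proposition~\ref{Proposition: egmors} (which promotes the abstract map $\iota$ to a homeomorphic groupoid isomorphism onto its image) together with Corollary~\ref{Corollary: egmor equiv alg} (which packages the algebraic equivalences).

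For the ``only if'' direction, suppose $(\varphi_1,\psi_1;K_1)$ and $(\varphi_2,\psi_2;K_2)$ are equivalent, so there is a homeomorphic groupoid isomorphism $\iota\:K_1\rightarrow K_2$ making the triangle commute. Since $\iota$ is a bijective groupoid homomorphism, it preserves the unit space: $\iota\big(K_1\0\big) = K_2\0$. This is exactly condition (i) of Corollary~\ref{Corollary: egmor equiv alg}.

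For the ``if'' direction, it suffices by Corollary~\ref{Corollary: egmor equiv alg} to assume condition (i) there: there exists $\iota\:K_1\rightarrow K_2$ making the diagram commute with $\iota\big(K_1\0\big)=K_2\0$. By Lemma~\ref{Lemma: map btwn alg cpl mor} (iv), this $\iota$ is automatically a bijective groupoid homomorphism. On the other hand, Proposition~\ref{Proposition: egmors} tells us that $\iota$ is a homeomorphic groupoid isomorphism onto its image (the unique map coinciding with $(\varphi_2\times\psi_2)\inv\circ(\varphi_1\times\psi_1)$ on its domain of definition). Combining bijectivity with the fact that $\iota$ is a homeomorphism onto its image yields that $\iota$ is a homeomorphism from $K_1$ onto all of $K_2$, hence an equivalence of couple morphisms.

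There is essentially no obstacle, since all the real work has been done in the preceding results. The only point to be careful about is that Corollary~\ref{Corollary: egmor equiv alg} is stated purely in the groupoid (algebraic) setting, so one must separately appeal to Proposition~\ref{Proposition: egmors} to obtain the topological part (homeomorphism) of the equivalence. The proof is therefore a two-line citation argument combining these two results.
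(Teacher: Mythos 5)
Your argument is correct and is exactly the intended one: the paper leaves this corollary without proof precisely because it follows immediately by combining Corollary \ref{Corollary: egmor equiv alg} (for the purely algebraic part, including $\iota\big(K_1\0\big)=K_2\0$ and bijectivity via Lemma \ref{Lemma: map btwn alg cpl mor} (iv)) with Proposition \ref{Proposition: egmors} (which upgrades $\iota$ to a homeomorphism onto its image), just as you do. No gaps; the only implicit point, that the hypotheses of Lemma \ref{Lemma: map btwn alg cpl mor} are met because the $\varphi_i$ are fibrewise bijective and the $\psi_i\0$ injective, is satisfied for \egmors.
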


\begin{prop}
	If $G\0$ is Hausdorff, the conditions (i)-(iii) in Proposition \ref{Proposition: egmors alg} are equivalent to the following;
	\begin{enumerate}
		\item[(iv)]  For every open bisection $U$ of $G$, $\psi_1\big(\varphi_1\inv(U)\big) \subset \psi_2\big(\varphi_2\inv(U)\big)$ holds.		
	\end{enumerate}
\end{prop}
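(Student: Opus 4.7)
The plan is to show (iii) $\Leftrightarrow$ (iv); since (i), (ii), (iii) are already known to be equivalent by Proposition \ref{Proposition: egmors alg}, this will close the proof. The direction (iii) $\Rightarrow$ (iv) is immediate because open bisections are particular subsets of $G$. The content of the statement lies in the converse, which is where the Hausdorff hypothesis on $G\0$ enters.

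Assuming (iv), to derive (iii) it suffices, by considering singleton subsets, to prove the following claim: for every $g \in G$ and every $k_1 \in K_1$ with $\varphi_1(k_1) = g$, there exists $k_2 \in K_2$ with $\varphi_2(k_2) = g$ and $\psi_2(k_2) = \psi_1(k_1)$. Let $\mathcal{V}$ denote the collection of open bisections of $G$ containing $g$. By Lemmas \ref{Lemma: bisection} and \ref{Lemma: dr homeo on op bis}, the family $\{d(V) \mid V \in \mathcal{V}\}$ is a neighborhood basis of $d(g)$ in $G\0$. For each $V \in \mathcal{V}$, condition (iv) supplies an element $k_2^V \in \varphi_2\inv(V)$ with $\psi_2(k_2^V) = \psi_1(k_1)$. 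The injectivity of $\psi_2\0$ applied to $\psi_2\0(d(k_2^V)) = d(\psi_2(k_2^V)) = d(\psi_1(k_1))$ shows that $d(k_2^V)$ is a single element $w \in K_2\0$ that does not depend on $V$. Hence $\varphi_2\0(w) = d(\varphi_2(k_2^V)) \in d(V)$ for every $V \in \mathcal{V}$, so $\varphi_2\0(w)$ lies in every member of a neighborhood basis of $d(g)$; Hausdorffness of $G\0$ then forces $\varphi_2\0(w) = d(g)$. Fixing any $V_0 \in \mathcal{V}$, the element $\varphi_2(k_2^{V_0}) \in V_0$ has domain $d(g)$, and the injectivity of $d|_{V_0}$ gives $\varphi_2(k_2^{V_0}) = g$. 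Thus $k_2 := k_2^{V_0}$ is the element we sought.

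The main obstacle is upgrading the \emph{local} information supplied by (iv) (membership of $\varphi_2(k_2^V)$ in each $V$) to the exact equality $\varphi_2(k_2) = g$ required by (iii). This upgrade succeeds precisely because Hausdorffness of $G\0$ collapses the intersection of the neighborhood basis $\{d(V)\}$ to the single point $\{d(g)\}$, thereby pinning the base unit down to a $w \in K_2\0$ with $\varphi_2\0(w) = d(g)$; once this is known, the bisection structure of a single $V_0 \in \mathcal{V}$ identifies $\varphi_2(k_2^{V_0})$ as $g$.
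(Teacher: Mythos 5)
Your argument is correct. It has the same skeleton as the paper's proof (fix $k_1$, use (iv) to produce a candidate $k_2$ with $\psi_2(k_2)=\psi_1(k_1)$, use Hausdorffness of $G\0$ to identify the images of the source units, then finish with injectivity of $d$ on a bisection), but the key unit-level step is implemented differently. The paper fixes a single bisection $V\ni\varphi_1(k_1)$, hence a single $k_2$, and then probes with arbitrary open subsets $U\subset G\0$ (these are open bisections of $G$, so (iv) applies to them), using Lemma \ref{Lemma: egmors subset}~(i) to convert $\psi_2(d(k_2))\in\psi_2\big(\varphi_2\inv(U)\big)$ into $\varphi_2(d(k_2))\in U$; that lemma quietly uses the fibrewise injectivity of $\varphi_2$. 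You instead apply (iv) to every open bisection $V$ through $g$, observe via injectivity of $\psi_2\0$ that all the resulting elements $k_2^V$ share one source unit $w$, and then use that the sets $d(V)$, $V\in\mathcal V$, form a neighborhood basis of $d(g)$ — a fact you assert from Lemmas \ref{Lemma: bisection} and \ref{Lemma: dr homeo on op bis} but which deserves the one-line justification that for an open $W\ni d(g)$ in $G\0$ one may replace a bisection $V_0\ni g$ by the open bisection $V_0\cap d\inv(W)$. With that small addition your route is complete, avoids Lemma \ref{Lemma: egmors subset}~(i) (and any use of fibrewise bijectivity of $\varphi_2$) at the cost of invoking (iv) for a whole family of bisections around $g$ rather than for subsets of the unit space; both arguments use Hausdorffness in exactly the same way, and your singleton reduction of (iii) is just condition (ii) restated, so the logical bookkeeping is equivalent to the paper's proof of (iv)~$\Rightarrow$~(ii).
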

\begin{proof}
	We get (iii) $\Rightarrow$ (iv) clearly.
	We show (iv) $\Rightarrow$ (ii).
	We fix $k_1\in K_1$ arbitrarily.
	There exists an open bisection $V$ of $G$ with $\varphi_1(k_1)\in V$ by Lemma \ref{Lemma: bisection}.
	By assumption, we get $\psi_1(k_1) \in \psi_1\big(\varphi_1\inv(V)\big) \subset \psi_2\big(\varphi_2\inv(V)\big)$.
	Thus there exists $k_2\in K_2$ such that $\psi_1(k_1) = \psi_2(k_2)$ and $\varphi_2(k_2)\in V$.
	
	If $\varphi_1(d(k_1)) \in U$ holds for an open subset $U$ of $G\0$, then $\psi_1(d(k_1)) \in \psi_1(\varphi_1\inv(U))$ holds.
	By assumption, we get
	\[
	\psi_2(d(k_2)) = \psi_1(d(k_1)) \in \psi_1(\varphi_1\inv(U)) \subset \psi_2\big(\varphi_2\inv(U)\big).
	\]
	This implies $\varphi_2(d(k_2)) \in U$ by Lemma \ref{Lemma: egmors subset} (i).
	
	By the above discussion, we have the fact that $\varphi_1(d(k_1)) \in U$ implies $\varphi_2(d(k_2)) \in U$ for every open subset $U$ of $G\0$.
	Since $G\0$ is Hausdorff, we get $\varphi_1(d(k_1)) = \varphi_2(d(k_2))$.
	Hence we get $\varphi_1(k_1) = \varphi_2(k_2)$ since both $\varphi_1(k_1)$ and $\varphi_2(k_2)$ are included in the same bisection $V$.
	Thus we have
	\[
	(\varphi_1\times\psi_1)(k_1) = (\varphi_1(k_1),\psi_1(k_1)) = (\varphi_2(k_2),\psi_2(k_2)) \in (\varphi_2\times\psi_2)(K_2).
	\]
	Hence (ii) holds.
\end{proof}

\begin{coro}
	If $G\0$ is Hausdorff, the conditions (i)-(iii) in Corollary \ref{Corollary: egmor equiv alg} are equivalent to the following;
	\begin{enumerate}
		\item[(iv)]  For every open bisection $U$ of $G$, $\psi_1\big(\varphi_1\inv(U)\big) = \psi_2\big(\varphi_2\inv(U)\big)$ holds.		
	\end{enumerate}
\end{coro}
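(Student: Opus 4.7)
The plan is to deduce this symmetric corollary from the preceding proposition by decomposing the equality in (iv) into a pair of opposite inclusions and invoking the preceding result twice. The direction (iii) of Corollary \ref{Corollary: egmor equiv alg} $\Rightarrow$ (iv) here is immediate, since every open bisection of $G$ is in particular a subset of $G$, so equality over all subsets specializes to equality over open bisections.

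For the converse, I would observe that (iv) is exactly the conjunction of the two inclusions $\psi_1\big(\varphi_1\inv(U)\big) \subset \psi_2\big(\varphi_2\inv(U)\big)$ and $\psi_2\big(\varphi_2\inv(U)\big) \subset \psi_1\big(\varphi_1\inv(U)\big)$, ranging over open bisections $U$ of $G$. Applying the preceding Proposition with $(\varphi_1,\psi_1;K_1)$ and $(\varphi_2,\psi_2;K_2)$ in the given order promotes the first inclusion from open bisections to all subsets of $G$, yielding condition (iii) of Proposition \ref{Proposition: egmors alg}. Applying the same Proposition with the roles of the two couple morphisms exchanged promotes the reverse inclusion to all subsets of $G$. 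Conjoining the two inclusions gives $\psi_1\big(\varphi_1\inv(U)\big) = \psi_2\big(\varphi_2\inv(U)\big)$ for every subset $U$ of $G$, which is precisely (iii) of Corollary \ref{Corollary: egmor equiv alg}.

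No substantive obstacle is anticipated: the argument is the straightforward symmetrization of the preceding Proposition, and the Hausdorff hypothesis on $G\0$ enters only through that Proposition, which is symmetric in the two couple morphisms.
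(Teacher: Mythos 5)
Your proposal is correct and is essentially the argument the paper intends (the corollary is stated without proof precisely because it follows by symmetrizing the preceding Proposition): the implication (iii)~$\Rightarrow$~(iv) is trivial, and (iv) splits into the two opposite inclusions over open bisections, each of which the Proposition (applied with the two \egmors in either order, both satisfying its hypotheses) upgrades to an inclusion over all subsets, giving (iii) of Corollary \ref{Corollary: egmor equiv alg}. No gaps.
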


\begin{defi}
	We denote the equivalence class of a \egmor $(\varphi,\psi;K)$ from $G$ to $H$ as $[\varphi,\psi;K] \: G \rightarrow H$, or just $[\varphi,\psi;K]$, or $[\varphi,\psi]$ for simplicity. 
	We call an equivalence class $[\varphi,\psi;K]$ also as a \egmor.
\end{defi}

To define composition of (equivalence classes of) \egmors, we recall the pull-back of groupoid homomorphisms.
Let $G$, $K_1$, $K_2$ be groupoids, and $\psi\: K_1\rightarrow G$, $\varphi\:K_2\rightarrow G$ be groupoid homomorphisms.
The \emph{pull-back groupoid} is constructed as follows: 
We define the arrow space as
\[
K_1\times_{G} K_2:=\{ (k_1,k_2)\in K_1\times K_2 \mid \psi(k_1)=\varphi(k_2) \},
\]
and the unit space as 
\[
\big(K_1\times_{G} K_2\big)\0 := \big(K_1\times_{G} K_2\big) \cap \big(K_1\0 \times K_2\0\big).
\]
The domain map, the range map, the multiplication, and the inverse map are defined component-wise. 
We define groupoid homomorphisms $\tilde{\varphi}\: K_1\times_{G} K_2\rightarrow K_1$ and $\tilde{\psi}\: K_1\times_{G} K_2\rightarrow K_2$ as
\[
\tilde{\varphi}(k_1,k_2) := k_1, \hspace{5mm} \tilde{\psi}(k_1,k_2) := k_2, 
\]
for $(k_1,k_2) \in K_1\times_{G} K_2$. 
The diagram 
\[
\begin{tikzcd}
&K_1\times_{G} K_2 \arrow[dl,"\widetilde{\varphi}"']\arrow[dr,"\widetilde{\psi}"]&\\[-5mm]
K_1\arrow[dr,"\psi"']&&K_2\arrow[dl,"\varphi"]\\[-5mm]
&G&
\end{tikzcd}
\]
is commutative.


\begin{lemm}\label{Lemma: pull-back 1}
	For a subset $U \subset K_1$, we have 
	\[
	\varphi\inv\big( \psi(U) \big) = \tilde{\psi}\big( \tilde{\varphi}\inv(U) \big).
	\]
\end{lemm}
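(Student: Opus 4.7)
The plan is to prove the equality by a direct double-inclusion argument, purely by unwinding the definitions of the pull-back groupoid and its two projections $\tilde{\varphi}$ and $\tilde{\psi}$. No hypotheses beyond the fact that $K_1 \times_G K_2$ is the set-theoretic fibre product will be needed, so the statement is really a lemma about fibre products of sets; the groupoid structure plays no role.

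First I would spell out the right-hand side. By definition, $\tilde{\varphi}^{-1}(U)$ is the set of pairs $(k_1,k_2) \in K_1 \times_G K_2$ with $k_1 \in U$, and $\tilde{\psi}(\tilde{\varphi}^{-1}(U))$ is the set of $k_2 \in K_2$ that appear as the second coordinate of some such pair. In other words, $\tilde{\psi}(\tilde{\varphi}^{-1}(U)) = \{k_2 \in K_2 \mid \exists\, k_1 \in U,\ \psi(k_1) = \varphi(k_2)\}$. The left-hand side $\varphi^{-1}(\psi(U))$ is, directly from the definitions, $\{k_2 \in K_2 \mid \varphi(k_2) \in \psi(U)\} = \{k_2 \in K_2 \mid \exists\, k_1 \in U,\ \varphi(k_2) = \psi(k_1)\}$. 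These descriptions are manifestly identical.

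To present this cleanly, I would write two short inclusions. For $\subset$, take $k_2 \in \varphi^{-1}(\psi(U))$; choose $k_1 \in U$ with $\psi(k_1) = \varphi(k_2)$. Then $(k_1,k_2) \in K_1 \times_G K_2$ lies in $\tilde{\varphi}^{-1}(U)$, and $\tilde{\psi}(k_1,k_2) = k_2$. For $\supset$, take $k_2 \in \tilde{\psi}(\tilde{\varphi}^{-1}(U))$; pick a witness $(k_1,k_2) \in \tilde{\varphi}^{-1}(U)$. Then $k_1 \in U$ and $\psi(k_1) = \varphi(k_2)$, so $\varphi(k_2) \in \psi(U)$.

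There is no real obstacle here; the statement is essentially the universal property of the fibre product. The only thing to be careful about is not to invoke any topological, groupoid-theoretic, or fibrewise-bijectivity assumption, since none is needed or available in this setup.
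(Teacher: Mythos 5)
Your proof is correct and is essentially identical to the paper's: both argue by a direct double inclusion, unwinding the definitions of the fibre product and of $\tilde{\varphi}$, $\tilde{\psi}$, with no topological or groupoid-theoretic hypotheses used. Nothing further is needed.
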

\begin{proof}
	Take $k_2 \in \widetilde{\psi}\big(\widetilde{\varphi}\inv(U)\big)$ arbitrarily.
	There exists $k \in K_1\times_G K_2$ such that $k_2 = \widetilde{\psi}(k)$ and $\widetilde{\varphi}(k) \in U$.
	We get $\varphi(k_2) = \varphi\big(\widetilde{\psi}(k)\big) = \psi(\widetilde{\varphi}(k)) \in \psi(U)$.
	Hence we have $k_2 \in \varphi\inv(\psi(U))$.
	
	Take $k_2 \in \varphi\inv(\psi(U))$ arbitrarily.
	There exists $k_1\in U$ with $\psi(k_1) = \varphi(k_2)$.
	We get $(k_1,k_2) \in K_1\times_G K_2$ and $\widetilde{\varphi}(k_1,k_2) = k_1 \in U$.
	Hence we have $k_2 = \widetilde{\psi}(k_1,k_2) \in \widetilde{\psi}\big(\widetilde{\varphi}\inv(U)\big)$.
\end{proof}

\begin{lemm}\label{Lemma: pull-back fibrewise}
	If $\varphi$ is fibrewise bijective, then so is $\widetilde{\varphi}$.
\end{lemm}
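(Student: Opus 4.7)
The plan is to unpack the definitions and reduce the claim to a direct application of the fibrewise bijectivity of $\varphi$. A unit of $K_1 \times_G K_2$ has the form $(u_1, u_2)$ with $u_i \in K_i\0$ and $\psi(u_1) = \varphi(u_2)$. Its fibre under $\widetilde{\varphi}$ consists of pairs $(k_1, k_2)$ with $d(k_1) = u_1$, $d(k_2) = u_2$ and $\psi(k_1) = \varphi(k_2)$, and the map sends $(k_1,k_2) \mapsto k_1$. So I need to show this map is a bijection onto $(K_1)_{u_1}$.

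For injectivity, I would fix $(k_1, k_2)$ and $(k_1, k_2')$ in the fibre above $(u_1,u_2)$ with the same first coordinate $k_1$. Then $\varphi(k_2) = \psi(k_1) = \varphi(k_2')$ and $d(k_2) = u_2 = d(k_2')$, so $k_2, k_2' \in (K_2)_{u_2}$. Applying the injectivity half of $\varphi_{u_2}\colon (K_2)_{u_2} \rightarrow G_{\varphi(u_2)}$ yields $k_2 = k_2'$.

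For surjectivity, I would start from an arbitrary $k_1 \in (K_1)_{u_1}$ and observe that $\psi(k_1) \in G_{\psi(u_1)} = G_{\varphi(u_2)}$. The surjectivity half of $\varphi_{u_2}$ then produces $k_2 \in (K_2)_{u_2}$ with $\varphi(k_2) = \psi(k_1)$, and by construction $(k_1, k_2)$ lies in $K_1 \times_G K_2$ and is mapped to $k_1$ by $\widetilde{\varphi}$.

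No step looks delicate here: this really is just a rephrasing of the universal property of the pull-back in the fibrewise setting, and the verification is a one-line use of fibrewise bijectivity of $\varphi$ in each direction. The only thing to keep in mind is that the unit condition $\psi(u_1) = \varphi(u_2)$ is precisely what makes $\psi(k_1)$ land in the right fibre of $G$ so that $\varphi_{u_2}$ can be applied, which is why I would state this identification explicitly before splitting into the two cases.
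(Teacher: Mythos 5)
Your proof is correct and is essentially the paper's argument: the paper simply packages your injectivity and surjectivity checks into the single observation that $k_1 \mapsto \big(k_1, \varphi_{u_2}\inv(\psi(k_1))\big)$ is an explicit inverse of $\widetilde{\varphi}_{(u_1,u_2)}$. Your remark that the unit condition $\psi(u_1)=\varphi(u_2)$ places $\psi(k_1)$ in the correct fibre $G_{\varphi(u_2)}$ is exactly the point that makes this work.
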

\begin{proof}
	The map $\tilde{\varphi}_{(u,v)}\: \left(K_1\times_G K_2\right)_{(u,v)} \rightarrow (K_1)_u$ is a bijection because a map defined as 
	\begin{align*}
	(K_1)_u\rightarrow \left(K_1\times_G K_2\right)_{(u,v)}; k_1\mapsto \big( k_1 , {\varphi_v}\inv\big(\psi(k_1)\big) \big)
	\end{align*}
	is the inverse map of $\tilde{\varphi}_{(u,v)}$ for every $(u,v)\in \left(K_1\times_G K_2\right)\0$.
\end{proof}

\begin{lemm}\label{Lemma: pull-back psi0 inj}
	If $\psi\0$ is injective, then so is $\widetilde{\psi}\0$.
\end{lemm}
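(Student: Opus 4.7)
The plan is to unpack the definitions directly. An element of $(K_1 \times_G K_2)\0$ is a pair $(u_1,u_2) \in K_1\0 \times K_2\0$ satisfying the pull-back condition $\psi(u_1) = \varphi(u_2)$, and by definition $\widetilde{\psi}\0(u_1,u_2) = u_2$.

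To prove injectivity, I would take two elements $(u_1,u_2),(u_1',u_2') \in (K_1 \times_G K_2)\0$ with $\widetilde{\psi}\0(u_1,u_2) = \widetilde{\psi}\0(u_1',u_2')$, that is, with $u_2 = u_2'$. Then from the two pull-back conditions
\[
\psi(u_1) = \varphi(u_2) = \varphi(u_2') = \psi(u_1'),
\]
and since $u_1, u_1' \in K_1\0$, this reads $\psi\0(u_1) = \psi\0(u_1')$. By the hypothesis that $\psi\0$ is injective, $u_1 = u_1'$, whence $(u_1,u_2) = (u_1',u_2')$.

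There is no real obstacle here; the lemma is immediate from the definition of the pull-back, the fact that the first coordinate of a unit pair is forced by the pull-back equation $\psi(u_1) = \varphi(u_2)$, and the injectivity of $\psi\0$ on $K_1\0$.
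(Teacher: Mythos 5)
Your proof is correct and is exactly the ``simple calculation'' the paper leaves to the reader: the pull-back condition $\psi(u_1)=\varphi(u_2)$ determines the first coordinate of a unit pair once $\psi\0$ is injective. Nothing to add.
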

\begin{proof}
	This follows from a simple calculation.
\end{proof}

Assume that $G,K_1,K_2$ are \'etale groupoids, and that $\psi\: K_1\rightarrow G$, $\varphi\:K_2\rightarrow G$ are continuous.
The groupoid $K_1\times_{G} K_2$ is \'etale with respect to the product topology.
The groupoid homomorphisms $\widetilde{\varphi}\: K_1\times_{G} K_2\rightarrow K_1$ and $\widetilde{\psi}\: K_1\times_{G} K_2\rightarrow K_2$ are continuous.

\begin{lemm}\label{Lemma: pull-back psi0 open}
	If $\psi\0$ is open, then so is $\widetilde{\psi}\0$.
\end{lemm}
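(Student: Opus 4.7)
The plan is to verify openness of $\widetilde{\psi}\0$ directly on a basis of open sets of the unit space $\bigl(K_1\times_G K_2\bigr)\0$. By definition this unit space carries the subspace topology from $K_1\0 \times K_2\0$, so a basis consists of sets of the form
\[
W_{U_1,U_2} := (U_1 \times U_2) \cap \bigl(K_1\times_G K_2\bigr)\0
\]
with $U_1 \subset K_1\0$ and $U_2 \subset K_2\0$ open. Since $\widetilde{\psi}\0$ is the second projection restricted to $\bigl(K_1\times_G K_2\bigr)\0$, it suffices to compute the image $\widetilde{\psi}\0(W_{U_1,U_2})$ and check it is open in $K_2\0$.

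The key computation is that
\[
\widetilde{\psi}\0(W_{U_1,U_2}) = U_2 \cap \bigl(\varphi\0\bigr)\inv\bigl(\psi\0(U_1)\bigr),
\]
which follows directly from the definition of $K_1\times_G K_2$: an element $u_2 \in U_2$ lies in the image exactly when there is some $u_1 \in U_1$ with $\psi(u_1) = \varphi(u_2)$. Since $\psi\0$ is open by hypothesis, $\psi\0(U_1)$ is open in $G\0$; since $\varphi\0$ is continuous (which follows from the continuity of $\varphi$ together with the fact that $G\0$ is open in $G$, by Proposition~\ref{Proposition: G0 open}, so that $\varphi\inv(G\0) = K_2\0$ receives the correct subspace structure), the preimage $\bigl(\varphi\0\bigr)\inv\bigl(\psi\0(U_1)\bigr)$ is open. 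The intersection with the open set $U_2$ is therefore open, completing the proof.

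The main obstacle is essentially bookkeeping rather than content: one must be careful that the image under $\widetilde{\psi}\0$ of a basic open set really is an intersection with a preimage under $\varphi\0$ rather than under $\varphi$ itself, and that the identification of the unit space of the pullback with a subspace of $K_1\0\times K_2\0$ is being used correctly. Once the basis is chosen and the image formula written down, openness is immediate from the hypothesis and the continuity of $\varphi\0$.
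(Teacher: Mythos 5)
Your proof is correct and follows essentially the same route as the paper: the image of a basic open set $(U_1\times U_2)\cap\bigl(K_1\times_G K_2\bigr)\0$ under $\widetilde{\psi}\0$ is computed to be $\bigl(\varphi\0\bigr)\inv\bigl(\psi\0(U_1)\bigr)\cap U_2$, which is open by openness of $\psi\0$ and continuity of $\varphi\0$. One small remark: your parenthetical justification of the continuity of $\varphi\0$ via $\varphi\inv\bigl(G\0\bigr)=K_2\0$ is unnecessary (and that equality need not hold for a general continuous groupoid homomorphism); continuity of $\varphi\0$ follows simply because it is the restriction of the continuous map $\varphi$ to $K_2\0$ with image contained in $G\0$.
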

\begin{proof}
	For open subsets $U \subset K_1\0$ and $V\subset K_2\0$, we get
	\[
	\widetilde{\psi}\0\big( (U\times V) \cap (K_1\0\times_G K_2\0) \big) = \big(\varphi\0\big)\inv\big( \psi\0(U) \big) \cap V
	\]
	by a simple calculation.
	This subset is open because $\psi\0$ is open.
	Hence $\widetilde{\psi}\0$ is open.
\end{proof}

Assume that $\varphi$ is fibrewise bijective, and that $\psi$ has open injective $\psi\0$.
By Lemma \ref{Lemma: pull-back fibrewise}, \ref{Lemma: pull-back psi0 inj}, and \ref{Lemma: pull-back psi0 open}, $\widetilde{\psi}$ has open injective $\widetilde{\psi}\0$, and $\widetilde{\varphi}$ is fibrewise bijective.
For an open bisection $U$ of $K_1$, $\tilde{\varphi}\inv(U)$ is an open bisection by Lemma \ref{Lemma: varphi inv}.
The maps $\psi|_U$ and $\widetilde{\psi}|_{\widetilde{\varphi}\inv(U)}$ are homeomorphisms onto their images by Lemma \ref{Lemma: psi 2}.

\begin{lemm}\label{Lemma: pull-back 2}
	For an open bisection $U$ of $K_1$,
	\[
	\psi|_U\inv\circ\varphi = \widetilde{\varphi}\circ\widetilde{\psi}|_{\widetilde{\varphi}\inv(U)}\inv
	\]
	holds as partial maps from $K_2$ to $K_1$.
\end{lemm}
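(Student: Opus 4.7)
The plan is to show that the two partial maps from $K_2$ to $K_1$ have the same domain and agree pointwise on that common domain. Both the domain equality and the pointwise equality are essentially unfoldings of the pull-back construction combined with Lemma~\ref{Lemma: pull-back 1}.

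First, I would identify the domains. Since $\psi|_U \: U \to \psi(U)$ is a homeomorphism (Lemma~\ref{Lemma: psi 2}), the domain of $\psi|_U\inv \circ \varphi$ is $\varphi\inv(\psi(U))$. On the other hand, since $\widetilde{\varphi}\inv(U)$ is an open bisection of $K_1 \times_G K_2$ (Lemma~\ref{Lemma: varphi inv}(iv) together with Lemma~\ref{Lemma: pull-back fibrewise}) and $\widetilde{\psi}$ restricted to it is a homeomorphism onto its image, the domain of $\widetilde{\varphi} \circ \widetilde{\psi}|_{\widetilde{\varphi}\inv(U)}\inv$ is $\widetilde{\psi}(\widetilde{\varphi}\inv(U))$. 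These two subsets of $K_2$ coincide by Lemma~\ref{Lemma: pull-back 1}.

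Second, I would verify the values agree. Fix $k_2$ in the common domain. The left-hand side returns the unique $k_1 \in U$ with $\psi(k_1) = \varphi(k_2)$. For the right-hand side, the unique element of $\widetilde{\varphi}\inv(U)$ mapped by $\widetilde{\psi}$ to $k_2$ is a pair $(k_1, k_2)$ with $k_1 \in U$ and $\psi(k_1) = \varphi(k_2)$ (the latter by definition of the pull-back); applying $\widetilde{\varphi}$ then returns this same $k_1$. Thus the two partial maps coincide.

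There is no real obstacle here; the argument is entirely a bookkeeping unfolding of definitions. The only thing one needs to be slightly careful about is the uniqueness assertion used on each side, but on the left this is the injectivity of $\psi$ on the bisection $U$, and on the right it is the injectivity of $\widetilde{\psi}$ on the bisection $\widetilde{\varphi}\inv(U)$, both of which are already in hand.
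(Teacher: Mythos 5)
Your proposal is correct and follows essentially the same route as the paper's proof: identify both domains, observe they coincide via Lemma \ref{Lemma: pull-back 1}, and then check pointwise agreement by exhibiting the pair $(k_1,k_2)\in\widetilde{\varphi}\inv(U)$ with $\widetilde{\psi}(k_1,k_2)=k_2$ and applying $\widetilde{\varphi}$. No gaps; the uniqueness facts you invoke are exactly the bisection injectivity statements already established in the surrounding lemmas.
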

\begin{proof}
	The domain of $\psi|_U\inv\circ\varphi$ is $\varphi\inv(\psi(U))$, and that of $\tilde{\varphi}\circ\tilde{\psi}|_{\tilde{\varphi}\inv(U)}\inv$ is $\tilde{\psi}(\tilde{\varphi}\inv(U))$. 
	They are the same open bisection of $K_2$ by Lemma \ref{Lemma: pull-back 1}.
	For every $k_2 \in \varphi\inv(\psi(U))$, we set $k_1 := \psi|_U\inv(\varphi(k_2)) \in U$.
	The pair $(k_1,k_2)$ belongs to $\tilde{\varphi}\inv(U)$, and $\tilde{\psi}(k_1,k_2) = k_2$ holds.
	Thus we get 
	\[
	\tilde{\varphi} \big(\tilde{\psi}|_{\tilde{\varphi}\inv(U)}\inv(k_2) \big) = \tilde{\varphi}(k_1,k_2) = k_1. \qedhere
	\]
\end{proof}

\begin{lemm}\label{Lemma: pull-back phi0 proper}
	If $\varphi\0$ is proper, then so is $\widetilde{\varphi}\0$.
\end{lemm}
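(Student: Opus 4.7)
The plan is to exploit the standing assumption that $\psi\0 \: K_1\0 \to G\0$ is open and injective (and continuous), which forces $\psi\0$ to be a homeomorphism onto its image $\psi\0\big(K_1\0\big) \subset G\0$. This in turn allows me to realize $\widetilde{\psi}\0 \: (K_1\times_G K_2)\0 \to K_2\0$ as a homeomorphism onto its image, and thus reduce properness of $\widetilde{\varphi}\0$ to the already-assumed properness of $\varphi\0$.

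First I would observe that a continuous, injective, open map is a homeomorphism onto its image; applied to $\psi\0$, the inverse $(\psi\0)\inv\: \psi\0\big(K_1\0\big) \to K_1\0$ is continuous. Next I would show that $\widetilde{\psi}\0\: (u,v) \mapsto v$ is a homeomorphism from $(K_1\times_G K_2)\0$ onto the subspace $(\varphi\0)\inv\big(\psi\0(K_1\0)\big)$ of $K_2\0$. Injectivity follows from injectivity of $\psi\0$ (given $v$, the only admissible $u$ is $(\psi\0)\inv(\varphi\0(v))$); the image description is immediate; and the inverse map $v\mapsto \big((\psi\0)\inv(\varphi\0(v)),v\big)$ is continuous by the previous step.

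For the conclusion, given a compact $C\subset K_1\0$, a direct calculation from the definitions gives
\[
\widetilde{\psi}\0\big((\widetilde{\varphi}\0)\inv(C)\big) = (\varphi\0)\inv\big(\psi\0(C)\big).
\]
The set $\psi\0(C)$ is compact as a continuous image of $C$, hence $(\varphi\0)\inv\big(\psi\0(C)\big)$ is compact by the properness of $\varphi\0$. Transporting this back across the homeomorphism $\widetilde{\psi}\0$ shows that $(\widetilde{\varphi}\0)\inv(C)$ is compact, which is what we need.

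The only real subtlety—rather than a hard obstacle—is to avoid the tempting shortcut of using the inclusion $(\widetilde{\varphi}\0)\inv(C) \subset C \times (\varphi\0)\inv\big(\psi\0(C)\big)$ together with a closedness argument: the latter would require $G\0$ to be Hausdorff to conclude that the equalizer defining $(K_1 \times_G K_2)\0$ is closed in the product, and Hausdorffness is not assumed here. Leveraging the injectivity and openness of $\psi\0$ to turn $\widetilde{\psi}\0$ into a genuine homeomorphism sidesteps this issue entirely.
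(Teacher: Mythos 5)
Your proposal is correct and follows essentially the same route as the paper: both arguments use that $\widetilde{\psi}\0$ is a homeomorphism onto its image (the paper quotes its earlier lemmas on openness and injectivity of $\widetilde{\psi}\0$, you rebuild the inverse explicitly via $(\psi\0)\inv\circ\varphi\0$), identify $\widetilde{\psi}\0\big((\widetilde{\varphi}\0)\inv(C)\big)$ with $(\varphi\0)\inv(\psi\0(C))$, and conclude by properness of $\varphi\0$. No further comment is needed.
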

\begin{proof}
	Take a compact subset $C$ of $K_1\0$.
	Since $\widetilde{\psi}\0$ is open and injective, $\widetilde{\psi}\0$ is a homeomorphism onto the image. 
	Hence $\big(\widetilde{\varphi}\0\big)\inv(C)$ is homeomorphic to $\widetilde{\psi}\0\big( \big(\widetilde{\varphi}\0\big)\inv(C)\big)$ 
	which coincides with $\big(\varphi\0\big)\inv(\psi\0(C))$ by Lemma \ref{Lemma: pull-back 1} (see also Lemma \ref{Lemma: varphi inv} (ii)).
	This subset of $K_2\0$ is compact because $\varphi\0$ is proper.
	Thus the subset $\big(\widetilde{\varphi}\0\big)\inv(C)$ of $(K_1\times_G K_2)\0$ is compact.
\end{proof}


Now we define composition of \egmors.

\begin{prop}\label{Proposition: comp cpl mor}
	Let $G_i$ be an \'etale groupoid with $i=1,2,3$, and $(\varphi_i,\psi_i;K_i)$ be a \egmor from $G_i$ to $G_{i+1}$ with $i=1,2$.
	The triplet 
	\[
	\big( \varphi_1\circ\widetilde{\varphi_2}, \psi_2\circ\widetilde{\psi_1}; K_1\times_{G_2} K_2 \big)
	\]
	is a \egmor from $G_1$ to $G_3$;
	\[
	\begin{tikzcd}[ampersand replacement=\&]
	\&\& K_1 \times_{G_2} K_2 \arrow[ld,"\widetilde{\varphi_2}"']\arrow[rd,"\widetilde{\psi_1}"] \&\&\\
	\& K_1 \arrow[ld,"\varphi_1"']\arrow[rd,"\psi_1"] \&\& K_2\arrow[ld,"\varphi_2"']\arrow[rd,"\psi_2"] \&\\
	G_1\&\& G_2 \&\& G_3.
	\end{tikzcd}
	\]
	If $( \varphi_i, \psi_i; K_i )$ is equivalent to a \egmor $( \varphi'_i, \psi'_i; K'_i )$ for $i=1,2$, then $\big(\varphi_1\circ\widetilde{\varphi_2}, \psi_2\circ\widetilde{\psi_1}; K_1\times_{G_2} K_2 \big)$ is equivalent to $\big(\varphi'_1\circ\widetilde{\varphi'_2}, \psi'_2\circ\widetilde{\psi'_1}; K'_1\times_{G_2} K'_2 \big)$.
\end{prop}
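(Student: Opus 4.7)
The plan is to establish the two assertions in sequence; both reduce to invoking pull-back lemmas already proved. The \'etale structure on $K_1\times_{G_2}K_2$ and continuity of $\widetilde{\varphi_2},\widetilde{\psi_1}$ were set up before Lemma \ref{Lemma: pull-back psi0 open}, so the task for the first assertion is to verify the two defining conditions of a \egmor for the maps $\varphi_1\circ\widetilde{\varphi_2}\:K_1\times_{G_2}K_2\rightarrow G_1$ and $\psi_2\circ\widetilde{\psi_1}\:K_1\times_{G_2}K_2\rightarrow G_3$. For fibrewise bijectivity of $\varphi_1\circ\widetilde{\varphi_2}$, Lemma \ref{Lemma: pull-back fibrewise} promotes the hypothesis on $\varphi_2$ to the same property for $\widetilde{\varphi_2}$, and then Remark \ref{Remark: composition fib bij} produces the property for the composition with $\varphi_1$. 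For open injectivity of $(\psi_2\circ\widetilde{\psi_1})\0$, Lemmas \ref{Lemma: pull-back psi0 inj} and \ref{Lemma: pull-back psi0 open} upgrade the hypothesis on $\psi_1\0$ to the same property for $\widetilde{\psi_1}\0$, and Remark \ref{Remark: comp psi0 open inj} then handles the composition with $\psi_2\0$.

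For the equivalence claim, let $\iota_i\:K_i\rightarrow K'_i$ realise the equivalence of $(\varphi_i,\psi_i;K_i)$ with $(\varphi'_i,\psi'_i;K'_i)$ for $i=1,2$. The natural candidate is $\iota(k_1,k_2):=(\iota_1(k_1),\iota_2(k_2))$ on $K_1\times_{G_2}K_2$. The defining equation $\psi_1(k_1)=\varphi_2(k_2)$ combined with $\psi'_1\circ\iota_1=\psi_1$ and $\varphi'_2\circ\iota_2=\varphi_2$ shows $\iota$ lands in $K'_1\times_{G_2}K'_2$. Since each $\iota_i$ is a homeomorphic groupoid isomorphism, so is $\iota_1\times\iota_2$ on $K_1\times K_2$, and since the pull-back groupoids carry the subspace topology $\iota$ inherits the required property, its inverse being the analogous restriction of $\iota_1\inv\times\iota_2\inv$. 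Finally, the two triangles in the equivalence diagram commute by a direct computation using $\varphi'_1\circ\iota_1=\varphi_1$ and $\psi'_2\circ\iota_2=\psi_2$.

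There is no substantive obstacle; the proof is an assembly of the pull-back lemmas for fibrewise bijectivity and for openness/injectivity of the unit map, together with the essentially trivial observation that the cartesian product of two equivalences restricts to an equivalence of the pull-back groupoids. The only point that calls for any care is confirming that the candidate $\iota$ actually takes values in $K'_1\times_{G_2}K'_2$, which is what the commutativity of the two given equivalence diagrams at the level of $\psi_1,\varphi_2$ is designed for.
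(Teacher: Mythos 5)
Your proposal is correct and follows essentially the same route as the paper: the first assertion is obtained by combining Lemmas \ref{Lemma: pull-back fibrewise}, \ref{Lemma: pull-back psi0 inj}, \ref{Lemma: pull-back psi0 open} with Remarks \ref{Remark: composition fib bij} and \ref{Remark: comp psi0 open inj}, and the equivalence claim is handled by the same map $\iota_1\times\iota_2$ restricted to the pull-back. Your added checks (that $\iota$ lands in $K'_1\times_{G_2}K'_2$ and that the triangles commute) are exactly the routine verifications the paper leaves implicit.
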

\begin{proof}
	For given \egmors $(\varphi_i,\psi_i;K_i)$ from $G_i$ to $G_{i+1}$, the triplet $(\varphi_1\circ\widetilde{\varphi_2},\psi_2\circ\widetilde{\psi_1};K_1\times_{G_2} K_2)$ is a \egmor by 
	Lemma \ref{Lemma: pull-back fibrewise}, \ref{Lemma: pull-back psi0 inj}, \ref{Lemma: pull-back psi0 open} and Remark \ref{Remark: composition fib bij}, \ref{Remark: comp psi0 open inj}.
	If $\iota_i$ is an equivalence from $(\varphi_i,\psi_i;K_i)$ to $(\varphi'_i,\psi'_i;K'_i)$ for $i=1,2$, then the map $\iota_1\times\iota_2$ from $K_1 \times_{G_2} K_2$ to $K'_1 \times_{G_2} K'_2$ defined as 
	\[
	(\iota_1\times\iota_2)(k_1,k_2):= (\iota_1(k_1),\iota_2(k_2))
	\]
	for $(k_1,k_2)\in K_1 \times_{G_2} K_2$ is an equivalence from $(\varphi_1\circ\widetilde{\varphi_2},\psi_2\circ\widetilde{\psi_1};K_1\times_{G_2} K_2)$ to $(\varphi'_1\circ\widetilde{\varphi'_2},\psi'_2\circ\widetilde{\psi'_1};K'_1\times_{G_2} K'_2)$.
\end{proof}

\begin{defi}\label{Definition: composition of couple morphisms}
	Let $G_i$ be an \'etale groupoid with $i = 1,2,3$, and $\kappa_i = [\varphi_i,\psi_i;K_i]\: G_i \rightarrow G_{i+1}$ be a \egmor with $i = 1,2$.
	We define composition of $\kappa_1$ and $\kappa_2$ as 
	\[
	\kappa_2\cdot \kappa_1 := \big[\varphi_1\circ\widetilde{\varphi_2},\psi_2\circ\widetilde{\psi_1};K_1\times_{G_2} K_2\big] \: G_1 \rightarrow G_3.
	\]
	This definition is well-defined by Proposition \ref{Proposition: comp cpl mor}.
\end{defi}

\begin{lemm}\label{Lemma: associativity of pull-back}
	The composition of \egmors satisfies the associative law, that is, $\kappa_3\cdot(\kappa_2\cdot\kappa_1)=(\kappa_3\cdot\kappa_2)\cdot\kappa_1$ holds for an \'etale groupoid $G_i$ with $i = 1,2,3,4$ and a \egmor $\kappa_i=[\varphi_i,\psi_i;K_i]\: G_i\rightarrow G_{i+1}$ with $i=1,2,3$.
\end{lemm}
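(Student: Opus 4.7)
The plan is to invoke Corollary \ref{Corollary: egmor equiv}, which characterises equivalence of two \egmors from $G$ to $H$ by the equality of the images of the maps $\varphi\times\psi$ in $G\times H$. This reduces the associativity of composition to an equality of two subsets of $G_1\times G_4$, which in turn is essentially tautological once both iterated pullbacks are identified with the triple pullback of $K_1,K_2,K_3$ over $G_2$ and $G_3$. A direct, alternative route would be to write down an explicit homeomorphic groupoid isomorphism $((k_1,k_2),k_3)\leftrightarrow(k_1,(k_2,k_3))$ and verify that it defines an equivalence of \egmors, but using Corollary \ref{Corollary: egmor equiv} bypasses this verification entirely.

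Concretely, I would first unpack both iterated compositions following Definition \ref{Definition: composition of couple morphisms}. For $\kappa_3\cdot(\kappa_2\cdot\kappa_1)$, the underlying groupoid is the pullback $(K_1\times_{G_2}K_2)\times_{G_3}K_3$ taken over the maps $\psi_2\circ\widetilde{\psi_1}\colon K_1\times_{G_2}K_2\to G_3$ and $\varphi_3\colon K_3\to G_3$, and the $(\varphi,\psi)$ pair of the resulting \egmor sends $((k_1,k_2),k_3)$ to $\bigl(\varphi_1(k_1),\psi_3(k_3)\bigr)$. Symmetrically, for $(\kappa_3\cdot\kappa_2)\cdot\kappa_1$ the underlying groupoid is $K_1\times_{G_2}(K_2\times_{G_3}K_3)$, taken over $\psi_1\colon K_1\to G_2$ and $\varphi_2\circ\widetilde{\varphi_3}\colon K_2\times_{G_3}K_3\to G_2$, and the $(\varphi,\psi)$ pair sends $(k_1,(k_2,k_3))$ to the same pair $\bigl(\varphi_1(k_1),\psi_3(k_3)\bigr)$.

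Next, I would observe that in both presentations the triples $(k_1,k_2,k_3)$ appearing in the respective images are subject to the same two conditions, namely $\psi_1(k_1)=\varphi_2(k_2)$ and $\psi_2(k_2)=\varphi_3(k_3)$. Hence the two images coincide as subsets of $G_1\times G_4$, and Corollary \ref{Corollary: egmor equiv} yields the desired equivalence of \egmors, so $\kappa_3\cdot(\kappa_2\cdot\kappa_1)=(\kappa_3\cdot\kappa_2)\cdot\kappa_1$.

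The main obstacle, such as it is, is purely notational: keeping track of which pullback the decorations $\widetilde{\phantom{\varphi}}$ refer to at each step, and carefully unfolding $\psi_2\circ\widetilde{\psi_1}$ and $\varphi_2\circ\widetilde{\varphi_3}$. Once these unfoldings are written out, no further argument is required; morally the statement is just the associativity of fibre products of sets, transported to \'etale groupoids via Proposition \ref{Proposition: comp cpl mor}.
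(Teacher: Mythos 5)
Your argument is correct, but it takes a different route from the paper's. The paper proves associativity exactly via the ``alternative route'' you set aside: it writes down the explicit re-bracketing map $\iota\:\big((k_1,k_2),k_3\big)\mapsto\big(k_1,(k_2,k_3)\big)$, observes that it is a homeomorphic groupoid isomorphism, and checks directly that it intertwines the two $(\varphi,\psi)$ pairs, so it is an equivalence of \egmors in the sense of the definition. You instead unpack both iterated compositions (your unfoldings of the pullback conditions $\psi_1(k_1)=\varphi_2(k_2)$ and $\psi_2(k_2)=\varphi_3(k_3)$, and of the maps $\big((k_1,k_2),k_3\big)\mapsto\big(\varphi_1(k_1),\psi_3(k_3)\big)$, are exactly right) and then invoke condition (ii) of Corollary \ref{Corollary: egmor equiv alg} via Corollary \ref{Corollary: egmor equiv}: since both composites are \egmors by Proposition \ref{Proposition: comp cpl mor}, equality of the images of $\varphi\times\psi$ and $\varphi'\times\psi'$ in $G_1\times G_4$ suffices. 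What your approach buys is that you never have to verify that the re-bracketing map is a homeomorphic groupoid isomorphism --- that work is absorbed into the machinery behind Proposition \ref{Proposition: egmors} and Lemma \ref{Lemma: phipsi homeo}, which manufacture the equivalence for you; what the paper's direct construction buys is a self-contained, concrete witness of the equivalence that does not lean on the image-coincidence criterion. Both proofs are, as you say, incarnations of the associativity of fibre products, and either is acceptable.
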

\begin{proof}
	We set
	\begin{align*}
	K &:= \big(K_1\times_{G_2} K_2\big)\times_{G_3} K_3,\\
	\varphi&:((k_1,k_2),k_3) \mapsto (k_1,k_2) \mapsto k_1 \mapsto \varphi_1(k_1),\\
	\psi&:((k_1,k_2),k_3) \mapsto k_3 \mapsto \psi_3(k_3),
	\end{align*}
	and 
	\begin{align*}
	K' &:= K_1\times_{G_2} \big(K_2\times_{G_3} K_3\big),\\
	\varphi'&:(k_1,(k_2,k_3)) \mapsto k_1 \mapsto \varphi_1(k_1),\\
	\psi'&:(k_1,(k_2,k_3)) \mapsto (k_2,k_3) \mapsto k_3 \mapsto \psi_3(k_3).
	\end{align*}
	We have $\kappa_3\cdot(\kappa_2\cdot\kappa_1) = [\varphi,\psi;K]$ and $(\kappa_3\cdot\kappa_2)\cdot\kappa_1 = [\varphi',\psi';K']$;
	\[
	\begin{tikzcd}
	&[-6mm]&[-6mm]&[-6mm]&[-6mm]&[-6mm]&[-6mm]\\[-6mm]
	&&&
	K \arrow[dl,""] \arrow[ddrr,""] 
	\arrow[dddlll,"\varphi"'{name = phi},bend right = 40]
	\arrow[dddrrr,"\psi",bend left = 30] 
	&&&\\[-2mm]
	&&
	K_1\times_{G_2}K_2 \arrow[dl,""] \arrow[dr,""] &&&&\\[-2mm]
	&
	K_1 \arrow[dl,"\varphi_1"] \arrow[dr,"\psi_1"] &&
	K_2 \arrow[dl,"\varphi_2"'] \arrow[dr,"\psi_2"] &&
	K_3 \arrow[dl,"\varphi_3"'] \arrow[dr,"\psi_3"'] &\\
	G_1&&G_2&&G_3&&G_4,
	%
	\end{tikzcd}
	\]
	\[
	\begin{tikzcd}
	&[-6mm]&[-6mm]&[-6mm]&[-6mm]&[-6mm]&[-6mm]\\[-6mm]
	&&&
	K' \arrow[ddll,""] \arrow[dr,""] 
	\arrow[dddlll,"\varphi' "',bend right = 40] 
	\arrow[dddrrr,"\psi' ",bend left = 40] 
	&&&\\[-2mm]
	&&&&
	K_2\times_{G_3}K_3 \arrow[dl,""] \arrow[dr,""] &&\\[-2mm]
	&
	K_1 \arrow[dl,"\varphi_1"] \arrow[dr,"\psi_1"] &&
	K_2 \arrow[dl,"\varphi_2"'] \arrow[dr,"\psi_2"] &&
	K_3 \arrow[dl,"\varphi_3"'] \arrow[dr,"\psi_3"'] &\\
	G_1&&G_2&&G_3&&G_4.
	\end{tikzcd}
	\]
	We define $\iota$ as a homeomorphic groupoid isomorphism as
	\[
	\iota\: K \rightarrow K';((k_1,k_2),k_3) \mapsto (k_1,(k_2,k_3)).
	\]
	We can check easily that $\varphi = \varphi'\circ\iota$ and $\psi = \psi'\circ\iota$.
	Thus $\iota$ is an equivalence between $\kappa_3\cdot(\kappa_2\cdot\kappa_1)$ and $(\kappa_3\cdot\kappa_2)\cdot\kappa_1$.
\end{proof}

\begin{lemm}\label{Lemma: unitor}
	Let $G,H$ be \'etale groupoids, and $[\varphi,\psi;K]\: G \rightarrow H$ be a \egmor.
	\begin{enumerate}[(i)]
		\item For an \'etale groupoid $G'$ and a continuous fibrewise bijective groupoid homomorphism $\varphi'\: G \rightarrow G'$, the \egmor $[\varphi',\id_G;G]\: G' \rightarrow G$ satisfies $[\varphi,\psi;K] \cdot [\varphi',\id_G;G] = [\varphi'\circ\varphi,\psi;K]$.
		\item For an \'etale groupoid $H'$ and a continuous groupoid homomorphism $\psi'\: H \rightarrow H'$ with open injective $\psi\0$, the \egmor $[\id_H,\psi';H]\: H \rightarrow H'$ satisfies $[\id_H,\psi';H] \cdot [\varphi,\psi;K] = [\varphi,\psi'\circ\psi;K]$.
	\end{enumerate}
\end{lemm}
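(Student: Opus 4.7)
The plan is to unfold each composition via Definition~\ref{Definition: composition of couple morphisms} and exhibit an explicit equivalence, exploiting the fact that when one of the legs over the middle groupoid is an identity map, the corresponding pullback collapses onto the other factor through a natural ``graph'' section.

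For part (i), set $\kappa_1 := [\varphi', \id_G; G]$ and $\kappa_2 := [\varphi, \psi; K]$. By definition
\[
\kappa_2 \cdot \kappa_1 = \big[\varphi' \circ \widetilde{\varphi},\, \psi \circ \widetilde{\id_G};\, G \times_G K\big],
\]
where the pullback is formed along $\id_G\:G \rightarrow G$ and $\varphi\:K \rightarrow G$, so concretely $G \times_G K = \{(g,k) \in G \times K \mid g = \varphi(k)\}$, with $\widetilde{\varphi}$ and $\widetilde{\id_G}$ the first and second projections. I would define $\iota\: K \rightarrow G \times_G K$ by $\iota(k) := (\varphi(k), k)$. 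This is visibly a continuous bijection with continuous inverse $(g,k)\mapsto k$, and it is a groupoid homomorphism because $\varphi$ is. A direct verification gives $\varphi' \circ \widetilde{\varphi} \circ \iota = \varphi' \circ \varphi$ and $\psi \circ \widetilde{\id_G} \circ \iota = \psi$, so $\iota$ is an equivalence from $(\varphi' \circ \varphi, \psi; K)$ to $\kappa_2 \cdot \kappa_1$, which is exactly the desired equality of \egmors.

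Part (ii) is entirely parallel. Setting $\kappa_1 := [\varphi, \psi; K]$ and $\kappa_2 := [\id_H, \psi'; H]$, the composition unfolds to
\[
\kappa_2 \cdot \kappa_1 = \big[\varphi \circ \widetilde{\id_H},\, \psi' \circ \widetilde{\psi};\, K \times_H H\big],
\]
with $K \times_H H = \{(k,h) \in K \times H \mid \psi(k) = h\}$. The graph-of-$\psi$ map $\iota\: K \rightarrow K \times_H H$ defined by $\iota(k) := (k, \psi(k))$ is a continuous bijection whose inverse is the first projection, hence a homeomorphic groupoid isomorphism; it intertwines the two pairs of legs exactly as required.

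The argument is essentially a unitor bookkeeping exercise, so I do not expect any serious obstacle. The only point that merits a brief check is that $\iota$ is indeed a groupoid homomorphism (immediate from the corresponding property of $\varphi$ in (i) and $\psi$ in (ii), together with the componentwise groupoid structure on the pullback) and that its inverse, being a restriction of a coordinate projection from a product-topology pullback, is continuous. As an alternative presentation, one can bypass the explicit construction of $\iota$ by verifying the criterion in Corollary~\ref{Corollary: egmor equiv alg}(iii) directly using Lemma~\ref{Lemma: pull-back 1}, which gives $\widetilde{\id_G}\bigl(\widetilde{\varphi}\inv(V)\bigr) = \varphi\inv(V)$ for every $V \subset G$ in (i), and dually in (ii); but the graph-section $\iota$ is more transparent.
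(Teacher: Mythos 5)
Your proof is correct and is essentially the paper's own argument: the paper exhibits the second projection $\widetilde{\id_G}\: G\times_G K \rightarrow K$ (resp.\ the first projection $K\times_H H \rightarrow K$) as the equivalence, and your graph map $\iota$ is exactly its inverse, so the two verifications coincide. The alternative check via Corollary~\ref{Corollary: egmor equiv alg}(iii) and Lemma~\ref{Lemma: pull-back 1} that you sketch is also valid but unnecessary.
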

\begin{proof}
	\begin{enumerate}[(i)]
		\item The homeomorphic groupoid isomorphism $\widetilde{\id_G}\: G\times_G K \rightarrow K$ is an equivalence from $\big( \varphi'\circ\widetilde{\varphi}, \psi\circ\widetilde{\id_G}; G \times_G K \big)$ to $(\varphi'\circ\varphi,\psi;K)$.
		\item The homeomorphic groupoid isomorphism $\widetilde{\id_H}\: K \times_H H \rightarrow H$ is an equivalence from $\big( \varphi\circ\widetilde{\id_H}, \psi'\circ\widetilde{\psi}; K\times_H H\big)$ to $(\varphi,\psi'\circ\psi;K)$. \qedhere
	\end{enumerate}
\end{proof}

\begin{theo}\label{Theorem: category EG}
	All \'etale groupoids and all (equivalence classes of) \egmors form a category with the composition defined in Definition \ref{Definition: composition of couple morphisms} and the identity morphisms $[\id_G,\id_G;G]$ for all $G$.
	We denote this category as $\EG$.
\end{theo}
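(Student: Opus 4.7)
The plan is to verify the three defining axioms of a category---well-definedness of composition, associativity, and the identity law---by assembling the preceding lemmas, since they have already carried out essentially all of the substantive work.

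First I would observe that for each \'etale groupoid $G$, the triplet $(\id_G, \id_G; G)$ is a valid \egmor: the identity map $\id_G \colon G \to G$ is trivially fibrewise bijective, and its restriction $(\id_G)\0 = \id_{G\0}$ is clearly open and injective. Hence $[\id_G, \id_G; G]$ provides a well-defined candidate identity morphism at each object $G$.

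Next, the well-definedness of the composition $\kappa_2 \cdot \kappa_1$ on equivalence classes is exactly the content of Proposition \ref{Proposition: comp cpl mor}, and the associativity $\kappa_3 \cdot (\kappa_2 \cdot \kappa_1) = (\kappa_3 \cdot \kappa_2) \cdot \kappa_1$ is Lemma \ref{Lemma: associativity of pull-back}.

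For the unit law, given a \egmor $[\varphi, \psi; K] \colon G \to H$, I would apply Lemma \ref{Lemma: unitor} (i) with $G' = G$ and $\varphi' = \id_G$ to obtain
\[
[\varphi, \psi; K] \cdot [\id_G, \id_G; G] = [\id_G \circ \varphi, \psi; K] = [\varphi, \psi; K],
\]
and Lemma \ref{Lemma: unitor} (ii) with $H' = H$ and $\psi' = \id_H$ to obtain
\[
[\id_H, \id_H; H] \cdot [\varphi, \psi; K] = [\varphi, \id_H \circ \psi; K] = [\varphi, \psi; K].
\]
Since the genuine technical content---that pull-backs preserve fibrewise bijectivity and the open-injective-on-units property (Lemmas \ref{Lemma: pull-back fibrewise}, \ref{Lemma: pull-back psi0 inj}, \ref{Lemma: pull-back psi0 open}), the characterizations of equivalence (Proposition \ref{Proposition: egmors} and Corollary \ref{Corollary: egmor equiv}), and the unitor equivalences---has already been dispatched, no real obstacle remains at this stage. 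The proof is genuinely just a bookkeeping exercise, and the only subtlety worth flagging is that associativity holds strictly (as equality of equivalence classes) rather than up to a nontrivial coherent isomorphism, which is what Lemma \ref{Lemma: associativity of pull-back} guarantees via the canonical re-bracketing homeomorphism $((k_1, k_2), k_3) \mapsto (k_1, (k_2, k_3))$.
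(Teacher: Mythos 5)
Your proposal is correct and follows essentially the same route as the paper: the paper's proof likewise reduces associativity to Lemma \ref{Lemma: associativity of pull-back} and the unit laws to Lemma \ref{Lemma: unitor}, with well-definedness of composition already supplied by Proposition \ref{Proposition: comp cpl mor}. Your explicit specializations of Lemma \ref{Lemma: unitor} (taking $\varphi'=\id_G$ and $\psi'=\id_H$) are exactly the intended applications.
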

\begin{proof}
	The associative law of composition holds by Lemma \ref{Lemma: associativity of pull-back}. 
	The unit law of identity follows by Lemma \ref{Lemma: unitor}.
\end{proof}

Let $G,H$ be \'etale groupoids.

\begin{lemm}\label{Lemma: unitor 2}
	Let $[\varphi,\psi;K]\: G \rightarrow H$ be a \egmor.
	If $\varphi\: K \rightarrow G$ is a homeomorphic groupoid isomorphism, then we get $[\varphi,\psi;K] = [\id_G,\psi\circ\varphi\inv;G]$.
	If $\psi\: K \rightarrow H$ is a homeomorphic groupoid isomorphism, then we get $[\varphi,\psi;K] = [\varphi\circ\psi\inv,\id_H;H]$.
\end{lemm}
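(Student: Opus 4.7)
The plan is to construct explicit equivalences between the given \egmor $(\varphi,\psi;K)$ and each of the claimed representatives, after first verifying that the proposed target triples are indeed \egmors.

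For the first claim, I would begin by checking that $(\id_G, \psi\circ\varphi\inv; G)$ is a \egmor. The map $\id_G$ is trivially continuous and fibrewise bijective, so the substantive point is that the unit restriction $(\psi\circ\varphi\inv)\0 = \psi\0\circ(\varphi\0)\inv$ is open and injective. This follows immediately because $\varphi\0$ is a homeomorphism by hypothesis (so $(\varphi\0)\inv$ is one as well), and $\psi\0$ is open and injective since $(\varphi,\psi;K)$ is a \egmor. The natural candidate for the equivalence $K \to G$ is $\varphi$ itself. Using the hypothesis that $\varphi$ is a homeomorphic groupoid isomorphism, the two commutativity conditions $\id_G\circ\varphi = \varphi$ and $(\psi\circ\varphi\inv)\circ\varphi = \psi$ are immediate, completing the verification of equivalence.

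For the second claim, dually, I would first check that $(\varphi\circ\psi\inv, \id_H; H)$ is a \egmor. Here $\id_H$ has injective open restriction to units, while $\varphi\circ\psi\inv$ is continuous and fibrewise bijective by Remark \ref{Remark: composition fib bij}, since $\psi\inv$ (being a groupoid isomorphism) is fibrewise bijective and $\varphi$ is fibrewise bijective by assumption on the original \egmor. The equivalence is then $\psi\: K \to H$ itself, with the identities $(\varphi\circ\psi\inv)\circ\psi = \varphi$ and $\id_H\circ\psi = \psi$ giving the required commutative diagram.

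I do not anticipate any genuine obstacle here: the argument amounts to a bookkeeping check that the defining data of a \egmor are preserved under the obvious substitutions, with the isomorphism hypothesis providing exactly the inverse maps needed to build the substitutions. A more abstract alternative would be to invoke Corollary \ref{Corollary: egmor equiv} and verify that the images of $\varphi\times\psi$ and of $\id_G\times(\psi\circ\varphi\inv)$ (respectively $(\varphi\circ\psi\inv)\times\id_H$) coincide in $G \times H$, but exhibiting the equivalences directly via $\varphi$ and $\psi$ themselves is the most economical route.
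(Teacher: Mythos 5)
Your proposal is correct and is essentially the paper's own argument: the paper likewise takes $\varphi$ (resp. $\psi$) itself as the equivalence from $(\varphi,\psi;K)$ to $(\id_G,\psi\circ\varphi\inv;G)$ (resp. to $(\varphi\circ\psi\inv,\id_H;H)$), your version merely spelling out the routine checks that the target triples are \egmors and that the diagrams commute.
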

\begin{proof}
	A homeomorphic groupoid isomorphism $\varphi\: K \rightarrow G$ is an equivalence from $(\varphi,\psi;K)$ to $(\id_G,\psi\circ\varphi\inv;G)$.
	A homeomorphic groupoid isomorphism $\psi\: K \rightarrow H$ is an equivalence from $(\varphi,\psi;K)$ to $(\varphi\circ\psi\inv,\id_H;H)$.
\end{proof}

\begin{lemm}\label{Lemma: unitor 3}
	Let $K$ be an \'etale groupoid.
	If $\varphi\: K \rightarrow G$ and $\psi\: K \rightarrow H$ are homeomorphic groupoid isomorphisms, then $[\psi,\varphi;K]$ is the inverse of $[\varphi,\psi;K]$.
	Thus $[\varphi,\psi;K]$ is an isomorphism in $\EG$.
\end{lemm}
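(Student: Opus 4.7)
The strategy is to compute both composites $[\psi,\varphi;K]\cdot[\varphi,\psi;K]$ and $[\varphi,\psi;K]\cdot[\psi,\varphi;K]$ explicitly by unraveling Definition~\ref{Definition: composition of couple morphisms}, and then simplify each to the appropriate identity morphism using Lemma~\ref{Lemma: unitor 2}.

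For the first composite, the relevant pull-back is $K\times_H K$, formed from the two maps $\psi\: K\to H$ sitting in the ``middle'' of the composition (namely $\psi_1 = \psi$ of the first \egmor and $\varphi_2 = \psi$ of the second). Since $\psi$ is a homeomorphic groupoid isomorphism, it is injective, so the condition $\psi(k_1) = \psi(k_2)$ forces $k_1 = k_2$. Hence the pull-back collapses to the diagonal and the map $k\mapsto (k,k)$ is a homeomorphic groupoid isomorphism $K\to K\times_H K$. Under this identification the two pull-back projections $\widetilde{\varphi_2}$ and $\widetilde{\psi_1}$ both become $\id_K$, so the composite \egmor is $[\varphi\circ\id_K,\varphi\circ\id_K;K] = [\varphi,\varphi;K]\: G\to G$. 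Since $\varphi\: K\to G$ is a homeomorphic groupoid isomorphism, Lemma~\ref{Lemma: unitor 2} applied to the first coordinate yields
\[
[\varphi,\varphi;K] = [\id_G, \varphi\circ\varphi\inv; G] = [\id_G,\id_G;G],
\]
which is the identity morphism on $G$ in $\EG$.

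The computation of $[\varphi,\psi;K]\cdot[\psi,\varphi;K]$ is entirely symmetric: the pull-back $K\times_G K$ built from two copies of $\varphi\: K\to G$ collapses to the diagonal by injectivity of $\varphi$, and the composite simplifies to $[\psi,\psi;K]\: H\to H$. Applying Lemma~\ref{Lemma: unitor 2} once more gives $[\id_H,\id_H;H]$, the identity on $H$.

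I do not anticipate a substantive obstacle; the content of the argument has already been packaged into Lemma~\ref{Lemma: unitor 2} and the pull-back definition of composition. The only care needed is in checking that the diagonal inclusion is indeed a homeomorphic groupoid isomorphism (which is immediate from the fact that $\psi$, respectively $\varphi$, is a homeomorphic groupoid isomorphism) and in keeping straight which of the four maps $\varphi,\psi,\varphi,\psi$ in the composite plays the role of $\varphi_i,\psi_i$ in Definition~\ref{Definition: composition of couple morphisms}.
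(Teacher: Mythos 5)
Your proof is correct, but it takes a slightly different route from the paper's. The paper first applies Lemma~\ref{Lemma: unitor 2} to each factor separately, rewriting $[\varphi,\psi;K]=[\id_G,\psi\circ\varphi\inv;G]$ and $[\psi,\varphi;K]=[\id_H,\varphi\circ\psi\inv;H]$, and then composes these using the unit-law computations of Lemma~\ref{Lemma: unitor}~(ii), so that the pull-back never appears explicitly in the proof of this lemma. You instead unwind Definition~\ref{Definition: composition of couple morphisms} directly: injectivity of the middle map collapses $K\times_H K$ (resp.\ $K\times_G K$) to the diagonal, the diagonal embedding is the required equivalence, the composite becomes $[\varphi,\varphi;K]$ (resp.\ $[\psi,\psi;K]$), and a single application of Lemma~\ref{Lemma: unitor 2} finishes. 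Both arguments rest on the same underlying facts (indeed the paper's Lemma~\ref{Lemma: unitor} is itself proved by exactly the kind of pull-back collapse you carry out); your version is more self-contained and makes visible why the composite trivializes, while the paper's is shorter given that the unit laws are already available. Your bookkeeping of which of $\varphi,\psi$ plays the role of $\varphi_i,\psi_i$ in the composition formula, and the check that the diagonal map is a homeomorphic groupoid isomorphism intertwining the data, are both accurate, so there is no gap.
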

\begin{proof}
	By Lemma \ref{Lemma: unitor} and \ref{Lemma: unitor 2}, we get 
	\[
	[\psi,\varphi;K] \cdot [\varphi, \psi; K] = [\id_H,\varphi\circ\psi\inv;H] \cdot [\id_G,\psi\circ\varphi\inv;G] = [\id_G,\id_G;G]. 
	\]
	Similarly, we get $[\varphi, \psi; K] \cdot [\psi,\varphi;K] = [\id_H,\id_H;H]$.
	Thus $[\varphi,\psi;K]$ is an isomorphism in $\EG$.
\end{proof}

\begin{prop}
	A \egmor $\kappa= [\varphi, \psi; K] \: G \rightarrow H$ is an isomorphism in $\EG$ if and only if $\varphi$ and $\psi$ are homeomorphic groupoid isomorphisms.
\end{prop}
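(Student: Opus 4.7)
The $(\Leftarrow)$ direction is precisely Lemma \ref{Lemma: unitor 3}. The substance is the $(\Rightarrow)$ direction, and my plan is to unpack the abstract invertibility of $\kappa = [\varphi,\psi;K]$ into concrete continuous one-sided inverses of $\varphi$ and $\psi$, then promote these to two-sided continuous inverses by using surjectivity of the pull-back projections.

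Let $\kappa' = [\varphi',\psi';K'] \: H \to G$ be an inverse for $\kappa$. Applying the composition formula of Definition \ref{Definition: composition of couple morphisms} to $\kappa' \cdot \kappa = [\id_G,\id_G;G]$ and $\kappa \cdot \kappa' = [\id_H,\id_H;H]$, and unravelling the definition of equivalence of \egmors, I obtain homeomorphic groupoid isomorphisms $\iota_1 \: G \to K \times_H K'$ and $\iota_2 \: H \to K' \times_G K$ satisfying
\[
\varphi \circ \widetilde{\varphi'} \circ \iota_1 = \id_G, \qquad \psi' \circ \widetilde{\psi} \circ \iota_1 = \id_G,
\]
\[
\varphi' \circ \widetilde{\varphi} \circ \iota_2 = \id_H, \qquad \psi \circ \widetilde{\psi'} \circ \iota_2 = \id_H.
\]
Each of these relations exhibits one of $\varphi,\psi,\varphi',\psi'$ as having a right inverse, so all four maps are surjective.

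The key step is to upgrade surjectivity of $\varphi$ to bijectivity. I set $\sigma := \widetilde{\varphi'} \circ \iota_1 \: G \to K$; the first identity above reads $\varphi \circ \sigma = \id_G$, so $\sigma$ is injective. The decisive observation is that the pull-back projection $\widetilde{\varphi'} \: K \times_H K' \to K$ is itself surjective: for any $k \in K$, the surjectivity of $\varphi'$ supplies $k' \in K'$ with $\varphi'(k') = \psi(k)$, so $(k,k') \in K \times_H K'$ projects onto $k$. Combined with the bijectivity of $\iota_1$, this forces $\sigma$ to be a continuous bijection whose inverse is $\varphi$. Therefore $\varphi$ is a bijective continuous groupoid homomorphism with continuous inverse $\sigma$, that is, a homeomorphic groupoid isomorphism. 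The symmetric argument applied to $\tau := \widetilde{\psi'} \circ \iota_2 \: H \to K$, using $\psi \circ \tau = \id_H$ and the surjectivity of $\psi'$ to conclude that $\widetilde{\psi'}$ is surjective, shows that $\psi$ is also a homeomorphic groupoid isomorphism.

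I do not foresee a serious obstacle: the argument is purely formal once the composition formula has been expanded, and the only ingredient beyond the definitions is the easy observation that a pull-back projection is surjective whenever the map it is pulled back against is surjective.
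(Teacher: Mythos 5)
Your proposal is correct and follows essentially the same route as the paper: take the inverse \egmor, expand the two compositions into commuting diagrams to get that $\varphi,\psi,\varphi',\psi'$ are surjective, and then use the surjectivity of the pull-back projections (the paper's remark that the ``tildes'' are surjective because the ``non-tildes'' are) to exhibit continuous two-sided inverses of $\varphi$ and $\psi$. Your explicit verification of the pull-back projection surjectivity is precisely the ``simple calculation'' the paper leaves implicit.
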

\begin{proof}
	The if part follows from Lemma \ref{Lemma: unitor 3}.
	
	We show that the only if part.
	Let $\kappa'=[\varphi',\psi';K']$ be the inverse morphism of $\kappa$.
	The following two diagrams commute;
	\[
	\begin{tikzcd}[ampersand replacement=\&]
		\&[-5mm]\&[-5mm] G\arrow[d,"\simeq"{sloped},phantom,"\tiny\exists"{xshift = -0.7em}] 
		\arrow[dddll,"\id_G"',bend right = 40]\arrow[dddrr,"\id_G",bend left = 40]
		\&[-5mm]\&[-5mm]\\[-5mm]
		\&\& K \times_{H} K' \arrow[ld,"\widetilde{\varphi'}"']
		\arrow[rd,"\widetilde{\psi}"]
		\&\&\\
		\& K \arrow[ld,"\varphi"']\arrow[rd,"\psi"] \&\& K'\arrow[ld,"\varphi'"']\arrow[rd,"\psi'"] \&\\
		G\&\& H \&\& G,
	\end{tikzcd}
	\begin{tikzcd}[ampersand replacement=\&]
		\&[-5mm]\&[-5mm] H\arrow[d,"\simeq"{sloped},phantom,"\tiny\exists"{xshift = -0.7em}]
		\arrow[dddll,"\id_H"',bend right = 40]\arrow[dddrr,"\id_H",bend left = 40]
		\&[-5mm]\&[-5mm]\\[-5mm]
		\&\& K' \times_{G} K \arrow[ld,"\widetilde{\varphi}"']
		\arrow[rd,"\widetilde{\psi'}"] 
		\&\&\\
		\& K' \arrow[ld,"\varphi'"']\arrow[rd,"\psi'"] \&\& K \arrow[ld,"\varphi"']\arrow[rd,"\psi"] \&\\
		H \&\& G \&\& H.
	\end{tikzcd}
	\]
	These diagrams imply that the ``non-tildes'' $\varphi,\psi,\varphi',\psi'$ are surjective and the ``tildes'' $\widetilde{\varphi},\widetilde{\psi},\widetilde{\varphi'},\widetilde{\psi'}$ are injective.
	By simple calculations, the ``tildes'' are surjective because so is ``non-tildes''. 
	Thus all homomorphisms in the above are continuous groupoid isomorphisms.
	Hence these are homeomorphic groupoid isomorphisms.
\end{proof}

\begin{defi}
	Let $G,H$ be \'etale groupoids.
	A \egmor $[\varphi,\psi]\: G \rightarrow H$ is said to be \emph{proper} if $\varphi\0$ is proper.
\end{defi}
We can check that this definition is well-defined, that is, for equivalent two \egmors $(\varphi_1,\psi_1;K_1)$ and $(\varphi_2,\psi_2;K_2)$, one is proper if and only if so is the other.

When constructing $C^*$-algebras and $*$-homomorphisms, we assume that \'etale groupoids have locally compact Hausdorff unit spaces, and that all \egmors between them are proper.

\begin{theo}\label{Theorem: category EGlcH}
	All \'etale groupoids with locally compact Hausdorff unit spaces and all proper \egmors form a subcategory of $\EG$.
	We denote this subcategory as $\EGlcH$.
\end{theo}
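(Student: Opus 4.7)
The plan is to verify the two subcategory axioms: identity morphisms on objects of $\EGlcH$ lie in $\EGlcH$, and the composition of morphisms in $\EGlcH$ stays in $\EGlcH$. Source and target groupoids of any composition are already assumed to have locally compact Hausdorff unit spaces, so the only substantive requirement is properness of the relevant couple morphisms.

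For the identity, given an \'etale groupoid $G$ with locally compact Hausdorff unit space, the identity morphism in $\EG$ is $[\id_G,\id_G;G]$. Its unit-space component is the identity map $\id_{G\0}\: G\0 \rightarrow G\0$, which is trivially proper (the preimage of a compact subset is itself). Hence the identity morphism is a proper \egmor.

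For composition, let $G_1,G_2,G_3$ be \'etale groupoids with locally compact Hausdorff unit spaces, and let $[\varphi_1,\psi_1;K_1]\: G_1 \rightarrow G_2$ and $[\varphi_2,\psi_2;K_2]\: G_2 \rightarrow G_3$ be proper \egmors. By Definition~\ref{Definition: composition of couple morphisms}, their composition is
\[
\big[ \varphi_1 \circ \widetilde{\varphi_2},\ \psi_2 \circ \widetilde{\psi_1};\ K_1 \times_{G_2} K_2 \big] \: G_1 \rightarrow G_3.
\]
Its unit-space component is $(\varphi_1 \circ \widetilde{\varphi_2})\0 = \varphi_1\0 \circ \widetilde{\varphi_2}\0$. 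By assumption, $\varphi_1\0$ is proper, and $\varphi_2\0$ is proper. Lemma~\ref{Lemma: pull-back phi0 proper} then ensures that the pulled-back map $\widetilde{\varphi_2}\0$ is also proper. Since the composition of proper continuous maps is proper, $(\varphi_1 \circ \widetilde{\varphi_2})\0$ is proper, so the composed \egmor is again proper.

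There is no substantial obstacle here; the only non-trivial input is Lemma~\ref{Lemma: pull-back phi0 proper}, which has already been established, and the elementary fact that a composition of proper maps between topological spaces is proper. Thus $\EGlcH$ is genuinely a subcategory of $\EG$.
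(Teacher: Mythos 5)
Your proof is correct and follows the same route as the paper: the paper's own argument is exactly that identity \egmors are obviously proper and that properness of compositions follows from Lemma~\ref{Lemma: pull-back phi0 proper}, which you have simply spelled out by writing $(\varphi_1\circ\widetilde{\varphi_2})\0 = \varphi_1\0\circ\widetilde{\varphi_2}\0$ and invoking stability of properness under composition.
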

\begin{proof}
	It is clear that identity \egmors are proper. 
	By Lemma \ref{Lemma: pull-back phi0 proper}, the composition of proper \egmors is proper.
\end{proof}

\section{Functors $\TG$ and $\SA$}
\label{Section: functors TG and SA}

\subsection{A functor $\TG$ from $\ISA$ to $\EG$}
\label{Subsection: A functor TG from ISA to EG}

In this subsection, we construct a functor $\TG$ from $\ISA$ to $\EG$. 
Let $(S,X,\alpha)$ and $(T,Y,\beta)$ be inverse semigroup actions and $(\theta,\xi)$ be an \isamor from $(S,X,\alpha)$ to $(T,Y,\beta)$.
We define an inverse semigroup action $(S,D_\xi,\beta\theta)$ as $D^{\beta\theta}_s = D^\beta_{\theta(s)}$ and $(\beta\theta)_s = \beta_{\theta(s)}$ for every $s \in S$.
We recall that $\bigcup_{s \in S} D^{\beta\theta}_s = D_\xi$ by Remark \ref{Remark: isamor}.
We consider the transformation groupoid $S \ltimes_{\beta\theta} D_\xi$ of the inverse semigroup action $(S,D_\xi,\beta\theta)$.
We define a map $\varphi_\xi \: S \ltimes_{\beta\theta} D_\xi \rightarrow S\ltimes_\alpha X$ as
\begin{align*}
\varphi_\xi( [s,y] ) := [s,\xi(y)],
\end{align*}
and a map $\psi_\theta \: S \ltimes_{\beta\theta} D_\xi \rightarrow T\ltimes_\beta Y$ as
\begin{align*}
\psi_\theta( [s,y] ) := [\theta(s),y]
\end{align*}
for every $[s,y] \in S\ltimes_{\beta\theta} D_\xi$.

\begin{prop}\label{Proposition: TG morphism}
	The couple $(\varphi_\xi,\psi_\theta)$ becomes a \egmor from $S\ltimes_\alpha X$ to $T\ltimes_\beta Y$.
\end{prop}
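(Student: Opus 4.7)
The plan is to verify the four defining properties of a \egmor in sequence: (a) both $\varphi_\xi$ and $\psi_\theta$ are well-defined groupoid homomorphisms; (b) both are continuous; (c) $\varphi_\xi$ is fibrewise bijective; (d) $\psi_\theta\0$ is open and injective. Throughout, I will use Remark~\ref{Remark: isamor}, which tells me that $\bigcup_{s\in S} D^{\beta\theta}_s = D_\xi$ and $\alpha_s\circ\xi = \xi\circ\beta_{\theta(s)}$ for all $s\in S$, so $(S, D_\xi, \beta\theta)$ is genuinely an inverse semigroup action and its transformation groupoid $S\ltimes_{\beta\theta} D_\xi$ is \'etale.

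For well-definedness, I would use the isamor condition $\xi\inv(D^\alpha_e) = D^\beta_{\theta(e)} = D^{\beta\theta}_e$ for $e\in E(S)$. If $(s,y)\sim(s',y)$ in $S\ltimes_{\beta\theta} D_\xi$, witnessed by $e\in E(S)$ with $y\in D^{\beta\theta}_e$ and $se = s'e$, then $\xi(y)\in D^\alpha_e$ by condition~(i), so $[s,\xi(y)] = [s',\xi(y)]$ in $S\ltimes_\alpha X$; and $\theta(e)\in E(T)$ with $y\in D^\beta_{\theta(e)}$ and $\theta(s)\theta(e)=\theta(s')\theta(e)$, so $[\theta(s),y]=[\theta(s'),y]$ in $T\ltimes_\beta Y$. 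The groupoid homomorphism laws reduce to verifying $\varphi_\xi([ts,y]) = \varphi_\xi([t,(\beta\theta)_s(y)])\varphi_\xi([s,y])$ and its analogue for $\psi_\theta$; the only non-trivial point is that $\alpha_s(\xi(y)) = \xi(\beta_{\theta(s)}(y))$, which is exactly condition~(ii) (or its rephrasing in Remark~\ref{Remark: isamor}).

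For continuity, I would exploit the fact that sets of the form $[s,U]$ with $U$ open in $D^{\beta\theta}_s$ form an open basis of the source. On such a basic open, the restriction of $\varphi_\xi$ factors as the homeomorphism $[s,U]\cong U$, the partial map $\xi|_U \: U\to\xi(U)\subseteq D^\alpha_s$, and the homeomorphism $\xi(U)\cong[s,\xi(U)]$, hence is continuous. Similarly $\psi_\theta$ restricted to $[s,U]$ becomes the inclusion $U\hookrightarrow D^\beta_{\theta(s)}$ composed with the homeomorphism $U\cong[\theta(s),U]$, so it is continuous. Openness and injectivity of $\psi_\theta\0$ are immediate since, under the identifications of unit spaces, $\psi_\theta\0$ is just the open inclusion $D_\xi\hookrightarrow Y$.

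The one step that needs a little care, and which I expect to be the main substantive point, is fibrewise bijectivity of $\varphi_\xi$. Fix a unit $y\in D_\xi$ of $S\ltimes_{\beta\theta} D_\xi$. Its fibre consists of all classes $[s,y]$ with $y\in D^{\beta\theta}_s$, while the fibre over $\xi(y)$ in $S\ltimes_\alpha X$ consists of classes $[s,\xi(y)]$ with $\xi(y)\in D^\alpha_s$. The equivalence $y\in D^{\beta\theta}_s\iff \xi(y)\in D^\alpha_s$ follows from condition~(i), so $(\varphi_\xi)_y$ is well-defined and surjective onto the fibre. For injectivity, $[s,y]=[s',y]$ is witnessed by some $e\in E(S)$ with $y\in D^{\beta\theta}_e$ and $se=s'e$, and the same $e$ then witnesses $[s,\xi(y)]=[s',\xi(y)]$ since $\xi(y)\in D^\alpha_e$; conversely, any $e$ witnessing the latter equality automatically satisfies $y\in D^{\beta\theta}_e$ by (i), giving the reverse implication. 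This back-and-forth use of condition~(i), together with the compatibility between $\alpha$, $\beta$ and $\xi$, is the heart of the argument.
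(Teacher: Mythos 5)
Your proposal is correct and follows essentially the same route as the paper's proof: well-definedness and fibrewise injectivity of $\varphi_\xi$ via the back-and-forth use of condition (i) ($\xi\inv(D^\alpha_e)=D^{\beta\theta}_e$), the homomorphism property via condition (ii), and the observation that $\psi_\theta\0$ is the open inclusion $D_\xi\hookrightarrow Y$. The only difference is that you spell out continuity on the basic open sets $[s,U]$, a step the paper leaves as routine.
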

\begin{proof}
	We first check that $\varphi_\xi$ and $\psi_\theta$ are well-defined.
	For every $[s,y] \in S \ltimes_{\beta\theta} D_\xi$, we get $y \in D^{\beta\theta}_s = D^\beta_{\theta(s)} = \xi\inv(D^\alpha_s)$.
	Thus there exist elements in the form of $[s,\xi(y)]$ in $S \ltimes_\alpha X$ and in the form of $[\theta(s),y]$ in $T \ltimes_\beta Y$.
	
	Take $[s,y],[s',y']\in S\ltimes_{\beta\theta} D_\xi$ with $[s,y] = [s',y']$.
	We get $y = y'$ and an element $e\in E(S)$ with $y=y' \in D^{\beta\theta}_e$ and $se = s'e$.
	This implies that $\xi(y) = \xi(y')\in D_e^\alpha$. 
	Thus $[s,\xi(y)] = [s',\xi(y')]$ holds. 
	Hence $\varphi_\xi$ is well-defined. 
	We also get $y = y' \in D^\beta_{\theta(e)}$ and $\theta(s)\theta(e) = \theta(se) = \theta(s'e) = \theta(s')\theta(e)$.
	Thus $[\theta(s),y] = [\theta(s'),y']$ holds.
	Hence $\psi_\theta$ is well-defined. 
	
	It is easy to check that $\varphi_\xi$ and $\psi_\theta$ are continuous, and $\psi_\theta$ is a groupoid homomorphism.
	If a pair $[s_2,y_2], [s_1,y_1] \in S\ltimes_{\beta\theta} D_\xi$ is composable, then we get $y_2 = \beta_{\theta(s_1)}(y_1)$.
	This implies $\xi(y_2) = \xi(\beta_{\theta(s_1)}(y_1)) = \alpha_{s_1}(\xi(y_1))$. 
	Thus the pair of $[s_2,\xi(y_2)]$ and $[s_1,\xi(y_1)]$ is composable.
	We can easily check that $\varphi_\xi$ keeps multiplication.
	Thus $\varphi_\xi$ is a groupoid homomorphism.
	
	We show that $\varphi_\xi$ is fibrewise bijective.
	Take $y \in D_\xi$.
	We can check easily that the map $(\varphi_\xi)_y$ is surjective.
	Take $[s,y], [s',y] \in (S\ltimes_{\beta\theta} D_\xi)_y$ with $[s,\xi(y)] = [s',\xi(y)]$.
	There exists $e\in E(S)$ with $\xi(y)\in D^\alpha_e$ and $se = s'e$. 
	For such $e\in E(S)$, we get $y\in \xi\inv(D_e^\alpha) = D^{\beta\theta}_e$.
	This implies $[s,y] = [s',y]$. 
	Thus $(\varphi_\xi)_y$ is injective.
	
	It is clear that the map $\psi_\theta\0\: (S\ltimes_{\beta\theta} D_\xi)\0 \simeq D_\xi \hookrightarrow Y \simeq (T\ltimes_\beta Y)\0$ is injective and open.
\end{proof}

\begin{theo}\label{Theorem: functor TG}
	 The constructions $(S,X,\alpha)\mapsto S\ltimes_\alpha X$ and $(\theta,\xi)\mapsto [\varphi_\xi,\psi_\theta]$ form a functor from $\ISA$ to $\EG$.
	 We denote this functor as $\TG$.
\end{theo}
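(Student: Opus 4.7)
The plan is to verify the two functoriality axioms; Proposition \ref{Proposition: TG morphism} already provides a \egmor from each \isamor, so the object- and morphism-level assignments are in place. For identity preservation, the identity \isamor $(\id_S, \id_X)$ on $(S, X, \alpha)$ gives $D_{\id_X} = X$ and induced action $\alpha \circ \id_S = \alpha$, so the witnessing \'etale groupoid is literally $S \ltimes_\alpha X$ and both $\varphi_{\id_X}$ and $\psi_{\id_S}$ reduce to the identity on representatives; hence $\TG(\id_S, \id_X) = [\id, \id; S \ltimes_\alpha X]$, the identity morphism in $\EG$.

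The heart of the proof is preservation of composition. Fix \isamors $(\theta_i, \xi_i) \colon (S_i, X_i, \alpha_i) \to (S_{i+1}, X_{i+1}, \alpha_{i+1})$ for $i = 1, 2$, write $K_i := S_i \ltimes_{\alpha_{i+1} \theta_i} D_{\xi_i}$ for the witnessing groupoids of $\TG(\theta_i, \xi_i)$, and write $K := S_1 \ltimes_{\alpha_3 \theta_2 \theta_1} D_{\xi_1 \circ \xi_2}$ for that of $\TG((\theta_2, \xi_2) \cdot (\theta_1, \xi_1))$. The task is to exhibit a homeomorphic groupoid isomorphism
$$\iota \colon K \longrightarrow K_1 \times_{S_2 \ltimes_{\alpha_2} X_2} K_2$$
making the equivalence-of-\egmors diagram commute. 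The natural candidate is $\iota([s, y]) := \bigl([s, \xi_2(y)], [\theta_1(s), y]\bigr)$. Well-definedness reduces to three facts, all flowing from the \isamor conditions applied to $(\theta_2, \xi_2)$: (a) $\xi_2(y) \in D^{\alpha_2 \theta_1}_s$ by the pull-back identity $\xi_2\inv(D^{\alpha_2}_{\theta_1(s)}) = D^{\alpha_3}_{\theta_2 \theta_1(s)}$ from Definition \ref{Definition: isamor} (i); (b) composability of the two components over $S_2 \ltimes_{\alpha_2} X_2$, since $\psi_{\theta_1}([s, \xi_2(y)]) = [\theta_1(s), \xi_2(y)] = \varphi_{\xi_2}([\theta_1(s), y])$; and (c) independence of the chosen representative, again by the same pull-back identity.

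Bijectivity is a short direct calculation: injectivity follows because the second coordinate of $\iota([s, y])$ determines $y$ and any $e \in E(S_1)$ equalising the first coordinate in $K_1$ automatically equalises $[s, y] = [s', y]$ in $K$ because $\xi_2(y) \in D^{\alpha_2 \theta_1}_e$ pulls back to $y \in D^{\alpha_3 \theta_2 \theta_1}_e$; surjectivity follows because a typical pull-back element $([s, x], [t, y])$ has $x = \xi_2(y)$ by composability, and any $f \in E(S_2)$ witnessing $[\theta_1(s), \xi_2(y)] = [t, \xi_2(y)]$ in $S_2 \ltimes_{\alpha_2} X_2$ also witnesses $[\theta_1(s), y] = [t, y]$ in $K_2$ by the pull-back identity, so the element is $\iota([s, y])$. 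On unit spaces $\iota$ restricts to the bijection $D_{\xi_1 \circ \xi_2} \to \{(\xi_2(y), y) : y \in D_{\xi_1 \circ \xi_2}\}$ whose inverse is projection onto the second coordinate, hence a homeomorphism; Remark \ref{Remark: phi homeo iff phi0 homeo} then upgrades the continuous fibrewise bijective groupoid homomorphism $\iota$ to a homeomorphism of \'etale groupoids. Commutativity of the two required triangles is an immediate unwinding: $\varphi_{\xi_1} \circ \widetilde{\varphi_{\xi_2}} \circ \iota([s, y]) = [s, \xi_1(\xi_2(y))] = \varphi_{\xi_1 \circ \xi_2}([s, y])$ and symmetrically $\psi_{\theta_2} \circ \widetilde{\psi_{\theta_1}} \circ \iota([s, y]) = [\theta_2(\theta_1(s)), y] = \psi_{\theta_2 \circ \theta_1}([s, y])$.

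The main obstacle is the bookkeeping of equivalence classes when transporting between $K$ and the pull-back, but the single identity $\xi_2\inv(D^{\alpha_2}_{\theta_1(s)}) = D^{\alpha_3}_{\theta_2 \theta_1(s)}$ is the universal lubricant; once it is invoked, every remaining verification is routine.
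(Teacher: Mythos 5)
Your argument is correct and is essentially the paper's: the same comparison of $\big[\varphi_{\xi_1}\circ\widetilde{\varphi_{\xi_2}},\psi_{\theta_2}\circ\widetilde{\psi_{\theta_1}}\big]$ with $\big[\varphi_{\xi_1\circ\xi_2},\psi_{\theta_2\circ\theta_1}\big]$ via an explicit comparison map whose every verification rests on the identity $\xi_2\inv\big(D^{\alpha_2}_{\theta_1(s)}\big)=D^{\alpha_3}_{\theta_2(\theta_1(s))}$. The only difference is direction and bookkeeping: you map $S_1\ltimes_{\alpha_3(\theta_2\circ\theta_1)}D_{\xi_1\circ\xi_2}$ into the pull-back and check bijectivity and the unit-space homeomorphism by hand (asserting continuity), whereas the paper defines the map the other way, $([s_1,x_2],[s_2,x_3])\mapsto[s_1,x_3]$, and then Corollary \ref{Corollary: egmor equiv} delivers the equivalence from commutativity plus $\iota(K\0)=K'\0$ alone, so the bijectivity, continuity, and homeomorphism checks you perform come for free from the earlier machinery.
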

\begin{proof}
	These constructions keep identity morphisms clearly.
	
	Let $(S_i,X_i,\alpha_i)$ be an inverse semigroup action with $i=1,2,3$, and $(\theta_i,\xi_i)$ be an \isamor from $(S_i,X_i,\alpha_i)$ to $(S_{i+1},X_{i+1},\alpha_{i+1})$ with $i=1,2$. 
	We set 
	\[
	\begin{aligned}
	K&:= ( S_1 \ltimes_{\alpha_2\theta_1} D_{\xi_1} ) \times_{S_2\ltimes_{\alpha_2} X_2} ( S_2\ltimes_{\alpha_3\theta_2} D_{\xi_2} ) \text{ and }\\
	K'&:=S_1\ltimes_{\alpha_3(\theta_2\circ\theta_1)} D_{\xi_1\circ\xi_2}.
	\end{aligned}
	\]
	We have to show that the \egmors $\big[\varphi_{\xi_2},\psi_{\theta_2}\big]\cdot\big[\varphi_{\xi_1},\psi_{\theta_1}\big]$ and $\big[\varphi_{\xi_1\circ\xi_2},\psi_{\theta_2\circ\theta_1}\big]$ coincide:
	\[
	\begin{tikzcd}
	( S_1, \arrow[dd,"\theta_1"'] &[-12mm] X_1, &[-12mm] \alpha_1 ) \arrow[r,"\mapsto",phantom] &[5mm] 
	S_1\ltimes_{\alpha_1} X_1&&&[-5mm]\\[-5mm]
	&&&&
	S_1\ltimes_{\alpha_2\theta_1} D_{\xi_1} \arrow[lu,"\varphi_{\xi_1}"']\arrow[ld,"\psi_{\theta_1}"'] &&\\[-5mm]
	( S_2, \arrow[dd,"\theta_2"'] & X_2, \arrow[uu,"\xi_1"'] & \alpha_2 )\arrow[r,"\mapsto",phantom] & 
	S_2\ltimes_{\alpha_2} X_2&&
	K \arrow[lu,"\widetilde{\varphi_{\xi_2}}"']\arrow[ld,"\widetilde{\psi_{\theta_1}}"] \arrow[r,dashed] &
	K'. \arrow[llluu,"\varphi_{\xi_1\circ\xi_2}"',bend right = 20]\arrow[llldd,"\psi_{\theta_2\circ\theta_1}",bend left = 20]
	\\[-5mm]
	&&&&
	S_2\ltimes_{\alpha_3\theta_2} D_{\xi_2} \arrow[lu,"\varphi_{\xi_2}"]\arrow[ld,"\psi_{\theta_2}"] &&\\[-5mm]
	( S_3, & X_3, \arrow[uu,"\xi_2"'] & \alpha_3 )\arrow[r,"\mapsto",phantom] & 
	S_3\ltimes_{\alpha_3} X_3&&&
	\end{tikzcd}
	\]
	
	We define a map $\iota\: K\rightarrow K'$ as $\iota( [s_1,x_2] , [s_2,x_3] ) := [s_1,x_3]$.	
	We show that $\iota$ is well-defined.
	Take an element $( [s_1,x_2] , [s_2,x_3] ) \in K$. 
	Since $[\theta_1(s_1),x_2] = [s_2,\xi_2(x_3)]$, we get $x_2 = \xi_2(x_3)$. 
	Since $x_2\in D^{\alpha_2\theta_1}_{s_1} = \xi_1\inv(D^{\alpha_1}_{s_1})$, we get $\xi_1(\xi_2(x_3)) = \xi_1(x_2) \in D_{s_1}^{\alpha_1}$.
	This implies $x_3 \in (\xi_1\circ \xi_2)\inv(D^{\alpha_1}_{s_1}) = D^{\alpha_3(\theta_2\circ\theta_1)}_{s_1}$.
	Hence there exists an element in the form of $[s_1,x_3]$ in $K'$.
	Take $[s_i,x_{i+1}],[s'_i,x'_{i+1}] \in S_i \ltimes_{\alpha_{i+1}\theta_i} D_{\xi_i}$ for $i =1,2$ with $( [s_1,x_2] , [s_2,x_3] ) = ( [s_1',x_2'] , [s_2',x_3'] ) \in K$.
	We get $x_2 = \xi_2(x_3)$, $x_3 = x_3'$, and $e_1\in E(S_1)$ with $s_1e_1 = s_1'e_1$ and $x_2 = x_2' \in D^{\alpha_2\theta_1}_{e_1}$.
	We can check that these imply $x_3 = x_3' \in D^{\alpha_3(\theta_2\circ\theta_1)}_{e_1}$.
	Thus we get $[s_1,x_3] = [s'_1,x'_3]$.
	
	It is easy to check that $\varphi_{\xi_1} \circ \widetilde{\varphi_{\xi_2}} = \varphi_{\xi_1\circ\xi_2} \circ \iota $. 
	We show that $\psi_{\theta_2} \circ \widetilde{\psi_{\theta_1}} = \psi_{\theta_2\circ\theta_1} \circ \iota$.
	Take an element $\big( [s_1,x_2] , [s_2,x_3] \big) \in K$.
	We get
	\[
	\begin{aligned}
	\big(\psi_{\theta_2} \circ \widetilde{\psi_{\theta_1}}\big) \big( [s_1,x_2] , [s_2,x_3] \big) &= [\theta_2(s_2),x_3] \text{ and }\\
	(\psi_{\theta_2\circ\theta_1} \circ \iota) \big( [s_1,x_2] , [s_2,x_3] \big) &= [\theta_2(\theta_1(s_1)),x_3].
	\end{aligned}
	\]
	Since $[\theta_1(s_1),x_2] = [s_2,\xi_2(x_3)]$, there exists $e_2\in E(S_2)$ with $\xi_2(x_3) \in D^{\alpha_2}_{e_2}$ and $\theta_1(s_1)e_2 = s_2e_2$.
	We get $x_3\in \xi_2\inv\big(D_{e_2}^{\alpha_2}\big) = D_{\theta_2(e_2)}^{\alpha_3}$ and
	\[
	\theta_2(\theta_1(s_1))\theta_2(e_2) = \theta_2(\theta_1(s_1)e_2) = \theta_2(s_2e_2) =  \theta_2(s_2)\theta_2(e_2).
	\]
	Thus $[\theta_2(s_2),x_3] = [\theta_2(\theta_1(s_1)),x_3]$ holds.
	Hence we get $\psi_{\theta_2} \circ \widetilde{\psi_{\theta_1}} = \psi_{\theta_2\circ\theta_1} \circ \iota$.
	
	
	Now we have that the map $\iota$ satisfies $\varphi_{\xi_1} \circ \widetilde{\varphi_{\xi_2}} = \varphi_{\xi_1\circ\xi_2} \circ \iota $ and $\psi_{\theta_2} \circ \widetilde{\psi_{\theta_1}} = \psi_{\theta_2\circ\theta_1} \circ \iota$.
	One can see that $\iota(K\0)={K'}\0$.
	Thus we get $\big[\varphi_{\xi_2},\psi_{\theta_2}\big]\cdot\big[\varphi_{\xi_1},\psi_{\theta_1}\big]=\big[\varphi_{\xi_1\circ\xi_2},\psi_{\theta_2\circ\theta_1}\big]$ by Corollary \ref{Corollary: egmor equiv}.
\end{proof}

\begin{coro}
	The functor $\TG$ descends to a functor from $\ISAlcH$ to $\EGlcH$ which is also denoted as $\TG$.
\end{coro}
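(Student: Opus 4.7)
The plan is to verify that the functor $\TG \: \ISA \rightarrow \EG$ established in Theorem \ref{Theorem: functor TG} restricts well to the subcategories. Since the composition and identities in $\ISAlcH$ and $\EGlcH$ are inherited from $\ISA$ and $\EG$, functoriality will be automatic once I check two things: (a) $\TG$ sends objects of $\ISAlcH$ to objects of $\EGlcH$, and (b) $\TG$ sends proper \isamors to proper \egmors.

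For (a), let $(S,X,\alpha)$ be an object of $\ISAlcH$, so that $X$ is locally compact Hausdorff. Recall from the construction of transformation groupoids that the bijection $(S\ltimes_\alpha X)\0 \ni [e,x] \mapsto x \in X$ is a homeomorphism. Hence $(S\ltimes_\alpha X)\0$ is locally compact Hausdorff, so $S\ltimes_\alpha X$ is an object of $\EGlcH$.

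For (b), let $(\theta,\xi) \: (S,X,\alpha) \rightarrow (T,Y,\beta)$ be a proper \isamor, so that $\xi \: Y \supset D_\xi \rightarrow X$ is a proper partial map. By Proposition \ref{Proposition: TG morphism} and Theorem \ref{Theorem: functor TG}, $\TG(\theta,\xi) = \big[\varphi_\xi,\psi_\theta;\, S\ltimes_{\beta\theta} D_\xi\big]$. I would then identify the unit-space restriction $\varphi_\xi\0$ with $\xi$: under the canonical homeomorphisms $(S\ltimes_{\beta\theta} D_\xi)\0 \simeq D_\xi$ and $(S\ltimes_\alpha X)\0 \simeq X$, the formula $\varphi_\xi([e,y]) = [e,\xi(y)]$ for $e \in E(S)$ shows that $\varphi_\xi\0$ corresponds to the partial map $\xi$ itself. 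Since $\xi$ is proper, $\varphi_\xi\0$ is proper, so $\TG(\theta,\xi)$ is a proper \egmor by the definition preceding Theorem \ref{Theorem: category EGlcH}.

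There is essentially no real obstacle here; the entire content is the observation that the unit-level map of $\varphi_\xi$ coincides with $\xi$ under the canonical identifications. Once this is noted, preservation of objects and preservation of proper morphisms are both immediate, and functoriality is inherited from Theorem \ref{Theorem: functor TG}.
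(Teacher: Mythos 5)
Your proposal is correct and follows essentially the same route as the paper: the unit space $(S\ltimes_\alpha X)\0$ is homeomorphic to $X$, hence locally compact Hausdorff, and under the identifications $(S\ltimes_{\beta\theta} D_\xi)\0 \simeq D_\xi$ and $(S\ltimes_\alpha X)\0 \simeq X$ the map $\varphi_\xi\0$ is $\xi$, so properness of $\xi$ gives properness of the \egmor. Functoriality then restricts automatically, exactly as in the paper's argument.
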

\begin{proof}
	Let $(S,X,\alpha)$, $(T,Y,\beta)$ be inverse semigroup actions on locally compact Hausdorff spaces, and $(\theta,\xi)\:(S,X,\alpha)\rightarrow(T,Y,\beta)$ be a proper \isamor.
	It is clear that the transformation groupoid $S \ltimes_\alpha X$ has a locally compact Hausdorff unit space.
	Since $\xi\: D_\xi \rightarrow X$ is proper, so is $\varphi_\xi\0 \: (S\ltimes_{\beta\theta} D_\xi)\0 \simeq D_\xi  \rightarrow X \simeq (S\ltimes_\alpha X)\0$.
	Hence the \egmor $(\varphi_\xi,\psi_\theta)$ is proper.
\end{proof}

The composition of the functors $\SP$ and $\TG$ sends inverse semigroups $S$ to their universal groupoids $\Gu(S)$ by their definitions.

\begin{defi}\label{Definition: functor Gu}
	We denote the composition of the functors $\SP\:\IS\rightarrow\ISAlcH$ and $\TG\:\ISAlcH\rightarrow\EGlcH$ as $\Gu$.
\end{defi}

The functor $\Gu$ sends semigroup homomorphisms $\theta$ to the \egmors $[\varphi_{\widehat{\theta}},\psi_\theta]$.


\subsection{A functor $\SA$ from $\EG$ to $\ISA$}

We construct a functor from $\EG$ to $\ISA$.
\begin{defi}[{\cite[Proposition 5.3]{Exe08}}]
	For an \'etale groupoid $G$, the \emph{slice action} $\big(\Bis G, G\0,\gamma^G\big)$ of $G$ is constructed as follows:
	For every $U \in \Bis G$, we set the domain of $\gamma^G_U$ as $d(U)$ and the partial homeomorphism $\gamma^G_U$ as $r|_U\circ\big(d|_U\big)\inv$.
\end{defi}

Let $G,H$ be \'etale groupoids.
We will see that a \egmor from $G$ to $H$ produces an \isamor from $\big(\Bis G, G\0,\gamma^G\big)$ to $\big(\Bis H, H\0,\gamma^H\big)$.

Let $K$ be an \'etale groupoid, and 
$\varphi\: K \rightarrow G$ be a continuous fibrewise bijective groupoid homomorphism.
Recall that the map $\varphi\inv\:\Bis G\rightarrow \Bis K; U \mapsto \varphi\inv(U)$ becomes a semigroup homomorphism (see Definition \ref{Definition: varphi inv}).

\begin{lemm}\label{Lemma: typeS isamor}
	The pair $(\varphi\inv,\varphi\0)$  becomes an \isamor from $\big(\Bis G, G\0, \gamma^G\big)$ to $\big(\Bis K, K\0, \gamma^K\big)$.
\end{lemm}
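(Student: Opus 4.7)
The plan is to unpack the definition of action morphism (\isamor) and match it piece by piece against the data $(\varphi\inv,\varphi\0)$, reducing each required identity to one of the bookkeeping facts already established in Lemma~\ref{Lemma: varphi inv}. First observe that the pair is syntactically of the right shape: $\varphi\inv\:\Bis G\to\Bis K$ is already known to be a semigroup homomorphism (Definition~\ref{Definition: varphi inv}), and $\varphi\0\:K\0\to G\0$ is continuous, so we view it as a (total) partial map with $D_{\varphi\0}=K\0$. Accordingly, the target space of the isamor is $G\0$, the source is $K\0$, and the semigroup homomorphism goes $\Bis G \to \Bis K$, which matches the conventions of Definition~\ref{Definition: isamor}.

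For condition (i) of Definition~\ref{Definition: isamor}, the task reduces to
\[
(\varphi\0)\inv\bigl(d_G(U)\bigr)=d_K\bigl(\varphi\inv(U)\bigr)\qquad(U\in\Bis G),
\]
since $D^{\gamma^G}_U=d_G(U)$ and $D^{\gamma^K}_{\varphi\inv(U)}=d_K(\varphi\inv(U))$ by the definition of the slice action. Lemma~\ref{Lemma: varphi inv}(iii) gives exactly $\varphi\inv(d_G(U))=d_K(\varphi\inv(U))$, and Lemma~\ref{Lemma: varphi inv}(ii) tells us $\varphi\inv(G\0)=K\0$, so the preimage $\varphi\inv(d_G(U))$ (taken in $K$) is automatically contained in $K\0$ and hence coincides with $(\varphi\0)\inv(d_G(U))$. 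Also, $\varphi\inv(U)$ is a bisection by Lemma~\ref{Lemma: varphi inv}(iv), so $\gamma^K_{\varphi\inv(U)}$ is honestly defined.

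For condition (ii) I would fix $u\in d_K(\varphi\inv(U))$, let $k$ be the unique element of $\varphi\inv(U)$ with $d_K(k)=u$, and set $g:=\varphi(k)\in U$. Then
\[
\varphi\0\bigl(\gamma^K_{\varphi\inv(U)}(u)\bigr)=\varphi\0(r_K(k))=r_G(\varphi(k))=r_G(g),
\]
using that $\varphi$ is a groupoid homomorphism. On the other hand, $d_G(g)=d_G(\varphi(k))=\varphi\0(d_K(k))=\varphi\0(u)$, so by the bisection property of $U$ the element $g$ is the unique one with $d_G(g)=\varphi\0(u)$, and therefore $\gamma^G_U(\varphi\0(u))=r_G(g)$. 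Comparing the two expressions gives the desired equality.

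The argument is essentially a translation exercise, and I do not anticipate any real obstacle: all nontrivial content has been packaged into Lemma~\ref{Lemma: varphi inv}, and the only care needed is to make sure one uses the ``fibrewise bijective'' hypothesis (via (ii)--(iv) of that lemma) to handle units, domains, and the bisection property of $\varphi\inv(U)$ simultaneously. The slight bookkeeping subtlety worth mentioning is identifying $\varphi\inv$ (preimage under $\varphi\:K\to G$) with $(\varphi\0)\inv$ on subsets of $G\0$, which is precisely what Lemma~\ref{Lemma: varphi inv}(ii) guarantees.
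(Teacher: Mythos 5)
Your proposal is correct and follows essentially the same route as the paper: condition (i) is reduced to $d(\varphi\inv(U)) = (\varphi\0)\inv(d(U))$ via Lemma~\ref{Lemma: varphi inv} (ii) and (iii), and condition (ii) is verified by taking the unique $k \in \varphi\inv(U)$ over a given unit and observing that $g = \varphi(k)$ is the unique element of $U$ over its image, exactly as in the paper's proof.
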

\begin{proof}
	For every $U\in\Bis G$, we have 
	\[
	D^{\gamma^K}_{\varphi\inv(U)} = d(\varphi\inv(U)) = (\varphi\0)\inv(d(U)) = \big(\varphi\0\big)\inv\big(D^{\gamma^G}_U\big)
	\]
	by Lemma \ref{Lemma: varphi inv} (ii) and (iii).
	For every $U \in \Bis G$ and $v \in D^{\gamma^K}_{\varphi\inv(U)}$, we get a unique element $k \in \varphi\inv(U)$ with $d(k) = v$.
	The element $g = \varphi(k)$ is a unique element such that $g \in U$ and $d(g) = \varphi\0(v)$.
	This implies that $\varphi\0\big(\gamma^K_{\varphi\inv(U)}(v)\big) = \varphi\0(r(k)) = r(g) = \gamma^G_U\big(\varphi\0(v)\big)$.
\end{proof}

Let $\psi\: K \rightarrow H$ be a continuous groupoid homomorphism with open injective $\psi\0$.
Recall that the map $\psi\: \Bis K \rightarrow \Bis H: U \mapsto \psi(U)$ becomes a semigroup homomorphism (see Definition \ref{Definition: psi}).
Since $\psi\0\:K\0\rightarrow H\0$ is a homeomorphism onto its open image $\psi\0(K\0)$, we can consider the partial map $\big(\psi\0\big)\inv\: H\0 \supset \psi\0(K\0) \rightarrow K\0$.

\begin{lemm}\label{Lemma: typeG isamor}
	The pair $\big(\psi,\big(\psi\0\big)\inv\big)$ becomes an \isamor from $(\Bis K, K\0, \gamma^K)$ to $(\Bis H, H\0, \gamma^H)$.
\end{lemm}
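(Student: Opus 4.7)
The plan is to verify conditions (i) and (ii) of Definition~\ref{Definition: isamor} directly for the pair $\big(\psi,(\psi\0)\inv\big)$. The semigroup-homomorphism requirement on the first component is given, and the partial map $(\psi\0)\inv \: H\0 \supset \psi\0(K\0) \rightarrow K\0$ is continuous since $\psi\0$ is an open injection, hence a homeomorphism onto its image. So only the two compatibility conditions remain, and both should reduce to bookkeeping with the identity $d\circ\psi = \psi\0\circ d$ together with injectivity of $\psi\0$.

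For (i), I would fix $U \in \Bis K$ and compute
\[
\big((\psi\0)\inv\big)\inv\big(D^{\gamma^K}_U\big) = \big((\psi\0)\inv\big)\inv(d(U)) = \psi\0(d(U)),
\]
where the last equality uses that $(\psi\0)\inv$ is a bijection $\psi\0(K\0) \to K\0$. Then $d\circ\psi = \psi\0\circ d$ and Lemma~\ref{Lemma: psi} (the bisection $U$ injects into $\psi(U)$ under $\psi$) give $\psi\0(d(U)) = d(\psi(U)) = D^{\gamma^H}_{\psi(U)}$, which is exactly (i).

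For (ii), I would fix $U \in \Bis K$ and $y \in \psi\0(d(U))$, and set $v := (\psi\0)\inv(y) \in d(U)$. Let $k \in U$ be the unique arrow with $d(k)=v$, so that $\gamma^K_U(v) = r(k)$. Then $\psi(k) \in \psi(U)$ satisfies $d(\psi(k)) = \psi\0(v) = y$, and since $\psi(U)$ is a bisection by Lemma~\ref{Lemma: psi}, $\psi(k)$ is the unique such arrow; hence $\gamma^H_{\psi(U)}(y) = r(\psi(k)) = \psi\0(r(k))$. Applying $(\psi\0)\inv$ to both sides yields $(\psi\0)\inv\big(\gamma^H_{\psi(U)}(y)\big) = r(k) = \gamma^K_U\big((\psi\0)\inv(y)\big)$, which is (ii).

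I do not anticipate any real obstacle. The only subtlety worth stating carefully is that $(\psi\0)\inv$ is a partial map whose domain is exactly $\psi\0(K\0)$, so that the preimage computation in (i) returns an image under $\psi\0$ rather than something larger, and the computation in (ii) is legitimate because $\gamma^H_{\psi(U)}(y)$ lies in $r(\psi(U)) = \psi\0(r(U)) \subset \psi\0(K\0)$, keeping us inside the domain of $(\psi\0)\inv$.
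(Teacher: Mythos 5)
Your proposal is correct and follows essentially the same route as the paper: condition (i) via $D^{\gamma^H}_{\psi(U)} = d(\psi(U)) = \psi\0(d(U)) = \big((\psi\0)\inv\big)\inv\big(D^{\gamma^K}_U\big)$, and condition (ii) by matching the unique arrow $k \in U$ over $(\psi\0)\inv(y)$ with the unique arrow $\psi(k) \in \psi(U)$ over $y$, using $d\circ\psi = \psi\0\circ d$ and that $\psi(U)$ is a bisection. Your added remarks on the domain of $(\psi\0)\inv$ are sound bookkeeping and do not change the argument.
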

\begin{proof}
	We have 
	\[
	D^{\gamma^H}_{\psi(U)} = d(\psi(U)) = \psi\0(d(U)) = \psi\0\big(D^{\gamma^K}_U\big)
	\]
	for every $U\in\Bis K$.
	The right hand side is the pre-image of $D^{\gamma^K}_U$ by the partial map $(\psi\0)\inv$.
	For every $U \in \Bis K$ and $v \in D^{\gamma^H}_{\psi(U)}$, we get a unique element of $k \in U $ with $\psi\0(d(k)) = v$.
	The element $h = \psi(k)$ is the unique element such that $h \in \psi(U)$ and $d(h) = v$.
	This implies that $\gamma^K_U(\big(\psi\0)\inv(v)\big) = \psi\0(r(k)) = r(h) = (\psi\0)\inv\big(\gamma^H_{\psi(U)}(v)\big)$.
\end{proof}

\begin{prop}\label{Propositon: SA morphism}
	Let $(\varphi,\psi;K)$ be a \egmor from $G$ to $H$.
	The pair $\big(\psi\circ\varphi\inv, \varphi\0\circ\big(\psi\0\big)\inv\big)$ becomes an \isamor from $\big(\Bis G,G\0,\gamma^G\big)$ to $\big(\Bis H,H\0,\gamma^H\big)$.
\end{prop}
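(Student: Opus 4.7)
The plan is to recognize the pair $\big(\psi\circ\varphi\inv,\varphi\0\circ(\psi\0)\inv\big)$ as the composition in $\ISA$ of the two \isamors already provided by Lemmas \ref{Lemma: typeS isamor} and \ref{Lemma: typeG isamor}, so that the proposition falls out of the fact that $\ISA$ is a category.

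Concretely, Lemma \ref{Lemma: typeS isamor} applied to the continuous fibrewise bijective groupoid homomorphism $\varphi\: K \rightarrow G$ furnishes an \isamor
\[
(\varphi\inv,\varphi\0) \: \big(\Bis G, G\0, \gamma^G\big) \rightarrow \big(\Bis K, K\0, \gamma^K\big),
\]
while Lemma \ref{Lemma: typeG isamor} applied to the continuous groupoid homomorphism $\psi\: K \rightarrow H$ with open injective $\psi\0$ (which is valid since $(\varphi,\psi;K)$ is a \egmor) furnishes an \isamor
\[
\big(\psi,(\psi\0)\inv\big) \: \big(\Bis K, K\0, \gamma^K\big) \rightarrow \big(\Bis H, H\0, \gamma^H\big).
\]
Applying Definition \ref{Definition: composition isamor} to these two \isamors yields
\[
\big(\psi,(\psi\0)\inv\big)\cdot (\varphi\inv,\varphi\0) = \big(\psi\circ\varphi\inv, \varphi\0\circ(\psi\0)\inv\big),
\]
which is exactly the pair in the statement. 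By Theorem \ref{Theorem: category ISA} this composition is an \isamor from $(\Bis G, G\0, \gamma^G)$ to $(\Bis H, H\0, \gamma^H)$, completing the proof.

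No substantive obstacle is anticipated: the two intermediate lemmas already encapsulate the verification of conditions (i) and (ii) of Definition \ref{Definition: isamor} for the $\varphi$-leg and the $\psi$-leg separately, and the only new ingredient is the formal composition in $\ISA$. If one preferred to avoid invoking the categorical structure, one could equivalently verify directly that $\psi\circ\varphi\inv \: \Bis G \rightarrow \Bis H$ is a semigroup homomorphism and that $\varphi\0 \circ (\psi\0)\inv \: H\0 \supset \psi\0(K\0) \rightarrow G\0$ is a partial map satisfying
\[
\big(\varphi\0\circ(\psi\0)\inv\big)\inv(D^{\gamma^G}_U) = D^{\gamma^H}_{\psi(\varphi\inv(U))}
\]
and intertwines the actions, but this is essentially just re-expanding the composition of the two lemmas.
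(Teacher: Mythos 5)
Your proposal is correct and follows the paper's own argument exactly: the paper also obtains the pair as the composition $\big(\psi,(\psi\0)\inv\big)\cdot(\varphi\inv,\varphi\0)$ of the \isamors from Lemmas \ref{Lemma: typeS isamor} and \ref{Lemma: typeG isamor}. The composition formula and the categorical justification you invoke are precisely what the paper uses, so there is nothing to add.
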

\begin{proof}
	The pair $\big(\psi\circ\varphi\inv, \varphi\0\circ\big(\psi\0\big)\inv\big)$ is obtained as the composition of \isamors in Lemma \ref{Lemma: typeS isamor} and \ref{Lemma: typeG isamor}.
\end{proof}

We remark that the domain of the partial map $\varphi\0\circ\big(\psi\0\big)\inv$ is the open subset $\psi\0\big(K\0\big)$ of $H\0$.

\begin{lemm}
	For equivalent \egmors $(\varphi_1,\psi_1;K_1)$ and $(\varphi_2,\psi_2;K_2)$ from $G$ to $H$, the \isamors associated with them as in Proposition \ref{Propositon: SA morphism} coincide.
\end{lemm}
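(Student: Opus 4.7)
The plan is to unpack the definitions and carry out a short diagram chase using the intertwining homeomorphic groupoid isomorphism supplied by the equivalence.

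By hypothesis there exists a homeomorphic groupoid isomorphism $\iota \colon K_1 \to K_2$ with $\varphi_2 \circ \iota = \varphi_1$ and $\psi_2 \circ \iota = \psi_1$. In particular $\iota\0 \colon K_1\0 \to K_2\0$ is a bijection, and for every subset $U \subset G$ we have the set-theoretic equality $\varphi_1\inv(U) = \iota\inv\big(\varphi_2\inv(U)\big)$.

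To compare the first components of the two \isamors, I would fix $U \in \Bis G$ and compute
\[
\psi_1\big(\varphi_1\inv(U)\big) = (\psi_2 \circ \iota)\big(\iota\inv\big(\varphi_2\inv(U)\big)\big) = \psi_2\big(\varphi_2\inv(U)\big),
\]
where the last equality uses that $\iota$ is a bijection, so $\iota \circ \iota\inv$ is the identity on $\varphi_2\inv(U) \subset K_2$. This shows the semigroup homomorphism components $\psi_1 \circ \varphi_1\inv$ and $\psi_2 \circ \varphi_2\inv$ from $\Bis G$ to $\Bis H$ agree.

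For the second (partial map) components, I would first identify the domains: since $\iota\0$ is a bijection $K_1\0 \to K_2\0$ and $\psi_1\0 = \psi_2\0 \circ \iota\0$, we get
\[
\psi_1\0\big(K_1\0\big) = \psi_2\0\big(\iota\0(K_1\0)\big) = \psi_2\0\big(K_2\0\big),
\]
so the two partial maps share the same domain $D \subset H\0$. For $v \in D$, let $u_1 := \big(\psi_1\0\big)\inv(v) \in K_1\0$. Since $\psi_2\0(\iota\0(u_1)) = \psi_1\0(u_1) = v$ and $\psi_2\0$ is injective, we conclude $\big(\psi_2\0\big)\inv(v) = \iota\0(u_1)$. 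Hence
\[
\varphi_2\0\big(\big(\psi_2\0\big)\inv(v)\big) = \varphi_2\0(\iota\0(u_1)) = \varphi_1\0(u_1) = \varphi_1\0\big(\big(\psi_1\0\big)\inv(v)\big),
\]
proving the two partial maps coincide pointwise on $D$.

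There is no real obstacle here; the statement is essentially a naturality check, and the only thing to keep straight is that $\varphi_i\inv$ and $\psi_i$ in the isamor are the semigroup homomorphisms on open bisections (Definitions \ref{Definition: varphi inv} and \ref{Definition: psi}), so both equalities above reduce to the set-theoretic identities obtained by applying $\iota$ and its inverse.
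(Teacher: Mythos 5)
Your proof is correct and follows essentially the same route as the paper: the paper obtains the equality $\psi_1\big(\varphi_1\inv(U)\big) = \psi_2\big(\varphi_2\inv(U)\big)$ by citing Corollary \ref{Corollary: egmor equiv}, whose content is exactly the diagram chase with $\iota$ that you carry out by hand, and then notes the equality of the partial maps, which you verify pointwise. No gap; your version just makes explicit what the paper leaves as "easily seen."
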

\begin{proof}
	The semigroup homomorphisms $\psi_1\circ\varphi_1\inv$ and $\psi_2\circ\varphi_2\inv$ coincide by Corollary \ref{Corollary: egmor equiv}.
	We can see easily that the partial maps $\varphi_1\0\circ\big(\psi_1\0\big)\inv$ and $\varphi_2\0\circ\big(\psi_2\0\big)\inv$ coincide.
\end{proof}

\begin{theo}\label{Theorem: SA functoriality}
	The constructions $G\mapsto (\Bis G,G\0,\gamma^G)$ and $[\varphi,\psi] \mapsto \big(\psi\circ\varphi\inv, \varphi\0\circ\big(\psi\0\big)\inv\big)$ form a functor from $\EG$ to $\ISA$.
	We denote this functor as $\SA$.
\end{theo}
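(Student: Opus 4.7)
The plan is to verify the two functor axioms: preservation of identities and preservation of composition. Well-definedness of $\SA$ on objects is immediate from the construction of the slice action, while well-definedness on morphisms is already secured by Proposition~\ref{Propositon: SA morphism} and the lemma preceding the theorem.

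For identities, the identity \egmor on $G$ is $[\id_G,\id_G;G]$, and unwinding the formula gives
\[
\SA([\id_G,\id_G;G]) = \bigl(\id_G\circ(\id_G)\inv,\ \id_{G\0}\circ(\id_{G\0})\inv\bigr) = (\id_{\Bis G},\id_{G\0}),
\]
which is the identity \isamor on $(\Bis G,G\0,\gamma^G)$.

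For composition, let $\kappa_i=[\varphi_i,\psi_i;K_i]\:G_i\rightarrow G_{i+1}$ for $i=1,2$, so that Definition~\ref{Definition: composition of couple morphisms} gives $\kappa_2\cdot\kappa_1 = \bigl[\varphi_1\circ\widetilde{\varphi_2},\ \psi_2\circ\widetilde{\psi_1};\ K_1\times_{G_2}K_2\bigr]$. I need to check that both components of $\SA(\kappa_2\cdot\kappa_1)$ agree with those of $\SA(\kappa_2)\cdot\SA(\kappa_1)$ as computed by Definition~\ref{Definition: composition isamor}. On the semigroup homomorphism side, the identity to be verified for $U\in\Bis G_1$ is
\[
(\psi_2\circ\widetilde{\psi_1})\bigl((\varphi_1\circ\widetilde{\varphi_2})\inv(U)\bigr) = \psi_2\bigl(\varphi_2\inv\bigl(\psi_1(\varphi_1\inv(U))\bigr)\bigr).
\]
Rewriting $(\varphi_1\circ\widetilde{\varphi_2})\inv(U) = \widetilde{\varphi_2}\inv(\varphi_1\inv(U))$ and applying Lemma~\ref{Lemma: pull-back 1} to the subset $\varphi_1\inv(U)\subset K_1$, followed by $\psi_2$, yields the equality. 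On the partial map side, the required identity reduces to
\[
\widetilde{\varphi_2}\0\circ(\widetilde{\psi_1}\0)\inv = (\psi_1\0)\inv\circ\varphi_2\0,
\]
which is immediate from the description of $(K_1\times_{G_2}K_2)\0$ as pairs $(u,v)\in K_1\0\times K_2\0$ with $\psi_1(u)=\varphi_2(v)$: given such a pair, $\widetilde{\psi_1}\0(u,v)=v$ and $\widetilde{\varphi_2}\0(u,v)=u=(\psi_1\0)\inv(\varphi_2\0(v))$.

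The step I expect to require the most care is matching domains: I must verify that the partial maps on the two sides of each equality have the same open domain, not merely agree where both are defined. This is bookkeeping rather than a conceptual obstacle, since condition~(i) in Definition~\ref{Definition: isamor}, applied to each of $\SA(\kappa_1)$, $\SA(\kappa_2)$, and $\SA(\kappa_2\cdot\kappa_1)$, pins the relevant domains down precisely, and Lemma~\ref{Lemma: pull-back 1} then matches them on the nose.
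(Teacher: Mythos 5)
Your proposal is correct and follows essentially the same route as the paper: the bisection-level identity is exactly the paper's first commuting triangle, obtained from Lemma~\ref{Lemma: pull-back 1} applied to $\varphi_1\inv(U)$, and your unit-space identity $\widetilde{\varphi_2}\0\circ\big(\widetilde{\psi_1}\0\big)\inv = \big(\psi_1\0\big)\inv\circ\varphi_2\0$ is precisely the paper's use of Lemma~\ref{Lemma: pull-back 2} with $U=K_1\0$ (together with $\widetilde{\varphi_2}\inv\big(K_1\0\big)=(K_1\times_{G_2}K_2)\0$), which you simply re-verify by a direct computation on the pull-back, domains included.
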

\begin{proof}
	This constructions keep identity morphisms clearly.
	
	Let $G_i$ be an \'etale groupoid with $i = 1,2,3$, and $[\varphi_i,\psi_i,K_i]\: G_i\rightarrow G_{i+1}$ be a \egmor with $i=1,2$. 
	We have to show that the following diagrams commute;
	\[
	\begin{tikzcd}[ampersand replacement=\&]
	\&[-14mm]\&[-14mm] 
	\Bis\left(K_1 \times_{G_2} K_2\right) \arrow[ld,"\widetilde{\varphi_2}\inv"',leftarrow]\arrow[rd,"\widetilde{\psi_1}"] 
	\&[-14mm]\&[-14mm]\\
	\& \Bis K_1 \arrow[ld,"\varphi_1\inv"',leftarrow]\arrow[rd,"\psi_1"] \&\& \Bis K_2 \arrow[ld,"\varphi_2\inv"',leftarrow]\arrow[rd,"\psi_2"] \&\\
	\Bis G_1\&\& \Bis G_2 \&\& \Bis G_3,
	\end{tikzcd}
	\begin{tikzcd}[ampersand replacement=\&]
	\&[-12mm]\&[-12mm] 
	(K_1 \times_{G_2} K_2)\0 \arrow[ld,"\widetilde{\varphi_2}\0"']\arrow[rd,"\big(\widetilde{\psi_1}\0\big)\inv"{yshift = -7pt},leftarrow]
	\&[-12mm]\&[-12mm]\\
	\& K_1\0 \arrow[ld,"\varphi_1\0"']\arrow[rd,"\big(\psi_1\0\big)\inv"{xshift = -5pt},leftarrow] \&\& K_2\0 \arrow[ld,"\varphi_2\0"]\arrow[rd,"\big(\psi_2\0\big)\inv",leftarrow] \&\\
	G_1\0 \&\& G_2\0 \&\& G_3\0.
	\end{tikzcd}
	\]
	The first diagram commutes by Lemma \ref{Lemma: pull-back 1}.

	By Lemma \ref{Lemma: pull-back fibrewise} and Lemma \ref{Lemma: varphi inv} (ii), we get 
	$
	\widetilde{\varphi_2}\inv\big(K_1\0\big) = (K_1 \times_G K_2)\0.
	$
	Hence we have 
	$
	 \big.\widetilde{\psi_1}\big|_{\widetilde{\varphi_2}\inv\big(K_1\0\big)} = \widetilde{\psi_1}\0.
	$
	Setting $U$ in Lemma \ref{Lemma: pull-back 2} as $K_1\0$, we get that the second diagram commutes.
\end{proof}

\begin{coro}
	The functor $\SA$ descends to a functor from $\EGlcH$ to $\ISAlcH$ which is also denoted as $\SA$.
\end{coro}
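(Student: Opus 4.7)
To prove this corollary, I need to verify two things: that $\SA$ sends an object of $\EGlcH$ to an object of $\ISAlcH$, and that it sends a proper \egmor to a proper \isamor.

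The object part is immediate: if $G$ is an \'etale groupoid in $\EGlcH$, then by definition $G\0$ is locally compact Hausdorff, so the slice action $\big(\Bis G, G\0, \gamma^G\big)$ is an inverse semigroup action on a locally compact Hausdorff space, and hence is an object of $\ISAlcH$.

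For the morphism part, let $[\varphi,\psi;K]\: G \rightarrow H$ be a proper \egmor between objects of $\EGlcH$. By Proposition \ref{Propositon: SA morphism}, $\SA$ sends this to the \isamor $\big(\psi\circ\varphi\inv,\, \varphi\0 \circ \big(\psi\0\big)\inv\big)$. I need to show that the partial map $\xi := \varphi\0 \circ \big(\psi\0\big)\inv \: H\0 \supset \psi\0(K\0) \rightarrow G\0$ is proper. The plan is to compute $\xi\inv(C)$ for a compact subset $C$ of $G\0$ and show it is compact. Since $\psi\0$ is an open injection (hence a homeomorphism onto its open image $\psi\0(K\0)$), for $y \in \psi\0(K\0)$ there is a unique $x \in K\0$ with $\psi\0(x) = y$; substituting, one obtains
\[
\xi\inv(C) \;=\; \big\{\psi\0(x) \,\big|\, x\in K\0,\ \varphi\0(x)\in C\big\} \;=\; \psi\0\big(\big(\varphi\0\big)\inv(C)\big).
\]
Because $[\varphi,\psi;K]$ is proper, $\varphi\0$ is proper, so $\big(\varphi\0\big)\inv(C)$ is compact in $K\0$. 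Since $\psi\0$ is continuous, its image $\psi\0\big(\big(\varphi\0\big)\inv(C)\big)$ is compact, as required. Therefore $\xi$ is proper, and the resulting \isamor lies in $\ISAlcH$.

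The functoriality and well-definedness (on equivalence classes) of $\SA$ have already been established for the ambient category $\EG \to \ISA$ in Theorem \ref{Theorem: SA functoriality}, so the restriction is automatically a functor. The only substantive point is the properness verification above, and this is entirely routine once one observes that $\psi\0$ being an open embedding makes the partial-map preimage collapse to the image under $\psi\0$ of an ordinary preimage under $\varphi\0$; no genuine obstacle arises.
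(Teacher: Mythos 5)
Your proof is correct and follows essentially the same route as the paper: the paper notes that $\big(\psi\0\big)\inv$ is a proper partial map (preimage of a compact $C\subset K\0$ is $\psi\0(C)$) and then invokes closure of proper partial maps under composition with the proper map $\varphi\0$, which is exactly your inline computation $\xi\inv(C)=\psi\0\big(\big(\varphi\0\big)\inv(C)\big)$. No gap; the object part and the appeal to Theorem \ref{Theorem: SA functoriality} for functoriality are handled as in the paper.
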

\begin{proof}
	It is trivial that $\SA$ sends an object in $\EGlcH$ to an object in $\ISAlcH$.
	
	Let $G,H$ be \'etale groupoids with locally compact Hausdorff unit spaces, and $[\varphi,\psi]\: G \rightarrow H$ be a proper \egmor.
	The partial map $\big(\psi\0\big)\inv$ is proper because the pre-image of a compact subset $C$ of $K\0$ by the partial map $\big(\psi\0\big)\inv$ coincides with the compact set $\psi\0(C)$.
	Thus the partial map $\varphi\0\circ\big(\psi\0\big)\inv$ is proper since $\varphi\0$ is proper.
	Hence $\big(\psi\circ\varphi\inv, \varphi\0\circ\big(\psi\0\big)\inv\big)$ is proper.
\end{proof}

\subsection{The functor $\TG$ is left adjoint to $\SA$}

We show that the functor $\TG$ from $\ISA$ to $\EG$ is left adjoint to the functor $\SA$ from $\EG$ to $\ISA$.
See \cite[Section IV]{Mac98} for basics of adjoint functors.

Let $(S,X,\alpha)$ be an inverse semigroup action.
We construct an \isamor $\eta_{(S,X,\alpha)}$ from $(S,X,\alpha)$ to $(\SA\cdot\TG)(S,X,\alpha)$. 
We remark that $(\SA\cdot\TG)(S,X,\alpha)$ is nothing but the inverse semigroup action
\[
\big( \Bis(S\ltimes_\alpha X), (S\ltimes_\alpha X)\0, \gamma^{S\ltimes_\alpha X} \big).
\]
We can check that the map 
\[
\theta_\alpha\: S\rightarrow \Bis(S\ltimes_\alpha X);s\mapsto[s,D^\alpha_{s}]
\]
becomes a semigroup homomorphism.
We set $\xi_\alpha$ as the homeomorphism $(S\ltimes_\alpha X)\0 \rightarrow X;[e,x]\mapsto x$.

\begin{lemm}\label{Lemma: eta_alpha}
	The pair $(\theta_\alpha,\xi_\alpha)$ becomes an \isamor from $(S,X,\alpha)$ to $(\SA\cdot\TG)(S,X,\alpha)$.
	We denote this \isamor as $\eta_{(S,X,\alpha)}$.
\end{lemm}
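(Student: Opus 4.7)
The plan is to verify directly that the pair $(\theta_\alpha, \xi_\alpha)$ satisfies the two conditions of Definition \ref{Definition: isamor}. As noted just before the lemma, $\theta_\alpha$ is already observed to be a semigroup homomorphism, and $\xi_\alpha$ is a homeomorphism defined on all of $(S\ltimes_\alpha X)\0$ (hence a fortiori a partial map). So it only remains to check that $\xi_\alpha\inv(D^\alpha_s) = D^{\gamma^{S\ltimes_\alpha X}}_{\theta_\alpha(s)}$ and that $\alpha_s \circ \xi_\alpha = \xi_\alpha \circ \gamma^{S\ltimes_\alpha X}_{\theta_\alpha(s)}$ as partial maps, for every $s \in S$.

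For condition (i), I will unwind both sides using the definition of the slice action. By construction, $D^{\gamma^{S\ltimes_\alpha X}}_{\theta_\alpha(s)}$ equals $d\big([s,D^\alpha_s]\big)$, which under the identification $\xi_\alpha \: [e,x] \mapsto x$ is exactly $D^\alpha_s$; this matches $\xi_\alpha\inv(D^\alpha_s)$ since $\xi_\alpha$ is bijective onto $X$. For condition (ii), I fix $s \in S$ and $u = [e,x] \in \xi_\alpha\inv(D^\alpha_s)$, and compute the composition $r|_{[s,D^\alpha_s]} \circ (d|_{[s,D^\alpha_s]})\inv$ on $u$: the preimage under $d|_{[s,D^\alpha_s]}$ is the unique element $[s,x] \in [s, D^\alpha_s]$ with domain $u$, and applying $r$ yields $\alpha_s(x)$. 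Applying $\xi_\alpha$ to this gives $\alpha_s(x) = \alpha_s(\xi_\alpha(u))$, which matches the left-hand side of (ii).

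There is no substantive obstacle here; the verification is essentially a bookkeeping check that the canonical identification of $(S\ltimes_\alpha X)\0$ with $X$ intertwines the original action $\alpha$ with the slice action $\gamma^{S\ltimes_\alpha X}$ via the semigroup homomorphism $\theta_\alpha$. The only care needed is in consistently juggling the two descriptions $[e,x] \leftrightarrow x$ of a unit and in recalling that $d([s,x]) = x$ and $r([s,x]) = \alpha_s(x)$ in the transformation groupoid; once these identifications are unpacked, both conditions fall out by direct computation.
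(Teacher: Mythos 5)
Your proposal is correct and follows essentially the same route as the paper: a direct verification of conditions (i) and (ii) of Definition \ref{Definition: isamor}, computing $D^{\gamma^{S\ltimes_\alpha X}}_{\theta_\alpha(s)} = d\big([s,D^\alpha_s]\big)$ and evaluating $\gamma^{S\ltimes_\alpha X}_{[s,D^\alpha_s]}$ on a unit $[e,x]$ via the unique element $[s,x]$ of the bisection, under the identification $\xi_\alpha\:[e,x]\mapsto x$. The paper's proof is the same bookkeeping, just written with the units kept in the form $[s^*s,x]$, $[ss^*,\alpha_s x]$ rather than through the identification with $X$.
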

\begin{proof}
	Fix $s \in S$ arbitrarily. 
	We get  
	\[
	D^{\gamma^{S\ltimes_\alpha X}}_{\theta_\alpha(s)} 
	= D^{\gamma^{S\ltimes_\alpha X}}_{[s,D^\alpha_s]} 
	= d([s,D^\alpha_s]) 
	= [s^*s,D^\alpha_s]
	= \xi_\alpha\inv(D^\alpha_s).
	\]
	Take $[e,x]\in \xi_\alpha\inv(D^\alpha_s)$.
	We get $x = \xi_\alpha([e,x]) \in D_s^\alpha$.
	We have 
	\begin{align*}
	\xi_\alpha\big( \gamma^{S\ltimes_\alpha X}_{\theta_\alpha(s)}([e,x]) \big) 
	&= \xi_\alpha\big( \gamma^{S\ltimes_\alpha X}_{[s,D^\alpha_s]}([s^*s,x]) \big)\\
	&= \xi_\alpha([ss^*,\alpha_sx])
	= \alpha_sx
	= \alpha_s(\xi_\alpha([e,x])).
	\end{align*}
	Thus $(\theta_\alpha,\xi_\alpha)$ becomes an \isamor.
\end{proof}

\begin{lemm}\label{Lemma: eta}
	For every inverse semigroup actions $(S,X,\alpha)$, $(T,Y,\beta)$ and an \isamor $(\theta,\xi)$, the following diagram in $\ISA$ commutes;
	\[
	\begin{tikzcd}
	(S,X,\alpha) & (\SA\cdot\TG)(S,X,\alpha)\\
	(T,Y,\beta) & (\SA\cdot\TG)(T,Y,\beta).
	\arrow[from = 1-1, to = 1-2, "{\eta_{(S,X,\alpha)}}"] 
	\arrow[from = 2-1, to = 2-2, "{\eta_{(T,Y,\beta)}}"'] 
	\arrow[from = 1-1, to = 2-1, "{(\theta,\xi)}"']
	\arrow[from = 1-2, to = 2-2, "{(\SA\cdot\TG)(\theta,\xi)}"]
	\end{tikzcd}
	\]
	
\end{lemm}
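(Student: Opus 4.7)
The plan is to unfold both compositions using Definition~\ref{Definition: composition isamor} together with Proposition~\ref{Propositon: SA morphism}, and then verify the resulting equality of \isamors coordinate by coordinate. The two candidates for the composite $(S,X,\alpha) \to (\SA\cdot\TG)(T,Y,\beta)$ are
\[
\eta_{(T,Y,\beta)} \cdot (\theta,\xi) = \big(\theta_\beta \circ \theta,\; \xi \circ \xi_\beta\big)
\]
and
\[
(\SA\cdot\TG)(\theta,\xi) \cdot \eta_{(S,X,\alpha)} = \big(\psi_\theta \circ \varphi_\xi\inv \circ \theta_\alpha,\; \xi_\alpha \circ \varphi_\xi\0 \circ (\psi_\theta\0)\inv\big).
\]
Hence I must check an equality of semigroup homomorphisms $S \to \Bis(T \ltimes_\beta Y)$ and an equality of partial maps $(T \ltimes_\beta Y)\0 \to X$.

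For the semigroup homomorphisms, fix $s \in S$. Starting from $\theta_\alpha(s) = [s, D^\alpha_s]$, the key intermediate claim is that
\[
\varphi_\xi\inv\big([s, D^\alpha_s]\big) = [s, D^\beta_{\theta(s)}]
\]
as open bisections of $S \ltimes_{\beta\theta} D_\xi$. The inclusion $\supset$ is immediate from condition~(i) of Definition~\ref{Definition: isamor}, namely $\xi\inv(D^\alpha_s) = D^\beta_{\theta(s)}$. For the reverse inclusion, given $[s', y]$ in the preimage I must exhibit $[s', y] = [s, y]$ in $S \ltimes_{\beta\theta} D_\xi$: I unpack the equivalence in $S \ltimes_\alpha X$ to extract $e \in E(S)$ with $\xi(y) \in D^\alpha_e$ and $s'e = se$, then pull $e$ back across $\xi$ using condition~(i) once more to conclude $y \in D^{\beta\theta}_e$. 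Applying $\psi_\theta$ then produces $[\theta(s), D^\beta_{\theta(s)}] = \theta_\beta(\theta(s))$, matching the left-hand side.

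For the partial maps, I identify each unit space with its underlying topological space via the canonical homeomorphism $[e, \cdot] \mapsto (\cdot)$. Under these identifications, $\xi_\alpha$ and $\xi_\beta$ become identities, $\psi_\theta\0$ becomes the open inclusion $D_\xi \hookrightarrow Y$ so that $(\psi_\theta\0)\inv$ is the identity on its open domain $D_\xi \subset Y$, and $\varphi_\xi\0$ becomes $\xi$ itself. Both sides then collapse to the partial map $\xi\: Y \supset D_\xi \to X$, and equality is automatic.

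The main obstacle is the reverse inclusion in the identity $\varphi_\xi\inv([s, D^\alpha_s]) = [s, D^\beta_{\theta(s)}]$: this is where the full force of condition~(i) of Definition~\ref{Definition: isamor} must be applied to the idempotent $e$ witnessing the equivalence in the target transformation groupoid, in order to transfer the relation back to the domain. Once this is in hand, the commutativity of the diagram reduces to a bookkeeping exercise.
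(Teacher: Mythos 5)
Your proposal is correct and follows essentially the same route as the paper: unfold both composites into their semigroup-homomorphism and partial-map components, verify $\varphi_\xi\inv\big([s,D^\alpha_s]\big)=[s,D^\beta_{\theta(s)}]$ using condition (i) of Definition \ref{Definition: isamor} and apply $\psi_\theta$, then handle the partial-map square via the canonical identifications of the unit spaces with $X$, $D_\xi$, and $Y$. The only difference is cosmetic: you spell out the reverse inclusion that the paper leaves implicit, while the paper instead verifies the domain equality $\xi_\beta\inv(D_\xi)=\psi_\theta\0\big((S\ltimes_{\beta\theta}D_\xi)\0\big)$ elementwise rather than by appealing to the identifications, which were already justified earlier.
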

\begin{proof}
	We have to show the following diagrams are commutative;
	\[
	\begin{tikzcd}
	S \arrow[r,"\theta_{\alpha}"] \arrow[d,"\theta"']  &\Bis\left( S\ltimes_{\alpha}X \right) \arrow[d,"\psi_\theta\circ\varphi_\xi\inv"] \\
	T \arrow[r,"\theta_{\beta}"'] &\Bis\left( T\ltimes_{\beta}Y \right),
	\end{tikzcd}
	\begin{tikzcd}
	X &\left( S\ltimes_{\alpha}X \right)\0 \arrow[l,"\xi_{\alpha}"'] \\
	Y \arrow[u,"\xi"] &\left( T\ltimes_{\beta}Y \right)\0. \arrow[u,"\varphi_\xi\0\circ\left(\psi_\theta\0\right)\inv"'] \arrow[l,"\xi_{\beta}"]
	\end{tikzcd}
	\]
	We get 
	\[
	\begin{aligned}
	(\psi_\theta\circ\varphi_\xi\inv\circ\theta_{\alpha})(s) 
	&= (\psi_\theta\circ\varphi_\xi\inv)([s,D^\alpha_s])\\
	&= \psi_\theta([s,\xi\inv(D^{\alpha}_s)])
	= [\theta(s),\xi\inv(D_s^{\alpha})],\\
	(\theta_{\beta}\circ\theta)(s) 
	&= \theta_{\beta}(\theta(s)) 
	= \big[\theta(s),D^{\beta}_{\theta(s)}\big]
	\end{aligned}
	\]
	for every $s \in S$.
	By the condition (i) in Definition \ref{Definition: isamor}, the first square is commutative.
	
	We show that the second square is commutative.
	The domain of the partial maps $\xi\circ\xi_\beta$ and $\xi_\alpha\circ(\varphi_\xi)\0\circ\big(\psi_\theta\0\big)\inv$ are $\xi\inv_\beta(D_\xi)$ and $\psi_\theta\0\big((S\ltimes_{\beta\theta}D_\xi)\0\big)$ respectively.
	It is clear that $\psi_\theta\0\big( (S\ltimes_{\beta\theta}D_\xi)\0 \big) \subset \xi\inv_\beta(D_\xi)$ holds.
	Take $[f,y]\in\xi_\beta\inv(D_\xi)$.
	We get $y \in D_\xi$.
	By Remark \ref{Remark: isamor}, there exists $e\in E(S)$ with $y \in D^\beta_{\theta(e)}$.
	Thus we get $[f,y] = [\theta(e),y] \in \psi_\theta\0\big( (S\ltimes_{\beta\theta}D_\xi)\0 \big)$.
	For every $[f,y] \in \xi_\beta\inv(D_\xi)$, the elements $(\xi\circ\xi_\beta)([f,y])$ and $\big(\xi_\alpha\circ\varphi_\xi\0\circ\big(\psi_\theta\0\big)\inv\big)([f,y])$ are the same element $\xi(y) \in X$.
	Thus the two partial maps coincide.
\end{proof}

This lemma means that the collection $\eta = \big\{\eta_{(S,X,\alpha)}\big\}$ forms a natural transformation from $1_{\ISA}$ to $\SA\cdot\TG$.

Let $G$ be an \'etale groupoid.
We construct a \egmor from $(\TG\cdot\SA)(G)$ to $G$.
We remark that the \'etale groupoid $(\TG\cdot\SA)(G)$ is nothing but the \'etale groupoid $\Bis G\ltimes_{\gamma^G} G\0$.
We define a map
\[
\omega_G\: \Bis G\ltimes_{\gamma^G} G\0 \rightarrow G;[U,u] \mapsto g_u,
\]
where $g_u$ is the unique element of $U$ with $d(g_u) = u$. 
This map becomes a homeomorphic groupoid isomorphism (see \cite[Proposition 5.4]{Exe08}). 
We denote as $\varepsilon_G$ 
the \egmor $[\id_{(\TG\cdot\SA)(G)}, \omega_G; (\TG\cdot\SA)(G)]$ from $(\TG\cdot\SA)(G)$ to $G$ 
which is an isomorphism in $\EG$ by Lemma \ref{Lemma: unitor 3}. 

\begin{lemm}\label{Lemma: eps}
	For every \'etale groupoids $G,H$ and a \egmor $\kappa\: G \rightarrow H$, the following diagram in $\EG$ commutes;
	\[
	\begin{tikzcd}
	(\TG\cdot\SA)(G) & G\\
	(\TG\cdot\SA)(H) & H.
	\arrow[from = 1-1, to = 1-2, "{\varepsilon_G}"] 
	\arrow[from = 2-1, to = 2-2, "{\varepsilon_H}"'] 
	\arrow[from = 1-1, to = 2-1, "{(\TG\cdot\SA)(\kappa)}"']
	\arrow[from = 1-2, to = 2-2, "{\kappa}"]
	\end{tikzcd}
	\]
\end{lemm}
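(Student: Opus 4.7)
The strategy is to simplify both composites into single couple morphisms and then apply Corollary~\ref{Corollary: egmor equiv} to exhibit an explicit equivalence between them. Write $(\theta,\xi) := \SA(\kappa) = \big(\psi\circ\varphi\inv,\ \varphi\0\circ(\psi\0)\inv\big)$, so that by definition of $\TG$ we have $(\TG\cdot\SA)(\kappa) = [\varphi_\xi,\psi_\theta; L]$ with $L := \Bis G \ltimes_{\gamma^H\theta} D_\xi$. Applying Lemma~\ref{Lemma: unitor}(ii) with $\psi' = \omega_H$ gives
\[
\varepsilon_H\cdot(\TG\cdot\SA)(\kappa) = [\varphi_\xi,\ \omega_H\circ\psi_\theta;\ L].
\]
On the other hand, since $\omega_G$ is a homeomorphic groupoid isomorphism, Lemma~\ref{Lemma: unitor 2} rewrites $\varepsilon_G = [\omega_G\inv,\id_G;G]$, and Lemma~\ref{Lemma: unitor}(i) then gives
\[
\kappa\cdot\varepsilon_G = [\omega_G\inv\circ\varphi,\ \psi;\ K].
\]
Both sides are couple morphisms from $(\TG\cdot\SA)(G)$ to $H$, so by Corollary~\ref{Corollary: egmor equiv} it suffices to produce a map $\iota: L \to K$ intertwining the four legs of the equivalence diagram and satisfying $\iota(L\0)=K\0$.

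I would define $\iota([U,v]) := k$, where $k$ is the unique element of the bisection $\varphi\inv(U) \subset K$ (Lemma~\ref{Lemma: varphi inv}(iv)) with $\psi\0(d(k)) = v$. Existence uses $v \in D^{\gamma^H\theta}_U = \psi\0((\varphi\0)\inv(d(U)))$ together with the bijectivity of $d$ on $\varphi\inv(U)$, and uniqueness follows from the injectivity of $\psi\0$ on $K\0$. The delicate point is well-definedness on equivalence classes: if $[U,v]=[U',v]$ is witnessed by an open $W\subset G\0$ with $v \in D^{\gamma^H\theta}_W$ and $UW = U'W$, then unwinding yields $\varphi\0(d(k)) \in W$, hence $\varphi(k) \in U \cap d\inv(W) = UW = U'W \subset U'$; the bisection property of $U'$ forces $\varphi(k) = \varphi(k')$, and the fibrewise bijectivity of $\varphi$ then yields $k = k'$. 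Commutativity of the two legs is then direct: using $\varphi(k) \in U$, one has $\omega_G\inv(\varphi(k)) = [U, d(\varphi(k))] = [U,\xi(v)] = \varphi_\xi([U,v])$, while $\omega_H([\theta(U),v])$ equals the unique element of the open bisection $\theta(U) = \psi(\varphi\inv(U))$ over $v$, which is $\psi(k)$.

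The surjectivity $\iota(L\0)=K\0$ is immediate, as any $u\in K\0$ is the image of $[W,\psi\0(u)]$ for a small open $W\ni\varphi\0(u)$ in $G\0$, so Corollary~\ref{Corollary: egmor equiv} concludes the proof. The main technical obstacle is the well-definedness of $\iota$ against the idempotent-induced equivalence in $L$, which simultaneously calls on the bisection structure of $\varphi\inv(U)$ and $\psi(\varphi\inv(U))$, the injectivity of $\psi\0$, and the explicit description of $D^{\gamma^H\theta}_W$; once this bookkeeping is dispatched, the remainder of the argument is formal.
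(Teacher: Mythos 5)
Your argument is correct, but it is organized differently from the paper's. The paper first factors $\kappa=[\varphi,\psi;K]$ as $[\id_K,\psi]\cdot[\varphi,\id_K]$ and verifies the naturality square separately for each factor: in each case it conjugates by $\varepsilon\inv$, simplifies via Lemma \ref{Lemma: unitor} and \ref{Lemma: unitor 3}, and then recognizes an already-available composite ($\omega_K\circ\psi_\theta$, resp.\ $\omega_K\circ\varphi_{\xi'}$) as the required equivalence, using Proposition \ref{Proposition: egmors} together with surjectivity of its unit-space restriction. You instead treat a general $\kappa$ in one stroke: you reduce both composites to single representatives $[\varphi_\xi,\omega_H\circ\psi_\theta;L]$ and $[\omega_G\inv\circ\varphi,\psi;K]$ via Lemma \ref{Lemma: unitor} and \ref{Lemma: unitor 2}, and then build the equivalence $\iota\:L\rightarrow K$ by hand from the uniqueness property of elements of the bisections $\varphi\inv(U)$, invoking Corollary \ref{Corollary: egmor equiv} (so no continuity or homomorphism check is needed for $\iota$). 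What your route buys is the avoidance of the two-factor decomposition and of the conjugation bookkeeping, at the cost of the well-definedness check for $\iota$ against the idempotent-induced relation in $\Bis G\ltimes_{\gamma^H\theta}D_\xi$, which in the paper's route is absorbed into maps that are already defined on the nose; the paper's route, conversely, is longer but each equivalence it uses comes for free. One small point you should make explicit when writing this up: in the well-definedness step, before appealing to the bisection property of $U'$ and to fibrewise injectivity you need $d(k)=d(k')$, which follows because $\psi\0(d(k))=v=\psi\0(d(k'))$ and $\psi\0$ is injective; your sketch uses this tacitly. With that spelled out, the verification of the two commuting legs and of $\iota(L\0)=K\0$ is exactly as you describe, and Corollary \ref{Corollary: egmor equiv} finishes the proof.
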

\begin{proof}
	Let $G,H$ be \'etale groupoids, and $\kappa = [\varphi,\psi;K]\:G\rightarrow H$ be a \egmor.
	Since $[\varphi,\psi] = [\id_K,\psi] \cdot [\varphi,\id_K]$ holds, it suffices to show that the following two squares commute;
	\[
	\begin{tikzcd}
	(\TG\cdot\SA)(G) & G\\
	(\TG\cdot\SA)(K) & K\\
	(\TG\cdot\SA)(H) & H.
	\arrow[from = 1-1, to = 1-2, "{\varepsilon_G}"]
	\arrow[from = 2-1, to = 2-2, "{\varepsilon_K}"]
	\arrow[from = 3-1, to = 3-2, "{\varepsilon_H}"]
	\arrow[from = 1-1, to = 2-1, "{(\TG\cdot\SA)([\varphi,\id_K])}"']
	\arrow[from = 2-1, to = 3-1, "{(\TG\cdot\SA)([\id_K,\psi])}"']
	\arrow[from = 1-2, to = 2-2, "{[\varphi,\id_K]}"]
	\arrow[from = 2-2, to = 3-2, "{[\id_K,\psi]}"]
	\end{tikzcd}
	\]
	
	We first show that the first square commutes. 
	We denote by $(\theta,\xi)$ the \isamor $\SA([\varphi, \id_K]) = \big( \varphi\inv, \varphi\0 \big)$ from $\SA(G) = \big(\Bis(G),G\0,\gamma^G\big)$ to $\SA(K) = \big( \Bis(K), K\0, \gamma^K \big)$.
	The \egmor $(\TG\cdot\SA)([\varphi, \id_K])$ is nothing but the \egmor $[\varphi_\xi,\psi_\theta; \Bis G \ltimes_{\gamma^K\theta} K\0]$.
	We have $\varphi_\xi([U,v]) = [U,\varphi\0(v)]$ and $\psi_\theta([U,v])) = [\varphi\inv(U),v]$ for $[U,v] \in \Bis G \ltimes_{\gamma^K\theta} K\0$.
	By Lemma \ref{Lemma: unitor} and \ref{Lemma: unitor 3}, we get 
	\begin{align*}
	&\varepsilon_K \cdot (\TG\cdot\SA)([\varphi, \id_K]) \cdot \varepsilon_G\inv \\
	&= [\id_{(\TG\cdot\SA)(K)}, \omega_K] \cdot [\varphi_\xi, \psi_\theta] \cdot [\id_{(\TG\cdot\SA)(G)},\omega_G]\inv\\
	&= [\id_{(\TG\cdot\SA)(K)}, \omega_K] \cdot [\varphi_\xi, \psi_\theta] \cdot [\omega_G,\id_{(\TG\cdot\SA)(G)}]\\
	&= [\omega_G\circ\varphi_\xi, \omega_K\circ\psi_\theta].
	\end{align*}
	Let us show $[\omega_G\circ\varphi_\xi, \omega_K\circ\psi_\theta] = [\varphi, \id_K]$.
	The continuous groupoid homomorphism $\omega_K\circ\psi_\theta$ becomes an equivalence from $(\varepsilon_G\circ\varphi_\xi, \varepsilon_K\circ\psi_\theta; \Bis G \ltimes_{\gamma^K\theta} K\0)$ to $(\varphi,\id_K;K)$;
	\[
	\begin{tikzcd}
	&\Bis G \ltimes_{\gamma^K\theta} K\0&\\
	G&&K.\\
	&K&
	\arrow[from = 1-2, to = 2-1, "{\omega_G\circ\varphi_\xi}"']
	\arrow[from = 1-2, to = 2-3, "{\omega_K\circ\psi_\theta}"]
	\arrow[from = 3-2, to = 2-1, "{\varphi}"]
	\arrow[from = 3-2, to = 2-3, "{\id_K}"']
	\arrow[from = 1-2, to = 3-2, "{\omega_K\circ\psi_\theta}",dashed]
	\end{tikzcd}
	\]
	It is trivial that the right triangle commutes.
	Take $[U,v] \in \Bis G \ltimes_{\gamma^K\theta} K\0$.
	We set $k:= \omega_K(\psi_\theta([U,v])) = \omega_K([\varphi\inv(U),v])$.
	This element $k \in K$ satisfies that $k \in \varphi\inv(U)$ and $d(k) = v$.
	The element $\varphi(k) \in G$ satisfies $\varphi(k) \in U$ and $d(\varphi(k)) = \varphi\0(d(k)) = \varphi\0(v)$.
	This element $\varphi(k)$ is nothing but the element $\omega_G(\varphi_\xi([U,v])) = \omega_G([U,\varphi\0(v)])$.
	Hence we get that $\omega_G \circ \varphi_\xi = \varphi \circ (\omega_K \circ \psi_\theta)$.
	One can see that $(\omega_K\circ\psi_\theta)\0$ is surjective.
	By Proposition \ref{Proposition: egmors}, $\omega_K\circ\psi_\theta$ is an equivalence.
	Thus we get $\varepsilon_K \cdot (\TG\cdot\SA)([\varphi, \id_K]) \cdot \varepsilon_G\inv = [\varphi, \id_K]$.
	Hence the first square commutes.
	
	Next we show that the second square commutes.
	We denote by $(\theta',\xi')$ the \isamor $\SA([\id_K,\psi]) = \big(\psi,\big(\psi\0\big)\inv\big)$ from $\SA(K)$ to $\SA(H)$.
	The \egmor $(\TG\cdot\SA)([\id_K,\psi])$ is nothing but the \egmor $\big[\varphi_{\xi'}, \psi_{\theta'}; \Bis K \ltimes_{\gamma^H\theta'} \psi\0\big(K\0\big)\big]$.
	We have $\varphi_{\xi'}([V,w]) = \big[V,\big(\psi\0\big)\inv(w)\big]$ and $\psi_{\theta'}([V,w]) = [\psi(V),w]$ for $[V,w] \in \Bis K \ltimes_{\gamma^H\theta'} \psi\0\big(K\0\big)$.
	In a similar way to the above discussion, we get 
	\[
	\varepsilon_H \cdot (\TG\cdot\SA)([\id_K,\psi]) \cdot \varepsilon_K\inv = [ \omega_K\circ\varphi_{\xi'}, \omega_H\circ\psi_{\theta'}].
	\]
	Let us show $[ \omega_K\circ\varphi_{\xi'}, \omega_H\circ\psi_{\theta'}] = [\id_K,\psi]$.
	The continuous groupoid homomorphism $\omega_K\circ\varphi_{\xi'}$ becomes an equivalence from $\big(\omega_K\circ\varphi_{\xi'}, \omega_H\circ\psi_{\theta'}; \Bis K \ltimes_{\gamma^H\theta'} \psi\0\big(K\0\big) \big)$ to $(\id_K,\psi;K)$;
	\[
	\begin{tikzcd}
	&\Bis K \ltimes_{\gamma^H\theta'} \psi\0\big(K\0\big) &\\
	K&&H.\\
	&K&
	\arrow[from = 1-2, to = 2-1, "{\omega_K\circ\varphi_{\xi'}}"']
	\arrow[from = 1-2, to = 2-3, "{\omega_H\circ\psi_{\theta'}}"]
	\arrow[from = 3-2, to = 2-1, "{\id_K}"]
	\arrow[from = 3-2, to = 2-3, "{\psi}"']
	\arrow[from = 1-2, to = 3-2, "{\omega_K\circ\varphi_{\xi'}}",dashed]
	\end{tikzcd}
	\]
	It is trivial that the left triangle commutes. 
	Take $[V,w] \in \Bis K \ltimes_{\gamma^H\theta'} \psi\0\big(K\0\big)$.
	We set $k := \omega_K\big(\varphi_{\xi'}([V,w]) \big) = \omega_K\big(\big[V,\big(\psi\0\big)\inv(w)\big]\big)$.
	This element $k\in K$ satisfies that $k \in V$ and $d(k) = \big(\psi\0\big)\inv(w)$.
	The element $\psi(k) \in H$ satisfies that $\psi(k) \in \psi(V)$ and $d(\psi(k)) = \psi\0(d(k)) = w$.
	This element is nothing but $\omega_H(\psi_{\theta'}([V,w])) = \omega_H([\psi(V),w])$.
	Hence we get $\omega_H\circ\psi_{\theta'} = \psi\circ (\omega_K\circ\varphi_{\xi'})$.
	One can see that $\big(\omega_K\circ\varphi_{\xi'}\big)\0$ is surjective.
	By Proposition \ref{Proposition: egmors}, $\omega_K\circ\varphi_{\xi'}$ is an equivalence.
	Thus we get $\varepsilon_H \cdot (\TG\cdot\SA)([\id_K,\psi]) \cdot \varepsilon_K\inv = [\id_K,\psi]$.
	Hence the second square commutes.
\end{proof}

This lemma means that the collection $\varepsilon = \{\varepsilon_G\}$ forms a natural isomorphism from $\TG\cdot\SA$ to $1_{\EG}$.

\begin{theo}\label{Theorem: adj func btwn ISA and EG}
	The functor $\TG$ from $\ISA$ to $\EG$ is left adjoint to the functor $\SA$ from $\EG$ to $\ISA$.
\end{theo}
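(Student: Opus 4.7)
The plan is to invoke the unit--counit characterization of adjunction (see \cite[Section IV]{Mac98}). The preceding lemmas already supply the two required natural transformations: by Lemmas~\ref{Lemma: eta_alpha} and~\ref{Lemma: eta}, the family $\eta = \{\eta_{(S,X,\alpha)}\}$ is a natural transformation $1_{\ISA} \Rightarrow \SA \cdot \TG$, and by Lemma~\ref{Lemma: eps}, the family $\varepsilon = \{\varepsilon_G\}$ is a natural (iso)transformation $\TG \cdot \SA \Rightarrow 1_{\EG}$. What remains is to verify the two triangle identities
\[
\varepsilon_{\TG(S,X,\alpha)} \cdot \TG(\eta_{(S,X,\alpha)}) = 1_{\TG(S,X,\alpha)}, \qquad \SA(\varepsilon_G) \cdot \eta_{\SA(G)} = 1_{\SA(G)}
\]
for every inverse semigroup action $(S,X,\alpha)$ and every \'etale groupoid $G$.

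For the first identity, $\TG(\eta_{(S,X,\alpha)})$ is represented by the \egmor $(\varphi_{\xi_\alpha},\psi_{\theta_\alpha}; S\ltimes_{\gamma^{S\ltimes_\alpha X}\theta_\alpha} D_{\xi_\alpha})$ of Proposition~\ref{Proposition: TG morphism}, while $\varepsilon_{\TG(S,X,\alpha)} = [\id,\omega_{S\ltimes_\alpha X}]$. Applying Lemma~\ref{Lemma: unitor}~(ii), the composite simplifies to $[\varphi_{\xi_\alpha},\,\omega_{S\ltimes_\alpha X}\circ\psi_{\theta_\alpha};\,S\ltimes_{\gamma^{S\ltimes_\alpha X}\theta_\alpha} D_{\xi_\alpha}]$. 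A direct computation on a typical element $[s,[s^*s,x]]$ of the representing groupoid shows that both $\varphi_{\xi_\alpha}$ and $\omega_{S\ltimes_\alpha X}\circ\psi_{\theta_\alpha}$ send it to $[s,x]\in S\ltimes_\alpha X$; in particular the two groupoid homomorphisms agree and are homeomorphic groupoid isomorphisms onto $S\ltimes_\alpha X$. Lemma~\ref{Lemma: unitor 2} then rewrites the composite as $[\id,\id;S\ltimes_\alpha X]$, as required.

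For the second identity, applying $\SA$ directly to $\varepsilon_G = [\id_K,\omega_G;K]$ (writing $K = \TG\cdot\SA(G)$) produces the \isamor $\bigl(\omega_G,\,(\omega_G\0)\inv\bigr)$, where $\omega_G$ is viewed as the semigroup homomorphism $\Bis K \to \Bis G$ of Definition~\ref{Definition: psi}. Composing with $\eta_{\SA(G)} = (\theta_{\gamma^G},\xi_{\gamma^G})$ via Definition~\ref{Definition: composition isamor} yields $\bigl(\omega_G\circ\theta_{\gamma^G},\,\xi_{\gamma^G}\circ(\omega_G\0)\inv\bigr)$. For the semigroup component, using that $\omega_G([U,u])$ is the unique $g\in U$ with $d(g)=u$, one computes $\omega_G(\theta_{\gamma^G}(U)) = \omega_G([U,d(U)]) = U$. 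For the space component, the homeomorphism $\omega_G\0 \colon K\0 \to G\0$ coincides with $\xi_{\gamma^G}$ under the canonical identification $K\0 \simeq G\0$, so $\xi_{\gamma^G}\circ(\omega_G\0)\inv = \id_{G\0}$.

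The main obstacle is the bookkeeping surrounding the pull-back groupoids hidden in the composition of \egmors: at each step one must identify the representing groupoid, the covering homomorphism $\varphi$, and the embedding $\psi$ explicitly. All verifications ultimately reduce to direct computation on generators, and the equivalences of \egmors are most efficiently recognised via Corollary~\ref{Corollary: egmor equiv} once the intertwining isomorphism has been exhibited.
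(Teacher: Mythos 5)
Your proposal is correct and follows essentially the same route as the paper: it checks the two triangle identities for the unit $\eta$ and counit $\varepsilon$ already shown to be natural, simplifies the composite \egmors via Lemma~\ref{Lemma: unitor} and Lemma~\ref{Lemma: unitor 2}, and verifies both identities by direct computation on elements $[s,[e,x]]$ and on $(\Bis G,G\0)$. The only cosmetic difference is notational (e.g.\ writing the representing groupoid with $D_{\xi_\alpha}=(S\ltimes_\alpha X)\0$ and asserting rather than citing Remark~\ref{Remark: phi homeo iff phi0 homeo} for $\varphi_{\xi_\alpha}$ being a homeomorphic isomorphism).
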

\begin{proof}
	It suffices to show that the following two triangles commute (see \cite[Section IV]{Mac98});
	\[
	\begin{tikzcd}
	\TG(S,X,\alpha)&[4mm] (\TG\cdot\SA\cdot\TG)(S,X,\alpha)\\
	&\TG(S,X,\alpha),
	\arrow[r,from = 1-1, to = 1-2, "\TG(\eta_{(S,X,\alpha)})"]
	\arrow[d, from =1-2, to = 2-2, "\varepsilon_{\TG(S,X,\alpha)}"]
	\arrow[rd, from = 1-1, to = 2-2, ""',Rightarrow,no head]
	\end{tikzcd}
	\]
	\[
	\begin{tikzcd}
	\SA(G)&(\SA\cdot\TG\cdot\SA)(G)\\
	&\SA(G)
	\arrow[r,from = 1-1, to = 1-2, "\eta_{\SA(G)}"]
	\arrow[d, from =1-2, to = 2-2, "\SA(\varepsilon_G)"]
	\arrow[rd, from = 1-1, to = 2-2, ""', Rightarrow,no head]
	\end{tikzcd}
	\]
	for an inverse semigroup action $(S,X,\alpha)$ and an \'etale groupoid $G$.
	
	First, we show that the first triangle commutes.
	The \egmor 
	\[
	\varepsilon_{\TG(S,X,\alpha)} \cdot \TG(\eta_{(S,X,\alpha)}) = [\id_{(\TG\cdot\SA)(S\ltimes_\alpha X)},\omega_{S\ltimes_\alpha X}] \cdot [\varphi_{\xi_\alpha},\psi_{\theta_\alpha}]
	\]
	coincides with 
	\[
	[\varphi_{\xi_\alpha},\omega_{S\ltimes_\alpha X}\circ\psi_{\theta_\alpha};S\ltimes_{\gamma^{S\ltimes_\alpha X}\theta_\alpha}(S\ltimes_\alpha X)\0]
	\]
	by Lemma \ref{Lemma: unitor}.
	Since $\xi_\alpha$ is homeomorphic, $\varphi_{\xi_\alpha}$ is a homeomorphic groupoid isomorphism by Remark \ref{Remark: phi homeo iff phi0 homeo}.
	We can check that 
	\begin{align*}
	\varphi_{\xi_\alpha} ([s,[e,x]]) &= [s,\xi_\alpha([e,x])] = [s,x],\\
	(\omega_{S\ltimes_\alpha X}\circ\psi_{\theta_\alpha})([s,[e,x]]) 
	&= \omega_{S\ltimes_\alpha X} ([\theta_\alpha(s),[e,x]])\\
	&= \omega_{S\ltimes_\alpha X} ([[s,D^\alpha_s],[e,x]]) 
	= [s,x],
	\end{align*}
	for every $[s,[e,x]] \in S\ltimes_{\gamma^{S\ltimes_\alpha X}\theta_\alpha}(S\ltimes_\alpha X)\0$.
	Hence we get $\varphi_{\xi_\alpha} = \omega_{S\ltimes_\alpha X}\circ\psi_{\theta_\alpha}$.
	Thus $[\varphi_{\xi_\alpha},\omega_{S\ltimes_\alpha X}\circ\psi_{\theta_\alpha}]$ coincides with the identity \egmor for $\TG(S,X,\alpha) = S\ltimes_\alpha X$ by Lemma \ref{Lemma: unitor 2}.
	
	Next we show that the second triangle commutes.
	The \isamor $\SA(\varepsilon_G) \cdot \eta_{\SA(G)}$ is nothing but the \isamor
	\[
	\big( \varepsilon_G , \big(\varepsilon_G\0\big)\inv \big) \cdot \big( \theta_{\gamma^G}, \xi_{\gamma^G} \big) 
	= \big( \varepsilon_G \circ \theta_{\gamma^G}, \xi_{\gamma^G} \circ \big(\varepsilon_G\0\big)\inv \big).
	\]
	We can check that
	\begin{align*}
	\big( \varepsilon_G \circ \theta_{\gamma^G} \big)(U) &= \varepsilon_G([U,d(U)]) = U\\
	\big( \xi_{\gamma^G} \circ \big(\varepsilon_G\0\big)\inv \big)(u) &= \xi_{\gamma^G}\big( [G\0,u] \big) = u
	\end{align*}
	for every $U \in \Bis G$ and $u\in G\0$.
	Hence we get $\varepsilon_G \circ \theta_{\gamma^G} = \id_{\Bis G}$ and $\xi_{\gamma^G} \circ \big(\varepsilon_G\0\big)\inv = \id_{G\0}$.
	Thus the \isamor $\SA(\varepsilon_G) \cdot \eta_{\SA(G)}$ coincides with the identity \isamor for $\SA(G) = \big(\Bis G,G\0,\gamma^G\big)$.
\end{proof}

\begin{coro}
	The functor $\TG$ from $\ISAlcH$ to $\EGlcH$ is left adjoint to the functor $\SA$ from $\EGlcH$ to $\ISAlcH$.
\end{coro}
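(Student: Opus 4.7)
The plan is to deduce this corollary directly from Theorem \ref{Theorem: adj func btwn ISA and EG} by verifying that the unit and counit of the adjunction $\TG \dashv \SA$ on $\ISA$ and $\EG$ restrict to morphisms in the subcategories $\ISAlcH$ and $\EGlcH$. Once that is checked, the two triangle identities established in the proof of Theorem \ref{Theorem: adj func btwn ISA and EG} transfer verbatim, since both sides of each triangle already live in the restricted categories.

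First I would verify that for every object $(S,X,\alpha) \in \ISAlcH$, the unit $\eta_{(S,X,\alpha)} = (\theta_\alpha,\xi_\alpha)$ constructed in Lemma \ref{Lemma: eta_alpha} is a morphism in $\ISAlcH$. By the descent corollary for $\TG$, the transformation groupoid $S\ltimes_\alpha X$ has a locally compact Hausdorff unit space, hence $(\SA\cdot\TG)(S,X,\alpha)$ is in $\ISAlcH$. Moreover $\xi_\alpha$ is the homeomorphism $(S\ltimes_\alpha X)\0 \to X$, and a homeomorphism is automatically a proper partial map, so $\eta_{(S,X,\alpha)}$ is proper.

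Next I would verify that for every $G \in \EGlcH$, the counit $\varepsilon_G = [\id_{(\TG\cdot\SA)(G)}, \omega_G; (\TG\cdot\SA)(G)]$ is a morphism in $\EGlcH$. Since the descent of $\SA$ and $\TG$ yields $(\TG\cdot\SA)(G) \in \EGlcH$, it remains to observe that the $\varphi$-component of $\varepsilon_G$ is the identity $\id_{(\TG\cdot\SA)(G)}$, whose restriction to units is the identity map on $(\TG\cdot\SA)(G)\0$. This identity is trivially proper, so $\varepsilon_G$ is a proper \egmor.

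Finally, with these two facts in hand, I would invoke the naturality of $\eta$ and $\varepsilon$ (Lemmas \ref{Lemma: eta} and \ref{Lemma: eps}), which hold in $\ISA$ and $\EG$ and therefore also in the full subcategories $\ISAlcH$ and $\EGlcH$, to conclude that the restricted $\eta$ and $\varepsilon$ are natural transformations. The two triangle identities are exactly those verified in the proof of Theorem \ref{Theorem: adj func btwn ISA and EG}, and since every arrow appearing in those triangles lies in the restricted categories, the identities hold there as well. Thus $\TG : \ISAlcH \to \EGlcH$ is left adjoint to $\SA : \EGlcH \to \ISAlcH$. I do not foresee any serious obstacle; the only point requiring care is checking that $\xi_\alpha$ is proper (immediate from being a homeomorphism) and that $\id^{(0)}$ is proper (trivial), both of which are routine.
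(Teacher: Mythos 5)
Your proposal is correct and takes essentially the same route as the paper: the paper's proof consists precisely of the observation that the unit $\eta_{(S,X,\alpha)}$ and the counit $\varepsilon_G$ are proper for all objects, so the adjunction of Theorem \ref{Theorem: adj func btwn ISA and EG} restricts; you merely spell out the (easy) properness verifications in more detail. One terminological quibble: $\ISAlcH$ and $\EGlcH$ are not \emph{full} subcategories (the morphisms are restricted to the proper ones), but this does not affect your argument, since the naturality squares and triangle identities involve only proper morphisms and can be checked as equalities in $\ISA$ and $\EG$.
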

\begin{proof}
	We can get this corollary since the \isamors $\eta_{(S,X,\alpha)}$ is proper for all inverse semigroup actions $(S,X,\alpha)$, and the \egmors $\varepsilon_G$ is proper for all \'etale groupoids.
\end{proof}


\section{A functor $C^*$ from $\EGlcH$ to $\Calg$}
\label{Section: A functor C* from EG to Calg}

We recall that $\Calg$ is the category of all $C^*$-algebras and all \shoms.
We construct a functor $C^*$ from $\EGlcH$ to $\Calg$ in Theorem \ref{Theorem: functor C*}.
Let $G,K$ and $H$ be \'etale groupoids with locally compact Hausdorff unit spaces.

\begin{prop}\label{Proposition: sigma_varphi}
	Let $\varphi\: K \rightarrow G$ be a continuous fibrewise bijective groupoid homomorphism with proper $\varphi\0$.  
	For every function $f\: G \rightarrow \mathbb{C}$, we define a function $\sigma_\varphi(f)\: K \rightarrow \mathbb{C}$ as
	\[
	\sigma_\varphi(f) := f\circ \varphi.
	\]
	The map $\sigma_\varphi: Q(G) \rightarrow Q(K); f\mapsto \sigma_\varphi(f)$ is a well-defined \shom. 
\end{prop}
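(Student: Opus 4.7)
The plan is to first verify that $\sigma_\varphi$ takes $Q(G)$ into $Q(K)$, and then separately check linearity, multiplicativity, and $*$-preservation. By definition, $Q(G)$ is the linear span of $C_c(U)$ over all open bisections $U \in \Bis G$, so by linearity it suffices to show that $\sigma_\varphi$ sends $C_c(U)$ into $C_c(\varphi\inv(U))$ for each such $U$. Given $f \in C_c(U)$, the function $\sigma_\varphi(f) = f \circ \varphi$ is continuous on $K$ and vanishes outside $\varphi\inv(U)$, which is an open bisection of $K$ by Lemma~\ref{Lemma: varphi inv}~(iv). If $C \subset U$ is a compact set with $f = 0$ on $U \setminus C$, then $\sigma_\varphi(f) = 0$ on $\varphi\inv(U) \setminus \varphi\inv(C)$, and $\varphi\inv(C)$ is compact by Lemma~\ref{Lemma: phi proper}, since $\varphi\0$ is proper. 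Hence $\sigma_\varphi(f) \in C_c(\varphi\inv(U)) \subset Q(K)$, and well-definedness follows.

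Linearity of $\sigma_\varphi$ is immediate from its definition as pullback along $\varphi$. For the involution, for any $f \in Q(G)$ and $k \in K$, since $\varphi$ is a groupoid homomorphism we have $\varphi(k\inv) = \varphi(k)\inv$, so
\[
\sigma_\varphi(f^*)(k) = f^*(\varphi(k)) = \overline{f(\varphi(k)\inv)} = \overline{f(\varphi(k\inv))} = \overline{\sigma_\varphi(f)(k\inv)} = \sigma_\varphi(f)^*(k).
\]

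The main step, and the one requiring genuine use of the hypothesis, is multiplicativity. Fix $f_1, f_2 \in Q(G)$ and $k \in K$ with $u := d(k)$. I would establish the identity
\[
\sigma_\varphi(f_1 * f_2)(k) = (\sigma_\varphi(f_1) * \sigma_\varphi(f_2))(k)
\]
by exhibiting a bijection between the index sets of the two convolution sums. For the right-hand side the index set consists of pairs $(k_1, k_2) \in K\2$ with $k_1 k_2 = k$, which is parametrised bijectively by $k_2 \in K_u$ via $k_1 = k k_2\inv$. For the left-hand side the index set is $\{(g_1, g_2) \in G\2 : g_1 g_2 = \varphi(k)\}$, parametrised bijectively by $g_2 \in G_{\varphi(u)}$ via $g_1 = \varphi(k) g_2\inv$. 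Since $\varphi$ is fibrewise bijective, the restriction $\varphi_u \: K_u \to G_{\varphi(u)}$ is a bijection, and this bijection implements a term-by-term matching: the pair associated with $k_2 \in K_u$ maps under $\varphi \times \varphi$ precisely to the pair associated with $\varphi(k_2) \in G_{\varphi(u)}$. Because the summands of both convolutions have only finitely many nonzero terms (by support considerations together with the bisection structure), the sums are well-defined and equal term-by-term.

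The obstacle to watch for is ensuring that this bijection is not just set-theoretic but compatible with the values $f_1(g_1)f_2(g_2)$ and $f_1(\varphi(k_1))f_2(\varphi(k_2))$; this is automatic from $\varphi(k k_2\inv) = \varphi(k)\varphi(k_2)\inv$ combined with $\varphi(k_2) = g_2$. Once these ingredients are assembled, the four properties together show that $\sigma_\varphi$ is a well-defined \shom from $Q(G)$ to $Q(K)$.
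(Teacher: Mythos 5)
Your proposal is correct, and the well-definedness and involution steps are essentially identical to the paper's (same use of Lemma~\ref{Lemma: varphi inv}~(iv) and Lemma~\ref{Lemma: phi proper}, same one-line computation for $f^*$). Where you genuinely diverge is the multiplicativity step. The paper reduces by linearity to $f_i\in C_c(U_i)$ for open bisections $U_i$, invokes Proposition~\ref{Proposition: convolution}~(i) to locate $f_1*f_2$ in $C_c(U_1U_2)$ and $\sigma_\varphi(f_1)*\sigma_\varphi(f_2)$ in $C_c\big(\varphi\inv(U_1)\varphi\inv(U_2)\big)$, identifies these supports via Lemma~\ref{Lemma: varphi inv}~(i), and then checks the values only at products $k_1k_2$; fibrewise bijectivity enters indirectly, through those two lemmas. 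You instead compare the two convolution sums pointwise for arbitrary $f_1,f_2\in Q(G)$: at $k\in K$ with $u=d(k)$, both sums are parametrised by the fibres $K_u$ and $G_{\varphi(u)}$ respectively (via $k_2\mapsto(kk_2\inv,k_2)$ and $g_2\mapsto(\varphi(k)g_2\inv,g_2)$), and the bijection $\varphi_u\:K_u\rightarrow G_{\varphi(u)}$ matches the summands term by term because $\varphi(kk_2\inv)=\varphi(k)\varphi(k_2)\inv$. This is a valid argument (finiteness of the nonzero terms is already built into the $*$-algebra structure of $Q(G)$ and $Q(K)$, as you note), and it has the merit of displaying exactly where fibrewise bijectivity is used, without detouring through the support identities for bisections. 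What the paper's route buys in exchange is uniformity: the same $C_c(U)$-by-$C_c(U)$ bookkeeping with Proposition~\ref{Proposition: convolution} is reused verbatim for the companion map $\pi_\psi$ in Proposition~\ref{Proposition: pi_psi}, where a direct pointwise sum comparison would be less clean.
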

\begin{proof}
	Let $U$ be an open bisection of $G$ and $f$ be an element of $C_c(U) \subset Q(G)$.
	Then there exists a compact subset $V$ of $U$ such that $f$ is zero outside of $V$.
	The set $\varphi\inv(V)$ is a compact subspace of an open bisection $\varphi\inv(U)$ of $K$ by Lemma \ref{Lemma: varphi inv} (iv) and Lemma \ref{Lemma: phi proper}.
	The function $\sigma_\varphi(f)$ is zero outside of $\varphi\inv(V)$, and is a function in $C_c(\varphi\inv(U)) \subset Q(K)$.
	The map $f \mapsto \sigma_\varphi(f)$ is linear as a map between linear spaces of all complex functions on $G$ and $K$.
	Thus $\sigma_\varphi\: Q(G) \rightarrow Q(K)$ is well-defined.
	
	Let $U_i$ be an open bisection of $G$, and $f_i$ be an element of $C_c(U_i) \subset Q(G)$ for $i=1,2$.
	By Proposition \ref{Proposition: convolution} (i), we get $f_1*f_2 \in C_c(U_1U_2)$.
	Thus $\sigma_\varphi(f_1*f_2) \in C_c(\varphi\inv(U_1U_2))$ holds by the above discussion.
	Proposition \ref{Proposition: convolution} (i) also implies that $\sigma_\varphi(f_1) * \sigma_\varphi(f_2) \in C_c(\varphi\inv(U_1)\varphi\inv(U_2))$.
	By Lemma \ref{Lemma: varphi inv} (i), $\varphi\inv(U_1)\varphi\inv(U_2) = \varphi\inv(U_1U_2)$ holds.
	Thus $\sigma_\varphi(f_1*f_2)$ and $\sigma_\varphi(f_1) * \sigma_\varphi(f_2)$ belong to the same subspace $C_c(\varphi\inv(U_1)\varphi\inv(U_2))$.
	For $k_1\in \varphi\inv(U_1)$ and $k_2\in \varphi\inv(U_2)$ with $d(k_1) = r(k_2)$, we get 
	\begin{align*}
	\big(\sigma_\varphi(f_1)*\sigma_\varphi(f_2)\big)(k_1k_2)
	&= \sigma_\varphi(f_1)(k_1)\sigma_\varphi(f_2)(k_2)\\
	&= f_1(\varphi(k_1))f_2(\varphi(k_2))\\
	&= (f_1*f_2)(\varphi(k_1)\varphi(k_2))\\
	&= (f_1*f_2)(\varphi(k_1k_2))\\
	&= \sigma_\varphi(f_1*f_2)(k_1k_2) 
	\end{align*}
	by Proposition \ref{Proposition: convolution} (i).
	Thus $\sigma_\varphi(f_1*f_2) = \sigma_\varphi(f_1) * \sigma_\varphi(f_2)$ holds.
	By linearity, this equation also holds for every $f_1,f_2 \in Q(K)$.
	
	For every $f \in Q(G)$ and $k \in K$, we get  
	\begin{align*}
		\sigma_\varphi(f)^*(k)
		= \overline{\sigma_\varphi(f)(k\inv)} 
		&= \overline{f(\varphi(k\inv))}\\
		&= \overline{f(\varphi(k)\inv)} 
		= f^*(\varphi(k))
		= \sigma_\varphi(f^*)(k).
	\end{align*}
	Thus $\sigma_\varphi(f)^* = \sigma_\varphi(f^*)$ holds.
	We finish to show that $\sigma_\varphi$ is a \shom from $Q(G)$ to $Q(K)$.
\end{proof}

\begin{defi}
	We use the same symbol $\sigma_\varphi\: C^*(G) \rightarrow C^*(K)$ for denoting the extension of the \shom $\sigma_\varphi\: Q(G) \rightarrow Q(K) \subset C^*(K)$.
\end{defi}

\begin{prop}\label{Proposition: pi_psi}
	Let $\psi\: K\rightarrow H$ be a continuous groupoid homomorphism with open injective $\psi\0$.
	For a function $f \in Q(K)$, we define a function $\pi_\psi(f) \in Q(H)$ as
	\[
	\pi_\psi(f)(h) := \sum_{\substack{k\in K\\\psi(k) = h}} f(k)
	\]
	for every $h\in H$.
	This map $\pi_\psi\: Q(K) \rightarrow Q(H); f \mapsto \pi_\psi(f)$ becomes a well-defined \shom.
\end{prop}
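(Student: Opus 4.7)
The plan is to reduce everything to functions supported on a single open bisection, where $\psi$ restricts to a homeomorphism, and then to extend by linearity.

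First I would handle $f \in C_c(U)$ for a single open bisection $U \in \Bis K$. By Lemmas~\ref{Lemma: psi} and \ref{Lemma: psi 2}, $\psi|_U \colon U \to \psi(U)$ is a homeomorphism onto the open bisection $\psi(U)$ of $H$, and in particular is injective. Hence for each $h \in H$ the sum $\sum_{\psi(k)=h} f(k)$ has at most one nonzero term, so $\pi_\psi(f) = f \circ (\psi|_U)\inv$ on $\psi(U)$ and vanishes elsewhere; the compact support of $f$ is carried by $\psi|_U$ onto a compact subset of $\psi(U)$, so $\pi_\psi(f) \in C_c(\psi(U)) \subset Q(H)$. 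For a general $f \in Q(K)$, write $f = \sum_{i=1}^n f_i$ with $f_i \in C_c(U_i)$, $U_i \in \Bis K$. For every $h \in H$ the set $\{k \in K : f_i(k) \neq 0,\ \psi(k)=h\}$ has at most one element for each $i$ (by the injectivity of $\psi|_{U_i}$), so the defining sum is finite and pointwise linearity yields $\pi_\psi(f) = \sum_i \pi_\psi(f_i) \in Q(H)$; this simultaneously gives well-definedness (independence of the decomposition) and linearity.

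For multiplicativity it suffices to check $\pi_\psi(f_1 * f_2) = \pi_\psi(f_1) * \pi_\psi(f_2)$ for $f_i \in C_c(U_i)$, $U_i \in \Bis K$. By Lemma~\ref{Lemma: psi}, $\psi(U_1 U_2) = \psi(U_1)\,\psi(U_2)$, and Proposition~\ref{Proposition: convolution}(i) tells us both sides are supported in $\psi(U_1 U_2)$. For composable $h_1 \in \psi(U_1)$, $h_2 \in \psi(U_2)$, set $k_i := (\psi|_{U_i})\inv(h_i)$; injectivity of $\psi\0$ together with $d(h_1) = r(h_2)$ forces $d(k_1) = r(k_2)$, so $(k_1,k_2)$ is composable and $\psi(k_1 k_2) = h_1 h_2$. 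Proposition~\ref{Proposition: convolution}(i) then evaluates both sides at $h_1 h_2$ as $f_1(k_1) f_2(k_2)$, proving the identity. Linearity extends it to all of $Q(K)$.

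The involution is immediate from $\psi(k\inv) = \psi(k)\inv$: the map $k \mapsto k\inv$ bijects $\psi\inv(h)$ onto $\psi\inv(h\inv)$, so
\[
\pi_\psi(f^*)(h) = \sum_{\psi(k)=h} \overline{f(k\inv)} = \overline{\sum_{\psi(k')=h\inv} f(k')} = \overline{\pi_\psi(f)(h\inv)} = \pi_\psi(f)^*(h).
\]
The main technical point is that the pointwise sum defining $\pi_\psi(f)$ is finite and lands in $Q(H)$; this is ultimately the assertion that $\psi$ is injective on every open bisection (contributing at most one term per fibre) and sends compact supports of bisection-functions to compact subsets of $\psi(U)$ via the homeomorphism $\psi|_U$. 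All the remaining verifications then boil down to Lemmas~\ref{Lemma: psi}, \ref{Lemma: psi 2} and Proposition~\ref{Proposition: convolution}, exactly as in the proof of Proposition~\ref{Proposition: sigma_varphi}.
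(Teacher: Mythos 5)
Your proof is correct and follows essentially the same route as the paper's: reduce to $f\in C_c(U)$ for an open bisection $U$ via the homeomorphism $\psi|_U\colon U\to\psi(U)$ (Lemmas \ref{Lemma: psi}, \ref{Lemma: psi 2}), obtain multiplicativity from Proposition \ref{Proposition: convolution}(i) together with $\psi(U_1U_2)=\psi(U_1)\psi(U_2)$, and extend by linearity. The only small deviation is the involution step, where you reindex the fibre sum by $k\mapsto k\inv$ directly for arbitrary $f\in Q(K)$ instead of the paper's bisection-by-bisection check via Proposition \ref{Proposition: convolution}(ii) and Lemma \ref{Lemma: grpd hom and inv}; this is a valid minor streamlining, and your explicit verification that the defining sum has only finitely many nonzero terms for general $f\in Q(K)$ makes the well-definedness argument, which the paper leaves terse, fully precise.
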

\begin{proof}
	Let $U$ be an open bisection of $K$ and $f$ be an element of $C_c(U) \subset Q(K)$.
	By Lemma \ref{Lemma: psi 2}, $\psi|_U\:U\rightarrow\psi(U)$ is a homeomorphism.
	The function $f \circ \psi|_U\inv$ is an element of $C_c(\psi(U))$.
	Since $f$ is zero outside of $U$, we get
	\[
	\pi_\psi(f)(h) = \sum_{\substack{k\in K\\\psi(k) = h}} f(k) = \sum_{\substack{k\in U\\\psi(k) = h}} f(k) = 
	\begin{cases}
	\big(f\circ\psi|_U\inv\big)(h) & h \in \psi(U),\\
	0 & h \not\in \psi(U).
	\end{cases}
	\]
	Thus this function $\pi_\psi(f)$ belongs to $C_c(\psi(U)) \subset Q(H)$.
	The above discussion implies that for every $f \in Q(K)$ the function $\pi_\psi(f)$ exists and it belongs to $Q(H)$.
	
	It is easy to check that $\pi_\psi\: Q(K) \rightarrow Q(H)$ is linear.
	We show that $\pi_\psi$ keeps multiplication and involution.
	
	Take an open bisection $U_i$ of $K$ and a function $f_i\in C_c(U_i) \subset Q(K)$ for $i=1,2$.
	By Proposition \ref{Proposition: convolution} (i), we get $f_1*f_2\in C_c(U_1U_2)$. 
	Thus $\pi_\psi(f_1*f_2) \in C_c(\psi(U_1U_2))$ holds by the above discussion.
	Proposition \ref{Proposition: convolution} (i) also implies that $\pi_\psi (f_1) * \pi_\psi(f_2) \in C_c(\psi(U_1)\psi(U_2))$.
	By Lemma \ref{Lemma: psi}, we get $\psi(U_1U_2) = \psi(U_1)\psi(U_2)$.
	Thus $\pi_\psi(f_1*f_2)$ and $\pi_\psi (f_1) * \pi_\psi(f_2)$ belong to the same subspace $C_c(\psi(U_1U_2))$.
	Take $k_1\in U_1$ and $k_2\in U_2$ with $d(k_1) = r(k_2)$.
	We get
	\begin{align*}
	\big(\pi_\psi(f_1)*\pi_\psi(f_2)\big)(\psi(k_1k_2))
	&= \big(\pi_\psi(f_1)*\pi_\psi(f_2)\big)(\psi(k_1)\psi(k_2))\\
	&= \pi_\psi(f_1)(\psi(k_1)) \pi_\psi(f_2)(\psi(k_2))\\
	&= f_1\big(\psi|_{U_1}\inv(\psi(k_1))\big)f_2\big(\psi|_{U_2}\inv(\psi(k_2))\big)\\
	&= f_1(k_1)f_2(k_2)\\
	&= (f_1*f_2)(k_1k_2)\\
	&= (f_1*f_2)(\psi|_{U_1U_2}\inv(\psi(k_1k_2)))\\
	&= \pi_\psi(f_1*f_2)(\psi(k_1k_2))
	\end{align*}
	by Proposition \ref{Proposition: convolution} (i).
	Thus we have $\pi_\psi(f_1) * \pi_\psi(f_2) = \pi_\psi(f_1 * f_2)$ holds.
	By linearity, this equation also holds for every $f_1,f_2\in Q(K)$.
	
	Take an open bisection $U$ of $K$ and a function $f\in C_c(U) \subset Q(K)$.
	By Proposition \ref{Proposition: convolution} (ii), we get $f^*\in C_c(U\inv)$.
	Thus $\pi_\psi(f^*) \in C_c(\psi(U\inv))$ holds by the above discussion.
	Proposition \ref{Proposition: convolution} (ii) also implies that $\pi_\psi(f)^* \in C_c(\psi(U)\inv)$ holds.
	By Lemma \ref{Lemma: grpd hom and inv}, we get $\psi(U\inv) = \psi(U)\inv$. 
	Thus $\pi_\psi(f^*)$ and $\pi_\psi(f)^*$ belong to the same subspace $C_c(\psi(U)\inv)$.
	For $k\in U$, we get 
	\begin{align*}
	\pi_\psi(f)^*(\psi(k)\inv)
	&= \overline{\pi_\psi(f)(\psi(k))}\\
	&= \overline{f(\psi|_{U}\inv(\psi(k)))}\\
	&= \overline{f(k)}\\
	&= f^*(k\inv)\\
	&= f^*\big(\psi|_{U\inv}\inv(\psi(k\inv))\big)\\
	&= \pi_\psi(f^*)(\psi(k\inv))\\
	&= \pi_\psi(f^*)(\psi(k)\inv).
	\end{align*} 
	Thus we have $\pi_\psi(f)^* = \pi_\psi(f^*)$.
	By linearity, this equation also holds for every function $f \in Q(K)$.
	We have shown that $\pi_\psi$ is a \shom from $Q(K)$ to $Q(H)$.
\end{proof}

\begin{defi}
	We use the same symbol $\pi_\psi\: C^*(K) \rightarrow C^*(H)$ for denoting the extension of the \shom $\pi_\psi\: Q(K) \rightarrow Q(H) \subset C^*(H)$.
\end{defi}

Let $G_i$ be an \'etale groupoid with a locally compact Hausdorff unit space for $i = 1,2,3$.
\begin{lemm}\label{Lemma: sigma composition}
	Let $\varphi_i\: G_i \rightarrow G_{i+1}$ be a continuous fibrewise bijective groupoid homomorphism with proper $\varphi_i\0$ for $i=1,2$.
	Two \shoms $\sigma_{\varphi_2\circ\varphi_1}$ and $\sigma_{\varphi_1}\circ\sigma_{\varphi_2}$ from $C^*(G_3)$ to $C^*(G_1)$ coincide.
\end{lemm}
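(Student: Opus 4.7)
The plan is to reduce the identity of two \shoms $C^*(G_3) \to C^*(G_1)$ to a trivial identity of functions on the dense $*$-subalgebra $Q(G_3) \subset C^*(G_3)$. Since the \shom $\sigma_{\varphi_2 \circ \varphi_1}$ is defined as the unique extension of the \shom $Q(G_3) \to Q(G_1)$ constructed in Proposition \ref{Proposition: sigma_varphi}, and $\sigma_{\varphi_1} \circ \sigma_{\varphi_2}$ is a composition of \shoms, both are \shoms from $C^*(G_3)$ to $C^*(G_1)$. By the universal property defining $C^*(G_3)$, two \shoms from $C^*(G_3)$ to $C^*(G_1)$ that agree on $Q(G_3)$ must coincide; so it is enough to check the equality on $Q(G_3)$.

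First I would note that the composition $\varphi_2 \circ \varphi_1 \: G_1 \to G_3$ is again continuous and fibrewise bijective by Remark \ref{Remark: composition fib bij}, and its restriction to the unit space $(\varphi_2 \circ \varphi_1)\0 = \varphi_2\0 \circ \varphi_1\0$ is proper as a composition of proper maps. Hence the \shom $\sigma_{\varphi_2 \circ \varphi_1} \: Q(G_3) \to Q(G_1)$ is defined.

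Next, for $f \in Q(G_3)$ and $g \in G_1$, I would simply compute
\[
\sigma_{\varphi_2 \circ \varphi_1}(f)(g) = f\bigl((\varphi_2\circ\varphi_1)(g)\bigr) = f\bigl(\varphi_2(\varphi_1(g))\bigr) = \sigma_{\varphi_2}(f)(\varphi_1(g)) = \sigma_{\varphi_1}\bigl(\sigma_{\varphi_2}(f)\bigr)(g),
\]
using only the definition $\sigma_\varphi(f) = f \circ \varphi$ and the associativity of composition of ordinary functions. This shows the two \shoms from $Q(G_3)$ to $Q(G_1)$ agree pointwise, hence agree as elements of $Q(G_1) \subset C^*(G_1)$.

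There is essentially no obstacle here; the only subtle point is conceptual, namely to remember that the extension from $Q(G_3)$ to $C^*(G_3)$ is unique by the defining universal property of $C^*(G_3)$, so the equality on the dense $*$-subalgebra upgrades to the equality of \shoms on $C^*(G_3)$. Conclude with $\sigma_{\varphi_2 \circ \varphi_1} = \sigma_{\varphi_1} \circ \sigma_{\varphi_2}$ as \shoms $C^*(G_3) \to C^*(G_1)$.
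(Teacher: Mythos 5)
Your proposal is correct and follows essentially the same route as the paper: both verify the identity $\sigma_{\varphi_1}\circ\sigma_{\varphi_2} = \sigma_{\varphi_2\circ\varphi_1}$ on $Q(G_3)$ by the trivial computation $f\circ\varphi_2\circ\varphi_1$ and then pass to $C^*(G_3)$ by density and uniqueness of the extension. Your added check that $\varphi_2\circ\varphi_1$ is again continuous, fibrewise bijective with proper unit-space restriction is implicit in the paper but harmless to spell out.
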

\begin{proof}
	For every $f \in Q(G_3)$,
	\[
	(\sigma_{\varphi_1}\circ\sigma_{\varphi_2})(f) 
	= \sigma_{\varphi_1}(f \circ \varphi_2) 
	= f \circ \varphi_2 \circ \varphi_1 
	= \sigma_{\varphi_2\circ\varphi_1} (f)
	\]
	holds.
	Thus $(\sigma_{\varphi_1}\circ\sigma_{\varphi_2})(x) = \sigma_{\varphi_2\circ\varphi_1} (x)$ holds for all $x \in C^*(G_3)$.
\end{proof}

\begin{lemm}\label{Lemma: pi composition}
	Let $\psi_i\: G_i\rightarrow G_{i+1}$ be a continuous groupoid homomorphism with open injective $\psi_i\0$ for $i=1,2$.
	Two \shoms $\pi_{\psi_2\circ\psi_1}$ and $\pi_{\psi_2} \circ \pi_{\psi_1}$ from $C^*(G_1)$ to $C^*(G_3)$ coincide.
\end{lemm}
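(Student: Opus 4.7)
The plan is to mimic the short proof of Lemma \ref{Lemma: sigma composition}: reduce to the dense $*$-subalgebra $Q(G_1)$ by continuity, then verify the equality pointwise on $G_3$ by an elementary reindexing of a finite sum. Both \shoms $\pi_{\psi_2\circ\psi_1}$ and $\pi_{\psi_2}\circ\pi_{\psi_1}$ extend uniquely from $Q(G_1)$ to $C^*(G_1)$, so once they agree on $Q(G_1)$ they agree everywhere. Note that $\psi_2\circ\psi_1$ has open injective $(\psi_2\circ\psi_1)\0 = \psi_2\0\circ\psi_1\0$ by Remark \ref{Remark: comp psi0 open inj}, so $\pi_{\psi_2\circ\psi_1}$ is defined by Proposition \ref{Proposition: pi_psi}.

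To check the equality on $Q(G_1)$, by linearity it suffices to fix an open bisection $U \subset G_1$ and $f \in C_c(U)$, and show that the two functions agree at every $g \in G_3$. Unwinding the definitions,
\[
\pi_{\psi_2\circ\psi_1}(f)(g) = \sum_{\substack{k \in G_1\\ \psi_2(\psi_1(k)) = g}} f(k),
\qquad
(\pi_{\psi_2}\circ\pi_{\psi_1})(f)(g) = \sum_{\substack{k' \in G_2\\ \psi_2(k') = g}} \sum_{\substack{k \in G_1\\ \psi_1(k) = k'}} f(k).
\]
Because $f$ is supported in the bisection $U$ on which $\psi_1$ is injective (Lemma \ref{Lemma: psi 2}), each inner sum on the right is either a single term or zero, so everything is well-defined.

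The main step is the reindexing: the map $k \mapsto (\psi_1(k), k)$ defines a bijection between $\{k \in G_1 : \psi_2(\psi_1(k)) = g\}$ and the set of pairs $\{(k', k) : \psi_2(k') = g,\ \psi_1(k) = k'\}$ indexing the double sum on the right, with inverse $(k',k) \mapsto k$. This bijection identifies the two sums term by term, so the two values coincide at $g$. Since $g \in G_3$ was arbitrary, $\pi_{\psi_2\circ\psi_1}(f) = (\pi_{\psi_2}\circ\pi_{\psi_1})(f)$ on $Q(G_1)$, and passing to the $C^*$-completion gives the claim.

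There is no real obstacle here; the statement is a bookkeeping lemma dual to Lemma \ref{Lemma: sigma composition}. The only minor subtlety worth articulating in the write-up is to record why the intermediate double sum is finite (so that Fubini-type reindexing is legitimate), which is guaranteed by support considerations together with the injectivity of $\psi_1|_U$ for $U \in \Bis G_1$.
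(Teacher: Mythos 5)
Your proof is correct and takes essentially the same route as the paper's: reduce to $f \in C_c(U)$ for an open bisection $U$ of $G_1$, verify that $\pi_{\psi_2\circ\psi_1}(f)$ and $(\pi_{\psi_2}\circ\pi_{\psi_1})(f)$ coincide as functions, then conclude by linearity and density. The only cosmetic difference is that the paper checks the equality via the formula $\pi_\psi(f)|_{\psi(U)} = f\circ\psi|_U\inv$ (both functions lying in $C_c(\psi_2(\psi_1(U)))$), while you reindex the defining sums pointwise; the finiteness you flag is exactly what the injectivity of $\psi_1|_U$ and $\psi_2|_{\psi_1(U)}$ from Lemma \ref{Lemma: psi 2} provides.
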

\begin{proof}
	For every open bisection $U$ of $G_1$ and $f \in C_c(U) \subset Q(G_1)$, the functions $\pi_{\psi_2\circ\psi_1}(f)$ and $(\pi_{\psi_2} \circ \pi_{\psi_1})(f)$ lie in $C_c(\psi_2(\psi_1(U))) \subset Q(G_3)$.
	We get 
	\begin{align*}
	(\pi_{\psi_2} \circ \pi_{\psi_1})(f)|_{\psi_2(\psi_1(U))}
	&= \pi_{\psi_1}(f) \circ \psi_2|_{\psi_1(U)}\inv\\
	&= f \circ \psi_1|_{U}\inv \circ \psi_2|_{\psi_1(U)}\inv\\
	&= f \circ (\psi_2\circ\psi_1)|_U\inv\\
	&= \pi_{\psi_2\circ\psi_1}(f)|_{\psi_2(\psi_1(U))}.
	\end{align*}
	Thus we get $\pi_{\psi_2\circ\psi_1}(f) = (\pi_{\psi_2} \circ \pi_{\psi_1})(f)$.
	By linearity, the same equation holds for every $f \in Q(G_1)$.
	Thus $(\pi_{\psi_2} \circ \pi_{\psi_1})(x) = \pi_{\psi_2\circ\psi_1}(x)$ holds for all $x \in C^*(G_1)$.
\end{proof}

\begin{lemm}\label{Lemma: sigma_iota, pi_iota}
	Let $K_1$, $K_2$ be \'etale groupoids with locally compact Hausdorff unit spaces, and $\iota\: K_1 \rightarrow K_2$ be a homeomorphic groupoid isomorphism.
	The $*$-homomorphism $\pi_\iota\circ\sigma_\iota$ from $C^*(K_2)$ to $C^*(K_2)$ coincides with the identity $*$-homomorphism for $C^*(K_2)$.
\end{lemm}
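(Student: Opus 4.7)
The plan is to reduce everything to a pointwise computation on the dense $*$-subalgebra $Q(K_2)$ and then invoke continuity of \shoms. First I would check that $\iota$ satisfies the hypotheses of both Proposition \ref{Proposition: sigma_varphi} and Proposition \ref{Proposition: pi_psi}. Since $\iota \: K_1 \to K_2$ is a homeomorphic groupoid isomorphism, the restriction $\iota_u \: (K_1)_u \to (K_2)_{\iota(u)}$ is bijective for each $u \in K_1\0$ by Lemma \ref{Lemma: grpd hom inj surj} (v), so $\iota$ is fibrewise bijective, and $\iota\0 \: K_1\0 \to K_2\0$ is itself a homeomorphism between locally compact Hausdorff spaces, hence simultaneously proper, open, and injective. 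Therefore $\sigma_\iota \: C^*(K_2) \to C^*(K_1)$ and $\pi_\iota \: C^*(K_1) \to C^*(K_2)$ are well-defined \shoms.

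Next I would compute $\pi_\iota \circ \sigma_\iota$ directly on $Q(K_2)$. Fix $f \in Q(K_2)$ and $h \in K_2$. Since $\iota$ is bijective, the set $\{k \in K_1 \mid \iota(k)=h\}$ consists of the single element $\iota\inv(h)$, so that
\[
(\pi_\iota \circ \sigma_\iota)(f)(h) = \sum_{\substack{k \in K_1\\ \iota(k) = h}} \sigma_\iota(f)(k) = \sigma_\iota(f)\big(\iota\inv(h)\big) = f\big(\iota(\iota\inv(h))\big) = f(h).
\]
Hence $\pi_\iota \circ \sigma_\iota$ and $\id_{C^*(K_2)}$ agree on $Q(K_2)$.

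Finally, since $\pi_\iota \circ \sigma_\iota$ is a \shom between $C^*$-algebras and thus norm-decreasing (as recalled in Subsection on $C^*$-algebras), and since $Q(K_2)$ is dense in $C^*(K_2)$, the equality extends from $Q(K_2)$ to all of $C^*(K_2)$, yielding $\pi_\iota \circ \sigma_\iota = \id_{C^*(K_2)}$. No serious obstacle is anticipated; the only thing one has to keep in mind is that the single-preimage observation collapses the defining sum of $\pi_\iota$ to one term, which is exactly what makes the identity work.
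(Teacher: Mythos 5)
Your proof is correct and is precisely the routine argument the paper leaves implicit (its proof is just ``It is easy to prove this lemma''): verify that $\iota$ meets the hypotheses of Propositions \ref{Proposition: sigma_varphi} and \ref{Proposition: pi_psi}, observe that bijectivity collapses the defining sum of $\pi_\iota$ to the single term $f(\iota(\iota\inv(h)))=f(h)$ on $Q(K_2)$, and extend by density and continuity of \shoms. No gaps; one could alternatively conclude from the uniqueness clause in the universal property of $C^*(K_2)$ instead of the norm-decreasing/density argument, but both are equally standard.
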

\begin{proof}
	It is easy to prove this lemma.
\end{proof}

\begin{prop}\label{Proposition: well-defined of C*}
	For \'etale groupoids $G,H$ with locally compact Hausdorff unit spaces and equivalent proper \egmors $(\varphi_1,\psi_1;K_1)$ and $(\varphi_2,\psi_2;K_2)$ from $G$ to $H$, two \shoms $\pi_{\psi_1}\circ\sigma_{\varphi_1}$ and $\pi_{\psi_2}\circ\sigma_{\varphi_2}$ from $C^*(G)$ to $C^*(H)$ coincide.
\end{prop}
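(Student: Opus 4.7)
The plan is to reduce the statement to the three composition lemmas (Lemma \ref{Lemma: sigma composition}, Lemma \ref{Lemma: pi composition}, Lemma \ref{Lemma: sigma_iota, pi_iota}) by exploiting the equivalence $\iota \: K_1 \to K_2$ between the two couple morphisms.

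First, I would unpack the hypothesis: since $(\varphi_1,\psi_1;K_1)$ and $(\varphi_2,\psi_2;K_2)$ are equivalent, there exists a homeomorphic groupoid isomorphism $\iota \: K_1 \to K_2$ satisfying $\varphi_2 \circ \iota = \varphi_1$ and $\psi_2 \circ \iota = \psi_1$. I would then verify that $\iota$ itself is simultaneously a continuous fibrewise bijective groupoid homomorphism with proper $\iota^{(0)}$ (so that $\sigma_\iota \: C^*(K_2) \to C^*(K_1)$ is defined by Proposition \ref{Proposition: sigma_varphi}) and a continuous groupoid homomorphism with open injective $\iota^{(0)}$ (so that $\pi_\iota \: C^*(K_1) \to C^*(K_2)$ is defined by Proposition \ref{Proposition: pi_psi}). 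All of these properties are immediate for a homeomorphic groupoid isomorphism.

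Next, I would apply the composition lemmas to the two relations coming from the equivalence. From $\varphi_1 = \varphi_2 \circ \iota$, Lemma \ref{Lemma: sigma composition} yields $\sigma_{\varphi_1} = \sigma_\iota \circ \sigma_{\varphi_2}$, and from $\psi_1 = \psi_2 \circ \iota$, Lemma \ref{Lemma: pi composition} yields $\pi_{\psi_1} = \pi_{\psi_2} \circ \pi_\iota$. Composing these,
\[
\pi_{\psi_1} \circ \sigma_{\varphi_1}
= \pi_{\psi_2} \circ (\pi_\iota \circ \sigma_\iota) \circ \sigma_{\varphi_2}.
\]
Finally, Lemma \ref{Lemma: sigma_iota, pi_iota} identifies $\pi_\iota \circ \sigma_\iota$ with the identity on $C^*(K_2)$, giving $\pi_{\psi_1} \circ \sigma_{\varphi_1} = \pi_{\psi_2} \circ \sigma_{\varphi_2}$.

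There is no real obstacle here: the proof is a purely formal manipulation once one checks that $\iota$ lies in both classes of groupoid homomorphisms for which $\sigma$ and $\pi$ are constructed. The only small point requiring care is making sure one applies Lemma \ref{Lemma: sigma composition} and Lemma \ref{Lemma: pi composition} in the correct order (noting that $\sigma$ is contravariant while $\pi$ is covariant), and verifying at the start that the intermediate groupoid $K_1$ (and $K_2$) has a locally compact Hausdorff unit space so that the lemmas apply—but this follows automatically since $\iota^{(0)} \: K_1^{(0)} \to K_2^{(0)}$ is a homeomorphism and $\varphi_i^{(0)}$ is proper into the locally compact Hausdorff space $G^{(0)}$.
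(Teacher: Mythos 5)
Your proof is correct and follows the same route as the paper: take the equivalence $\iota$, use Lemma \ref{Lemma: sigma composition} and Lemma \ref{Lemma: pi composition} to rewrite $\pi_{\psi_1}\circ\sigma_{\varphi_1}$ as $\pi_{\psi_2}\circ\pi_\iota\circ\sigma_\iota\circ\sigma_{\varphi_2}$, and conclude with Lemma \ref{Lemma: sigma_iota, pi_iota}. The extra checks you mention (that $\iota$ lies in both classes of homomorphisms and that the unit spaces are locally compact Hausdorff) are fine and are left implicit in the paper's version.
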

\begin{proof}
	Let $\iota\: K_1\rightarrow K_2$ be a homeomorphic groupoid isomorphism with $\varphi_2\circ\iota = \varphi_1$ and $\psi_2\circ\iota = \psi_1$.
	Lemma \ref{Lemma: sigma composition}, \ref{Lemma: pi composition}, and \ref{Lemma: sigma_iota, pi_iota} imply that
	\begin{equation*}
	\pi_{\psi_1} \circ \sigma_{\varphi_1} 
	= \pi_{\psi_2 \circ \iota} \circ \sigma_{\varphi_2 \circ \iota} 
	= \pi_{\psi_2} \circ \pi_\iota \circ \sigma_\iota \circ \sigma_{\varphi_2}
	= \pi_{\psi_2} \circ \sigma_{\varphi_2}.\qedhere
	\end{equation*}
\end{proof}

\begin{theo}\label{Theorem: functor C*}
	The constructions $G\mapsto C^*(G)$ and $[\varphi,\psi] \mapsto \pi_\psi\circ\sigma_\varphi$ form a functor from $\EGlcH$ to $\Calg$.
	We denote this functor as $C^*$.
\end{theo}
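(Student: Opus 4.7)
The plan is to verify the four functor axioms in order, with most of the work concentrated in the composition axiom. Well-definedness on objects is immediate since $C^*(G)$ is defined for every \'etale groupoid with locally compact Hausdorff unit space. For morphisms, given a proper \egmor $[\varphi,\psi;K]\colon G\to H$, the map $\pi_\psi\circ\sigma_\varphi\colon C^*(G)\to C^*(H)$ is a \shom as a composition of \shoms by Propositions \ref{Proposition: sigma_varphi} and \ref{Proposition: pi_psi}, and its independence from the choice of representative is exactly Proposition \ref{Proposition: well-defined of C*}. For the identity morphism $[\id_G,\id_G;G]$, Lemma \ref{Lemma: sigma_iota, pi_iota} gives $\pi_{\id_G}\circ\sigma_{\id_G}=\id_{C^*(G)}$.

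The main work is preservation of composition. Given composable proper \egmors $[\varphi_1,\psi_1;K_1]\colon G_1\to G_2$ and $[\varphi_2,\psi_2;K_2]\colon G_2\to G_3$, their composite in $\EGlcH$ is represented by $\big(\varphi_1\circ\widetilde{\varphi_2},\psi_2\circ\widetilde{\psi_1};K_1\times_{G_2}K_2\big)$, so I must establish
\[
\pi_{\psi_2\circ\widetilde{\psi_1}}\circ \sigma_{\varphi_1\circ\widetilde{\varphi_2}}=\big(\pi_{\psi_2}\circ\sigma_{\varphi_2}\big)\circ\big(\pi_{\psi_1}\circ\sigma_{\varphi_1}\big).
\]
Applying Lemmas \ref{Lemma: sigma composition} and \ref{Lemma: pi composition} to rewrite the left-hand side as $\pi_{\psi_2}\circ\pi_{\widetilde{\psi_1}}\circ\sigma_{\widetilde{\varphi_2}}\circ\sigma_{\varphi_1}$ reduces the task to the base-change identity
\[
\pi_{\widetilde{\psi_1}}\circ\sigma_{\widetilde{\varphi_2}}=\sigma_{\varphi_2}\circ\pi_{\psi_1}\colon C^*(K_1)\longrightarrow C^*(K_2),
\]
attached to the pull-back square with legs $\psi_1,\varphi_2$ on the bottom and $\widetilde{\varphi_2},\widetilde{\psi_1}$ on the top.

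I expect this base-change identity to be the main obstacle, but it can be checked by a pointwise computation on the dense $*$-subalgebra $Q(K_1)$. For $f\in Q(K_1)$ and $k_2\in K_2$, the right-hand side evaluates to
\[
\big(\sigma_{\varphi_2}\circ\pi_{\psi_1}\big)(f)(k_2)=\pi_{\psi_1}(f)(\varphi_2(k_2))=\sum_{\psi_1(k_1)=\varphi_2(k_2)}f(k_1),
\]
while the left-hand side is
\[
\big(\pi_{\widetilde{\psi_1}}\circ\sigma_{\widetilde{\varphi_2}}\big)(f)(k_2)=\sum_{\substack{(k_1',k_2')\in K_1\times_{G_2}K_2\\ \widetilde{\psi_1}(k_1',k_2')=k_2}}f\big(\widetilde{\varphi_2}(k_1',k_2')\big)=\sum_{\psi_1(k_1')=\varphi_2(k_2)}f(k_1'),
\]
because $\widetilde{\psi_1}(k_1',k_2')=k_2$ forces $k_2'=k_2$, and the defining relation of the pull-back then reads $\psi_1(k_1')=\varphi_2(k_2)$. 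Both sides define elements of $Q(K_2)$ by Propositions \ref{Proposition: sigma_varphi} and \ref{Proposition: pi_psi}, so pointwise equality yields equality as functions in $Q(K_2)$; extending by linearity and continuity to $C^*(K_1)$ completes the verification of composition and thus of functoriality.
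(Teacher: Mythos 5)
Your proposal is correct and follows essentially the same route as the paper: the same reduction of the composition axiom, via Lemmas \ref{Lemma: sigma composition} and \ref{Lemma: pi composition}, to the exchange identity $\pi_{\widetilde{\psi_1}}\circ\sigma_{\widetilde{\varphi_2}}=\sigma_{\varphi_2}\circ\pi_{\psi_1}$ on the pull-back, with well-definedness and identities handled by Proposition \ref{Proposition: well-defined of C*} and Lemma \ref{Lemma: sigma_iota, pi_iota} exactly as in the paper. The only local difference is that you verify the exchange identity by a direct pointwise summation over the fibres of $\widetilde{\psi_1}$, valid for every $f\in Q(K_1)$, whereas the paper restricts to $f\in C_c(U)$ for an open bisection $U$ and invokes Lemma \ref{Lemma: pull-back 2}; both computations are correct.
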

\begin{proof}
	The map $[\varphi,\psi] \mapsto \pi_\psi\circ\sigma_\varphi$ is well-defined by Proposition \ref{Proposition: well-defined of C*}.
	
	Let $G_i$ be an \'etale groupoid with a locally compact Hausdorff unit space with $i=1,2,3$, and $(\varphi_i,\psi_i;K_i)\: G_i \rightarrow G_{i+1}$ be a proper \egmor with $i = 1,2$.
	Consider the \egmor $\big(\widetilde{\varphi_2},\widetilde{\psi_1};K_1\times_{G_2} K_2\big)$ from $K_1$ to $K_2$ as in Proposition \ref{Proposition: comp cpl mor}. 
	Take an open bisection $U$ of $K_1$ and $f \in C_c(U) \subset Q(K_1)$.
	We set the open bisection $V:=\varphi_2\inv(\psi_1(U)) = \widetilde{\psi_1}(\widetilde{\varphi_2}\inv(U))$ of $K_2$ (see Lemma \ref{Lemma: pull-back 1}).
	The functions $\big(\pi_{\widetilde{\psi_1}} \circ \sigma_{\widetilde{\varphi_2}}\big)(f)$ and $\big(\sigma_{\varphi_2} \circ \pi_{\psi_1}\big)(f)$ lies in $C_c(V) \subset Q(K_2)$.
	We get
	\begin{align*}
		\big(\pi_{\widetilde{\psi_1}} \circ \sigma_{\widetilde{\varphi_2}}\big)(f)|_V
		&= f \circ \widetilde{\varphi_2} \circ \widetilde{\psi_1}|\inv_{\widetilde{\varphi_2}\inv(U)}\\
		&= f \circ \left.\psi_1\right|_U\inv \circ \varphi_2 
		= \big(\sigma_{\varphi_2} \circ \pi_{\psi_1}\big)(f)|_V
	\end{align*}
	by Lemma \ref{Lemma: pull-back 2}.
	Hence \shoms $\pi_{\widetilde{\psi_1}} \circ \sigma_{\widetilde{\varphi_2}}$ and $\sigma_{\varphi_2} \circ \pi_{\psi_1}$ from $C^*(K_1)$ to $C^*(K_2)$ coincide. 
	By Lemma \ref{Lemma: sigma composition} and \ref{Lemma: pi composition}, we get
	\[
	\pi_{\psi_2\circ\widetilde{\psi_1}} \circ \sigma_{\varphi_1\circ\widetilde{\varphi_2}} = \pi_{\psi_2} \circ \pi_{\widetilde{\psi_1}} \circ \sigma_{\widetilde{\varphi_2}} \circ \sigma_{\varphi_1} = \pi_{\psi_2} \circ \sigma_{\varphi_2} \circ \pi_{\psi_1} \circ \sigma_{\varphi_1}.
	\]
	Thus the construction $[\varphi,\psi] \mapsto \pi_\psi\circ\sigma_\varphi$ preserves composition. 
	
	Let $G$ be an \'etale groupoid with a locally compact Hausdorff unit space.
	The identity \egmor $[\id_G,\id_G;G]$ on $G$ produces the identity \shom on $C^*(G)$ by Lemma \ref{Lemma: sigma_iota, pi_iota}.
	Thus the construction $[\varphi,\psi] \mapsto \pi_\psi\circ\sigma_\varphi$ preserves identity morphisms. 
\end{proof}

\begin{prop}\label{Proposition: faithfulness of C*}
	The functor $C^*$ from $\EGlcH$ to $\Calg$ is faithful.
\end{prop}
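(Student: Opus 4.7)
The plan is to invoke the characterization of equivalent \egmors available when $G\0$ is Hausdorff: two \egmors $[\varphi_1,\psi_1;K_1]$ and $[\varphi_2,\psi_2;K_2]$ from $G$ to $H$ are equivalent if and only if $\psi_1(\varphi_1\inv(U)) = \psi_2(\varphi_2\inv(U))$ for every open bisection $U$ of $G$, by Corollary~\ref{Corollary: egmor equiv} together with the Hausdorff-case corollary stated just after it. Since every object of $\EGlcH$ has a locally compact Hausdorff unit space, it suffices to extract this set equality from the assumption $\pi_{\psi_1}\circ\sigma_{\varphi_1} = \pi_{\psi_2}\circ\sigma_{\varphi_2}$.

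Given an open bisection $U$ of $G$, which is locally compact Hausdorff by Corollary~\ref{Corollary: bisection lcHs}, and a function $f\in C_c(U)\subset Q(G)$, I would compute
\[
\pi_{\psi_i}(\sigma_{\varphi_i}(f))(h) = \sum_{\psi_i(k) = h} f(\varphi_i(k))
\]
for each $h\in H$. Since $\varphi_i\inv(U)$ is an open bisection of $K_i$ by Lemma~\ref{Lemma: varphi inv}, since $\psi_i|_{\varphi_i\inv(U)}$ is injective by Lemma~\ref{Lemma: psi}, and since $f$ vanishes outside $U$, this sum collapses to at most one nonzero term: it vanishes unless $h\in\psi_i(\varphi_i\inv(U))$, in which case it equals $f(\varphi_i(k))$ for the unique $k\in\varphi_i\inv(U)$ with $\psi_i(k)=h$.

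To show $\psi_1(\varphi_1\inv(U))\subset\psi_2(\varphi_2\inv(U))$, I would pick $h\in\psi_1(\varphi_1\inv(U))$, let $k_1$ be the unique preimage in $\varphi_1\inv(U)$ under $\psi_1$, and set $g_1 := \varphi_1(k_1)\in U$. A Urysohn-type argument on $U$ produces $f\in C_c(U)$ with $f(g_1)\neq 0$, so the pointwise formula gives $\pi_{\psi_1}(\sigma_{\varphi_1}(f))(h) = f(g_1)\neq 0$. The hypothesis then forces $\pi_{\psi_2}(\sigma_{\varphi_2}(f))(h)\neq 0$, which in turn places $h$ in $\psi_2(\varphi_2\inv(U))$. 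The reverse inclusion is symmetric, and the proof concludes via the corollary cited above.

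The main delicate point is the passage from equality in $C^*(H)$ to equality of elements of $Q(H)$ as functions on $H$; this requires injectivity of the canonical map $Q(H)\hookrightarrow C^*(H)$, which is part of the defining property of $C^*(H)$ recalled in the preliminaries (it contains $Q(H)$ as a dense $*$-subalgebra). Once this is granted, the remaining work is just the bisection bookkeeping that collapses the pushforward sum to a single term, and no further obstacle arises.
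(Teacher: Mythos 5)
Your proof is correct and follows essentially the same route as the paper's: both reduce faithfulness to the bisection-image criterion for equivalence of \egmors (valid since unit spaces in $\EGlcH$ are Hausdorff), then separate the two \shoms on a Urysohn function in $C_c(U)$ using the pointwise formula for $\pi_\psi\circ\sigma_\varphi$, which collapses to a single term on a bisection. The only differences are cosmetic: you argue the contrapositive direction and make explicit the (correct) appeal to the injectivity of $Q(H)\hookrightarrow C^*(H)$, which the paper uses implicitly.
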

\begin{proof}
	Let $[\varphi_1,\psi_1;K_1]$ and $[\varphi_2,\psi_2;K_2]$ be \egmors from $G$ to $H$ with $[\varphi_1,\psi_1;K_1] \neq [\varphi_2,\psi_2;K_2]$.
	By Corollary \ref{Corollary: egmor equiv}, this implies that $\psi_1(\varphi_1\inv(U)) \neq \psi_2(\varphi_2\inv(U))$ holds for some open bisection $U$ of $G$.
	The proof finishes if we show that the two \shoms $\pi_{\psi_1} \circ \sigma_{\varphi_1}$ and $\pi_{\psi_2} \circ \sigma_{\varphi_2}$ from $Q(G)$ to $Q(H)$ do not coincide.
	
	Without loss of generality, we can take an element $g\in \varphi_1\inv(U)$ with $\psi_1(g)\not\in \psi_2(\varphi_2\inv(U))$.
	There exists an element $f \in C_c(U) \subset Q(G)$ with $f(\varphi_1(g)) = 1$ by Urysohn's Lemma (see \cite[Lemma 2.12]{Rud87}).
	We get $\big(\pi_{\psi_1}\circ\sigma_{\varphi_1}\big)(f)(\psi_1(g)) = 1$.
	However $\big(\pi_{\psi_2}\circ\sigma_{\varphi_2}\big)(f)(\psi_1(g)) = 0$ holds because $\big(\pi_{\psi_2}\circ\sigma_{\varphi_2}\big)(f)$ is zero outside of $\psi_2(\varphi_2\inv(U))$.
	Thus the two \shoms $\pi_{\psi_1}\circ\sigma_{\varphi_1}$ and $\pi_{\psi_2}\circ\sigma_{\varphi_2}$ from $Q(G)$ to $Q(H)$ do not coincide.
\end{proof}

Now we proceed to show our categorical version of \cite[Theorem 4.1.1]{Pat99}. 
Let $G$ be an \'etale groupoid with a locally compact Hausdorff unit space.
For a compact open bisection $C$ of $G$, the characteristic function $\chi_C$ of $C$ is an element of $Q(G)$.
\begin{lemm}\label{Lemma: Paterson}
	Let $G$, $H$ be \'etale groupoids with locally compact Hausdorff unit spaces, and $[\varphi,\psi;K]$ be a proper \egmor from $G$ to $H$.
	For a compact open bisection $C$ of $G$, we have 
	\[
	\pi_\psi \big(\sigma_\varphi ( \chi_C )\big) = \chi_{\psi(\varphi\inv(C))}.
	\] 
\end{lemm}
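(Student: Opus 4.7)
The plan is to chase through the definitions of $\sigma_\varphi$ and $\pi_\psi$ in sequence, using only the bisection arithmetic already established. First I would compute $\sigma_\varphi(\chi_C)$. Since $\sigma_\varphi(f) = f \circ \varphi$ by Proposition~\ref{Proposition: sigma_varphi}, we have $\sigma_\varphi(\chi_C)(k) = \chi_C(\varphi(k))$, which equals $1$ precisely when $k \in \varphi\inv(C)$. Thus $\sigma_\varphi(\chi_C) = \chi_{\varphi\inv(C)}$. Note that $\varphi\inv(C)$ is a bisection by Lemma~\ref{Lemma: varphi inv}(iv), is open by continuity of $\varphi$, and (since the \egmor is proper, so $\varphi\0$ is proper) is compact by Lemma~\ref{Lemma: phi proper}. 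Hence $\chi_{\varphi\inv(C)}$ is a legitimate element of $Q(K)$.

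Next I would apply $\pi_\psi$ to this characteristic function via the summation formula in Proposition~\ref{Proposition: pi_psi}: for each $h \in H$,
\[
\pi_\psi\bigl(\chi_{\varphi\inv(C)}\bigr)(h)
= \sum_{\substack{k \in K\\ \psi(k)=h}} \chi_{\varphi\inv(C)}(k)
= \#\bigl\{ k \in \varphi\inv(C) \ \big|\ \psi(k) = h \bigr\}.
\]
The crucial observation is that $\varphi\inv(C)$ is a bisection of $K$ on which $\psi$ is injective: indeed, by Lemma~\ref{Lemma: psi} the restriction $\psi\:\varphi\inv(C)\rightarrow\psi(\varphi\inv(C))$ is a bijection onto the bisection $\psi(\varphi\inv(C))$ of $H$, since $\psi\0$ is injective. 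Therefore the cardinality above is at most one, and equals one exactly when $h \in \psi(\varphi\inv(C))$. This yields $\pi_\psi(\chi_{\varphi\inv(C)}) = \chi_{\psi(\varphi\inv(C))}$, which combined with the first step gives the desired identity.

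There is no real obstacle in this argument: the whole lemma is a bookkeeping computation that decomposes into (1) pulling back a characteristic function along $\varphi$ gives the characteristic function of the preimage, and (2) pushing forward a bisectionally-supported characteristic function along $\psi$ gives the characteristic function of the image, where the ``sum over fibre'' collapses because $\psi$ is injective on bisections. All the structural ingredients — that $\varphi\inv(C)$ is a compact open bisection and that $\psi$ restricted to it is a homeomorphism onto a compact open bisection of $H$ — are already packaged in the earlier lemmas, so the proof is essentially two lines of unwinding.
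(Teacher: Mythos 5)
Your proposal is correct and follows essentially the same route as the paper's proof: pull back $\chi_C$ to get $\chi_{\varphi\inv(C)}$ (a compact open bisection by Lemma \ref{Lemma: varphi inv} (iv) and Lemma \ref{Lemma: phi proper}), then push forward along $\psi$, where injectivity of $\psi$ on the bisection collapses the fibre sum to give $\chi_{\psi(\varphi\inv(C))}$. The paper phrases the second step via the case formula in Proposition \ref{Proposition: pi_psi} rather than counting fibre elements, but this is the same computation.
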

\begin{proof}
	By Lemma \ref{Lemma: phi proper}, $\varphi \: \varphi\inv(C) \rightarrow C$ is proper.
	Thus $C' := \varphi\inv(C)$ is a compact open bisection of $K$.
	
	For every $k\in K$, we get 
	\[
	\sigma_\varphi(\chi_C)(k) = \chi_C(\varphi(k)) = \chi_{\varphi\inv(C)}(k).
	\]
	Hence $\sigma_\varphi(\chi_C) = \chi_{C'}$.
	The function $\chi_{C'}$ lies in $C_c(C') \subset Q(K)$.
	For every $h\in H$, we get 
	\[
	\pi_\psi(\chi_{C'})(h) = 
	\begin{cases}
		\chi_{C'}\big(\psi|_{C'}\inv(h)\big) = 1 & h\in \psi(C')\\
		0 & h\not\in \psi(C')
	\end{cases}
	\]
	(see Proposition \ref{Proposition: pi_psi}).
	Thus we get $\pi_\psi(\chi_{C'}) = \chi_{\psi(C')}$, that is, $\pi_\psi \big(\sigma_\varphi ( \chi_C )\big) = \chi_{\psi(\varphi\inv(C))}$.
\end{proof}

\begin{theo}\label{Theorem: Paterson categorical ver}
	The composition of the functors $\SP\: \IS \rightarrow \ISAlcH$, $\TG\: \ISAlcH \rightarrow \EGlcH$, and $C^*\: \EGlcH \rightarrow \Calg$ is naturally equivalent to the functor $C^*\: \IS \rightarrow \Calg$ through the natural isomorphism consisting of the $*$-isomorphisms $\iota_S \: C^*(S) \rightarrow C^*(\Gu(S))$ for all inverse semigroups $S$ in Theorem \ref{Theorem: Paterson};
	\[
	\begin{tikzcd}
	\IS&&&\Calg.\\
	&\ISAlcH&\EGlcH&
	\arrow[from = 1-1, to = 1-4, rightarrow, ""'{name = a}, "C^*"]
	\arrow[from = 1-1, to = 2-2, rightarrow, "\SP"']
	\arrow[from = 2-2, to = 2-3, rightarrow, ""{name = b}, "\TG"']
	\arrow[from = 2-3, to = 1-4, rightarrow, "C^*"']
	\arrow[from = a, to = b, "\simeq", phantom, sloped]
	\end{tikzcd}
	\]
	%
\end{theo}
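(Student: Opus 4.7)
The plan is to show that the family of Paterson $*$-isomorphisms $\iota = \{\iota_S\}_{S \in \IS}$ from Theorem \ref{Theorem: Paterson} assembles into a natural isomorphism between the two functors $\IS \to \Calg$. Since each $\iota_S$ is already known to be a $*$-isomorphism, everything reduces to verifying naturality: for every semigroup homomorphism $\theta \: S \to T$, one must check commutativity of the square
\[
\begin{tikzcd}
C^*(S) \arrow[r, "\iota_S"] \arrow[d, "\sigma_\theta"'] & C^*(\Gu(S)) \arrow[d, "(C^* \cdot \TG \cdot \SP)(\theta)"] \\
C^*(T) \arrow[r, "\iota_T"'] & C^*(\Gu(T)).
\end{tikzcd}
\]
The right vertical map unwinds by Definition \ref{Definition: functor Gu} and Theorem \ref{Theorem: functor C*} to $\pi_{\psi_\theta} \circ \sigma_{\varphi_{\widehat\theta}}$.

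Since all four arrows are $*$-homomorphisms and $\mathbb{C}[S]$ is dense in $C^*(S)$, it suffices to test the square on the generators $\delta_s$ for $s \in S$. The bottom-left composition is immediate: $\iota_T(\sigma_\theta(\delta_s)) = \iota_T(\delta_{\theta(s)}) = \chi_{[\theta(s),\,U_{\theta(s)}^T]}$. For the top-right composition, $\iota_S(\delta_s) = \chi_{[s,\,U_s^S]}$ is the characteristic function of the compact open bisection $[s, U_s^S] \subset \Gu(S)$, and Lemma \ref{Lemma: Paterson} converts the $*$-homomorphism applied to a characteristic function of a compact open bisection into the characteristic function of its image under $\psi_\theta \circ \varphi_{\widehat\theta}^{-1}$. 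Hence
\[
(\pi_{\psi_\theta} \circ \sigma_{\varphi_{\widehat\theta}})\bigl(\chi_{[s, U_s^S]}\bigr) = \chi_{\psi_\theta\bigl(\varphi_{\widehat\theta}^{-1}([s, U_s^S])\bigr)}.
\]

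The remaining task is a direct computation inside the transformation groupoid $S \ltimes_{\beta^T\theta} D_{\widehat\theta}$. Using the explicit formulas $\varphi_{\widehat\theta}([s,\zeta]) = [s,\widehat\theta(\zeta)]$ and $\psi_\theta([s,\zeta]) = [\theta(s),\zeta]$ from Subsection \ref{Subsection: A functor TG from ISA to EG}, together with the identity $\widehat\theta^{-1}(U_s^S) = U_{\theta(s)}^T$ already established in the proof of Proposition \ref{Proposition: proper morphism}, one obtains $\varphi_{\widehat\theta}^{-1}([s, U_s^S]) = [s, U_{\theta(s)}^T]$ and then $\psi_\theta([s, U_{\theta(s)}^T]) = [\theta(s), U_{\theta(s)}^T]$. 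Thus both compositions yield $\chi_{[\theta(s),\,U_{\theta(s)}^T]}$ on each generator, and the square commutes on all of $C^*(S)$.

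I expect no serious obstacle: the substantive content (that $\iota_S$ is an isomorphism, and that $\pi_\psi \circ \sigma_\varphi$ on a characteristic function $\chi_C$ equals $\chi_{\psi(\varphi^{-1}(C))}$) is already packaged as Theorem \ref{Theorem: Paterson} and Lemma \ref{Lemma: Paterson}. The only mildly delicate point is the book-keeping between the two transformation groupoids and the domain/codomain conventions for $\varphi_{\widehat\theta}$ and $\psi_\theta$; once the compact open bisection $[s, U_s^S]$ is correctly pulled back along $\varphi_{\widehat\theta}$ and pushed forward along $\psi_\theta$, the proof closes immediately.
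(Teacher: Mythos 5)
Your proposal is correct and follows essentially the same route as the paper: reduce naturality to the generators $\delta_s$, unwind the right vertical map to $\pi_{\psi_\theta}\circ\sigma_{\varphi_{\widehat{\theta}}}$, apply Lemma \ref{Lemma: Paterson} to the characteristic function $\chi_{[s,U^S_s]}$, and finish with the computation $\psi_\theta\big(\varphi_{\widehat{\theta}}\inv\big(\big[s,U^S_s\big]\big)\big) = \big[\theta(s),U^T_{\theta(s)}\big]$ via $\widehat{\theta}\inv\big(U^S_s\big)=U^T_{\theta(s)}$. This matches the paper's argument step for step.
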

\begin{proof}
	Let $S,T$ be inverse semigroups, and $\theta: S \rightarrow T$ be a semigroup homomorphism.
	We show the following diagram commutes;
	\[
	\begin{tikzcd}
		C^*(S) \arrow[r,"\iota_S"] \arrow[d,"\sigma_\theta"']& 
		C^*(\Gu(S)) \arrow[d,"\pi_{\psi_\theta}\circ\sigma_{\varphi_{\widehat{\theta}}}"]\\
		C^*(T) \arrow[r,"\iota_T"]& 
		C^*(\Gu(T)),
	\end{tikzcd}
	\]
	where $\Gu$ is the functor defined in Definition \ref{Definition: functor Gu}.
	For every $s\in S$, we have
	\begin{align*}
	(\pi_{\psi_\theta}\circ\sigma_{\varphi_{\widehat{\theta}}})\big( \iota_S(\delta_s) \big)
	&= (\pi_{\psi_\theta}\circ\sigma_{\varphi_{\widehat{\theta}}})\big( \chi_{[s,U^S_s]}\big)\\ 
	&= \chi_{\psi_\theta\big(\varphi_{\widehat{\theta}}\inv \big(\big[s,U^S_s\big]\big)\big)},\\
	\iota_T\big(\sigma_\theta(\delta_s)\big)
	&= \iota_T\big(\delta_{\theta(s)}\big)\\
	&= \chi_{[\theta(s),U^T_{\theta(s)}]},
	\end{align*} 
	where the second equal follows from Lemma \ref{Lemma: Paterson}.
	These functions coincide by the following calculation;
	\[
	\psi_\theta\big(\varphi_{\widehat{\theta}}\inv \big(\big[s,U^S_s\big]\big)\big)
	= \psi_\theta\big(\big[s,\widehat{\theta}\inv\big(U^S_s\big)\big]\big)
	= \psi_\theta\big(\big[s,U^T_{\theta(s)}\big]\big)
	= \big[\theta(s), U^T_{\theta(s)}\big]. \qedhere
	\]
\end{proof}

\bibliography{reference}
\bibliographystyle{alpha}
\end{document}